\numberwithin{equation}{section}
\newtheorem{theorem}{Theorem}[section]
\newtheorem{lemma}[theorem]{Lemma}
\newtheorem{proposition}[theorem]{Proposition}
\newtheorem{corollary}[theorem]{Corollary}
\newtheorem{definition}[theorem]{Definition}
\newtheorem{remark}[theorem]{Remark}
\newcommand\supp{\mathop{\rm supp}}
\newcommand\id{\mathop{\rm id}}
\newcommand\dd{\mathop{\rm d}}
\newcommand\mm{\mathop{\rm m}}
\newcommand{\cl}[1]{\mathcal{#1}}
\newcommand{\bb}[1]{\mathbb{#1}}
\newcommand\Tr{\mathop{\rm Tr}}
\newcommand\omin{\mathop{\rm OMIN}}
\newcommand\omax{\mathop{\rm OMAX}}
\newcommand\nph{\mathop{\varphi}}
\begin{document}

\title{Quantum no-signalling correlations 
and non-local games}




\author[I. G. Todorov]{Ivan G. Todorov}
\address{
School of Mathematical Sciences, University of Delaware, 501 Ewing Hall,
Newark, DE 19716, USA, and 
Mathematical Sciences Research Centre,
Queen's University Belfast, Belfast BT7 1NN, United Kingdom}
\email{todorov@udel.edu}
\email{i.todorov@qub.ac.uk}

\author[L. Turowska]{Lyudmila Turowska}
\address{Department of Mathematical Sciences, Chalmers University
of Technology and the University of Gothenburg, Gothenburg SE-412 96, Sweden}
\email{turowska@chalmers.se}



\begin{abstract}
We introduce and 
examine three subclasses of the family of quantum no-signalling (QNS) correlations introduced 
by Duan and Winter: quantum commuting, quantum and local. 
We formalise the notion of a universal TRO of a block operator isometry, 
define an operator system, universal for stochastic operator matrices, 
and realise it as a quotient of a matrix algebra. 
We describe the classes of QNS correlations in terms of states on the tensor products
of two copies of the universal operator system, 
and specialise the correlation classes and their representations to classical-to-quantum correlations.
We study various quantum versions of synchronous no-signalling correlations and 
show that they possess invariance properties for suitable sets of states. 
We introduce quantum non-local games as a generalisation of non-local games. 
We define the operation of quantum game composition and
show that the perfect strategies belonging to a certain class are closed under 
channel composition. 
We specialise to the case of graph colourings, where we 
exhibit quantum versions of the orthogonal rank of a graph as 
the optimal output dimension for which perfect classical-to-quantum strategies of 
the graph colouring game exist, as well as to non-commutative graph homomorphisms, 
where we identify quantum versions of non-commutative graph homomorphisms 
introduced by Stahlke. 
\end{abstract}

\date{4 September 2020}

\maketitle

\tableofcontents


\section{Introduction}\label{s_intro}

Non-local games \cite{chtw} have in the past decade acquired significant prominence,
demonstrating both the power and limitations of quantum entanglement.
These are cooperative games, played by two players, Alice and Bob, against a verifier, in each round of which 
the verifier feeds in as an input a pair $(x,y)$, selected from the cartesian product $X\times Y$ of two finite sets, 
and the players produce as an output a pair $(a,b)$ from a cartesian product $A\times B$.
The combinations $(x,y,a,b)$ that yield a win are determined by a predicate function 
$\lambda : X\times Y \times A\times B \to \{0,1\}$. 
A probabilistic strategy is a family $p = \{(p(a,b|x,y))_{(a,b)\in A\times B} : (x,y)\in X\times Y\}$
of probability distributions, one for each input pair $(x,y)$, where the value $p(a,b|x,y)$ denotes the probability that 
the players spit out the output $(a,b)$ given they have received the input $(x,y)$. 
Such families $p$ are in addition required to satisfy a no-signalling condition, which 
ensures no communication between the players takes place during the course of the game, and are hence called 
\emph{no-signalling (NS) correlations}.

In pseudo-telepathy games \cite{bbt}, no deterministic perfect (that is, winning) strategies exist, while 
shared entanglement can produce perfect \emph{quantum} strategies. 
Such strategies consist of two parts: a unit vector $\xi$ in the tensor product $H_A\otimes H_B$ of 
two finite dimensional Hilbert spaces (representing the joint physical system of the players), and 
local measurement operators $(E_{x,a})_{x,a}$ (for Alice) and $(F_{y,b})_{y,b}$ (for Bob), leading to the 
probabilities $p(a,b|x,y) = \langle (E_{x,a}\otimes F_{y,b})\xi,\xi\rangle$. 
Employing the commuting model of Quantum Mechanics leads, on the other hand, 
to the broader set of \emph{quantum commuting} strategies, whose underlying no-signalling correlations arise 
from mutually commuting measurement operators (that is, $E_{x,a}F_{y,b} = F_{y,b}E_{x,a}$)
acting on a single Hilbert space.
This viewpoint leads to the following chain of classes of no-signalling correlations:
\begin{equation}\label{eq_chain0}
\cl C_{\rm loc}\subseteq \cl C_{\rm q}\subseteq \cl C_{\rm qa}\subseteq \cl C_{\rm qc}\subseteq \cl C_{\rm ns}.
\end{equation}
The class $\cl C_{\rm qa}$ of \emph{approximately quantum} correlations is the closure of the quantum class $\cl C_{\rm q}$ 
-- known, due to the work of Slofstra \cite{slofstra} (see also \cite{dpp}) to be strictly larger than $\cl C_{\rm q}$ --
and $\cl C_{\rm ns}$ is the class of all no-signalling correlations, playing a fundamental role in 
generalised probabilistic theories \cite{alp, barrett}.
The long-standing question of whether $\cl C_{\rm qa}$ coincides with the class $\cl C_{\rm qc}$ 
of all quantum commuting correlations, 
known as Tsirelson's problem, was recently settled in the negative in \cite{jnvwy}. Due to 
the works \cite{jnpp} and \cite{oz}, this also resolved the fundamental Connes Embedding Problem \cite{Pi2}.

In this paper, we propose a quantisation of the chain of inclusions (\ref{eq_chain0}). 
Our motivation is two-fold. 
Firstly, the resolution of the Connes Embedding Problem in \cite{jnvwy}
follows complexity theory routes, and it remains of great interest if
an operator algebraic approach is within reach. The classes of correlations we introduce are 
wider and hence may offer more flexibility in looking for counterexamples. 

Our second source of motivation is the development of non-local games with quantum inputs and quantum outputs. 
A number of versions of quantum games have already been examined.
In \cite{cjppg}, the authors studied the computability and the parallel repetition behaviour of the 
entangled value of a rank one quantum game, where the players receive quantum inputs from the verifier, 
but a measurement is taken against a rank one projection to determine the likelihood of winning. 
In \cite{gw}, the focus is on multiple round quantum strategies that are
available to players with quantum memory, while the quantum-classical and extended non-local games 
considered in \cite{rw} both have classical outputs
(see also \cite{buscemi}). 
Here, we propose a framework for quantum-to-quantum non-local games, which generalises directly
(classical) non-local games. 
This allows us to define a quantum version of the graph homomorphism game
(see \cite{dpp, mrob, mrob2, pt_QJM}), 
and leads to notions of quantum homomorphisms between
(the widely studied at present \cite{bcehpsw, btw, dsw, dw, lp, stahlke}) 
non-commutative graphs.

Our starting point is the definition of quantum no-signalling correlations
given by Duan and Winter in \cite{dw}. 
Note that no-signalling (NS) correlations correspond precisely to (bipartite) classical information channels 
from $X\times Y$ to $A\times B$ with well-defined marginals.
In \cite{dw}, \emph{quantum no-signalling  (QNS) correlations} are thus defined as quantum channels 
$M_{X\times Y}\to M_{A\times B}$ (here $M_Z$ denotes the space of all $Z\times Z$ complex matrices)
whose marginal channels are well-defined.  
In Section \ref{s_cqnsc}, we define the quantum versions of the classes in (\ref{eq_chain0}), arriving at an  
analogous chain
\begin{equation}\label{eq_chainq0}
\cl Q_{\rm loc}\subseteq \cl Q_{\rm q}\subseteq \cl Q_{\rm qa}\subseteq \cl Q_{\rm qc}\subseteq \cl Q_{\rm ns}.
\end{equation}
The base for our definitions is a quantisation of positive operator valued measures, which we develop in 
Section \ref{s_gpovm}. The \emph{stochastic operator matrices} defined therein replace the families 
$(E_{x,a})_{x\in X,a\in A}$ of measurement operators that play a crucial role in the definitions of the 
classical classes (\ref{eq_chain0}).
In Section \ref{s_uopsys}, we define a universal operator system $\cl T_{X,A}$, whose 
concrete representations on Hilbert spaces are precisely determined by stochastic operator matrices. 
Our route passes through the definition of a universal ternary ring of operators $\cl V_{X,A}$ of 
a given $A\times X$-block operator isometry, which 
is a generalisation of the Brown algebra of a unitary matrix \cite{b} 
(see also \cite{ghj}).  
We describe $\cl T_{X,A}$ as a quotient of a full matrix algebra (Corollary \ref{c_quotxa});
this is a quantum version of a previous known result in the classical case \cite{fkpt}. 
We show that any such quotient 
possesses the local lifting property \cite{kptt_adv}. This unifies a number of results in the literature, 
implying in particular \cite[Theorem 4.9]{harris}. 

In Section \ref{s_dtp}, we provide operator theoretic descriptions of the classes 
$\cl Q_{\rm loc}$, $\cl Q_{\rm qa}$, $\cl Q_{\rm qc}$ and $\cl Q_{\rm ns}$, 
establishing a perfect correspondence between the elements of these classes and states on operator system tensor products.
We see that, similarly to the case of classical NS correlations \cite{lmprsstw}, 
each QNS correlation of the class $\cl Q_{\rm qc}$
arises from a state on the commuting tensor product $\cl T_{X,A}\otimes_{\rm c}\cl T_{Y,B}$, and that 
similar descriptions hold for the rest of the aforementioned classes. 
Along with the hierarchy (\ref{eq_chainq0}), we introduce an intermediate chain
\begin{equation}\label{eq_chaincq0}
\cl{CQ}_{\rm loc}\subseteq \cl{CQ}_{\rm q}\subseteq \cl{CQ}_{\rm qa}\subseteq \cl{CQ}_{\rm qc}\subseteq \cl{CQ}_{\rm ns},
\end{equation}
lying between 
(\ref{eq_chain0}) and (\ref{eq_chainq0}), whose terms are classes of \emph{classical-to-quantum no-signalling (CQNS) 
correlations}.
We define their universal operator system, and provide analogous characterisations in terms of states on 
its  tensor products; this is achieved in Section \ref{s_ctqqnsc}. In Section \ref{s_crs}, 
we point out the canonical surjections $\cl Q_{\rm x}\to \cl{CQ}_{\rm x} \to \cl C_{\rm x}$ (where ${\rm x}$ denotes any 
specific correlation class from the set $\{{\rm loc}, {\rm q}, {\rm qa}, {\rm qc}, {\rm ns}\}$). 
Combined with the separation results at each term, known for (\ref{eq_chain0}), 
this implies that the inclusions in (\ref{eq_chainq0}) and (\ref{eq_chaincq0}) are proper. 

The class $\cl Q_{\rm loc}$ at the ground level of the chain (\ref{eq_chainq0}) is in fact well-known: 
its elements are precisely the local operations and shared randomness (LOSR) channels (see e.g. \cite[p. 358]{jw}). 
Thus, the channels from $\cl Q_{\rm q}$ can be thought of as entanglement assisted LOSR transformations, 
and a similar interpretation can be adopted for the higher terms of (\ref{eq_chainq0}). 

The notion of a synchronous NS correlation \cite{psstw} is of crucial importance 
when correlations are employed as strategies of non-local games. Here, we assume that $X = Y$ and $A = B$. 
These correlations were characterised in \cite{psstw} as arising from traces on a universal C*-algebra $\cl A_{X,A}$
-- the free product of $|X|$ copies of the $|A|$-dimensional abelian C*-algebra. 
In Section \ref{s_fc}, we propose two quantum versions of synchronicity. 
\emph{Fair} correlations are defined in operational terms, but display a lower level of relevance than 
\emph{tracial} correlations, which are defined operator algebraically, via traces on the 
universal C*-algebra of a stochastic operator matrix.
Tracial QNS correlations are closely related to factorisable channels \cite{delaroche} 
which have been used to produce counterexamples to the asymptotic Birkhoff conjecture \cite{haag-musat0}. 
More precisely, if one restricts attention to QNS correlations that arise from the Brown algebra
as opposed to the ternary ring of operators $\cl V_{X,A}$, then the tracial QNS correlations
are precisely the couplings of a pair of factorisable channels with equal terms. 

Restricted to CQNS and NS correlations, traciality produces classes of correlations that 
strictly contain synchronous NS correlations. The difference between synchronous and tracial NS correlations
can be heuristically compared to that between projection and positive operator valued measures. 
The operational significance of tracial QNS, tracial CQNS and tracial NS correlations arises from the preservation 
of appropriate classes of states, which quantise the symmetry possessed by the 
classical pure states supported on the diagonal of a matrix algebra. 
The ground class, of \emph{locally reciprocal states}, turns out to be a twisted version of de Finetti states 
\cite{ckr}. Thus, the higher classes of \emph{quantum reciprocal} and \emph{C*-reciprocal states} 
can be thought of as an entanglement assisted and a commuting model version, respectively, of de Finetti states. 

In Section \ref{s_tqnlg}, we point out how QNS and CQNS correlations can be 
used as strategies for quantum-to-quantum and classical-to-quantum non-local games. 
This is not an exhaustive treatment, and is rather intended to summarise several directions and provide 
a general context that we hope to investigate subsequently. 
In Subsection \ref{ss_gcg}, we show that, when compared to NS correlations, 
CQNS correlations provide a significant advantage in the graph colouring game \cite{cmnsw}.
Employing the CQNS classes, we define new versions of quantum chromatic numbers of a classical graph $G$.
The class $\cl{CQ}_{\rm loc}$ yields the well-known orthogonal rank $\xi(G)$ of $G$ \cite{ss}; thus, 
the chromatic numbers $\xi_{\rm q}(G)$ and $\xi_{\rm qc}(G)$, 
arising from $\cl{CQ}_{\rm q}$ and $\cl{CQ}_{\rm qc}$, respectively, can be thought of as 
entanglement assisted and commuting model versions of this classical graph parameter. 
We show that $\xi_{\rm qc}(G)$ does not degenerate, in that it is always lower bounded by 
$\sqrt{d/\theta(G)}$, where $d$ is the number of vertices of $G$ and $\theta(G)$ is its Lov\'{a}sz number. 

In Subsection \ref{ss_qhncg}, we define a non-commutative version of the graph homomorphism game 
\cite{mrob}. We show that its perfect strategies from the class $\cl Q_{\rm loc}$ correspond precisely to 
non-commutative graph homomorphisms in the sense of Stahlke \cite{stahlke}. 
Thus, the perfect strategies from the larger classes in (\ref{eq_chainq0}) 
can be thought of as quantum non-commutative graph 
homomorphisms. 
We note that special cases have been previously considered in \cite{bcehpsw} and \cite{mrv}. 
The treatment in the latter papers was restricted to non-commutative graph isomorphisms, and
the suggested approach was operator-algebraic. 
We remedy this by suggesting, up to our knowledge, the first 
operational approach to non-commutative graph homomorphisms, thus aligning 
the non-commutative case with the case of quantum homomorphisms between classical graphs \cite{mrob}. 

Finally, in Subsection \ref{ss_gnlg}, we introduce a quantum version of non-local games that contains as a 
special case the games considered in the previous subsections. 
To this end, we view the rule predicate as a map between the projection lattices of algebras of diagonal
matrices. 
We define game composition, show that the perfect strategies from a fixed class 
${\rm x}\in \{{\rm loc}, {\rm q}, {\rm qa}, {\rm qc}, {\rm ns}\}$ are closed under 
channel composition, and prove that channel composition preserves traciality. 
Some of these results extend results previously proved in \cite{po} in the case of classical
no-signalling strategies.



\section{Preliminaries}\label{s_prel}

All inner products appearing in the paper will be assumed linear in the first variable.
Let $H$ be a Hilbert space.
We denote by $\cl B(H)$ the space of all bounded linear operators on $H$
and often write $\cl L(H)$ if $H$ is finite dimensional.
If $\xi,\eta \in H$, we write $\xi\eta^*$ for the rank one operator given by
$(\xi\eta^*)(\zeta) = \langle\zeta,\eta\rangle \xi$.
In addition to inner products, $\langle\cdot,\cdot\rangle$ will denote 
bilinear dualities between a vector space and its dual.
We write $\cl B(H)^+$ for the cone of positive operators in $\cl B(H)$,
denote by 
$\cl T(H)$ its ideal of trace class operators, and by $\Tr$ -- the trace functional on $\cl T(H)$.

An \emph{operator system} is a self-adjoint subspace $\cl S$ of $\cl B(H)$ for some Hilbert space $H$, 
containing the identity operator $I_H$. The linear space $M_n(\cl S)$ of all $n$ by $n$ 
matrices with entries in $\cl S$ can be canonically identified with a subspace of $\cl B(H^n)$, 
where $H^n$ is the direct sum of $n$-copies of $H$; 
we set $M_n(\cl S)^+ = M_n(\cl S)\cap \cl B(H^n)^+$ and write $\cl S_h$ for the real vector space of all hermitian 
elements of $\cl S$.
If $K$ is a Hilbert space, $\cl T\subseteq \cl B(K)$ is an operator system and 
$\phi : \cl S\to \cl T$ is a linear map, we let $\phi^{(n)} : M_n(\cl S)\to M_n(\cl T)$ be the 
(linear) map given by $\phi^{(n)}((x_{i,j})_{i,j}) = (\phi(x_{i,j}))_{i,j}$. 
The map $\phi$ is called \emph{positive} (resp. \emph{unital}) 
if $\phi(\cl S^+)\subseteq \cl T^+$ (resp. $\phi(I_H) = I_K$), and 
\emph{completely positive} if $\phi^{(n)}$ is positive for every $n\in \bb{N}$. 
We call $\phi$ a complete order embedding if it is injective and $\phi^{-1}|_{\phi(\cl S)} : \phi(\cl S) \to \cl S$ 
is completely positive; 
we write $\cl S\subseteq_{\rm c.o.i.} \cl T$. 
We note that $\bb{C}$ is an operator system in a canonical way;
a \emph{state} of $\cl S$ is a unital positive (linear) map $\phi : \cl S\to \bb{C}$. 
We denote by $S(\cl S)$ the (convex) set of all states of $\cl S$. 
We note that every operator system is an operator space in a canonical fashion,
and denote by $\cl S^{\rm d}$ the dual Banach space of $\cl S$, equipped with its 
canonical matrix order structure. 
Operator systems can be described abstractly via a set of axioms \cite{Pa}; 
we refer the reader to \cite{er}, \cite{Pa} and \cite{Pi} for details and for further background on 
operator space theory.

We denote by $|X|$ the cardinality of a finite set $X$,
let $H^X = \oplus_{x\in X}H$ and denote by
$M_X$ the space of all complex matrices of size $|X|\times |X|$; 
we identify $M_X$ with $\cl L(\bb{C}^X)$ and 
write $I_X = I_{\bb{C}^{X}}$. 
For $n\in \bb{N}$, we set $[n] = \{1,\dots,n\}$ and $M_n = M_{[n]}$. 
We write $(e_x)_{x\in X}$ for the canonical orthonormal basis of $\bb{C}^{X}$,
denote by $\cl D_X$ the subalgebra of $M_X$ of all diagonal, with respect to the
basis $(e_x)_{x\in X}$, matrices, and let $\Delta_X : M_X\to \cl D_X$
be the corresponding conditional expectation.

When $\omega$ is a linear functional on $M_X$, we often write $\omega = \omega_X$.
The canonical complete order isomorphism
from $M_X$ onto $M_X^{\rm d}$ maps
an element $\omega\in M_X$ to the linear functional $f_{\omega} : M_X\to \bb{C}$ given by 
$f_{\omega}(T) = \Tr(T\omega^{\rm t})$ (here, and in the sequel, $\omega^{\rm t}$ denotes the 
transpose of $\omega$ in the canonical basis); see e.g. \cite[Theorem 6.2]{ptt}. 
We will thus consider $M_X$ as self-dual space with pairing 
\begin{equation}\label{eq_rdu}
(\rho,\omega) \to \langle \rho,\omega\rangle := \Tr(\rho\omega^{\rm t}).
\end{equation}
On the other hand, note that the Banach space predual $\cl B(H)_*$ can be canonically identified with 
$\cl T(H)$; every normal functional $\phi : \cl B(H) \to \bb{C}$ thus 
corresponds to a (unique) operator $S_{\phi} \in \cl T(H)$ such that 
$\phi(T) = \Tr(TS_{\phi})$, $T\in \cl B(H)$.
In the case where $X$ is a fixed finite set (which will sometimes come in the form of a direct product),
we will use a mixture of the two dualities just discussed: if $\omega,\rho\in M_X$, $S\in \cl T(H)$ and $T\in \cl B(H)$, 
it will be convenient to continue writing
$$\langle \rho\otimes T, \omega\otimes S\rangle = \Tr(\rho\omega^{\rm t}) \Tr(TS).$$

If $X$ and $Y$ are finite sets, we identify $M_X\otimes M_Y$ with $M_{X\times Y}$ and
write $M_{XY}$ in its place. Similarly, we set $\cl D_{XY} = \cl D_X\otimes \cl D_Y$.
Here, and in the sequel, we use the symbol $\otimes$ to denote the algebraic tensor product of 
vector spaces.
For an element $\omega_X\in M_X$ and a Hilbert space $H$, we let 
$L_{\omega_X} : M_X\otimes \cl B(H)\to \cl B(H)$ be the 
linear map given by 
$L_{\omega_X}(S\otimes T) = \langle S,\omega_X\rangle T$. 
If $H = \bb{C}^Y$ and $\omega_Y\in M_Y$, we thus have linear maps
$L_{\omega_X} : M_{XY}\to M_Y$ and $L_{\omega_Y} : M_{XY}\to M_X$; note that  
$$\langle L_{\omega_X}(R),\rho_Y\rangle = \langle R, \omega_X\otimes \rho_Y\rangle, \ \ R\in M_{XY}, \rho_Y\in M_Y,$$
and a similar formula holds for $L_{\omega_Y}$.
We let $\Tr_X : M_{XY}\to M_Y$ (resp. $\Tr_Y : M_{XY}\to M_X$) be the partial trace, that is,
$\Tr_X = L_{I_X}$ (resp. $\Tr_Y = L_{I_Y}$).

Let $X$ and $A$ be finite sets.
A \emph{classical information channel} from $X$ to $A$
is a positive trace preserving linear map $\cl N : \cl D_X\to \cl D_A$.
It is clear that if $\cl N : \cl D_X\to \cl D_A$ is a classical channel
then $p(\cdot|x) := \cl N(e_x e_x^*)$ is a probability distribution over $A$, and
that $\cl N$ is completely determined by the family $\{(p(a|x))_{a\in A} : x\in X\}$.

A \emph{quantum channel} from $M_X$ into $M_A$ is a completely positive trace preserving map
$\Phi : M_X\to M_A$; such a $\Phi$ will be called \emph{$(X,A)$-classical} if 
$\Phi = \Delta_A\circ \Phi \circ \Delta_X$. 
A classical channel $\cl N : \cl D_X\to \cl D_A$ gives rise to a $(X,A)$-classical (quantum) channel
$\Phi_{\cl N} : M_X\to M_A$ by letting $\Phi_{\cl N} = \cl N \circ \Delta_X$.
Conversely, a quantum channel $\Phi : M_X\to M_A$ induces a classical channel
$\cl N_{\Phi} : \cl D_X\to \cl D_A$ by letting $\cl N_{\Phi} = \Delta_A\circ \Phi|_{\cl D_X}$.
Note that $\cl N_{\Phi_{\cl N}} = \cl N$ for every classical channel $\cl N$.

Let $X,Y,A$ and $B$ be finite sets.
A \emph{quantum correlation over $(X,Y,A,B)$} 
(or simply a \emph{quantum correlation} if the sets are understood from the context) 
is a quantum channel $\Gamma : M_{XY}\to M_{AB}$.
Such a $\Gamma$ is called a
\emph{quantum no-signalling (QNS) correlation} \cite{dw} if
\begin{equation}\label{eq_qns1}
\Tr\mbox{}_A\Gamma(\rho_X\otimes \rho_Y) = 0 \ \mbox{ whenever } \Tr(\rho_X) = 0
\end{equation}
and
\begin{equation}\label{eq_qns2}
\Tr\mbox{}_B\Gamma(\rho_X\otimes \rho_Y) = 0 \ \mbox{ whenever } \Tr(\rho_Y) = 0.
\end{equation}
We denote by $\cl Q_{\rm ns}$ the set of all QNS correlations; it is clear that 
$\cl Q_{\rm ns}$ is a closed convex subset of the cone ${\rm CP}(M_{XY},M_{AB})$
of all completely positive maps from $M_{XY}$ into $M_{AB}$.

\begin{remark}\label{c_strpt}\rm
A quantum channel $\Gamma : M_{XY}\to M_{AB}$ is a QNS correlation if and only if 
$$\Tr\mbox{}_A\Gamma(\rho') = 0 \ \mbox{ and  } \Tr\mbox{}_B\Gamma(\rho'') = 0$$
 provided  $\rho'$, $\rho''\in M_{XY}$ are such that $\Tr\mbox{}_X\rho'=0$ and $\Tr\mbox{}_Y\rho''=0$.
 Indeed, suppose that $\Gamma$ is a QNS correlation and $\rho'\in M_{XY}$, 
 $\Tr\mbox{}_X\rho'=0$. 
Writing $\rho'=\sum_{x,x',y,y'}\rho'_{x,x',y,y'}e_{x}e_{x'}^*\otimes e_ye_{y'}^*$, we have that
 $\sum_{x\in X}\rho'_{x,x,y,y'}=0$ for all $y$, $y'\in Y$.
Thus $\Tr\left(\sum_{x\in X}\rho'_{x,x,y,y'}e_xe_x^*\right) = 0$, and hence
$$\Tr\mbox{}_A\Gamma\left(\left(\sum_{x\in X} \rho'_{x,x,y,y'}e_xe_x^*\right)\otimes e_ye_{y'}^*\right) = 0, \ \ y, y'\in Y.$$ 
Since $\Tr e_xe_{x'}^* = \delta_{x,x'}$,
we also have $\Tr_A\Gamma(e_xe_{x'}^* \otimes e_ye_{y'}^*) = 0$ if $x\ne x'$, for all $y, y'\in Y$. 
It follows that $\Tr_A\Gamma(\rho') = 0$.  
The second property is verified similarly, while the converse direction of the statement is trivial.  
\end{remark}

A \emph{classical correlation} over $(X,Y,A,B)$
is a family
$$p = \left\{(p(a,b|x,y))_{(a,b)\in A\times B} : (x,y)\in X\times Y\right\},$$
where $(p(a,b|x,y))_{(a,b)\in A\times B}$ is a probability distribution for each $(x,y)\in X\times Y$;
classical correlations $p$ thus correspond precisely to classical channels $\cl N_p : \cl D_{XY}\to \cl D_{AB}$.
A \emph{classical no-signalling correlation} (or simply a no-signalling \emph{(NS)} correlation)
is a correlation $p = ((p(a,b|x,y))_{a,b})_{x,y}$ that satisfies the conditions
\begin{equation}\label{eq_ns1}
\sum_{a'\in A} p(a',b|x,y) = \sum_{a'\in A} p(a',b|x',y), \ \ \ x,x'\in X, y\in Y, b\in B,
\end{equation}
and
\begin{equation}\label{eq_ns2}
\sum_{b'\in B} p(a,b'|x,y) = \sum_{b'\in B} p(a,b'|x,y'), \ \ \ x\in X, y,y'\in Y, a\in A.
\end{equation}
We denote by $\cl C_{\rm ns}$ the set of all NS correlations
and identify its elements with classical channels from $\cl D_{XY}$ to $\cl D_{AB}$.
Given a classical correlation $p$, we write $\Gamma_p = \Phi_{\cl N_p}$; thus,
$\Gamma_p : M_{XY}\to M_{AB}$ is 
the $(X\times Y,A\times B)$-classical channel given by
\begin{equation}\label{eq_Gap}
\Gamma_p(\rho) = 
\sum_{x\in X,y\in Y} \sum_{a \in A,b\in B}
p(a,b|x,y) \left\langle \rho(e_x\otimes e_y),e_x\otimes e_y\right\rangle
e_ae_a^*\otimes e_b e_b^*.
\end{equation}

\begin{remark}\label{r_NSQNS}
{\rm 
If $p$ is a classical correlation over $(X,Y,A,B)$ then 
$p$ is an NS correlation precisely when $\Gamma_p$ is a QNS correlation. 
Indeed, if $\Tr\rho_X = 0$ and $p$ satisfies (\ref{eq_ns1}) and (\ref{eq_ns2}) then
\begin{eqnarray*}
& & \Tr\mbox{}_A\Gamma_p(\rho_X\otimes \rho_Y)\\
& = &\sum_{x\in X,y\in Y} \sum_{a \in A,b\in B}
p(a,b|x,y) \left\langle \rho_Xe_x, e_x\right\rangle\left\langle\rho_Ye_y, e_y\right\rangle e_b e_b^*\\
& = &
\sum_{y\in Y} \sum_{b\in B}
\left(\sum_{x\in X}\sum_{a\in A}p(a,b|x,y) 
\left\langle \rho_Xe_x, e_x\right\rangle\right)\left\langle\rho_Ye_y, e_y\right\rangle e_b e_b^*
= 0;
\end{eqnarray*}
(\ref{eq_qns2}) is checked similarly. 
Conversely, assuming that $\Gamma_p$ satisfies (\ref{eq_qns1}) and (\ref{eq_qns2}),
the relations (\ref{eq_ns1}) and (\ref{eq_ns2}) are obtained by substituting in
(\ref{eq_Gap}) 
$\rho = e_{x}e_{x}^*\otimes e_{y}e_{y}^* - e_{x'}e_{x'}^*\otimes e_{y}e_{y}^*$ 
and 
$\rho = e_{x}e_{x}^*\otimes e_{y}e_{y}^* - e_{x}e_{x}^*\otimes e_{y'}e_{y'}^*$.
It follows that if $\Gamma$ is a $(X\times Y, A\times B)$-classical QNS correlation then 
$\Gamma = \Gamma_p$ for some NS correlation $p$. 
}
\end{remark}

Let $H_1,\dots,H_k$ be Hilbert spaces, at most one of which is infinite dimensional, 
$T\in \cl B(H_1\otimes \cdots \otimes H_k)$ and
$f$ be a bounded functional on $\cl B(H_{i_1}\otimes\cdots \otimes H_{i_k})$, where $k\leq n$ and
$i_1,\dots,i_k$ are distinct elements of $[n]$ (not necessarily in increasing order).
We will use the expression
$L_{f}(T)$, or $\langle T,f\rangle$ (in the case $k = n$), without mentioning explicitly that a suitable
permutation of the tensor factors has been applied before the action of $f$.
We note that, if $g$ is a bounded functional on 
$\cl B(H_{j_1}\otimes\cdots \otimes H_{j_l})$, where $l\leq n$ and
the subset $\{j_1,\dots,j_l\}$ does not intersect $\{i_1,\dots,i_k\}$, then 
\begin{equation}\label{eq_LfLg}
L_f L_g = L_g L_f.
\end{equation}
Considering an element $\omega\in M_X$ as a functional on $M_X$ via (\ref{eq_rdu}), 
we have that, if $E = (E_{x,x'})_{x,x'}\in M_X\otimes\cl B(H)$ then 
\begin{equation}\label{Exx'}
L_{e_x e_{x'}^*}(E) = E_{x,x'}, \ \ \ x,x'\in X.
\end{equation}


\section{Stochastic operator matrices}\label{s_gpovm}

Let $X,Y,A$ and $B$ be finite sets.
A \emph{stochastic operator matrix} over $(X,A)$ is a positive operator
$E\in M_X\otimes M_A\otimes \cl B(H)$ for some Hilbert space $H$ such that
\begin{equation}\label{eq_gpovm0}
\Tr\mbox{}_A E = I_X\otimes I_{H}.
\end{equation}
We say that $E$ \emph{acts on} $H$.
This terminology becomes natural after noting that
the operator stochastic matrices $E\in \cl D_X\otimes \cl D_A\otimes \cl B(\bb{C})$
coincide, after the natural identification of $\cl D_X\otimes \cl D_A$ with the space of all $|X|\times |A|$ 
matrices, with the row-stochastic scalar-valued matrices.

Let
$E\in M_X\otimes M_A\otimes \cl B(H)$ be a stochastic operator matrix and $E_{x,x',a,a'}\in \cl B(H)$, 
$x,x'\in X$, $a,a'\in A$, be the operators such that
$$E = \sum_{x,x'\in X}\sum_{a,a'\in A} e_x e_{x'}^* \otimes e_a e_{a'}^* \otimes E_{x,x',a,a'};$$
we write $E = (E_{x,x',a,a'})_{x,x',a,a'}$. Note that 
$$E_{x,x',a,a'} = L_{e_{x} e_{x'}^* \otimes e_{a} e_{a'}^*} \left(E\right), \ \ x,x'\in X, a,a'\in A.$$
Set
$$E_{a,a'} = (E_{x,x',a,a'})_{x,x'\in X}\in M_X\otimes \cl B(H);$$
thus, $E_{a,a'} = L_{e_{a} e_{a'}^*}(E)$, $a,a'\in A$, 
and hence $E_{a,a} \in \left(M_X\otimes \cl B(H)\right)^+$, $a\in A$.
By Choi's Theorem, 
stochastic operator matrices $E$ are precisely the Choi matrices
of unital completely positive maps $\Phi_E : M_A\to M_X\otimes \cl B(H)$ defined by 
\begin{equation}\label{eq_PhiE}
\Phi_E(e_a e_{a'}^*) = E_{a,a'}, \ \ \ a,a'\in A.
\end{equation}

Recall that a \emph{positive operator-valued measure (POVM)} on a Hilbert space $H$, 
indexed by $A$, is 
a family $(E_a)_{a\in A}$ of positive operators on $H$, such that $\sum_{a\in A} E_a = I_H$. 
If $E_a$ is a projection for each $a\in A$, the family $(E_a)_{a\in A}$ is called a 
\emph{projection valued measure (PVM)}.

\begin{theorem}\label{p_coor}
Let $H$ be a Hilbert space and $E \in (M_X\otimes M_A\otimes \cl B(H))^+$.
The following are equivalent:
\begin{itemize}
\item[(i)] $E$ is a stochastic operator matrix;

\item[(ii)] $(E_{a,a})_{a\in A}$ is a POVM in $M_X\otimes \cl B(H)$;

\item[(iii)] $\Tr_A L_{\omega_X}(E) = I_{H}$, for all states  $\omega_X \in M_X$;

\item[(iv)] $\Tr_A L_{\omega_X}(E) = \Tr(\omega_X) I_{H}$, for all $\omega_X \in M_X$;

\item[(v)] there exists a Hilbert space $K$ and operators $V_{a,x} : H\to K$, $x\in X$, $a\in A$,
such that $(V_{a,x})_{a,x} \in \cl B(H^{X},K^{A})$ is an isometry and
\begin{equation}\label{eq_v}
E_{x,x',a,a'} = V_{a,x}^* V_{a',x'}, \ \ \ x,x'\in X, a,a'\in A.
\end{equation}
\end{itemize}

\noindent
In particular, if $E$ is a stochastic operator matrix then $(E_{x,x,a,a})_{a\in A}$ is a POVM for every $x\in X$.
\end{theorem}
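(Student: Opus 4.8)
The plan is to prove the cycle of implications (i) $\Rightarrow$ (iv) $\Rightarrow$ (iii) $\Rightarrow$ (ii) $\Rightarrow$ (i), and then (i) $\Leftrightarrow$ (v) separately, with the last part being the real work. First I would observe that for $\omega_X\in M_X$, the map $L_{\omega_X}$ commutes with the partial trace $\Tr_A$ over a disjoint tensor leg by \eqref{eq_LfLg}, so $\Tr_A L_{\omega_X}(E) = L_{\omega_X}(\Tr_A E)$. Hence if (i) holds, $\Tr_A E = I_X\otimes I_H$ gives $\Tr_A L_{\omega_X}(E) = L_{\omega_X}(I_X\otimes I_H) = \langle I_X,\omega_X\rangle I_H = \Tr(\omega_X)I_H$, which is (iv); specialising to states $\omega_X$ yields (iii). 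For (iii) $\Rightarrow$ (ii): positivity of $E$ forces each $E_{a,a}\in (M_X\otimes\cl B(H))^+$ as noted before the statement, and $\sum_{a\in A}E_{a,a} = \Tr_A E$; testing against an arbitrary vector state $\omega_X = \xi\xi^*$ on $M_X$ and using $L_{\xi\xi^*}(\Tr_A E) = \langle(\Tr_A E)(\xi\otimes\zeta),\xi\otimes\zeta\rangle$-type identities shows $\Tr_A E$ acts as $I$ on every $\xi\otimes\zeta$, hence $\Tr_A E = I_X\otimes I_H$, i.e.\ $(E_{a,a})_{a\in A}$ sums to the identity and is a POVM. The implication (ii) $\Rightarrow$ (i) is then immediate since $\sum_a E_{a,a} = \Tr_A E$.

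For the equivalence with (v), the backward direction (v) $\Rightarrow$ (i) is a direct computation: if $(V_{a,x})_{a,x}$ is an isometry in $\cl B(H^X,K^A)$ and $E_{x,x',a,a'} = V_{a,x}^*V_{a',x'}$, then $E$ is manifestly the Gram matrix of the family $\{V_{a,x}\}$, hence positive, and $\Tr_A E$ has $(x,x')$-entry $\sum_{a\in A}V_{a,x}^*V_{a,x'}$, which is exactly the $(x,x')$-block of $(V_{a,x})^*(V_{a,x}) = I_{H^X}$, giving $\Tr_A E = I_X\otimes I_H$. The forward direction (i) $\Rightarrow$ (v) is where I expect the main obstacle. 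The natural route is to apply Stinespring/Choi to the unital completely positive map $\Phi_E : M_A \to M_X\otimes\cl B(H)$ whose Choi matrix is $E$ (via \eqref{eq_PhiE}), producing a Hilbert space $K$, a $*$-representation $\pi$ of $M_A$ on $K$, and an isometry $W : H^X \to K$ (unitality of $\Phi_E$ makes $W$ an isometry) with $\Phi_E(\cdot) = W^*\pi(\cdot)W$ after the identification $M_X\otimes\cl B(H)\cong \cl B(H^X)$. Decomposing $\pi$ against the matrix units $e_ae_a^*$ of $M_A$ gives orthogonal projections $Q_a = \pi(e_ae_a^*)$ with $\sum_a Q_a = I_K$; setting $K' = Q_{a_0}K$ for a fixed $a_0$ and $V_{a,x} = Q_a W|_{\text{$x$-th summand of }H^X}$, one recovers \eqref{eq_v} by unwinding $E_{x,x',a,a'} = L_{e_xe_{x'}^*\otimes e_ae_{a'}^*}(E)$ in terms of $W^*\pi(e_ae_{a'}^*)W$. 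The bookkeeping with the permutations of tensor legs and the identification $\cl B(H^X)\cong M_X\otimes\cl B(H)$ (keeping track of which index is "row", which is "column") is the fiddly part; alternatively one can avoid Stinespring entirely and just factor the positive operator $E$ as $E = T^*T$ with $T\in\cl B(H^{XA}, L)$, write $T$ in block form $T = (T_{a,x})$ and check that $\Tr_A E = I_X\otimes I_H$ translates precisely into $\sum_a T_{a,x}^*T_{a,x'} = \delta_{x,x'}I_H$, i.e.\ that $(T_{a,x})_{a,x}$ is a block isometry — then take $V_{a,x} = T_{a,x}$ and $K = L$.

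For the final assertion of the theorem, I would apply the equivalence (i) $\Leftrightarrow$ (ii) with $X$ replaced by the singleton $\{x\}$: for fixed $x\in X$, compress $E$ by the rank-one projection $e_xe_x^*\otimes I_A\otimes I_H$, obtaining $E^{(x)} := (E_{x,x,a,a'})_{a,a'}\in (M_A\otimes\cl B(H))^+$ with $\Tr_A E^{(x)} = L_{e_xe_x^*}(\Tr_A E) = L_{e_xe_x^*}(I_X\otimes I_H) = I_H$; thus $E^{(x)}$ is a stochastic operator matrix over $(\{x\},A)$, and by the already-established equivalence (i) $\Leftrightarrow$ (ii) its diagonal $(E_{x,x,a,a})_{a\in A}$ is a POVM on $H$.
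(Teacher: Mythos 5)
Your proposal is correct and, for most of the equivalences, runs parallel to the paper: (i)$\Rightarrow$(iv)$\Rightarrow$(iii) via $\Tr_A L_{\omega_X}=L_{\omega_X}\Tr_A$, the recovery of $\Tr_A E=I_X\otimes I_H$ from (iii) by slicing with vector states (the paper does this with arbitrary states of $M_X$ paired against normal states of $\cl B(H)$ and polarisation — your "acts as $I$ on every $\xi\otimes\zeta$" step needs exactly that double polarisation, so spell it out rather than leaving it as "-type identities"), and (v)$\Rightarrow$(i) by the Gram-matrix computation, which is the paper's (v)$\Rightarrow$(ii). Where you genuinely diverge is (i)$\Rightarrow$(v): the paper runs Stinespring on $\Phi_E:M_A\to\cl B(H^X)$ and invokes the structure of unital representations of $M_A$ ($\tilde K\cong\bb{C}^A\otimes K$, $\pi(T)=T\otimes I_K$) to read off the blocks $V_{a,x}$, whereas your alternative — factor $E=T^*T$ (e.g. $T=E^{1/2}$), write $T=(T_{a,x})$ as a row over $H^{XA}$, and observe that $\Tr_A E=I_X\otimes I_H$ says precisely $\sum_a T_{a,x}^*T_{a,x'}=\delta_{x,x'}I_H$ — is more elementary, avoids the representation theory entirely, and immediately produces the isometry $(T_{a,x})_{a,x}:H^X\to K^A$ with $K=H^{XA}$. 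That is a perfectly valid and arguably cleaner proof of (i)$\Rightarrow$(v).

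One caution about your primary Stinespring sketch: the formula $V_{a,x}=Q_aW|_x$ does not work as written. These operators land in the subspaces $Q_aK$, which vary with $a$, and more importantly $(Q_aW|_x)^*(Q_{a'}W|_{x'})=\delta_{a,a'}(W|_x)^*\pi(e_ae_a^*)W|_{x'}$, which is not $E_{x,x',a,a'}=(W|_x)^*\pi(e_ae_{a'}^*)W|_{x'}$ when $a\neq a'$. To repair it one must insert the partial isometries $\pi(e_{a_0}e_a^*)$, i.e. set $V_{a,x}=\pi(e_{a_0}e_a^*)W|_x:H\to Q_{a_0}K$, or use the multiplicity form of $\pi$ as the paper does. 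Since your square-root factorisation route is correct, the proposal as a whole stands, but if you keep the Stinespring version this fix is needed. The final "in particular" assertion, obtained by compressing to the $x$-th diagonal block and applying the singleton case, is fine.
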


\begin{proof}
(i)$\Leftrightarrow$(ii) and (iv)$\Rightarrow$(iii) are trivial, 
while (i)$\Rightarrow$(iii) is immediate from (\ref{eq_LfLg}). 

(iii)$\Rightarrow$(iv)
By assumption, 
$\Tr_A L_{\omega}(E) = \Tr(\omega) I_{H}$ for every state $\omega \in M_X.$
Write $\omega = \sum_{i=1}^4 \lambda_i \omega_i$, where $\omega_i$ is a state in $M_X$
and $\lambda_i\in \bb{C}$, $i = 1,2,3,4$.
Then
$$\Tr\mbox{}_A L_{\omega}(E) = \sum_{i=1}^4 \lambda_i \Tr\mbox{}_A L_{\omega_i}(E) =
\sum_{i=1}^4 \lambda_i I_{H} = \Tr(\omega) I_{H}.$$

(iii)$\Rightarrow$(i)
By (\ref{eq_LfLg}), for all $\omega_X\in S(M_X)$ and all normal states $\tau$ on $\cl B(H)$, we have 
\begin{eqnarray*}
\left\langle I_X\otimes I_H,\omega_X\otimes \tau\right\rangle
& = & 
1 =
\left\langle \Tr\mbox{}_A L_{\omega_X} (E),\tau\right\rangle 
= \left\langle L_{\omega_X} \Tr\mbox{}_A(E),\tau\right\rangle\\ 
& = & \left\langle \Tr\mbox{}_A (E),\omega_X\otimes \tau\right\rangle.
\end{eqnarray*}
By polarisation and linearity, 
$$\left\langle \Tr\mbox{}_A(E),\sigma\right\rangle = 
\left\langle I_X\otimes I_H,\sigma\right\rangle$$ 
for all $\sigma\in (M_X\otimes\cl B(H))_*$, and hence $\Tr_A  (E) = I_X\otimes I_H$. 

(i)$\Rightarrow$(v)
Let $\Phi = \Phi_E$ be the unital completely positive map given by (\ref{eq_PhiE}). 
By Stinespring's Dilation Theorem, there exist a Hilbert space $\tilde{K}$,
an isometry $V : \bb{C}^X\otimes H\to \tilde{K}$ and
a unital *-homomorphism $\pi : M_A\to \cl B(\tilde{K})$ such that
$\Phi(T) = V^* \pi(T) V$, $T\in M_A$.
Up to unitary equivalence, $\tilde{K} = \bb{C}^A\otimes K$ for some Hilbert space $K$
and $\pi(T) = T\otimes I_K$, $T\in M_A$. 
Write $V_{a,x} : H\to K$, $a\in A$, $x\in X$, for the entries of $V$, 
when $V$ is considered as a block operator matrix.
For $\xi,\eta\in H$, $x,x'\in X$ and $a,a'\in A$, we have
\begin{eqnarray*}
& & 
\left\langle E_{x,x',a,a'}\xi,\eta\right\rangle
=
\left\langle L_{e_{x} e_{x'}^*}(E_{a,a'}) \xi,\eta\right\rangle 
= 
\Tr\left( L_{e_{x} e_{x'}^*}(\Phi(e_a e_{a'}^*)) (\xi\eta^*)\right)\\
& = & 
\Tr\left(\Phi(e_a e_{a'}^*)((e_{x'} e_{x}^*) \otimes (\xi \eta^*))\right)\\ 
& = & 
\Tr\left(V^*((e_a e_{a'}^*)\otimes I_K)V (e_{x'} \otimes \xi)(e_{x}\otimes \eta)^*\right)\\
& = & 
\left\langle V^*((e_a e_{a'}^*)\otimes I_K)V (e_{x'} \otimes\xi), e_{x}\otimes \eta\right\rangle\\
& = &
\left\langle ((e_a e_{a'}^*)\otimes I_K)V (e_{x'} \otimes\xi), V(e_x\otimes \eta)\right\rangle\\
& = & 
\left\langle ((e_a e_{a'}^*)\otimes I_K)((e_{a'} e_{a'}^*)\otimes I_K)V (e_{x'} \otimes\xi), ((e_a e_a^*)\otimes I_K)V(e_x\otimes \eta)\right\rangle\\
& = & 
\left\langle V_{a',x'}\xi,V_{a,x}\eta\right\rangle 
= \left\langle V_{a,x}^* V_{a',x'}\xi,\eta\right\rangle.
\end{eqnarray*}

(v)$\Rightarrow$(ii)
Let $\xi = \sum_{x\in X} \sum_{a\in A} e_x\otimes e_a \otimes \xi_{x,a}$, where 
$\xi_{x,a}\in H$, $x\in X$, $a\in A$. 
Using (\ref{eq_v}), we have 
$$\left\langle E\xi,\xi \right\rangle
=
\sum_{x,x'\in X} \sum_{a,a'\in A} \left\langle V_{a',x'} \xi_{x',a'}, V_{a,x} \xi_{x,a}\right\rangle
= 
\left\|\sum_{x\in X} \sum_{a\in A} V_{a,x}\xi_{x,a}\right\|^2,
$$
and thus $E$ is positive. 
Since $V$ is an isometry, we have 
$$\sum_{a\in A} E_{x,x',a,a} = \sum_{a\in A} V_{a,x}^* V_{a,x'} = \delta_{x,x'} I_H.$$
\end{proof}

Let $(E_{x,a})_{a\in A}$ be a POVM on a Hilbert space $H$ for every $x\in X$.
A stochastic operator matrix of the form
\begin{equation}\label{eq_cgpovm}
E = \sum_{x\in X}\sum_{a\in A} e_x e_x^* \otimes e_a e_a^* \otimes E_{x,a}
\end{equation}
will be called \emph{classical}.
A general stochastic operator matrix can thus be thought of as a coordinate-free version of
a finite family of POVM's.

\medskip

\noindent {\bf Remarks. (i) }
In view of Theorem \ref{p_coor}, stochastic operator matrices are precisely the positive completions $E$ of
partially defined diagonal block matrices $D = (E_{a,a})_{a\in A}$ with entries in $M_X\otimes \cl B(H)$ and $\Tr_A(D) = I$.

\smallskip

{\bf (ii) }
The following generalisation of Naimark's Dilation Theorem was proved in \cite{paulsen_notes}: 
if $(E_{x,a})_{a\in A}\subseteq \cl B(H)$, $x\in X$, are POVM's 
then there exist a Hilbert space $\tilde{H}$, a PVM
$(\tilde{E}_a)_{a\in A}\subseteq \cl B(\tilde{H})$
and isometries $V_x : H\to \tilde{H}$, $x\in X$, with orthogonal ranges such that
\begin{equation}\label{eq_gennai}
E_{x,a} = V_x^* \tilde{E}_a V_x, \ \ \ a\in A, x\in X.
\end{equation}
This can be seen as a corollary of Theorem \ref{p_coor}:
given POVM's $(E_{x,a})_{a\in A}\subseteq \cl B(H)$, $x\in X$, 
let $E$ be the stochastic operator matrix defined by (\ref{eq_cgpovm}) and let 
$V = (V_{a,x})_{a,x}$ be the isometry from Theorem \ref{p_coor}. 
Set $\tilde{E}_a = e_a e_a^*\otimes I_H$, $a\in A$,
and let $V_x$ be the column isometry $(V_{a,x})_{a\in A} : H\to K^A$, $x\in X$. 
Then $(\tilde{E}_a)_{a\in A}$ is a PVM fulfilling (\ref{eq_gennai}).

\medskip

Let $E\in M_X\otimes M_A\otimes \cl B(H)$ be a stochastic operator matrix
and $\Phi = \Phi_E$ be given by (\ref{eq_PhiE}).
Recall that the predual $\Phi_* : M_X\otimes \cl T(H) \to M_A$ of $\Phi$ is the 
completely positive map satisfying 
$\langle \Phi_*(\rho),\omega\rangle = \langle \rho, \Phi(\omega)\rangle$, $\rho\in M_X\otimes \cl T(H)$, $\omega\in M_A$. 
For a state $\sigma\in \cl T(H)$, set 
$$\Gamma_{E,\sigma}(\rho_X) = \Phi_*(\rho_X\otimes\sigma), \ \ \ \rho_X\in M_X;$$
then $\Gamma_{E,\sigma} : M_X\to M_A$ is a quantum channel. 
We have
\begin{equation}\label{eq_gammad}
\Gamma_{E,\sigma}(\rho_X) = L_{\rho_X\otimes \sigma}(E),  \ \ \ \rho_X\in M_X;
\end{equation}
indeed, if $a, a'\in A$ then 
\begin{eqnarray*}
\left \langle \Gamma_{E,\sigma}(e_{x} e_{x'}^*), e_{a} e_{a'}^* \right\rangle 
& = & 
\left \langle \Phi_*(e_{x} e_{x'}^* \otimes \sigma), e_{a} e_{a'}^* \right\rangle 
= 
\left \langle e_{x} e_{x'}^* \otimes\sigma, \Phi(e_{a} e_{a'}^*)\right\rangle\\
& = & 
\left \langle e_{x} e_{x'}^* \otimes\sigma, E_{a,a'}\right\rangle 
= 
\left \langle \sigma, E_{x,x',a,a'}\right\rangle\\
& = & 
\left\langle L_{e_{x} e_{x'}^* \otimes \sigma}(E), e_{a} e_{a'}^*\right\rangle;
\end{eqnarray*}
(\ref{eq_gammad}) now follows by linearity. 
By Choi's Theorem, every quantum channel $\Phi : M_X\to M_A$ has the form
$\Gamma_{E,1}$ for some stochastic operator matrix $E\in M_X\otimes M_A$.

\begin{remark}\label{r_vmm}
{\rm Let $H$ be a Hilbert space and $E\in M_X\otimes M_A\otimes \cl B(H)$ be a stochastic operator matrix. 
The following are equivalent: 

(i) \ $E$ is classical; 

(ii) for each state $\sigma\in \cl T(H)$, the quantum channel $\Gamma_{E,\sigma} : M_X\to M_A$ is 
$(X,A)$-classical. }
\end{remark}

\begin{proof}
The channel $\Gamma_{E,\sigma}$ is $(X,A)$-classical if and only if 
$\Gamma_{E,\sigma}(e_x e_{x'}^*) = 0$ whenever $x\neq x'$
and 
$$\left \langle \Gamma_{E,\sigma}(e_x e_{x}^*), e_a e_{a'}^* \right\rangle = 0 \ \mbox{ whenever } a\neq a'.$$
The latter equality holds for every $\sigma$ if and only if $E_{x,x',a,a'} = 0$ whenever $x\neq x'$ and 
$E_{x,x,a,a'} = 0$ whenever $a\neq a'$, that is, if and only if $E$ is classical.
\end{proof}


\section{Three subclasses of QNS correlations}\label{s_cqnsc}

In this section, we introduce several classes of QNS correlations, which generalise corresponding 
classes of NS correlations studied in the literature (see e.g. \cite{lmprsstw}).


\subsection{Quantum commuting QNS correlations}\label{cs_cqnsc}

Let $H$ be a Hilbert space,
and $E\in M_X\otimes M_A\otimes \cl B(H)$ and $F\in M_Y\otimes M_B\otimes \cl B(H)$
be stochastic operator matrices. The pair $(E,F)$ will be called \emph{commuting} if
$L_{\omega_X\otimes \omega_A}(E)$ and $L_{\omega_Y\otimes \omega_B}(F)$
commute for all $\omega_X\in M_X$, $\omega_Y\in M_Y$, $\omega_A\in M_A$ and $\omega_B\in M_B$.
Writing $E = (E_{x,x',a,a'})_{x,x',a,a'}$ and $F = (F_{y,y',b,b'})_{y,y',b,b'}$, we have that
$(E,F)$ is commuting if and only if
$$E_{x,x',a,a'}F_{y,y',b,b'} = F_{y,y',b,b'}E_{x,x',a,a'}, \ x,x'\in X, y,y'\in Y, a,a'\in A, b,b'\in B.$$


\begin{proposition}\label{p_dotp}
Let $H$ be a Hilbert space and
$E\in M_X\otimes M_A\otimes \cl B(H)$, $F\in M_Y\otimes M_B\otimes \cl B(H)$ 
form a commuting pair of stochastic operator matrices.
There exists a unique operartor
$E\cdot F\in M_{XY}\otimes M_{AB}\otimes \cl B(H)$
such that
\begin{equation}\label{eq_long}
\langle E\cdot F,\rho_X \otimes \rho_Y \otimes \rho_A \otimes \rho_B\otimes \sigma\rangle
= \left\langle L_{\rho_X\otimes \rho_A}(E)L_{\rho_Y\otimes \rho_B}(F),\sigma \right\rangle,
\end{equation}
for all $\rho_X \in M_X$, $\rho_Y\in M_Y$, $\rho_A \in M_A$, $\rho_B\in M_B$ and $\sigma\in \cl T(H)$.
Moreover, 
\begin{itemize}
\item[(i)] $E\cdot F$ is a stochastic operator matrix;
\item[(ii)] $\|E\cdot F\| \leq \|E\|\|F\|$;
\item[(iii)] If $\sigma \in \cl T(H)$ is a state then $\Gamma_{E \cdot F,\sigma}$ is a QNS correlation.
\end{itemize}
\end{proposition}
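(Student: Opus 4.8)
The plan is to construct $E \cdot F$ directly as an element of $M_{XY} \otimes M_{AB} \otimes \cl B(H)$ by specifying its matrix entries, and then verify the defining relation (\ref{eq_long}), positivity, the norm bound, the stochasticity condition, and finally the QNS property. First I would unpack what (\ref{eq_long}) forces: taking $\rho_X = e_x e_{x'}^*$, $\rho_Y = e_y e_{y'}^*$, $\rho_A = e_a e_{a'}^*$, $\rho_B = e_b e_{b'}^*$ and using (\ref{Exx'}), the left side becomes (a scalar pairing of $\sigma$ with) the $((x,y,x',y'),(a,b,a',b'))$-entry of $E \cdot F$, while the right side is $\langle E_{x,x',a,a'} F_{y,y',b,b'}, \sigma\rangle$. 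So the only candidate is
\begin{equation}\label{eq_EFent}
(E\cdot F)_{(x,y),(x',y'),(a,b),(a',b')} = E_{x,x',a,a'}\, F_{y,y',b,b'},
\end{equation}
and conversely one checks by bilinearity/linearity in each $\rho$-variable that the operator defined by (\ref{eq_EFent}) satisfies (\ref{eq_long}); uniqueness is then immediate since the functionals $\rho_X\otimes\rho_Y\otimes\rho_A\otimes\rho_B\otimes\sigma$ separate points of $M_{XY}\otimes M_{AB}\otimes\cl B(H)$ (using that $\cl T(H)$ norms $\cl B(H)$ when at most one factor is infinite-dimensional — here $H$ plays that role). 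Commutativity of the pair is what makes (\ref{eq_EFent}) well behaved: it guarantees $E_{x,x',a,a'} F_{y,y',b,b'} = F_{y,y',b,b'} E_{x,x',a,a'}$, which will be needed for positivity.

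For (i), I would prove positivity via the isometric representation in Theorem \ref{p_coor}(v). Write $E_{x,x',a,a'} = V_{a,x}^* V_{a',x'}$ with $(V_{a,x}) : H^X \to K^A$ an isometry, and $F_{y,y',b,b'} = W_{b,y}^* W_{b',y'}$ with $(W_{b,y}) : H^Y \to L^B$ an isometry, where moreover (by the commuting hypothesis) the ranges of the $V$'s and $W$'s can be taken inside a common space with the corresponding operators commuting — more precisely, I would invoke the standard fact that a commuting pair of completely positive maps $\Phi_E, \Phi_F$ into $\cl B(H)$ admits a joint Stinespring dilation, so that there is a Hilbert space $K$ and operators $V_{a,x}, W_{b,y} : H \to K$ with $(V_{a,x}) \in \cl B(H^X, K^A)$ and $(W_{b,y}) \in \cl B(H^Y, K^B)$ isometries whose images of $E$ and $F$ commute, and such that $E_{x,x',a,a'} = V_{a,x}^* V_{a',x'}$, $F_{y,y',b,b'} = W_{b,y}^* W_{b',y'}$ simultaneously. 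Then for a vector $\xi = \sum_{x,y,a,b} e_x \otimes e_y \otimes e_a \otimes e_b \otimes \xi_{x,y,a,b}$ one computes
\[
\langle (E\cdot F)\xi, \xi\rangle = \sum \langle V_{a',x'}^* V_{a,x} W_{b',y'}^* W_{b,y}\,\xi_{x',y',a',b'}, \xi_{x,y,a,b}\rangle,
\]
and the commuting structure should let me reorganise this as $\|\sum_{x,y,a,b} V_{a,x} W_{b,y}'\,\xi_{x,y,a,b}\|^2 \ge 0$ for an appropriate commuting realisation — this is exactly the (v)$\Rightarrow$(ii)-style computation in the proof of Theorem \ref{p_coor}. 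Then $\Tr_{AB}(E\cdot F) = \sum_{a,b} E_{x,x',a,a} F_{y,y',b,b} \delta\text{-factorises}$; since $\sum_a E_{x,x',a,a} = \delta_{x,x'} I_H$ and $\sum_b F_{y,y',b,b} = \delta_{y,y'} I_H$, and these commute, the product sums to $\delta_{x,x'}\delta_{y,y'} I_H = \delta_{(x,y),(x',y')} I_H$, giving $\Tr_{AB}(E\cdot F) = I_{XY} \otimes I_H$, i.e. (\ref{eq_gpovm0}).

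For (ii), the norm bound should follow from the joint dilation: writing $E\cdot F$ as a product of the images of $E$ and $F$ under commuting completely positive (indeed the relevant compressions of isometric) maps, one gets $\|E\cdot F\| \le \|E\|\,\|F\|$ — alternatively, observe $E\cdot F = (\Phi_E \otimes \id)(\cdot)$ composed suitably with $F$ and use that the relevant maps are completely bounded with cb-norm controlled by $\|E\|, \|F\|$; I would phrase it most cleanly via the dilation, where $\|E\| = \|V^* V'\|$-type estimates are immediate. Finally for (iii): $\Gamma_{E\cdot F, \sigma}(\rho_{XY}) = L_{\rho_{XY} \otimes \sigma}(E\cdot F)$ is a quantum channel by the discussion preceding Remark \ref{r_vmm} (it is $\Gamma_{G,\sigma}$ for the stochastic operator matrix $G = E\cdot F$). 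To check the no-signalling conditions (\ref{eq_qns1})–(\ref{eq_qns2}), suppose $\Tr \rho_X = 0$. Using (\ref{eq_EFent}) and (\ref{eq_LfLg}), $\Tr_A \Gamma_{E\cdot F,\sigma}(\rho_X \otimes \rho_Y)$ should compute to $L_{\rho_Y \otimes \sigma}$ applied to $\big(\sum_a L_{\rho_X}(E_{a,a})\big)\cdot(\text{something involving } F)$; but $\sum_a L_{\rho_X}(E_{a,a}) = L_{\rho_X}(\Tr_A E) = L_{\rho_X}(I_X \otimes I_H) = \Tr(\rho_X) I_H = 0$, and since the relevant $E$- and $F$-factors commute this kills the whole expression. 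The case $\Tr \rho_Y = 0$ is symmetric.

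\textbf{Main obstacle.} The delicate point is the \emph{joint} dilation of the commuting pair $(E,F)$ — i.e. upgrading Theorem \ref{p_coor}(v), which dilates $E$ alone, to a simultaneous isometric representation of $E$ and $F$ in which the two families of operators still commute on the nose. This is what underlies both positivity (i) and the norm bound (ii), and it requires a commuting-dilation argument (a joint Stinespring dilation for the commuting pair of unital completely positive maps $\Phi_E, \Phi_F : M_A, M_B \to \cl B(H)$) rather than a routine computation; everything else is bookkeeping with the partial-trace and slice-map identities (\ref{eq_LfLg}), (\ref{Exx'}) already established.
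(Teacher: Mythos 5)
Your candidate for $E\cdot F$ and the verification of (\ref{eq_long}), uniqueness, the partial-trace computation and the no-signalling check in (iii) all match what is needed, but the heart of the proposition — positivity of $E\cdot F$, and with it the norm bound (ii) — is exactly the step you leave unresolved, and as written it does not go through. The "joint Stinespring dilation" you invoke is not an off-the-shelf fact in the form you need: first, $\Phi_E$ and $\Phi_F$ take values in the \emph{different} algebras $M_X\otimes \cl B(H)$ and $M_Y\otimes \cl B(H)$, so one must amplify before speaking of commuting ranges; second, even granting a joint dilation, your operators $V_{a,x}$ and $W_{b,y}$ both map \emph{out of} $H$, so products such as $V_{a,x}W_{b,y}$ do not compose, and the proposed rewriting of $\langle (E\cdot F)\xi,\xi\rangle$ as a single square $\bigl\|\sum_{x,y,a,b}V_{a,x}W'_{b,y}\xi_{x,y,a,b}\bigr\|^2$ is not justified. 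Worse, producing a factorisation $(E\cdot F)_{(x,y),(x',y'),(a,b),(a',b')}=Z_{(a,b),(x,y)}^*Z_{(a',b'),(x',y')}$ via Theorem \ref{p_coor}(v) presupposes that $E\cdot F$ is already a (positive) stochastic operator matrix, i.e. the very thing being proved; the underlying fact you actually need is that a commuting pair of completely positive maps linearises to a completely positive map on the maximal tensor product, which is the engine of the argument and cannot be bypassed by the dilation bookkeeping you sketch.

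The paper closes this gap at the level of $*$-representations: letting $\cl A$ and $\cl B$ be the C*-algebras generated by the entries of $E$ and of $F$ (so $\cl B\subseteq \cl A'$ by hypothesis), the amplifications $\pi_{\cl A}(S)=S\otimes I_{YB}$ and $\pi_{\cl B}(T)=T\otimes I_{XA}$ have commuting ranges and hence induce a $*$-representation $\pi$ of $M_{XA}(\cl A)\otimes_{\max}M_{YB}(\cl B)$ with $\pi(E\otimes F)=E\cdot F$; positivity of $E\cdot F$ and the bound $\|E\cdot F\|\leq\|E\|\|F\|$ then both follow at once (the latter from contractivity of $*$-representations), after which the stochasticity and no-signalling computations proceed as in your sketch. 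Note also a more elementary repair of your route: with $\tilde E=E\otimes I_{YB}$ and $\tilde F=F\otimes I_{XA}$ (suitably shuffled), the entrywise commutation hypothesis gives $\tilde E\tilde F=\tilde F\tilde E=E\cdot F$, and a product of two commuting positive operators is positive since $\tilde E\tilde F=\tilde E^{1/2}\tilde F\tilde E^{1/2}$, with $\|E\cdot F\|\leq\|\tilde E\|\|\tilde F\|=\|E\|\|F\|$ immediate — no dilation is needed at all.
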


\begin{proof} 
Let $$E\cdot F := \left(E_{x,x',a,a'}F_{y,y',b,b'}\right) \in M_{XY}\otimes M_{AB}\otimes \cl B(H).$$ 
Denote by $\cl A$ (resp. $\cl B$) the C*-algebra, generated by $E_{x,x',a,a'}$, 
$x,x'\in X$, $a,a'\in A$ (resp. $F_{y,y',b,b'}$, $y,y'\in Y$, $b,b'\in B$); 
by assumption, $\cl B\subseteq \cl A'$. 
Let $\pi_{\cl A} : M_{XA}(\cl A) \to M_{XYAB}(\cl B(H))$ 
(resp. $\pi_{\cl B} : M_{YB}(\cl B) \to M_{XYAB}(\cl B(H))$)
be the *-representation given by $\pi_{\cl A}(S) = S\otimes I_{YB}$ (resp. $\pi_{\cl B}(T) = T\otimes I_{XA}$).
Then the ranges of $\pi_{\cl A}$ and $\pi_{\cl B}$ commute and hence the pair $(\pi_{\cl A},\pi_{\cl B})$
gives rise to a *-representation 
$\pi : M_{XA}(\cl A)\otimes_{\max}M_{YB}(\cl B) \to M_{XYAB}(\cl B(H))$ 
with $\pi(S\otimes T) = \pi_{\cl A}(S)\pi_{\cl B}(T)$, $S\in M_{XA}(\cl A)$, $T\in M_{YB}(\cl B)$.
Thus, $E \cdot F = \pi(E\otimes F) \in M_{XYAB}(\cl B(H))^+$.
Inequality (ii) now follows from the contractivity of *-representations.
In addition,
\begin{eqnarray*}
\Tr\mbox{}_{AB}(E\cdot F)
& = &
\sum_{a\in A} \sum_{b\in B} \left(E_{x,x',a,a}F_{y,y',b,b}\right)_{x,x',y,y'}\\
& = &
\left(\delta_{x,x'}  \delta_{y,y'} I\right)_{x,x',y,y'}
= 
I_{XY}\otimes I_H,
\end{eqnarray*}
that is, $E\cdot F$ is a stochastic operator matrix. 
For $x,x'\in X$, $y,y'\in Y$, $a,a'\in A$, $b,b'\in B$ and $\sigma\in \cl T(H)$, we have
\begin{eqnarray}\label{eq_EcdotF}
& & \left\langle E\cdot F,  e_{x} e_{x'}^* \otimes e_{y} e_{y'}^* \otimes e_{a} e_{a'}^* \otimes e_{b} e_{b'}^* \otimes \sigma\right\rangle\\
& = & 
\left\langle E_{x,x',a,a'} F_{y,y',b,b'}, \sigma\right\rangle
= 
\left\langle L_{e_{x} e_{x'}^* \otimes e_{a} e_{a'}^*}(E)L_{e_{y} e_{y'}^*\otimes e_{b} e_{b'}^*}(F),\sigma \right\rangle,  \nonumber
\end{eqnarray}
and (\ref{eq_long}) follows by linearity.

To show (iii), let $\sigma \in \cl T(H)$ be a state.
Suppose that $\rho_X\in M_X$ is traceless and $\rho_Y\in M_Y$. 
For every $\tau_B\in M_B$, by (\ref{eq_long}) and Theorem \ref{p_coor}, we have 
\begin{eqnarray*}
\left\langle \Tr\mbox{}_A\Gamma_{E\cdot F,\sigma}(\rho_X \otimes \rho_Y), \tau_B\right\rangle
& = &
\left\langle \Gamma_{E\cdot F, \sigma}(\rho_X \otimes \rho_Y), I_A\otimes \tau_B\right\rangle\\
& = &
\left\langle E\cdot F, \rho_X\otimes \rho_Y \otimes I_A\otimes \tau_B \otimes \sigma\right\rangle\\
& = &
\left\langle \Tr\mbox{}_A L_{\rho_X}(E) L_{\rho_Y\otimes \tau_B}(F), \sigma\right\rangle = 0.
\end{eqnarray*}
Thus, (\ref{eq_qns1}) is satisfied; by symmetry, so is (\ref{eq_qns2}).
\end{proof}

If $\xi$ is a unit vector in $H$,
we set for brevity $\Gamma_{E,F,\xi} = \Gamma_{E\cdot F, \xi\xi^*}$.

\begin{definition}\label{d_qcqns}
A QNS correlation of the form $\Gamma_{E,F,\xi}$, where $(E,F)$ is a commuting pair of
stochastic operator matrices acting on a Hilbert space $H$, and $\xi\in H$ is a unit vector,
will be called \emph{quantum commuting}.
\end{definition}

We denote by $\cl Q_{\rm qc}$ the set of all quantum commuting QNS correlations.

\begin{proposition}\label{p_genstok}
In Definition \ref{d_qcqns} one can assume, without gain of generality, that $\sigma$ is an
arbitrary state.
\end{proposition}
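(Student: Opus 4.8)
The plan is to show that allowing an arbitrary state $\sigma\in\cl T(H)$ in place of a vector state $\xi\xi^*$ produces nothing new, by purifying $\sigma$ and absorbing the purification space into the Hilbert space on which the stochastic operator matrices act. So suppose $(E,F)$ is a commuting pair of stochastic operator matrices on $H$, and $\sigma\in\cl T(H)$ is an arbitrary state. I would take a Hilbert space $H'$ and a unit vector $\xi\in H\otimes H'$ with $\Tr_{H'}(\xi\xi^*)=\sigma$ (a purification of $\sigma$), and then promote $E$ and $F$ to operators acting on $H\otimes H'$ by tensoring with the identity: set $\tilde E = E\otimes I_{H'}$, viewed as an element of $M_X\otimes M_A\otimes\cl B(H\otimes H')$ after the obvious reshuffle, and similarly $\tilde F = F\otimes I_{H'}$.

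The key steps, in order, are: (1) verify that $\tilde E$ and $\tilde F$ are again stochastic operator matrices — positivity is preserved under tensoring with $I_{H'}$, and $\Tr_A\tilde E = (\Tr_A E)\otimes I_{H'} = I_X\otimes I_H\otimes I_{H'} = I_X\otimes I_{H\otimes H'}$, so (\ref{eq_gpovm0}) holds; (2) verify that $(\tilde E,\tilde F)$ is again a commuting pair — this is immediate from $\tilde E_{x,x',a,a'} = E_{x,x',a,a'}\otimes I_{H'}$, $\tilde F_{y,y',b,b'} = F_{y,y',b,b'}\otimes I_{H'}$ and the commutation of $E_{x,x',a,a'}$ with $F_{y,y',b,b'}$; (3) compute $\tilde E\cdot\tilde F$ and check that it equals $(E\cdot F)\otimes I_{H'}$, which follows directly from the formula $E\cdot F = (E_{x,x',a,a'}F_{y,y',b,b'})$ in the proof of Proposition \ref{p_dotp}; (4) finally, check that $\Gamma_{\tilde E,\tilde F,\xi} = \Gamma_{\tilde E\cdot\tilde F,\xi\xi^*} = \Gamma_{E\cdot F,\sigma}$. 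For the last equality I would use the defining formula (\ref{eq_gammad}), $\Gamma_{E\cdot F,\sigma}(\rho) = L_{\rho\otimes\sigma}(E\cdot F)$: evaluating $L_{\rho\otimes\xi\xi^*}$ against $(E\cdot F)\otimes I_{H'}$ amounts, after applying $L$ in the $H'$ slot to the factor $I_{H'}$ paired with $\Tr_{H'}(\xi\xi^*)=\sigma$, to evaluating $L_{\rho\otimes\sigma}$ against $E\cdot F$, by the composition rule (\ref{eq_LfLg}) for partial evaluations on disjoint tensor legs.

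The minor bookkeeping obstacle is keeping track of the tensor-leg permutations: $\tilde E$ naturally lives in $M_X\otimes M_A\otimes\cl B(H)\otimes\cl B(H')$ and one must identify $\cl B(H)\otimes\cl B(H')$ with $\cl B(H\otimes H')$, and similarly interpret the pairing $\langle\,\cdot\,,\rho\otimes\sigma\rangle$ with the convention that $\sigma$ acts only after the appropriate permutation, exactly as licensed by the remarks preceding (\ref{eq_LfLg}). Once that identification is fixed, each of steps (1)–(4) is a short computation. The substantive point — that this really is "without gain of generality" — is just the observation that $\xi$ need not be assumed to have full support, so every $\sigma$ arises this way; there is no genuine difficulty, only the need to state the purification correctly and to invoke (\ref{eq_gammad}) and (\ref{eq_LfLg}).
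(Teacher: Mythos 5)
Your argument is correct and is essentially the paper's own proof: the paper constructs the purification explicitly, writing $\sigma=\sum_{i}\lambda_i\xi_i\xi_i^*$, taking $\tilde H=H\otimes\ell^2$, $\xi=\sum_i\sqrt{\lambda_i}\,\xi_i\otimes e_i$, and $\tilde E=E\otimes I_{\ell^2}$, $\tilde F=F\otimes I_{\ell^2}$, then verifying $\Gamma_{\tilde E,\tilde F,\xi}=\Gamma_{E\cdot F,\sigma}$ exactly as in your steps (1)--(4). Your abstract purification over an ancilla $H'$ with $\Tr_{H'}(\xi\xi^*)=\sigma$ is the same construction, so there is nothing further to add.
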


\begin{proof}
Suppose that $H$ is a Hilbert space and 
$E\in M_X\otimes M_A\otimes \cl B(H)$, $F\in M_Y\otimes M_B\otimes \cl B(H)$
form a commuting pair of stochastic operator matrices. 
Let $\sigma$ be a state in $\cl T(H)$ and write 
$\sigma = \sum_{i=1}^{\infty} \lambda_i \xi_i \xi_i^*$, where $(\xi_i)_{i=1}^{\infty}$ is 
sequence of unit vectors and $\lambda_i \geq 0$, $i\in \bb{N}$, are such that 
$\sum_{i=1}^{\infty} \lambda_i = 1$. 
Set $\tilde{H} = H\otimes \ell^2$ and $\xi = \sum_{i=1}^{\infty} \sqrt{\lambda_i} \xi_i \otimes e_i$;
then $\xi$ is a unit vector in $\tilde{H}$ and 
$\langle\xi\xi^*,T\otimes I_{\ell^2}\rangle = \langle \sigma,T\rangle$, $T\in \cl B(H)$.

Let $\tilde{E} = E\otimes I_{\ell^2}$ and $\tilde{F} = F\otimes I_{\ell^2}$; 
thus, 
$\tilde{E}$ and $\tilde{F}$ are stochastic operator matrices acting on $\tilde{H}$ that form a commuting pair.  
Moreover, if 
$\rho_X\in M_X$, $\rho_Y \in M_Y$, $\sigma_A \in M_A$ and $\sigma_B\in M_B$ then 
\begin{eqnarray*}
& & \left\langle \Gamma_{\tilde{E},\tilde{F},\xi}(\rho_X\otimes \rho_Y), \sigma_A\otimes \sigma_B\right\rangle
=
\left\langle \tilde{E} \cdot \tilde{F}, \rho_X\otimes \rho_Y \otimes \sigma_A\otimes \sigma_B \otimes \xi\xi^*\right\rangle\\
& = &
\left\langle L_{\rho_X\otimes \sigma_A}(\tilde{E})L_{\rho_Y\otimes \sigma_B}(\tilde{F}),\xi\xi^*\right\rangle
= 
\left\langle \left(L_{\rho_X\otimes \sigma_A}(E)L_{\rho_X\otimes \sigma_A}(F)\right)\otimes I_{\ell^2} ,\xi\xi^*\right\rangle\\
& = &
\left\langle L_{\rho_X\otimes \sigma_A}(E)L_{\rho_X\otimes \sigma_A}(F),\sigma\right\rangle
= 
\left\langle \Gamma_{E \cdot F,\sigma}(\rho_X\otimes \rho_Y), \sigma_A\otimes \sigma_B\right\rangle.
\end{eqnarray*}
\end{proof}

\begin{remark}\label{r_qcqnsns}
{\rm Recall that a classical NS correlation $p$ over $(X,Y,A,B)$ is called \emph{quantum commuting} 
\cite{psstw, pt_QJM}
if there exist a Hilbert space $H$, 
POVM's $(E_{x,a})_{a\in A}$, $x\in X$, and $(F_{y,b})_{b\in B}$, $y\in Y$, on $H$
with $E_{x,a}F_{y,b} = F_{y,b}E_{x,a}$ for all $x,y,a,b$, 
and a unit vector $\xi\in H$, such that
$$p(a,b|x,y) = \langle E_{x,a}F_{y,b}\xi,\xi\rangle, \ \ \ x \in X, y\in Y, a\in A, b\in B.$$
Suppose that the stochastic operator matrices $E\in M_X\otimes M_A\otimes \cl B(H)$ and 
$F\in M_Y\otimes M_B\otimes \cl B(H)$ are classical, and correspond to the families 
$(E_{x,a})_{a\in A}$, $x\in X$, and $(F_{y,b})_{b\in B}$, $y\in Y$, respectively, as in (\ref{eq_cgpovm}). 
It is clear that pair $(E,F)$ is commuting and $E\cdot F$ is classical.  
We have that $\Gamma_{p} = \Gamma_{E,F,\xi}$.
Indeed, 
by Remark \ref{r_vmm}, the QNS correlation $\Gamma_{E,F,\xi}$ is classical; by 
Remark \ref{r_NSQNS}, 
$\Gamma_{E,F,\xi} = \Gamma_{p'}$ for some NS correlation $p'$. 
It is now straightforward that $p'=p$.
}
\end{remark}


\subsection{Quantum QNS correlations}\label{ss_qqnsc}

Let $H_A$ and $H_B$ be Hilbert spaces, and 
$E\in M_X\otimes M_A\otimes \cl B(H_A)$ and $F\in M_Y\otimes M_B\otimes \cl B(H_B)$
be stochastic operator matrices;
then 
$$E\otimes F \in M_X\otimes M_A\otimes \cl B(H_A)\otimes M_Y\otimes M_B\otimes \cl B(H_B).$$
Reshuffling the terms of the tensor product, 
we consider $E\otimes F$ as an element of $M_{XY}\otimes M_{AB}\otimes \cl B(H_A\otimes H_B)$;
to underline this distinction, the latter element will henceforth be denoted by $E\odot F$. 
Note that, if 
$$\tilde{E} = E\otimes I_{H_B}\in M_X\otimes M_A\otimes \cl B(H_A\otimes H_B)$$
and 
$$\tilde{F} = F\otimes I_{H_A}\in M_Y\otimes M_B\otimes \cl B(H_A\otimes H_B)$$
(where the last containment is up to a suitable permutation of the tensor factors),
then $(\tilde{E},\tilde{F})$ is a commuting pair of stochastic operator matrices, and $E\odot F = \tilde{E}\cdot \tilde{F}$. 
By Proposition \ref{p_dotp}, $E\odot F$ is a stochastic operator matrix on $H_A\otimes H_B$ and, 
if $\sigma\in \cl T(H_A\otimes H_B)$ is a state then, by Proposition \ref{p_dotp}, 
$\Gamma_{E\odot F,\sigma}$ is a QNS correlation.

\begin{remark}\label{r_split}
{\rm 
It is straightforward to check that, 
if $\sigma = \sigma_A\otimes \sigma_B$, where $\sigma_A\in \cl T(H_A)$ and 
$\sigma_B\in \cl T(H_B)$ are states, then 
$\Gamma_{E,\sigma_A}\otimes \Gamma_{F,\sigma_B} = \Gamma_{E\odot F,\sigma_A\otimes \sigma_B}.$}
\end{remark}

\begin{definition}\label{d_qqns}
(i) A QNS correlation $\Gamma : M_{XY}\to M_{AB}$ is called 
\emph{quantum} if there exist finite dimensional Hilbert spaces $H_A$ and $H_B$, 
stochastic operator matrices
$E\in M_X\otimes M_A\otimes \cl L(H_A)$ and $F\in M_Y\otimes M_B\otimes \cl L(H_B)$
and a pure state $\sigma\in \cl L(H_A\otimes H_B)$ such that $\Gamma = \Gamma_{E\odot F,\sigma}$. 

(ii) A QNS correlation will be called \emph{approximately quantum}
if it is the limit of a sequence of quantum QNS correlations.
\end{definition}

We denote by $\cl Q_{\rm q}$ (resp. $\cl Q_{\rm qa}$) the set of all quantum (resp. approximately quantum)
QNS correlations.
It is clear from the definitions that $\cl Q_{\rm q}\subseteq \cl Q_{\rm qc}$.
It will be shown later that $\cl Q_{\rm qc}$ is closed, and hence contains $\cl Q_{\rm qa}$.

Similarly to Proposition \ref{p_genstok}, it can be shown that quantum QNS correlations can 
equivalently be defined using arbitrary, as opposed to pure, states.

\begin{remark}\label{r_qqnsns}
{\rm Recall that a classical NS correlation $p$ over $(X,Y,A,B)$ is called \emph{quantum} 
if there exist finite dimensional Hilbert spaces $H_A$ and $H_B$, 
POVM's $(E_{x,a})_{a\in A}$, on $H_A$, $x\in X$, $(F_{y,b})_{b\in A}$ on $H_B$, $y\in Y$,
and a unit vector $\xi\in H_A\otimes H_B$, such that
\begin{equation}\label{eq_clq}
p(a,b|x,y) = \left\langle (E_{x,a}\otimes F_{y,b})\xi,\xi \right\rangle, \ \ \ x \in X, y\in Y, a\in A, b\in B.
\end{equation}
It is easy to verify that, if 
the stochastic operator matrices $E\in M_X\otimes M_A\otimes \cl B(H_A)$ and 
$F\in M_Y\otimes M_B\otimes \cl B(H_B)$ are classical, and determined by the families 
$(E_{x,a})_{a\in A}$, $x\in X$, and $(F_{y,b})_{b\in B}$, $y\in Y$, then 
$E\odot F$ is classical and determined by the family $\{(E_{x,a}\otimes F_{y,b})_{(a,b)\in A\times B} : 
(x,y)\in X\times Y\}$.
As in Remark \ref{r_qcqnsns}, it is easy to see that $\Gamma_{p} = \Gamma_{E,F,\xi}$.
}
\end{remark}

\begin{proposition}\label{p_Qqqaco}
The sets $\cl Q_{\rm q}$ and $\cl Q_{\rm qa}$ are convex. 
\end{proposition}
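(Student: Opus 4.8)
The plan is to prove convexity of $\cl Q_{\rm q}$ first, and then deduce convexity of $\cl Q_{\rm qa}$ by a routine closure argument. For $\cl Q_{\rm q}$, I would take two quantum QNS correlations $\Gamma^{(1)} = \Gamma_{E_1\odot F_1,\sigma_1}$ and $\Gamma^{(2)} = \Gamma_{E_2\odot F_2,\sigma_2}$, with $E_i$ acting on $H_A^{(i)}$, $F_i$ acting on $H_B^{(i)}$, and $\sigma_i$ a pure state on $H_A^{(i)}\otimes H_B^{(i)}$, together with a convex coefficient $t\in[0,1]$. The standard device is to form the direct sums $H_A = H_A^{(1)}\oplus H_A^{(2)}$ and $H_B = H_B^{(1)}\oplus H_B^{(2)}$, and to build block-diagonal stochastic operator matrices $E = E_1\oplus E_2 \in M_X\otimes M_A\otimes \cl L(H_A)$ and $F = F_1\oplus F_2\in M_Y\otimes M_B\otimes \cl L(H_B)$; one checks using Theorem \ref{p_coor} (e.g. via condition (ii), the POVM condition, which is preserved under direct sums) that $E$ and $F$ are again stochastic operator matrices.

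The subtlety — and the step I expect to be the main obstacle — is choosing the state on $H_A\otimes H_B$ so that the resulting correlation is exactly $t\Gamma^{(1)} + (1-t)\Gamma^{(2)}$, while keeping it \emph{pure} (or, invoking the remark after Definition \ref{d_qqns} that arbitrary states may be used, at least keeping it a genuine state on the tensor product). A naive convex combination $t\sigma_1 \oplus (1-t)\sigma_2$ is not pure, but more importantly one must be careful that $H_A\otimes H_B$ decomposes as $\bigoplus_{i,j}H_A^{(i)}\otimes H_B^{(j)}$ and that the cross terms $i\neq j$ do not contribute: the point is that $E\odot F$, restricted to the summand $H_A^{(i)}\otimes H_B^{(j)}$, is $E_i\odot F_j$, and a state supported on $\bigoplus_i H_A^{(i)}\otimes H_B^{(i)}$ only sees the diagonal blocks. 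Concretely, I would write $\sigma_i = \xi_i\xi_i^*$ with $\xi_i\in H_A^{(i)}\otimes H_B^{(i)}$ a unit vector and set $\xi = \sqrt{t}\,\xi_1 \oplus \sqrt{1-t}\,\xi_2 \in H_A^{(1)}\otimes H_B^{(1)} \;\oplus\; H_A^{(2)}\otimes H_B^{(2)} \subseteq H_A\otimes H_B$; then $\sigma = \xi\xi^*$ is pure, and
\[
\Gamma_{E\odot F,\sigma} = \langle E\odot F, (\,\cdot\,)\otimes \xi\xi^*\rangle = t\,\langle E_1\odot F_1, (\,\cdot\,)\otimes \xi_1\xi_1^*\rangle + (1-t)\,\langle E_2\odot F_2, (\,\cdot\,)\otimes \xi_2\xi_2^*\rangle,
\]
using formula (\ref{eq_gammad}) and the fact that the block-diagonal structure kills the off-diagonal contributions $\langle E_1\odot F_2, (\,\cdot\,)\otimes \xi_1\xi_2^*\rangle$ and its conjugate (here one uses that $\xi_1$ and $\xi_2$ live in orthogonal summands, so the relevant matrix entries of $E\odot F$ pair to zero against $\xi_1\xi_2^*$). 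This yields $t\Gamma^{(1)} + (1-t)\Gamma^{(2)} \in \cl Q_{\rm q}$.

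For $\cl Q_{\rm qa}$: if $\Gamma^{(1)}, \Gamma^{(2)} \in \cl Q_{\rm qa}$, pick sequences $\Gamma^{(1)}_n \to \Gamma^{(1)}$ and $\Gamma^{(2)}_n \to \Gamma^{(2)}$ with $\Gamma^{(i)}_n \in \cl Q_{\rm q}$; then $t\Gamma^{(1)}_n + (1-t)\Gamma^{(2)}_n \in \cl Q_{\rm q}$ by the first part, and this sequence converges to $t\Gamma^{(1)} + (1-t)\Gamma^{(2)}$, which is therefore in $\cl Q_{\rm qa}$ by definition (the relevant maps live in the finite-dimensional space ${\rm CP}(M_{XY},M_{AB})$, so the limit notion is unambiguous). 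I would present the $\cl Q_{\rm q}$ argument in full and dispatch the $\cl Q_{\rm qa}$ case in one or two sentences. The only thing to double-check carefully in the write-up is the bookkeeping of the tensor-factor permutations hidden in the notation $E\odot F$, and the verification that the off-diagonal blocks genuinely vanish — this is where a slightly more detailed computation with the entries $E_{x,x',a,a'}$ and $F_{y,y',b,b'}$ may be warranted.
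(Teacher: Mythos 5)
Your argument is correct and is essentially the proof in the paper: the paper likewise forms direct sums $E=\oplus_i E_i$, $F=\oplus_i F_i$ on $\oplus_i K_{1,i}$ and $\oplus_i K_{2,i}$ and takes the pure state given by the vector $\oplus_i\sqrt{\lambda_i}\,\eta_i$ supported on the diagonal summands $K_{1,i}\otimes K_{2,i}$, the cross-block contributions vanishing exactly as you describe, and then obtains convexity of $\cl Q_{\rm qa}$ from $\cl Q_{\rm qa}=\overline{\cl Q_{\rm q}}$. The only difference is cosmetic: the paper treats an $n$-term convex combination at once, while you do the binary case, which of course suffices.
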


\begin{proof}
Let $E_i \in M_X\otimes M_A\otimes \cl L(K_{1,i})$
(resp. $F_i \in M_Y\otimes M_B\otimes \cl L(K_{2,i})$) be a stochastic operator matrix over $(X,A)$ (resp. $(Y,B)$)
and $\sigma_i = \eta_i \eta_i^*$ be a pure state on $K_{1,i}\otimes K_{2,i}$, $i = 1,\dots,n$.
Fix $\lambda_i \geq 0$, $i = 1,\dots,n$, with $\sum_{i=1}^n \lambda_i = 1$.
Let $K_{k} = \oplus_{i=1}^n K_{k,i}$, $k = 1,2$, $E = \oplus_{i=1}^n E_i$, $F = \oplus_{i=1}^n F_i$,
and $\eta = \oplus_{i=1}^n \sqrt{\lambda_i} \eta_i \in K_1\otimes K_2$, 
viewed as supported on the $(i,i)$-terms of 
$K_1\otimes K_2 \equiv \oplus_{i,j =1}^n K_{1,i}\otimes K_{2,j}$.
Set $\sigma = \eta\eta^*$.
Using distributivity, we consider 
$E$ (resp. $F$) as a stochastic operator matrix in 
$M_X\otimes M_A\otimes \cl L(K_{1})$ (resp. $M_Y\otimes M_B\otimes \cl L(K_{2})$).
A direct verification shows that 
$$\sum_{i=1}^n \lambda_i \Gamma_{E_i\odot F_i, \sigma_i} = \Gamma_{E\odot F,\sigma};$$
thus, $\cl Q_{\rm q}$ is convex, and the convexity of $\cl Q_{\rm qa}$ follows from the fact that 
$\cl Q_{\rm qa} = \overline{\cl Q_{\rm q}}$. 
\end{proof}


\subsection{Local QNS correlations}\label{ss_lqnsc}

It is clear that, if $\Phi : M_X\to M_A$ and $\Psi : M_Y \to M_B$ are quantum channels, then
the quantum channel $\Gamma := \Phi\otimes \Psi$ is a QNS correlation.

\begin{definition}\label{d_lqns}
A QNS correlation $\Gamma : M_{XY}\to M_{AB}$ is called \emph{local} if
it is a convex combination of quantum channels of the form
$\Phi\otimes\Psi$, where
$\Phi : M_X\to M_A$ and $\Psi : M_Y \to M_B$ are quantum channels.
\end{definition}

We denote by $\cl Q_{\rm loc}$ the set of all local QNS correlations. 
The elements of $\cl Q_{\rm loc}$ are precisely the maps that arise via
\emph{local operations and shared randomness (LOSR)} (see e.g. \cite[p. 358]{jw}).

\begin{remark}\label{r_lqqnsi}
We have that $\cl Q_{\rm loc}$ is a closed convex subset of $\cl Q_{\rm q}$.
\end{remark}

\begin{proof}
Let $\Phi : M_X\to M_A$ and $\Psi : M_Y \to M_B$ be quantum channels and
$E\in M_X\otimes M_A$ and $F\in M_Y\otimes M_B$ be the Choi matrices of $\Phi$ and $\Psi$, respectively. 
By Remark \ref{r_split},
$$\Phi \otimes \Psi = \Gamma_{E,1}\otimes \Gamma_{F,1} = \Gamma_{E\odot F,1}$$
and hence $\Phi \otimes \Psi \in \cl Q_{\rm q}$.

Let $(\Gamma_k)_{k\in \bb{N}} \subseteq \cl Q_{\rm loc}$ be a sequence with limit $\Gamma\in \cl Q_{\rm ns}$. 
Note that $\Gamma_k$ all are elements of a real vector space of dimension $2|X|^2 |Y|^2 |A|^2 |B|^2$. 
Let $L = 2|X|^2 |Y|^2 |A|^2 |B|^2 + 1$.
By Carath\'{e}odory's Theorem, $\Gamma_k = \sum_{l=1}^L \lambda_l^{(k)} \Phi_l^{(k)} \otimes \Psi_l^{(k)}$ as a 
convex combination. 
By compactness, we may assume, by passing to subsequences as necessary, that 
$\Phi_l^{(k)} \to_{k \to \infty} \Phi_l$, $\Psi_l^{(k)} \to_{k\to \infty} \Psi_l$ and 
$\lambda_l^{(k)} \to_{k\to \infty} \lambda_l$. 
Thus, $\Gamma = \sum_{l=1}^L \lambda_l \Phi_l \otimes \Psi_l$ as a 
convex combination, that is, $\Gamma\in \cl Q_{\rm loc}$, showing that $\cl Q_{\rm loc}$ is closed. 
\end{proof}

\begin{remark}\label{r_qlocnsns}
{\rm Recall that a classical NS correlation $p$ over $(X,Y,A,B)$ is called \emph{local} 
if there exist families of probability distributions 
$\{(p_k^1(a|x))_{a\in A} : x\in X\}$ and $\{(p_k^2(b|y))_{b\in B} : y\in Y\}$
and positive scalars $\lambda_k$, $k = 1,\dots,m$, 
such that $\sum_{k=1}^m \lambda_k = 1$ and
$$p(a,b|x,y) = \sum_{k=1}^m \lambda_kp_k^1(a|x)p_k^2(b|y), \ \ \ x\in X, y\in Y, a\in A, b\in B.$$
It is clear that, if $\Phi_k$ (resp. $\Psi_k$) is the $(X,A)$-classical (resp. $(Y,B)$-classical) channel 
corresponding to $p_1^k$ (resp. $p_2^k$) then 
$\Gamma_p = \sum_{k=1}^m \lambda_k\Phi_k\otimes \Psi_k$ and hence $\Gamma_p \in \cl Q_{\rm loc}$.
}
\end{remark}

If needed, we specify the dependence of $\cl Q_{\rm x}$ on the sets $X$, $Y$, $A$ and $B$
by using the notation 
$\cl Q_{\rm x}(X,Y,A,B)$, for ${\rm x}\in \{{\rm loc}, {\rm q}, {\rm qa}, {\rm qc}, {\rm ns}\}$.


\section{The operator system of a stochastic operator matrix}\label{s_uopsys}

Recall \cite{hestenes, zettl} that a \emph{ternary ring} is a complex vector space $\cl V$, equipped with a 
ternary operation $[\cdot,\cdot,\cdot] : \cl V\times \cl V\times \cl V\to \cl V$, linear on the outer variables and 
conjugate linear in the middle variable, such that 
$$[s,t,[u,v,w]] = [s,[v,u,t],w] = [[s,t,u],v,w], \ \ \ s,t,u,v,w\in \cl V.$$
A \emph{ternary representation} of $\cl V$ is a linear map $\theta: \cl V \to \cl B(H,K)$, for some 
Hilbert spaces $H$ and $K$, such that 
$$\theta\left([u,v,w]\right) = \theta(u)\theta(v)^*\theta(w), \ \ \ u,v,w \in \cl V.$$
We call $\theta$ \emph{non-degenerate} if
$\text{span}\{\theta(u)^*\eta : u\in \cl V, \eta\in K\}$ is dense in $H$. 
A \emph{concrete ternary ring of operators (TRO)} \cite{zettl} is a subspace $\cl U\subseteq \cl B(H,K)$ for some Hilbert spaces 
$H$ and $K$ such that $S,T,R\in \cl U$ implies $ST^*R\in \cl U$.

Let $X$ and $A$ be finite sets, and $\cl V^0_{X,A}$ be the 
ternary ring, 
generated by elements $v_{a,x}$,  $x\in X$, $a\in A$, satisfying the relations
\begin{equation}\label{eq_bc}
\sum_{a\in A} [v_{a'',x''},v_{a,x},v_{a,x'}] = \delta_{x,x'}v_{a'',x''}, \ \ \ x, x',x''\in X, a''\in A.
\end{equation}
Note that (\ref{eq_bc}) implies
\begin{equation}\label{eq_bcu}
\sum_{a\in A} [u,v_{a,x},v_{a,x'}] = \delta_{x,x'}u, \ \ \ x, x'\in X, u\in \cl V^0_{X,A}.
\end{equation}
Indeed, suppose that (\ref{eq_bcu}) holds for $u = u_i$, $i = 1,2,3$. 
Then 
$$\sum_{a\in A} [[u_1,u_2,u_3],v_{a,x},v_{a,x'}] 
= \sum_{a\in A} [u_1,u_2,[u_3,v_{a,x},v_{a,x'}]] = \delta_{x,x'}[u_1,u_2,u_3];$$
(\ref{eq_bcu}) now follows by induction.

Let $\theta : \cl V_{X,A}^0\to \cl B(H,K)$ be a non-degenerate ternary representation. 
Setting $V_{a,x} = \theta(v_{a,x})$, $x\in X$, $a\in A$, (\ref{eq_bcu}) implies
\begin{equation}\label{eq_bc3}
\sum_{a\in A} V_{a,x}^*V_{a,x'} = \delta_{x,x'}I_H, \ \ \ x,x'\in X;
\end{equation}
conversely, a family $\{V_{a,x} : x\in X, a\in A\}\subseteq \cl B(H,K)$ satisfying (\ref{eq_bc3})
clearly gives rise to a non-degenerate ternary representation $\theta : \cl V^0_{X,A}\to \cl B(H,K)$.
We therefore call such a family a \emph{representation of the relations} (\ref{eq_bc}). 
We note that the set of representations of (\ref{eq_bc}) is non-empty.
Indeed, consider isometries 
$V_x$, $x\in X$, with orthogonal ranges on some Hilbert space $H$, 
i.e. $V_x^*V_{x'} = \delta_{x, x'}I_H$, $x,x'\in X$. 
Fix $a_0\in A$ and let $V_{a,x} = \delta_{a,a_0}V_x$. 
Then $\sum_{a\in A} V_{a,x}^*V_{a,x'} = V_x^*V_{x'} = \delta_{x,x'}I_H$.

We note that (\ref{eq_bc3}) implies $\|V_{a,x}\|\leq 1$, $x\in X$, $a\in A$. 
We identify the family $\{V_{a,x} : a\in A, x\in X\}$ with the isometry $V = (V_{a,x})_{a,x}$ 
and write $H_V = H$, $K_V = K$ and $\theta_V = \theta$. 
Two representations $V = (V_{a,x})_{a,x}$ and $W = (W_{a,x})_{a,x}$
are called equivalent if there exist unitary operators $U_H : H_V\to H_W$ and $U_K : K_V\to K_W$
such that $W_{a,x}U_H = U_KV_{a,x}$, $x\in X$, $a\in A$. 

Write $\hat{\theta} = \oplus_V \theta_V$, where the direct sum is taken over all 
equivalence classes of representations of the relations (\ref{eq_bc}). 
For $u\in \cl V^0_{X,A}$, let 
$\|u\|_0 := \|\hat{\theta}(u)\|$. 
As $\|v_{a,x}\|\leq 1$ and $\cl V_{X,A}^0$ is generated by $v_{a,x}$, $a\in A$, $x\in X$, 
we have that $\|u\|_0<\infty$ for every $u\in \cl V^0_{X,A}$. 
It is also clear that $\|\cdot\|_0$ is a semi-norm on $\cl V_{X,A}^0$.
Set $N = \left\{u\in \cl V^0_{X,A} : \|u\|_0 = 0\right\}$. We have that $N$ is a ternary ideal of $\cl V^0_{X,A}$,
that is, $[u_1,u_2,u_3]\in N$ if $u_i\in N$ for some $i\in \{1,2,3\}$. 
The ternary product of $\cl V^0_{X,A}$ thus induces a ternary product on $\cl V^0_{X,A}/N$, 
and $\hat\theta$ induces a ternary representation of $\cl V^0_{X,A}/N$ that will be denoted in the
same way. 
Letting $\|u\| := \|\hat{\theta}(u)\|$, $u\in \cl V^0_{X,A}/N$, we have that $\|\cdot\|$ is a norm on 
$\cl V^0_{X,A}/N$, and hence $\cl V^0_{X,A}/N$ is a ternary pre-C*-ring (see \cite{zettl}). 
We let $\cl V_{X,A}$ be the completion of $\cl V^0_{X,A}/N$; thus, $\cl V_{X,A}$ is a ternary C*-ring \cite{zettl}. 
Note that $\hat{\theta}$ extends to a ternary representation of $\cl V_{X,A}$ (denoted in the same way)
onto a concrete TRO, 
and the equality $\|u\| = \|\hat{\theta}(u)\|$ continues to hold for every $u\in \cl V_{X,A}$. 
We thus have that $\cl V_{X,A}$ is a TRO in a canonical fashion.
It is clear that each $\theta_V$
induces a ternary representation of $\cl V_{X,A}$ onto a TRO, which will be denoted in the same way.

Let $\cl C_{X,A}$ be the \emph{right C*-algebra} of $\cl V_{X,A}$; 
if $\cl V_{X,A}$ is represented faithfully as a concrete 
ternary ring of operators in $\cl B(H,K)$ for some Hilbert spaces $H$ and $K$
(that is, $\cl V_{X,A}\cl V_{X,A}^*\cl V_{X,A} \subseteq \cl V_{X,A}$),
the C*-algebra $\cl C_{X,A}$ may be defined by letting
$$\cl C_{X,A} = \overline{{\rm span}\{S^*T : S,T\in \cl V_{X,A}\}}.$$ 
Each representation $V = (V_{a,x})_{a,x}$ of the relations (\ref{eq_bc})
gives rise \cite{hamana} to a unital *-representation $\pi_V$ of $\cl C_{X,A}$ on $H_V$ by letting 
$$\pi_V(S^*T) = \theta_V(S)^*\theta_V(T), \ \ \ S,T\in \cl V_{X,A}.$$

\begin{lemma}\label{l_rr}
The following hold true:
\begin{itemize}
\item[(i)] Every non-degenerate ternary representation of $\cl V_{X,A}$ has the form $\theta_V$, for some 
representation $V$ of the relations (\ref{eq_bc});
\item[(ii)] $\hat{\theta}$ is a faithful ternary representation of $\cl V_{X,A}$;
\item[(iii)] Every unital *-representation $\pi$ of $\cl C_{X,A}$ has the form $\pi_V$, for some 
representation $V$ of the relations (\ref{eq_bc}).
\end{itemize}
\end{lemma}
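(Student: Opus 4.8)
\textbf{Proof proposal for Lemma \ref{l_rr}.}

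The plan is to establish the three items essentially simultaneously, since (i) and (iii) are two faces of the same Stinespring-type reconstruction, and (ii) will follow once we know that every ternary representation factors through $\hat\theta$. First I would prove (i). Suppose $\rho : \cl V_{X,A} \to \cl B(H,K)$ is a non-degenerate ternary representation, and set $V_{a,x} = \rho(v_{a,x})$, where I abuse notation by writing $v_{a,x}$ for the image of the generator in $\cl V_{X,A}$. Applying $\rho$ to the defining relations (\ref{eq_bcu}) (which hold in $\cl V^0_{X,A}/N$, hence in $\cl V_{X,A}$), and using that $\rho$ turns ternary products into $\rho(u)\rho(v)^*\rho(w)$, gives $\sum_{a\in A} V_{a,x}^* V_{a,x'} = \delta_{x,x'} P$, where $P$ is the orthogonal projection onto $\overline{\text{span}}\{\rho(u)^*\eta : u\in\cl V_{X,A}, \eta\in K\}$ — the point being that the right-hand side of the relation is $\delta_{x,x'}$ times the image of an arbitrary generator, and non-degeneracy forces $P$ to act as the identity on $H$. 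Concretely, $\rho(u) P = \rho(u)$ for all $u$ since $\rho(u)\rho(w)^*\eta = \rho([u,w,w']\cdots)$-type combinations land back in the range data, and non-degeneracy says these span a dense set; so $P = I_H$ and $V = (V_{a,x})_{a,x}$ is an isometry satisfying (\ref{eq_bc3}), i.e. a representation of the relations (\ref{eq_bc}). It remains to check $\rho = \theta_V$, i.e. that $\rho$ is determined on all of $\cl V_{X,A}$ by its values on the generators; this holds because $\cl V^0_{X,A}$ is generated as a ternary ring by the $v_{a,x}$, $\rho$ is a ternary homomorphism, and $\rho$ is norm-continuous (ternary homomorphisms of TROs are contractive), so it is determined on the completion.

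Next, (iii): given a unital $*$-representation $\pi : \cl C_{X,A} \to \cl B(\widetilde H)$, I want to produce a representation $V$ of (\ref{eq_bc}) with $\pi = \pi_V$. The standard device (this is exactly the content of Hamana's linking-algebra construction, which the paper cites) is to pass to the linking C*-algebra $\cl L = \begin{pmatrix} \cl C_{X,A}^{\,\ell} & \cl V_{X,A} \\ \cl V_{X,A}^* & \cl C_{X,A}\end{pmatrix}$ of the TRO, extend $\pi$ (acting on the $(2,2)$ corner $\cl C_{X,A}$, which I will take to be the right C*-algebra) to a $*$-representation $\widetilde\pi$ of $\cl L$ on a Hilbert space of the form $K\oplus \widetilde H$ by a Stinespring/Arveson extension argument, restrict $\widetilde\pi$ to the $(1,2)$ corner to get a ternary representation $\rho := \widetilde\pi|_{\cl V_{X,A}} : \cl V_{X,A}\to \cl B(\widetilde H, K)$, cut down to the non-degenerate part, and apply (i) to obtain $V$ with $\rho = \theta_V$; then $\pi(S^*T) = \rho(S)^*\rho(T) = \theta_V(S)^*\theta_V(T) = \pi_V(S^*T)$ on the generating elements $S^*T$ of $\cl C_{X,A}$, and unitality plus continuity upgrades this to $\pi = \pi_V$. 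One has to be a little careful about non-degeneracy: if $\rho$ has a nontrivial kernel-of-non-degeneracy summand one restricts to its essential subspace, and the corresponding summand of $\pi$ is recovered from the rest by the $\cl C_{X,A}$-module structure. I would cite \cite{hamana} for the linking-algebra machinery rather than redo it.

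Finally, (ii): $\hat\theta = \oplus_V \theta_V$ is faithful because it is isometric by construction — recall $\|u\| := \|\hat\theta(u)\|$ was the very definition of the norm on $\cl V_{X,A}$ (after quotienting by $N$ and completing), so $\hat\theta$ is injective and isometric, hence a faithful ternary representation onto its image TRO. This is essentially immediate from the setup preceding the lemma; the only thing to note is that the direct sum over all equivalence classes of representations of (\ref{eq_bc}) is a legitimate (set-sized) index once one bounds Hilbert space dimensions by, say, the cardinality of a dense subset generated from $X$ and $A$, which is the standard cardinality argument for universal C*-objects.

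\textbf{Main obstacle.} The genuinely non-trivial step is (iii), specifically extending a $*$-representation of the right C*-algebra $\cl C_{X,A}$ to the linking algebra so that it ``remembers'' the TRO $\cl V_{X,A}$; getting the ranges and non-degeneracy bookkeeping right (and invoking Hamana's result in the correct form) is where the care is needed. Items (i) and (ii) are comparatively routine given the construction of $\cl V_{X,A}$ already carried out in the text.
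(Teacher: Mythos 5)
Your proposal is correct and follows essentially the same route as the paper: (i) by reading the relations (\ref{eq_bcu}) off the generators and using the universal construction of $\cl V_{X,A}$, (ii) directly from the definition of the norm via $\hat\theta$, and (iii) by producing a ternary representation of $\cl V_{X,A}$ that induces $\pi$ and then invoking (i) — the paper simply cites \cite[Theorem 3.4]{blecher} and \cite{elefkak} for the existence of that ternary representation instead of sketching the linking-algebra/Stinespring construction. The only remark worth making is that your caution about cutting down to the essential subspace in (iii) is unnecessary: unitality of $\pi$ together with $1=\sum_{a\in A} v_{a,x}^*v_{a,x}$ already forces the induced ternary representation to be non-degenerate, which is exactly how the paper disposes of this point.
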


\begin{proof} 
(i) Suppose that $\theta$ is a non-degenerate ternary representation of $\cl V_{X,A}$.
Letting $q : \cl V_{X,A}^0\to \cl V_{X,A}$ be the quotient map, write $\theta_0 = \theta\circ q$; 
thus, $\theta_0$ is a non-degenerate ternary representation of $\cl V_{X,A}^0$. 
Letting $V$ be the representation of the relations (\ref{eq_bc}) such that $\theta_0 = \theta_V$, 
we have that $\theta = \theta_V$. 

(ii) follows from the fact that $\|\hat\theta(u)\|=\|u\|$, $u\in \cl V_{X,A}$. 

(iii) Let $\pi : \cl C_{X,A} \to \cl B(H)$ be a unital *-representation. 
Then there exists a ternary representation $\theta : \cl V_{X,A}\to \cl B(H,K)$
such that $\pi(S^*T) = \theta(S)^*\theta(T)$, $S,T\in \cl V_{X,A}$
(see e.g. \cite[Theorem 3.4]{blecher} and \cite[p. 1636]{elefkak}). 
Since $\pi$ is unital, $\theta$
is non-degenerate. By (i), there exists a representation 
$V$ of the relations (\ref{eq_bc}) such that 
$\theta = \theta_V$, and hence $\pi = \pi_V$.
\end{proof}

Set $e_{x,x',a,a'} = v_{a,x}^*v_{a',x'}  \in \cl C_{X,A}$, $x,x' \in X$, $a,a'\in A$. 
We call the operator subsystem
$$\cl T_{X,A} := {\rm span}\{e_{x,x',a,a'} : x,x'\in X, a,a'\in A\}$$
of $\cl C_{X,A}$ the \emph{Brown-Cuntz operator system}.
Note that relations (\ref{eq_bc}) imply
\begin{equation}\label{eq_relt}
\sum_{a\in A} e_{x,x',a,a} = \delta_{x,x'}1, \ \ \ x, x'\in X.
\end{equation}

\begin{theorem}\label{p_ucptau2}
Let $H$ be a Hilbert space and $\phi : \cl T_{X,A}\to \cl B(H)$ be a linear map.
The following are equivalent:
\begin{itemize}
\item[(i)] $\phi$ is a unital completely positive map;
\item[(ii)] $\left(\phi(e_{x,x',a,a'})\right)_{x,x',a,a'}$ is a stochastic operator matrix;
\item[(iii)] there exists a *-representation $\pi : \cl C_{X,A}\to \cl B(H)$ such that $\phi = \pi|_{\cl T_{X,A}}$. 
\end{itemize}
\noindent 
Moreover, if $\left(E_{x,x',a,a'}\right)_{x,x',a,a'}$ is a stochastic operator matrix acting on a Hilbert space $H$
then there exists a (unique) unital completely positive map 
$\phi : \cl T_{X,A}\to \cl B(H)$ such that $\phi(e_{x,x',a,a'}) = E_{x,x',a,a'}$ for all $x,x',a,a'$.
\end{theorem}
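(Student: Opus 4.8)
The plan is to establish the cycle of implications (iii)$\Rightarrow$(i)$\Rightarrow$(ii)$\Rightarrow$(iii) and then deduce the final ``moreover'' clause from the equivalence together with the universal property built into $\cl V_{X,A}$. The implication (iii)$\Rightarrow$(i) is immediate: a unital $*$-representation $\pi$ of the C*-algebra $\cl C_{X,A}$ is automatically unital and completely positive, and its restriction to the operator subsystem $\cl T_{X,A}$ retains both properties; one only needs to note that $\pi(1) = I_H$ because $1\in\cl C_{X,A}$ (by \eqref{eq_relt}, the unit lies in $\cl T_{X,A}$). For (i)$\Rightarrow$(ii), I would set $E_{x,x',a,a'} = \phi(e_{x,x',a,a'})$ and check that $E := (E_{x,x',a,a'})_{x,x',a,a'}$ is positive in $M_X\otimes M_A\otimes\cl B(H)$ and satisfies $\Tr_A E = I_X\otimes I_H$. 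Positivity follows because $E$ is (up to the canonical reshuffling of tensor legs) the image under $\phi^{(|X||A|)}$ of the matrix $(e_{x,x',a,a'})_{(x,a),(x',a')}\in M_{XA}(\cl T_{X,A})$, and this matrix is positive since $e_{x,x',a,a'} = v_{a,x}^*v_{a',x'}$, i.e. it factors as $W^*W$ where $W$ is the row $(v_{a,x})_{(x,a)}$ viewed inside $M_{1,XA}(\cl C_{X,A})$; complete positivity of $\phi$ then gives $E\geq 0$. The marginal condition $\Tr_A E = I_X\otimes I_H$ is exactly the image under $\phi$ of relation \eqref{eq_relt}, using unitality of $\phi$.

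The implication (ii)$\Rightarrow$(iii) is the substantive step, and I expect it to be the main obstacle. Given a stochastic operator matrix $E = (E_{x,x',a,a'})$ acting on $H$, Theorem \ref{p_coor}(v) supplies a Hilbert space $K$ and an isometry $V = (V_{a,x})_{a,x}\in\cl B(H^X,K^A)$ with $E_{x,x',a,a'} = V_{a,x}^*V_{a',x'}$. Since $V$ satisfies \eqref{eq_bc3}, it is a representation of the relations \eqref{eq_bc}, hence (via the discussion preceding Lemma \ref{l_rr}, or Lemma \ref{l_rr}(i)) induces a non-degenerate ternary representation $\theta_V:\cl V_{X,A}\to\cl B(H,K)$ and a unital $*$-representation $\pi_V:\cl C_{X,A}\to\cl B(H)$ with $\pi_V(S^*T) = \theta_V(S)^*\theta_V(T)$. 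Restricting $\pi_V$ to $\cl T_{X,A}$ we get $\pi_V(e_{x,x',a,a'}) = \theta_V(v_{a,x})^*\theta_V(v_{a',x'}) = V_{a,x}^*V_{a',x'} = E_{x,x',a,a'}$. This simultaneously proves (ii)$\Rightarrow$(iii) and, combined with (iii)$\Rightarrow$(i), establishes the existence part of the ``moreover'' clause: $\phi := \pi_V|_{\cl T_{X,A}}$ is the desired unital completely positive map with $\phi(e_{x,x',a,a'}) = E_{x,x',a,a'}$.

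The one point requiring a little care is \emph{uniqueness} of $\phi$ in the ``moreover'' clause, i.e. that $\cl T_{X,A}$ is genuinely spanned by the $e_{x,x',a,a'}$ so that a linear map is determined by its values on them — this is true by the very definition of $\cl T_{X,A}$ as their span, so prescribing $\phi(e_{x,x',a,a'})$ determines $\phi$ on all of $\cl T_{X,A}$ provided the assignment is well-defined, and well-definedness is exactly what the construction above delivers (the values $V_{a,x}^*V_{a',x'}$ respect every linear relation among the $e_{x,x',a,a'}$ because those relations hold inside $\cl C_{X,A}$ and $\pi_V$ is linear). Thus no separate verification of well-definedness is needed beyond exhibiting $\pi_V$. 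The remaining subtlety — and the reason (ii)$\Rightarrow$(iii) is the crux rather than a formality — is that one must invoke the nontrivial passage from a ternary representation of the \emph{universal} object $\cl V_{X,A}$ to a $*$-representation of its right C*-algebra $\cl C_{X,A}$, which is precisely the content of Lemma \ref{l_rr}(iii) and the references cited there (\cite{blecher, elefkak, hamana}); everything else is bookkeeping with Choi-matrix positivity and the defining relations.
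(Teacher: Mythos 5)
Your proposal is correct, and its core coincides with the paper's proof: the implication (ii)$\Rightarrow$(iii), and the ``moreover'' clause, are handled exactly as in the paper, by extracting the isometry $V=(V_{a,x})_{a,x}$ from Theorem \ref{p_coor}(v), observing that $V$ represents the relations (\ref{eq_bc}), and restricting the induced representation $\pi_V$ of $\cl C_{X,A}$ to $\cl T_{X,A}$; (iii)$\Rightarrow$(i) is trivial in both treatments (with the understanding, as in Lemma \ref{l_rr}(iii), that the $*$-representation is unital --- membership of $1$ in $\cl C_{X,A}$ alone does not force $\pi(1)=I_H$, so you should appeal to that convention rather than to $1\in\cl C_{X,A}$).

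The only genuine divergence is in (i)$\Rightarrow$(ii). The paper dilates: by Arveson extension and Stinespring, $\phi(u)=W^*\pi(u)W$ for a representation $\pi$ of $\cl C_{X,A}$ and an isometry $W$, and positivity of $\bigl(\pi(e_{x,x',a,a'})\bigr)$ is compressed down to $\bigl(\phi(e_{x,x',a,a'})\bigr)$. You instead apply $\phi^{(|X||A|)}$ directly to the canonical matrix $\bigl(e_{x,x',a,a'}\bigr)\in M_{XA}(\cl T_{X,A})^+$ and read off the marginal condition from unitality together with (\ref{eq_relt}); this avoids any dilation and is in fact precisely the mechanism the paper uses immediately afterwards to prove the non-unital extension, Proposition \ref{c_ucptau2} (cf.\ (\ref{eq_TC})). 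The one point to phrase more carefully is the positivity of the generator matrix itself: the row $(v_{a,x})_{(x,a)}$ has entries in the TRO $\cl V_{X,A}$, not in $\cl C_{X,A}$, so ``viewed inside $M_{1,XA}(\cl C_{X,A})$'' is not literally meaningful; one should pass to a faithful concrete ternary representation of $\cl V_{X,A}$ (Lemma \ref{l_rr}(ii)), or to the linking algebra, where the factorisation $W^*W$ makes sense and yields $\bigl(e_{x,x',a,a'}\bigr)\in\bigl(M_{XA}\otimes\cl C_{X,A}\bigr)^+$, whence positivity in $M_{XA}(\cl T_{X,A})$ because $\cl T_{X,A}\subseteq_{\rm c.o.i.}\cl C_{X,A}$. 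With that adjustment your argument is complete and marginally more economical than the paper's for this implication, at the cost of having to justify the positivity of the generator matrix up front, which the paper gets for free from Stinespring.
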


\begin{proof}
(i)$\Rightarrow$(ii)
By Arveson's Extension Theorem and Stinespring's Theorem, 
there exist a Hilbert space $K$, a *-representation $\pi : \cl C_{X,A} \to \cl B(K)$ 
and an isometry $W\in \cl B(H,K)$, such that 
$\phi(u) = W^*\pi(u) W$, $u\in \cl T_{X,A}$. 
By Lemma \ref{l_rr}, $\pi = \pi_V$ for some representation $V = (V_{a,x})_{a,x}$ of the relations (\ref{eq_bc}).
By the proof of Theorem \ref{p_coor}, $E :=\left(\pi(e_{x,x',a,a'})\right) \in (M_X\otimes M_A\otimes \cl B(K))^+$, and hence 
$$\left(\phi(e_{x,x',a,a'})\right)
= (I_X\otimes I_A\otimes W)^* E (I_X\otimes I_A\otimes W)
\in \left(M_X\otimes M_A\otimes B(H)\right)^+.$$
By (\ref{eq_relt}) and Theorem \ref{p_coor}, $\left(\phi(e_{x,x',a,a'})\right)_{x,x',a,a'}$ is a stochastic operator matrix.

(ii)$\Rightarrow$(iii)
By Theorem \ref{p_coor}, there exist a Hilbert space $K$ and 
an isometry $V = (V_{a,x})_{a,x}\in \cl B(H^X,K^A)$ such that
$$\phi(e_{x,x',a,a'}) = V_{a,x}^*V_{a',x'}, \ \ \ x,x'\in X, a,a'\in A.$$
The *-representation $\pi_V$ of $\cl C_{X,A}$ is an extension of $\phi$.

(iii)$\Rightarrow$(i) is trivial. 

\smallskip

Suppose that $E = \left(E_{x,x',a,a'}\right)_{x,x',a,a'}$ is a stochastic operator matrix acting on $H$. 
Letting $V$ be the isometry, associated with $E$ via Theorem \ref{p_coor}, 
we have that $\phi := \pi_V|_{\cl T_{X,A}}$ satisfies the required conditions. 
\end{proof}

Let $\cl S$ be an operator system. Recall that the pair $(C_u^*(\cl S),\iota)$ is called a
universal C*-cover of $\cl S$, if $C_u^*(\cl S)$ is a unital C*-algebra, $\iota : \cl S\to C_u^*(\cl S)$ is a 
unital complete order embedding, and whenever $H$ is a Hilbert space and 
$\phi : \cl S\to \cl B(H)$ is a unital completely positive map, there exists a *-representation 
$\pi_{\phi} : C^*_u(\cl S) \to \cl B(H)$ such that $\pi_{\phi} \circ \iota = \phi$. 
It is clear that the universal C*-cover is unique up to a *-isomorphism.
The following corollary is immediate from Theorem \ref{p_ucptau2}.

\begin{corollary}\label{c_ucst}
The pair $(\cl C_{X,A}, \iota)$, where $\iota$ is the inclusion map of $\cl T_{X,A}$ into $\cl C_{X,A}$, 
is the universal C*-cover of $\cl T_{X,A}$.
\end{corollary}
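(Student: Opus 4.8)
The plan is to show that $(\cl C_{X,A},\iota)$ satisfies the defining universal property of the universal C*-cover of $\cl T_{X,A}$, and then invoke uniqueness. There are three things to verify: that $\cl C_{X,A}$ is a unital C*-algebra, that $\iota : \cl T_{X,A}\hookrightarrow \cl C_{X,A}$ is a unital complete order embedding, and that every unital completely positive map $\phi : \cl T_{X,A}\to \cl B(H)$ extends to a $*$-representation of $\cl C_{X,A}$ on $H$.

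First, $\cl C_{X,A}$ is unital because relation (\ref{eq_relt}) gives $\sum_{a}e_{x,x,a,a} = 1$, so the unit lies in (the closed span generating) $\cl C_{X,A}$; it is a C*-algebra by construction as the right C*-algebra of the TRO $\cl V_{X,A}$. Next, for the complete order embedding: since $\cl T_{X,A}$ is defined as an operator subsystem of $\cl C_{X,A}$, the inclusion $\iota$ is automatically a unital complete order embedding — this is built into the definition in the excerpt, so there is essentially nothing to prove here beyond noting that $\cl T_{X,A}$ contains the unit (again by (\ref{eq_relt})) and is self-adjoint (clear since $e_{x,x',a,a'}^* = e_{x',x,a',a}$).

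The substantive point is the extension property, and this is exactly where Theorem \ref{p_ucptau2} does the work: given a unital completely positive $\phi : \cl T_{X,A}\to \cl B(H)$, the implication (i)$\Rightarrow$(ii)$\Rightarrow$(iii) of that theorem produces a $*$-representation $\pi : \cl C_{X,A}\to \cl B(H)$ with $\pi|_{\cl T_{X,A}} = \phi$, i.e. $\pi\circ\iota = \phi$. So the corollary is genuinely immediate once Theorem \ref{p_ucptau2} is in hand. Finally, uniqueness of the universal C*-cover up to $*$-isomorphism (already noted in the excerpt) completes the identification. I do not anticipate any real obstacle: the only mild care needed is to observe that $\cl T_{X,A}$ is unital and self-adjoint so that "operator subsystem" is legitimate and the extension target matches, but both follow at once from (\ref{eq_relt}) and the definition $e_{x,x',a,a'} = v_{a,x}^* v_{a',x'}$.
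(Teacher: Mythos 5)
Your proof is correct and takes essentially the same route as the paper, which simply notes that the corollary is immediate from Theorem \ref{p_ucptau2}: the extension property is exactly the implication (i)$\Rightarrow$(iii) there, and the remaining checks (unitality via (\ref{eq_relt}), self-adjointness of the generators, and the inclusion being a unital complete order embedding by construction) are routine, as you observe. The final appeal to uniqueness is harmless but not needed, since the corollary only asserts that the pair satisfies the defining universal property.
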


We will need the following slight extension of the equivalence (i)$\Leftrightarrow$(ii) of Theorem \ref{p_ucptau2}.

\begin{proposition}\label{c_ucptau2}
Let $H$ be a Hilbert space and $\phi : \cl T_{X,A}\to \cl B(H)$ be a linear map.
The following are equivalent:
\begin{itemize}
\item[(i)] $\phi$ is a completely positive map;
\item[(ii)] $\left(\phi(e_{x,x',a,a'})\right)_{x,x',a,a'} \in \left(M_X\otimes M_A\otimes \cl B(H)\right)^+$.
\end{itemize}
\end{proposition}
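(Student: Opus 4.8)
The plan is to deduce this from Theorem \ref{p_ucptau2} by a standard unitisation (Arveson-type) trick that turns a completely positive map into a unital one without disturbing the matricial positivity we care about. The direction (i)$\Rightarrow$(ii) is immediate: if $\phi$ is completely positive then, by (\ref{eq_relt}), the matrix $\left(\phi(e_{x,x',a,a'})\right)_{x,x',a,a'}$ is the image under $\phi^{(|X||A|)}$ of the positive element $\left(e_{x,x',a,a'}\right)_{x,x',a,a'}\in M_X\otimes M_A\otimes\cl C_{X,A}$, which is positive since this element is a stochastic operator matrix realised inside $\cl C_{X,A}$ (indeed $\left(e_{x,x',a,a'}\right) = \left(v_{a,x}^*v_{a',x'}\right)$ has the form (\ref{eq_v}) with $V = (\theta_V(v_{a,x}))$ for a faithful representation, hence is positive and satisfies $\Tr_A = 1$). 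So complete positivity of $\phi$ forces condition (ii), using only $\phi^{(n)}$ positivity on one explicit positive element.

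For the converse (ii)$\Rightarrow$(i), the idea is: given the hypothesis, I would build an auxiliary unital map. First I would observe that $\phi$ extends naturally to the unital operator system $\cl T_{X,A} + \bb{C}1$; but more robustly, I would pass to the matrix algebra picture. Since $\left(\phi(e_{x,x',a,a'})\right)_{x,x',a,a'}\in\left(M_X\otimes M_A\otimes \cl B(H)\right)^+$ but may fail $\Tr_A = I_X\otimes I_H$, let $P := \Tr_A\left(\phi(e_{x,x',a,a'})\right) \in (M_X\otimes\cl B(H))^+$, which by (\ref{eq_relt}) equals $\left(\sum_a\phi(e_{x,x,a,a})\right)$ placed diagonally, i.e. $P = \mathrm{diag}_{x}(\phi(1))$ where $\phi(1)\in\cl B(H)^+$ is well-defined from (\ref{eq_relt}) (note $\sum_a e_{x,x,a,a} = 1$ for every $x$, so all these partial sums coincide). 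The plan is then to dilate: enlarge $H$ to $H\oplus H'$ and add a correction term so that the modified family $\left(E_{x,x',a,a'}\right)$ with $E_{x,x',a,a'} = \phi(e_{x,x',a,a'})\oplus(\text{defect}_{x,x',a,a'})$ becomes a genuine stochastic operator matrix — concretely, choosing the defect to supply $\delta_{x,x'}(I_{H'} - \text{something})$ in the $A$-partial-trace, exactly as one completes a positive operator to one with prescribed partial trace. Then Theorem \ref{p_ucptau2}(ii)$\Rightarrow$(iii) gives a $*$-representation $\pi$ of $\cl C_{X,A}$ on $H\oplus H'$ with $\pi|_{\cl T_{X,A}}(e_{x,x',a,a'}) = E_{x,x',a,a'}$, and compressing $\pi|_{\cl T_{X,A}}$ back to $H$ recovers $\phi$ as a compression of a $*$-representation, hence completely positive.

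The main obstacle is the dilation/completion step: I need the defect term to (a) keep the global matrix positive, and (b) fix up the $A$-partial trace to be exactly $\delta_{x,x'}$ times the identity, simultaneously. The cleanest route is probably to work with a scalar normalisation first: if $\phi(1)$ is invertible one can conjugate by $\phi(1)^{-1/2}\otimes I$ to reduce to the case $\phi(1) = I_H$ — but $\phi(1)$ need not be invertible, so I would instead add $\epsilon I$ and take limits, or directly use the standard fact (as in the proof of Arveson's extension / Stinespring for non-unital c.p.\ maps) that a positive element of $M_X\otimes M_A\otimes\cl B(H)$ with partial trace $Q\otimes I_X$ dominated by a stochastic operator matrix structure extends. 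Alternatively, and perhaps most efficiently, I would invoke the known correspondence (Choi-type) between elements of $\left(M_X\otimes M_A\otimes \cl B(H)\right)^+$ and completely positive maps $M_A\to M_X\otimes\cl B(H)$, notice that such a map composed with suitable corner embeddings lands among maps whose Choi matrix is a stochastic operator matrix after a direct-sum dilation on the domain Hilbert space, and then read off the $*$-representation from Theorem \ref{p_ucptau2}. Either way, the substantive point to verify carefully is that the dilation can be arranged so that $\phi$ is literally a compression $W^*\pi(\cdot)W$ with $W$ an isometry — and that is where I would spend the real effort.
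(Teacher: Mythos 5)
Your direction (i)$\Rightarrow$(ii) is fine and is the paper's argument: positivity of $(e_{x,x',a,a'})$ in $M_{XA}(\cl C_{X,A})$, hence in $M_{XA}(\cl T_{X,A})$ since $\cl T_{X,A}\subseteq_{\rm c.o.i.}\cl C_{X,A}$, plus complete positivity of $\phi$. The problem is your main plan for (ii)$\Rightarrow$(i). A direct-sum ``defect'' dilation cannot exist unless $\phi$ is unital: if $\tilde E$ is a stochastic operator matrix on $K\supseteq H$ whose compression (in the $\cl B(K)$ leg) to $H$ is $E=(\phi(e_{x,x',a,a'}))$, then, since compression commutes with $\Tr_A$, we get $\Tr_A E = I_X\otimes I_H$; but you computed yourself that $\Tr_A E = I_X\otimes\phi(1)$, so this forces $\phi(1)=I_H$. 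Equivalently, any map of the form $W^*\pi(\cdot)W$ with $W$ an isometry is automatically unital, so the goal you single out as ``the substantive point to verify carefully'' --- realising a possibly non-unital $\phi$ literally as an isometric compression of a $*$-representation --- is not merely hard, it is impossible in general. The correct target is $\phi=V^*\pi(\cdot)V$ with a general bounded $V$, i.e.\ $\phi = T^{1/2}\psi(\cdot)T^{1/2}$ with $T=\phi(1)$ and $\psi$ unital completely positive.

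The aside you mention almost in passing is in fact the whole proof, and it is the paper's: assuming $T=\phi(1)$ invertible, set $\psi(u)=T^{-1/2}\phi(u)T^{-1/2}$; then $F=(\psi(e_{x,x',a,a'})) = (I_{XA}\otimes T^{-1/2})E(I_{XA}\otimes T^{-1/2})\geq 0$, and your identity $\sum_{a}\phi(e_{x,x',a,a})=\delta_{x,x'}T$ (from (\ref{eq_relt})) gives $\sum_a F_{x,x',a,a}=\delta_{x,x'}I_H$, so $F$ is a stochastic operator matrix by Theorem \ref{p_coor}; Theorem \ref{p_ucptau2} then makes $\psi$ completely positive, hence $\phi=T^{1/2}\psi(\cdot)T^{1/2}$ is too. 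For non-invertible $T$ one perturbs, e.g.\ $\phi_{\epsilon}(u)=\phi(u)+\epsilon\,s(u)I_H$ for a state $s$ of $\cl T_{X,A}$ with $(s(e_{x,x',a,a'}))\geq 0$, applies the invertible case, and passes to the point-norm limit. As written, your proposal leaves this route unexecuted and anchors instead on the isometry requirement, so the argument as proposed does not go through without this correction.
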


\begin{proof}
(i)$\Rightarrow$(ii) 
It follows from Theorem \ref{p_coor} and Lemma \ref{l_rr} (iii), 
with $\pi_V$ a faithful *-representation of $\cl C_{X,A}$,
 that $(e_{x,x',a,a'})\in (M_X\otimes M_A\otimes \cl C_{X,A})^+$. Since $\cl T_{X,A}\subseteq \cl C_{X,A}$ as 
an operator subsystem, we have 
\begin{equation}\label{eq_TC}
(e_{x,x',a,a'})_{x,x',a,a'} \in (M_X\otimes M_A\otimes \cl T_{X,A})^+
\end{equation}
and (ii) follows.

(ii)$\Rightarrow$(i)
Write $E = \left(\phi(e_{x,x',a,a'})\right)_{x,x',a,a'}$ and 
let $T = \phi(1)$. Since $1 = \sum_{a\in A} e_{x,x,a,a}$ (where $x$ is any element of $X$),
we have that $T\geq 0$. 
Note also that if $x,x'\in X$ and $x\neq x'$ then 
\begin{equation}\label{eq_Exx'}
\sum_{a\in A} E_{x,x',a,a} = \phi\left(\sum_{a\in A} e_{x,x',a,a}\right) = 0.
\end{equation}
Assume first that $T$ is invertible. 
Let $\psi : \cl T_{X,A} \to \cl B(H)$ be the map given by 
\begin{equation}\label{eq_Thalf}
\psi(u) = T^{-1/2} \phi(u) T^{-1/2}, \ \ \ u\in \cl T_{X,A}.
\end{equation}
Setting 
$F = \left(\psi(e_{x,x',a,a'})\right)_{x,x',a,a'}$, we have that 
$$F = \left(I_{XA} \otimes T^{-1/2}\right) E \left(I_{XA} \otimes T^{-1/2}\right) \geq 0.$$
Let $\omega = (\omega_{x,x'}) \in M_X$ and $\sigma$ be a state in $\cl T(H)$. 
Using (\ref{eq_Exx'}), we have
\begin{eqnarray*}
\left\langle \Tr\mbox{}_{A} L_\omega(F),  \sigma \right\rangle
& = & 
\left\langle F, \omega\otimes I_A \otimes \sigma \right\rangle\\
& = & 
\left\langle \left(I_{XA} \otimes T^{-1/2}\right) E \left(I_{XA} \otimes T^{-1/2}\right), \omega\otimes I_A \otimes \sigma \right\rangle\\
& = & 
\left\langle E, \omega\otimes I_A \otimes T^{-1/2} \sigma T^{-1/2}\right\rangle\\
& = & 
\sum_{x,x'\in X}\sum_{a\in A} \omega_{x,x'} \left\langle E_{x,x',a,a}, T^{-1/2} \sigma T^{-1/2}\right\rangle\\
& = & 
\sum_{x\in X}\sum_{a\in A} \omega_{x,x} \left\langle E_{x,x,a,a}, T^{-1/2} \sigma T^{-1/2}\right\rangle\\
& = & 
\sum_{x\in X} \omega_{x,x} \left\langle T, T^{-1/2} \sigma T^{-1/2}\right\rangle
= 
\sum_{x\in X} \omega_{x,x}  = \Tr(\omega).
\end{eqnarray*} 
By Theorem \ref{p_coor}, $F$ is a stochastic operator matrix; by Theorem \ref{p_ucptau2}, $\psi$ is completely positive.
Since 
$\phi(\cdot) = T^{1/2} \psi(\cdot) T^{1/2}$, so is $\phi$. 

Now relax the assumption that $T$ be invertible. For every $\epsilon > 0$, let 
$\phi_{\epsilon} : \cl T_{X,A}\to \cl B(H)$ be the map, given by $\phi_{\epsilon}(u) = \phi(u) + \epsilon I$. 
By the previous paragraph, $\phi_{\epsilon}$ is completely positive. Since $\phi = \lim_{\epsilon \to 0} \phi_{\epsilon}$
in the point-norm topology, $\phi$ is completely positive. 
\end{proof}

Let 
$$\cl L_{X,A} = \left\{(\lambda_{x,x',a,a'})\in M_{XA} : \exists \ c\in \bb{C} \mbox{ s.t.} 
\sum_{a\in A} \lambda_{x,x',a,a} = \delta_{x,x'} c, \ x,x'\in X\right\};$$
we consider $\cl L_{X,A}$ as an operator subsystem of $M_{XA}$. 
For the next proposition, note that, by \cite[Corollary 4.5]{CE2}, if 
$\cl T$ is a finite dimensional operator system than its 
(matrix ordered) dual $\cl T^{\rm d}$ is an operator system, when equipped with any 
faithful state of $\cl T$ as an Archimedean order unit. It is straightforward to verify that, in this case, 
$\cl T^{\rm dd}\cong_{\rm c.o.i.} \cl T$.

\begin{proposition}\label{p_dualt}
The linear map $\Lambda : \cl T_{X,A}^{\rm d} \to \cl L_{X,A}$, given by 
\begin{equation}\label{p_Lam}
\Lambda(\phi) = \left(\phi(e_{x,x',a,a'})\right)_{x,x'\in X, a,a'\in A}
\end{equation}
is a well-defined complete order isomorphism. 
\end{proposition}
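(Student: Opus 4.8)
The plan is to identify $\cl T_{X,A}^{\rm d}$ with a concrete operator subsystem of $M_{XA}$ by pairing a functional $\phi$ with the matrix of its values on the standard generators, and to show that the resulting image is exactly $\cl L_{X,A}$. First I would check that $\Lambda$ is well-defined: given $\phi \in \cl T_{X,A}^{\rm d}$, the relations (\ref{eq_relt}) force $\sum_{a\in A}\phi(e_{x,x',a,a}) = \delta_{x,x'}\phi(1)$, so with $c = \phi(1)$ the matrix $\Lambda(\phi)$ indeed lies in $\cl L_{X,A}$. Injectivity is clear because $\{e_{x,x',a,a'}\}$ spans $\cl T_{X,A}$, and surjectivity amounts to checking that every matrix $(\lambda_{x,x',a,a'})$ satisfying the defining constraint of $\cl L_{X,A}$ arises from a (necessarily unique) linear functional on $\cl T_{X,A}$ — this is a linear-algebra statement about the span of the $e_{x,x',a,a'}$, and the constraint in $\cl L_{X,A}$ is precisely the image of the single linear relation (\ref{eq_relt}) among the generators, so dimensions match.

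Next I would address the order structure. The dual operator system $\cl T_{X,A}^{\rm d}$ carries its canonical matrix order (with a faithful state as Archimedean order unit, as recalled before the statement via \cite{CE2}), and $\cl L_{X,A}$ carries the order inherited from $M_{XA}$. To show $\Lambda$ is a complete order isomorphism I would work at the level of matrix cones: an element of $M_n(\cl T_{X,A}^{\rm d})$ is positive iff, viewed as a map $\cl T_{X,A}\to M_n$, it is completely positive (this is the standard identification of $M_n(\cl S^{\rm d})$ with ${\rm CP}(\cl S, M_n)$ up to the duality). By Proposition \ref{c_ucptau2}, a linear map $\phi : \cl T_{X,A}\to M_n$ is completely positive iff $\left(\phi(e_{x,x',a,a'})\right)_{x,x',a,a'} \in (M_X\otimes M_A\otimes M_n)^+$. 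Under the identification $M_n(\cl L_{X,A}) \subseteq M_n(M_{XA}) = M_X\otimes M_A\otimes M_n$, this positivity condition is exactly the statement that $\Lambda^{(n)}(\phi)$ is a positive element of $M_n(\cl L_{X,A})$. Hence $\Lambda^{(n)}$ carries positive elements to positive elements and conversely, for every $n$, i.e. $\Lambda$ is a complete order isomorphism onto $\cl L_{X,A}$.

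One subtlety to handle carefully: Proposition \ref{c_ucptau2} is stated for maps into $\cl B(H)$, so I would either note it applies verbatim with $H = \bb{C}^n$, or more cleanly rephrase matrix positivity in $M_n(\cl T_{X,A}^{\rm d})$ in terms of complete positivity of the associated map into $M_n = \cl L(\bb{C}^n)$. I also need to be slightly careful about order units: the Archimedean matrix order unit on $\cl T_{X,A}^{\rm d}$ is some faithful state $\delta$ of $\cl T_{X,A}$, and I should check that $\Lambda$ sends it to an order unit of $\cl L_{X,A}$ — but since a complete order isomorphism between operator systems need only match the cones (it then automatically matches Archimedean order units up to the isomorphism), this is automatic once the matrix-cone identification is established, and it is consistent with the earlier remark that $\cl T^{\rm dd} \cong_{\rm c.o.i.} \cl T$.

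The main obstacle I anticipate is not any single deep step but rather bookkeeping the identification $M_n(\cl T_{X,A}^{\rm d}) \cong {\rm CP}(\cl T_{X,A}, M_n)$ together with the reshuffling of tensor legs so that "$\left(\phi(e_{x,x',a,a'})\right)_{x,x',a,a'}$ positive in $M_X\otimes M_A\otimes M_n$" lines up on the nose with "$\Lambda^{(n)}(\phi)$ positive in $M_n(\cl L_{X,A})$" — i.e. making sure the index $(x,x',a,a')$ grouping in the definition of $\cl L_{X,A}$ as a subspace of $M_{XA} = M_X\otimes M_A$ is the same grouping that appears in Proposition \ref{c_ucptau2}. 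Once that indexing is pinned down, the rest follows from Proposition \ref{c_ucptau2} and the standard duality, with well-definedness and bijectivity being the elementary linear-algebra checks described above.
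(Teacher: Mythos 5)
Most of your argument is sound and follows the paper's own route: the identification of $M_m(\cl T_{X,A}^{\rm d})^+$ with completely positive maps $\cl T_{X,A}\to M_m$ combined with Proposition \ref{c_ucptau2} (applied with $H=\bb{C}^m$) is exactly how the paper proves that both $\Lambda$ and $\Lambda^{-1}$ are completely positive, and your well-definedness check via the relations (\ref{eq_relt}) is a legitimate (in fact more streamlined) substitute for the paper's decomposition of a functional into positive parts. Injectivity is as immediate as you say.

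The genuine gap is in your surjectivity step. You assert that ``the constraint in $\cl L_{X,A}$ is precisely the image of the single linear relation (\ref{eq_relt}) among the generators, so dimensions match,'' but this presupposes that the \emph{only} linear relations satisfied by the elements $e_{x,x',a,a'}$ in $\cl T_{X,A}$ are those generated by (\ref{eq_relt}). The relations (\ref{eq_relt}) only give you the inequality $\dim \cl T_{X,A}\leq |X|^2|A|^2-|X|^2+1=\dim \cl L_{X,A}$ for free; a priori the universal TRO construction could force further relations and make $\cl T_{X,A}$ smaller, in which case $\Lambda$ would not be onto. Ruling this out is not pure linear algebra: one must exhibit enough functionals on $\cl T_{X,A}$, i.e.\ invoke the universal property. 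This is exactly what the paper does: $\cl L_{X,A}$ is an operator subsystem of $M_{XA}$, hence spanned by its positive elements; a positive element of $\cl L_{X,A}$ with constant $c>0$ is $c$ times a (scalar) stochastic operator matrix, so by the last part of Theorem \ref{p_ucptau2} (with $H=\bb{C}$) it equals $\Lambda(c\phi)$ for a state $\phi$, while a positive element with $c=0$ has zero diagonal and is therefore $0$. If you replace your dimension count by this argument (or, equivalently, first prove that $\dim\cl T_{X,A}$ attains the upper bound by producing these states), your proof is complete and coincides in substance with the paper's.
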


\begin{proof}
By Proposition \ref{c_ucptau2}, if $\phi\in \cl T_{X,A}\to \bb{C}$ is a positive functional then 
$\left(\phi(e_{x,x',a,a'})\right)_{x,x',a,a'}\in \cl L_{X,A}^+$. 
Thus, the map $\Lambda_+ : \left(\cl T_{X,A}^{\rm d}\right)^+ \to \cl L_{X,A}^+$, given by 
$$\Lambda_+(\phi) = \left(\phi(e_{x,x',a,a'})\right)_{x,x',a,a'}, \ \ \ \phi\in \left(\cl T_{X,A}^{\rm d}\right)^+,$$
is well-defined. It is clear that $\Lambda_+$ is additive and 
$$\Lambda_+(\lambda \phi) = \lambda\Lambda_+(\phi), \ \ \lambda \geq 0, \ \phi\in \left(\cl T_{X,A}^{\rm d}\right)^+.$$
Suppose that $\phi\in \cl T_{X,A}^{\rm d}$ is a hermitian functional. 
If $\phi = \phi_1 - \phi_2$, where $\phi_1$ and $\phi_2$ are positive functionals on $\cl T_{X,A}$, 
set 
$$\Lambda_h(\phi) = \Lambda_+(\phi_1) - \Lambda_+(\phi_2).$$
The map $\Lambda_h : \left(\cl T_{X,A}\right)_h^{\rm d} \to \cl L_{X,A}$ is well-defined: 
if $\phi_1 - \phi_2 = \psi_1 - \psi_2$, where $\phi_1$, $\phi_2$, $\psi_1$ and $\psi_2$ are positive functionals
then, by the additivity of $\Lambda_+$, we have that 
$\Lambda_+(\phi_1) + \Lambda_+(\psi_2) = \Lambda_+(\psi_1) + \Lambda_+(\phi_2)$, that is,
$\Lambda_+(\phi_1) -  \Lambda_+(\phi_2) = \Lambda_+(\psi_1) - \Lambda_+(\psi_2)$.
It is straightforward that the map $\Lambda_h$ is $\bb{R}$-linear, and thus it extends to a 
($\bb{C}$-)linear map $\Lambda : \cl T_{X,A}^{\rm d}\to \cl L_{X,A}$. 

Suppose that $(\phi_{i,j})_{i,j=1}^m\in M_m\left(\cl T_{X,A}^{\rm d}\right)^+$; thus, the map 
$\Phi : \cl T_{X,A}\to M_m$, given by 
$\Phi(u) = (\phi_{i,j}(u))_{i,j=1}^m$, is completely positive. 
By Proposition \ref{c_ucptau2},
$(\phi_{i,j}(e_{x,x',a,a'}))_{i,j} \in \left(M_{XA}\otimes M_m\right)^+$.
This shows that $\Lambda$ is completely positive. 

If $\Lambda(\phi) = 0$ then $\phi(e_{x,x',a,a'}) = 0$ for all $x,x'\in X$ and all $a,a'\in A$, 
implying 
$\phi = 0$; thus, $\Lambda$ is injective. 
Since $\cl L_{X,A}$ is an operator subsystem of $M_{XA}$, 
it is spanned by the positive matrices it contains. 
Using Theorem \ref{p_ucptau2}, we see that 
every positive element of $\cl L_{X,A}$
is in the range of $\Lambda$; it follows that $\Lambda$ is surjective.  

Finally, suppose that 
$\phi_{i,j}\in \cl T_{X,A}^{\rm d}$, $i,j=1,\dots,m$, are such that the matrix 
$\left(\Lambda(\phi_{i,j})\right)_{i,j=1}^m$ is a positive element of $M_m\left(\cl L_{X,A}\right)$.
Let $\Phi : \cl T_{X,A}\to M_m$ be given by $\Phi(u) = (\phi_{i,j}(u))_{i,j=1}^m$.
Then $\left(\Phi(e_{x,x',a,a'})\right)\in M_m\left(\cl L_{X,A}\right)^+$. 
By Proposition \ref{c_ucptau2}, $\Phi$ is completely positive, that is, 
$\left(\phi_{i,j}\right)_{i,j=1}^m\in M_m\left(\cl T_{X,A}^{\rm d}\right)^+$.
Thus, $\Lambda^{-1}$ is completely positive, and the proof is complete. 
\end{proof}

Let $\cl S$ be an operator system. A \emph{kernel} in $\cl S$ \cite{kptt_adv}
is a linear subspace $J\subseteq \cl S$, for which there exists an operator system $\cl T$ and a 
unital completely positive map $\psi : \cl S\to \cl T$ such that $J = \ker(\psi)$. 
If $J$ is a kernel in $\cl S$, the quotient space $\cl S/J$ can be equipped with a unique
operator system structure 
with the property that, 
whenever $\cl T$ is an operator system and 
$\phi : \cl S\to \cl T$ is a completely positive map annihilating $J$, the induced map 
$\tilde{\phi} : \cl S/J\to \cl T$ is completely positive. 
If $\cl T$ is an operator system, a surjective map $\phi : \cl S\to \cl T$ is called 
\emph{complete quotient}, if the map $\tilde{\phi}$ is a complete order isomorphism.
We refer the reader to \cite{kptt_adv} for further details.

Let 
$$J_{X,A} = 
\{(\mu_{x,x',a,a'}) \in M_{XA} : 
\mu_{x,x',a,a'} = 0  \mbox{ and } \mu_{x,x',a,a} = \mu_{x,x',a',a'}, a \neq a',$$
$$\mbox{ and } \sum_{x\in X}\mu_{x,x,a,a} = 0, a\in A\}.$$

\begin{corollary}\label{c_quotxa}
The space $J_{X,A}$ is a kernel in $M_{XA}$ and the operator system $\cl T_{X,A}$ 
is completely order isomorphic to the quotient $M_{XA}/J_{X,A}$.
\end{corollary}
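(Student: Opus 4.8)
The plan is to obtain $\cl T_{X,A}$ as a quotient of $M_{XA}$ by dualising the identification $\cl T_{X,A}^{\rm d}\cong_{\rm c.o.i.}\cl L_{X,A}$ provided by Proposition \ref{p_dualt} and feeding it into the duality between operator subsystems and operator system quotients from \cite{kptt_adv}. First I would record that $\cl L_{X,A}$ is a genuine operator subsystem of $M_{XA}$: it is self-adjoint (if $\lambda\in\cl L_{X,A}$ with $\sum_{a}\lambda_{x,x',a,a}=\delta_{x,x'}c$, then $\sum_{a}(\lambda^*)_{x,x',a,a}=\overline{\sum_{a}\lambda_{x',x,a,a}}=\delta_{x,x'}\bar c$), it contains $I_{XA}$ (with $c=|A|$), and it carries the order inherited from $M_{XA}$, so the inclusion $\cl L_{X,A}\hookrightarrow M_{XA}$ is a complete order embedding. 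Applying \cite{kptt_adv} to the finite-dimensional operator systems $\cl L_{X,A}\subseteq M_{XA}$, the annihilator $(\cl L_{X,A})^{\perp}$ in $M_{XA}^{\rm d}$ is a kernel, and the restriction map induces a complete order isomorphism $M_{XA}^{\rm d}/(\cl L_{X,A})^{\perp}\cong_{\rm c.o.i.}\cl L_{X,A}^{\rm d}$. Transporting this along the canonical complete order isomorphism $M_{XA}^{\rm d}\cong_{\rm c.o.i.}M_{XA}$ associated with the pairing (\ref{eq_rdu}), and using $\cl L_{X,A}^{\rm d}\cong_{\rm c.o.i.}\cl T_{X,A}^{\rm dd}\cong_{\rm c.o.i.}\cl T_{X,A}$ (obtained by dualising Proposition \ref{p_dualt} and then invoking the finite-dimensional biduality recalled before it), we arrive at a kernel $\hat J\subseteq M_{XA}$ with $M_{XA}/\hat J\cong_{\rm c.o.i.}\cl T_{X,A}$, where $\hat J$ is the preimage of $(\cl L_{X,A})^{\perp}$ under $M_{XA}\cong_{\rm c.o.i.}M_{XA}^{\rm d}$. (Chasing the isomorphisms, the corresponding quotient map $M_{XA}\to\cl T_{X,A}$ sends $e_xe_{x'}^*\otimes e_ae_{a'}^*$ to $e_{x,x',a,a'}$.)

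It then remains only to check that $\hat J=J_{X,A}$, which is the one genuinely computational point. Unwinding the pairing (\ref{eq_rdu}), $\hat J=\{\mu\in M_{XA}:\Tr(\lambda\mu^{\rm t})=0\text{ for all }\lambda\in\cl L_{X,A}\}$. An element of $J_{X,A}$ has, after reshuffling $M_{XA}=M_X\otimes M_A$, the form $\mu=\nu\otimes I_A$ with $\nu\in M_X$ and $\Tr(\nu)=0$: indeed the conditions $\mu_{x,x',a,a'}=0$ for $a\neq a'$ and $\mu_{x,x',a,a}=\mu_{x,x',a',a'}$ say precisely that $\mu$ is of this form, while $\sum_x\mu_{x,x,a,a}=0$ becomes $\Tr(\nu)=0$. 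For any $\lambda\in\cl L_{X,A}$ one then computes $\Tr(\lambda\mu^{\rm t})=\Tr\big((\Tr_A\lambda)\,\nu^{\rm t}\big)=\Tr\big(cI_X\,\nu^{\rm t}\big)=c\,\Tr(\nu)=0$, so $J_{X,A}\subseteq\hat J$. Since $\cl L_{X,A}=\Tr_A^{-1}(\bb{C}I_X)$ and $\Tr_A:M_{XA}\to M_X$ is surjective, $\dim\cl L_{X,A}=|X|^2|A|^2-|X|^2+1$, whence $\dim\hat J=\dim M_{XA}-\dim\cl L_{X,A}=|X|^2-1=\dim J_{X,A}$; therefore $\hat J=J_{X,A}$, and the proof is complete.

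I expect the main difficulty to be organisational rather than conceptual: keeping the four dualities in play ($\cl T_{X,A}^{\rm d}\cong\cl L_{X,A}$, $M_{XA}^{\rm d}\cong M_{XA}$, the subsystem--quotient duality of \cite{kptt_adv}, and $\cl T_{X,A}^{\rm dd}\cong\cl T_{X,A}$) mutually consistent, and in particular handling the transposes in (\ref{eq_rdu}) correctly in the annihilator computation. Two minor points deserve care. First, the subsystem--quotient duality must be applied with compatible Archimedean order units; this is automatic here, since the restriction to $\cl L_{X,A}$ of a faithful state of $M_{XA}$ is again faithful. Second, under the resulting isomorphism the class of $I_{XA}$ maps to $|X|\cdot 1_{\cl T_{X,A}}$ rather than to $1_{\cl T_{X,A}}$; but rescaling the Archimedean order unit of an operator system by a positive scalar yields a completely order isomorphic operator system, so this does not affect the statement.
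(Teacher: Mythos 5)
Your argument is correct and is essentially the paper's own proof: both rest on Proposition \ref{p_dualt} together with the finite-dimensional duality between complete order embeddings and complete quotient maps (the paper applies \cite[Proposition 1.8]{fp} directly to $\Lambda$), and then identify the resulting kernel as the annihilator of $\cl L_{X,A}$ under the pairing (\ref{eq_rdu}), which is exactly $J_{X,A}$. Your variations — dualising the inclusion $\cl L_{X,A}\subseteq M_{XA}$ and passing through $\cl T_{X,A}^{\rm dd}\cong\cl T_{X,A}$, and establishing $\hat J=J_{X,A}$ by the containment $\nu\otimes I_A$ plus a dimension count instead of the direct equivalence chain — are cosmetic, and your added care about order units is sound though not an issue the paper addresses.
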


\begin{proof}
By Proposition \ref{p_dualt}, the map
$\Lambda : \cl T_{X,A}^{\rm d} \to M_{XA}$ is a complete order embedding. 
By \cite[Proposition 1.8]{fp}, the dual $\Lambda^* :  M_{XA}^{\rm d} \to \cl T_{X,A}$ 
of $\Lambda$ is a complete quotient map. 
Identifying $M_{XA}^{\rm d}$ with $M_{XA}$ canonically, 
for an element $f\in M_{XA}^{\rm d}$, we have 
\begin{eqnarray*}
\Lambda^*(f) = 0 
& \Longleftrightarrow & 
\left\langle \Lambda^*(f), \phi \right\rangle = 0 \ \mbox{ for all } \phi\in \cl T_{X,A}\\
& \Longleftrightarrow & 
\left\langle f, \Lambda(\phi) \right\rangle = 0 \ \mbox{ for all } \phi\in \cl T_{X,A}\\
& \Longleftrightarrow & 
\left\langle f, T \right\rangle = 0 \ \mbox{ for all } T \in \cl L_{X,A}\\
& \Longleftrightarrow &
f\in J_{X,A}.
\end{eqnarray*}
Thus, $\ker(\Lambda^*) = J_{X,A}$.
\end{proof}


\section{Descriptions via tensor products}\label{s_dtp}

In this section, we provide a description of the classes of QNS correlations, introduced in Section \ref{s_cqnsc},
analogous to the description of the classes of NS correlations given in \cite{lmprsstw} 
(see also \cite{fkpt} and \cite{pt_QJM}). 
We will use the tensor theory of operator systems developed in \cite{kptt}.
If $\cl S$ and $\cl T$ are operator systems,
$\cl S\otimes_{\min}\cl T$ denotes
the minimal tensor product of $\cl S$ and $\cl T$: if 
$\cl A$ and $\cl B$ are unital C*-algebras, 
$\cl A\otimes_{\min} \cl B$ is the spatial tensor product of $\cl A$ and $\cl B$, and 
$\cl S\subseteq_{\rm c.o.i.} \cl A$ and $\cl T\subseteq_{\rm c.o.i.} \cl B$, then 
$\cl S\otimes_{\min}\cl T\subseteq_{\rm c.o.i.} \cl A\otimes_{\min}\cl B$. 
The commuting tensor product $\cl S\otimes_{\rm c} \cl T$ sits completely order isomorphically 
in the maximal tensor product $C^*_u(\cl S) \otimes_{\max} C^*_u(\cl T)$ of the universal C*-covers of 
$\cl S$ and $\cl T$, while 
the maximal tensor product $\cl S\otimes_{\max} \cl T$ is characterised by the property that it linearises
jointly completely positive maps $\theta : \cl S\times \cl T\to \cl B(H)$.
We refer the reader to \cite{kptt} for more details and further background.

Let $X$, $Y$, $A$ and $B$ be finite sets.
As in Section \ref{s_uopsys}, we write 
$e_{x,x',a,a'}$, $x,x'\in X$, $a,a'\in A$, for the canonical generators of $\cl T_{X,A}$. 
Similarly, we write $f_{y,y',b,b'}$, $y,y'\in Y$, $b,b'\in B$, for the canonical generators of $\cl T_{Y,B}$.
Given a linear functional $s : \cl T_{X,A} \otimes \cl T_{Y,B} \to \bb{C}$
(or a linear functional $s : \cl C_{X,A} \otimes \cl C_{Y,B} \to \bb{C}$), we let
$\Gamma_s : M_{XY}\to M_{AB}$ be the linear map given by 
\begin{equation}\label{eq_gammas}
\Gamma_s\left(e_{x} e_{x'}^*\otimes e_{y} e_{y'}^*\right)
= \sum_{a,a'\in A}\sum_{b,b'\in B} s\left(e_{x,x',a,a'} \otimes f_{y,y',b,b'}\right) e_ae_{a'}^*\otimes e_be_{b'}^*.
\end{equation}

\begin{remark}\label{r_ling}
{\rm The correspondence $s\to \Gamma_s$ is a linear map 
from the dual $(\cl T_{X,A} \otimes \cl T_{Y,B})^{\rm d}$ of 
$\cl T_{X,A} \otimes \cl T_{Y,B}$ into the space 
$\cl L(M_{XY},M_{AB})$ of all linear transformations 
from $M_{XY}$ into $M_{AB}$.}
\end{remark}

\begin{theorem}\label{th_qnsma}
Let $X,Y,A,B$ be finite sets and $\Gamma : M_{XY}\to M_{AB}$ be a linear map. The following are equivalent:
\begin{itemize}
\item[(i)] $\Gamma$ is a QNS correlation;
\item[(ii)] there exists a state $s : \cl T_{X,A} \otimes_{\max} \cl T_{Y,B} \to \bb{C}$ such that $\Gamma = \Gamma_s$.
\end{itemize}
\end{theorem}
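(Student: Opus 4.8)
The plan is to establish the equivalence by passing through stochastic operator matrices and the maximal tensor product's defining property.

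First I would prove (ii)$\Rightarrow$(i). Given a state $s$ on $\cl T_{X,A}\otimes_{\max}\cl T_{Y,B}$, the defining universal property of $\otimes_{\max}$ (together with the fact that $\cl T_{X,A}\otimes_{\max}\cl T_{Y,B}\subseteq_{\rm c.o.i.} C^*_u(\cl T_{X,A})\otimes_{\max}C^*_u(\cl T_{Y,B}) = \cl C_{X,A}\otimes_{\max}\cl C_{Y,B}$ by Corollary \ref{c_ucst}) lets me extend $s$ to a state on $\cl C_{X,A}\otimes_{\max}\cl C_{Y,B}$. Applying the GNS construction gives a Hilbert space $H$, a unit vector $\xi$, and commuting $*$-representations $\pi_{\cl A}$ of $\cl C_{X,A}$ and $\pi_{\cl B}$ of $\cl C_{Y,B}$ on $H$ with $s(u\otimes v) = \langle\pi_{\cl A}(u)\pi_{\cl B}(v)\xi,\xi\rangle$. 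Setting $E = (\pi_{\cl A}(e_{x,x',a,a'}))$ and $F = (\pi_{\cl B}(f_{y,y',b,b'}))$, Theorem \ref{p_ucptau2} shows $E$ and $F$ are stochastic operator matrices, and commutativity of the representations makes $(E,F)$ a commuting pair. Comparing (\ref{eq_gammas}) with (\ref{eq_EcdotF}) I get $\Gamma_s = \Gamma_{E,F,\xi}$, which is a QNS correlation by Proposition \ref{p_dotp}(iii).

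Next, (i)$\Rightarrow$(ii). Given a QNS correlation $\Gamma : M_{XY}\to M_{AB}$, I would define a bilinear form $\theta : \cl T_{X,A}\times\cl T_{Y,B}\to\bb{C}$ by $\theta(e_{x,x',a,a'}, f_{y,y',b,b'}) = \langle\Gamma(e_xe_{x'}^*\otimes e_ye_{y'}^*), e_ae_{a'}^*\otimes e_be_{b'}^*\rangle$, so that the associated linear functional $s$ on $\cl T_{X,A}\otimes\cl T_{Y,B}$ satisfies $\Gamma = \Gamma_s$ by construction. The content is to check that $\theta$ is jointly completely positive — equivalently that $s$ is positive on $\cl T_{X,A}\otimes_{\max}\cl T_{Y,B}$ — and unital. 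Unitality is immediate from (\ref{eq_relt}): $s$ evaluated on $1\otimes 1 = \sum_{a,b} e_{x,x,a,a}\otimes f_{y,y,b,b}$ equals $\Tr\Gamma(e_xe_x^*\otimes e_ye_y^*) = 1$ since $\Gamma$ is trace preserving. For joint complete positivity, I would use the Choi matrix $C_\Gamma\in M_{XY}\otimes M_{AB}$ of the completely positive map $\Gamma$, reorganize it as a positive element of $(M_X\otimes M_A)\otimes(M_Y\otimes M_B)$, and invoke Proposition \ref{c_ucptau2} (in both variables) together with the no-signalling conditions (\ref{eq_qns1})--(\ref{eq_qns2}), which are exactly what forces the partial "row sums" to behave correctly so that the functionals land in $\cl L_{X,A}$ and $\cl L_{Y,B}$ respectively; the duality from Proposition \ref{p_dualt} then translates positivity of $C_\Gamma$ into the statement that the map $u\mapsto(v\mapsto s(u\otimes v))$ is completely positive into $\cl T_{Y,B}^{\rm d}\cong\cl L_{Y,B}$, which by the universal property of $\otimes_{\max}$ is the same as positivity of $s$ on the max tensor product.

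The main obstacle is this last step: correctly matching the tensor-factor bookkeeping so that the no-signalling conditions on $\Gamma$ translate precisely into the membership of the relevant slices of the Choi matrix in $\cl L_{X,A}$ and $\cl L_{Y,B}$, and then chaining the two applications of Proposition \ref{c_ucptau2} / Proposition \ref{p_dualt} to conclude complete positivity with respect to $\otimes_{\max}$ rather than merely $\otimes_{\min}$. I expect the cleanest route is to verify that $\theta$ is jointly completely positive directly from the definition — i.e.\ that for any $(u_{i,j})\in M_n(\cl T_{X,A})^+$ and $(v_{k,l})\in M_m(\cl T_{Y,B})^+$ the matrix $(\theta(u_{i,j}, v_{k,l}))\in M_{nm}$ is positive — by reducing via Proposition \ref{c_ucptau2} to stochastic operator matrices $E$, $F$ and recognizing the resulting scalar as $\langle(\Gamma\text{-action})\dots\rangle$ applied to a positive input, which is positive since $\Gamma$ is completely positive; the no-signalling conditions are then what guarantee the reduction respects the relations (\ref{eq_bc}) on both sides.
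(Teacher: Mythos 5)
The step that fails is in your (ii)$\Rightarrow$(i). You assert that $\cl T_{X,A}\otimes_{\max}\cl T_{Y,B}\subseteq_{\rm c.o.i.} C^*_u(\cl T_{X,A})\otimes_{\max}C^*_u(\cl T_{Y,B}) = \cl C_{X,A}\otimes_{\max}\cl C_{Y,B}$ and use this to extend $s$ to a state of the C*-algebraic maximal tensor product. That embedding is the defining property of the \emph{commuting} tensor product $\otimes_{\rm c}$, not of the operator system maximal tensor product: by Corollary \ref{c_ucst} and \cite[Theorem 6.4]{kptt} it is $\cl T_{X,A}\otimes_{\rm c}\cl T_{Y,B}$ that sits completely order isomorphically inside $\cl C_{X,A}\otimes_{\max}\cl C_{Y,B}$, and this is precisely how the paper proves the equivalence (ii)$\Leftrightarrow$(iii) in Theorem \ref{t_dqc}. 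The canonical map $\cl T_{X,A}\otimes_{\max}\cl T_{Y,B}\to\cl C_{X,A}\otimes_{\max}\cl C_{Y,B}$ is unital completely positive by functoriality, but it is not an order embedding here, so a state of the operator system max tensor product need not extend. Indeed, if your extension step were valid, your GNS argument would exhibit every $\Gamma_s$, for $s$ a state on $\cl T_{X,A}\otimes_{\max}\cl T_{Y,B}$, in the form $\Gamma_{E,F,\xi}$ with $(E,F)$ a commuting pair, i.e.\ it would place it in $\cl Q_{\rm qc}$; combined with the theorem being proved (or with Theorem \ref{t_dqc}) this would force $\cl Q_{\rm ns}=\cl Q_{\rm qc}$, which is false, since already $\cl C_{\rm qc}\neq\cl C_{\rm ns}$ and the classes separate under classical reduction (Remark \ref{th_onedq}). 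The correct argument for this direction is much more elementary: by (\ref{eq_TC}) and the definition of $\otimes_{\max}$, the matrix $\left(e_{x,x',a,a'}\otimes f_{y,y',b,b'}\right)$ is positive in $M_{XYAB}\left(\cl T_{X,A}\otimes_{\max}\cl T_{Y,B}\right)$, so applying $s$ entrywise shows that the Choi matrix of $\Gamma_s$ is positive, and the relations (\ref{eq_relt}) then give trace preservation together with conditions forcing (\ref{eq_qns1}) and (\ref{eq_qns2}) by a direct computation.

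Your (i)$\Rightarrow$(ii) is essentially the paper's proof and is sound in outline: positivity of the Choi matrix plus the two no-signalling conditions place it in $\left(\cl L_{X,A}\otimes\cl L_{Y,B}\right)\cap M_{XYAB}^+$, hence in $\left(\cl L_{X,A}\otimes_{\min}\cl L_{Y,B}\right)^+$ by injectivity of $\otimes_{\min}$, and the duality $\left(\cl T_{X,A}\otimes_{\max}\cl T_{Y,B}\right)^{\rm d}\cong_{\rm c.o.i.}\cl L_{X,A}\otimes_{\min}\cl L_{Y,B}$ (Proposition \ref{p_dualt} combined with \cite[Proposition 1.9]{fp}) converts this positive element into the desired state; your reformulation via complete positivity of the map $u\mapsto\left(v\mapsto s(u\otimes v)\right)$ into $\cl T_{Y,B}^{\rm d}$ is a correct rephrasing of the same duality. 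So the genuine gap is confined to the converse direction, where the appeal to the universal C*-cover must be abandoned in favour of the direct Choi-matrix argument.
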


\begin{proof}
(i)$\Rightarrow$(ii) 
Let $\Gamma : M_{XY}\to M_{AB}$ be a QNS correlation and write 
$$\Gamma\left(e_x e_{x'}^*\otimes e_y e_{y'}^*\right)
= \sum_{a,a'\in A}\sum_{b,b'\in B} C^{x,x',y,y'}_{a,a',b,b'} e_ae_{a'}^*\otimes e_be_{b'}^*,$$
for some $C^{x',x,y',y}_{a,a',b,b'}\in \bb{C}$, $x,x'\in X$, $y,y'\in Y$, $a,a'\in A$, $b,b'\in B$.
It follows from (\ref{eq_qns1}) and (\ref{eq_qns2}) that 
the Choi matrix $C:= \left(C^{x,x',y,y'}_{a,a',b,b'}\right)$ of $\Gamma$ satisfies the 
following conditions (see also \cite{dw}): 
\begin{itemize}
\item[(a)] $C\in M_{XYAB}^+$;
\item[(b)] there exists $c_{b,b'}^{y,y'} \in \bb{C}$ such that 
$\sum_{a\in A} C^{x,x',y,y'}_{a,a,b,b'} = \delta_{x,x'} c_{b,b'}^{y,y'}$, $y,y'\in Y$, $b,b'\in B$;
\item[(c)] there exists $d_{a,a'}^{x,x'}\in \bb{C}$ such that
$\sum_{b\in B} C^{x,x',y,y'}_{a,a',b,b} = \delta_{y,y'} d_{a,a'}^{x,x'}$, $x,x'\in X$, $a,a'\in A$.
\end{itemize}
By condition (b), $L_{\omega_{YB}}(C)\in \cl L_{X,A}$ for every $\omega_{YB}\in M_{YB}$, while 
by condition (c), $L_{\omega_{XA}}(C)\in \cl L_{Y,B}$ for every $\omega_{XA}\in M_{XA}$. 
Thus, 
$$C\in \left(\cl L_{X,A}\otimes\cl L_{Y,B}\right)\cap M_{XYAB}^+;$$
by the injectivity of the minimal operator system tensor product, 
$C\in \left(\cl L_{X,A}\otimes_{\min}\cl L_{Y,B}\right)^+$.

By \cite[Proposition 1.9]{fp} and Proposition \ref{p_dualt}, 
\begin{equation}\label{eq_PXA}
\left(\cl T_{X,A}\otimes_{\max}\cl T_{Y,B}\right)^{\rm d} \cong_{\rm c.o.i.} \cl L_{X,A}\otimes_{\min}\cl L_{Y,B},
\end{equation}
via the identification $\Lambda$ given by (\ref{p_Lam}). 
The state $s$ of $\cl T_{X,A}\otimes_{\max}\cl T_{Y,B}$ corresponding to $C$ 
via (\ref{eq_PXA}) satisfies 
\begin{equation}\label{eq_Cs}
C^{x,x',y,y'}_{a,a',b,b'} = s\left(e_{x,x',a,a'}\otimes f_{y,y',b,b'}\right), \ \ x,x'\in X, y,y'\in Y, a,a'\in A, b,b'\in B.
\end{equation}
Thus, $\Gamma = \Gamma_s$.

(ii)$\Rightarrow$(i)
Let $s$ be a state of $\cl T_{X,A}\otimes_{\max}\cl T_{Y,B}$, and define $C^{x,x',y,y'}_{a,a',b,b'}$ via 
(\ref{eq_Cs}); thus, $C$ is the Choi matrix of $\Gamma_s$. 
By (\ref{eq_TC}) and and the definition of the maximal tensor procuct,
$$\left(e_{x,x',a,a'} \otimes f_{y,y',b,b'}\right)\in M_{XYAB}\left(\cl T_{X,A} \otimes_{\max} \cl T_{Y,B}\right)^+,$$
and hence the matrix $C:= \left(C^{x,x',y,y'}_{a,a',b,b'}\right)$ is positive; 
by Choi's Theorem, $\Gamma_s$ is completely positive. 
Relations (\ref{eq_relt}) imply that $\Gamma_s$ is trace preserving and
that conditions (b) and (c) hold. 
Suppose that $\rho_X = (\rho_{x,x'})_{x,x'} \in M_X$ has zero trace and 
$\rho_Y = (\rho^{y,y'})_{y,y'} \in M_Y$. 
We have 
\begin{eqnarray*}
& & \sum_{x,x'\in X} \sum_{y,y'\in Y} \sum_{b,b'\in B} \sum_{a\in A} C^{x,x',y,y'}_{a,a,b,b'} 
\rho_{x,x'} \rho^{y,y'} e_b e_{b'}^*\\
& = & 
\left(\sum_{x\in X} \rho_{x,x}\right) \sum_{y,y'\in Y} \sum_{b,b'\in B}   \rho^{y,y'}  c_{b,b'}^{y,y'} e_b e_{b'}^* = 0,
\end{eqnarray*} 
that is, (\ref{eq_qns1}) holds; similarly, (c) implies (\ref{eq_qns2}). 
\end{proof}

\begin{theorem}\label{t_dqc}
Let $X,Y,A,B$ be finite sets and $\Gamma : M_{XY}\to M_{AB}$ be a linear map. The following are equivalent:
\begin{itemize}
\item[(i)] $\Gamma$ is a quantum commuting QNS correlation;
\item[(ii)] there exists a state $s : \cl T_{X,A} \otimes_{\rm c} \cl T_{Y,B} \to \bb{C}$ such that $\Gamma = \Gamma_s$;
\item[(iii)] there exists a state $s : \cl C_{X,A} \otimes_{\max} \cl C_{Y,B} \to \bb{C}$ such that $\Gamma = \Gamma_s$.
\end{itemize}
\end{theorem}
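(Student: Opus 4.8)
\textbf{Proof plan for Theorem \ref{t_dqc}.}
The plan is to prove the cycle (i)$\Rightarrow$(iii)$\Rightarrow$(ii)$\Rightarrow$(i), using the universal properties established earlier together with the structure of the commuting and maximal tensor products. The implication (iii)$\Rightarrow$(ii) is essentially formal: by the remarks preceding the theorem, $\cl T_{X,A}\otimes_{\rm c}\cl T_{Y,B}$ embeds completely order isomorphically in $C^*_u(\cl T_{X,A})\otimes_{\max}C^*_u(\cl T_{Y,B})$, and by Corollary \ref{c_ucst} the universal C*-covers are exactly $\cl C_{X,A}$ and $\cl C_{Y,B}$; so a state on $\cl C_{X,A}\otimes_{\max}\cl C_{Y,B}$ restricts to a state on $\cl T_{X,A}\otimes_{\rm c}\cl T_{Y,B}$, and the formula (\ref{eq_gammas}) only sees the generators $e_{x,x',a,a'}\otimes f_{y,y',b,b'}$, which lie in the subsystem. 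Hence $\Gamma_s$ is unchanged.

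For (i)$\Rightarrow$(iii), start from a quantum commuting $\Gamma = \Gamma_{E,F,\xi}$, where $(E,F)$ is a commuting pair of stochastic operator matrices on a Hilbert space $H$ and $\xi\in H$ is a unit vector. By Theorem \ref{p_ucptau2}, the stochastic operator matrix $E$ yields a unital $*$-representation $\pi_1 : \cl C_{X,A}\to \cl B(H)$ with $\pi_1(e_{x,x',a,a'}) = E_{x,x',a,a'}$, and similarly $F$ yields $\pi_2 : \cl C_{Y,B}\to \cl B(H)$ with $\pi_2(f_{y,y',b,b'}) = F_{y,y',b,b'}$. The commuting condition on $(E,F)$ says precisely that the ranges of $\pi_1$ and $\pi_2$ commute (here one should check that commutation of the generators $E_{x,x',a,a'}$ with $F_{y,y',b,b'}$ propagates to the generated C*-algebras, which is immediate since commutants are closed). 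Therefore the pair $(\pi_1,\pi_2)$ induces a $*$-representation $\pi : \cl C_{X,A}\otimes_{\max}\cl C_{Y,B}\to \cl B(H)$ with $\pi(u\otimes v) = \pi_1(u)\pi_2(v)$. Define $s(w) = \langle \pi(w)\xi,\xi\rangle$; this is a state, and by (\ref{eq_gammas}),
\begin{eqnarray*}
\Gamma_s\left(e_x e_{x'}^*\otimes e_y e_{y'}^*\right)
& = & \sum_{a,a',b,b'} \langle E_{x,x',a,a'}F_{y,y',b,b'}\xi,\xi\rangle\, e_ae_{a'}^*\otimes e_be_{b'}^*,
\end{eqnarray*}
which by (\ref{eq_EcdotF}) and the definition of $\Gamma_{E,F,\xi} = \Gamma_{E\cdot F,\xi\xi^*}$ equals $\Gamma(e_x e_{x'}^*\otimes e_y e_{y'}^*)$. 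So $\Gamma = \Gamma_s$.

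For (ii)$\Rightarrow$(i), let $s$ be a state on $\cl T_{X,A}\otimes_{\rm c}\cl T_{Y,B}$. Using the identification of $\cl T_{X,A}\otimes_{\rm c}\cl T_{Y,B}$ as a subsystem of $\cl C_{X,A}\otimes_{\max}\cl C_{Y,B}$ and Arveson's extension theorem, extend $s$ to a state $\tilde s$ on $\cl C_{X,A}\otimes_{\max}\cl C_{Y,B}$; then apply the GNS construction to $\tilde s$ to get a cyclic representation $\pi : \cl C_{X,A}\otimes_{\max}\cl C_{Y,B}\to \cl B(H)$ with cyclic vector $\xi$ and $\tilde s(\cdot) = \langle\pi(\cdot)\xi,\xi\rangle$. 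Compose $\pi$ with the canonical $*$-homomorphisms $\cl C_{X,A}\to \cl C_{X,A}\otimes_{\max}\cl C_{Y,B}$, $u\mapsto u\otimes 1$, and $\cl C_{Y,B}\to \cl C_{X,A}\otimes_{\max}\cl C_{Y,B}$, $v\mapsto 1\otimes v$, to obtain commuting unital $*$-representations $\pi_1 : \cl C_{X,A}\to \cl B(H)$ and $\pi_2 : \cl C_{Y,B}\to \cl B(H)$. Set $E_{x,x',a,a'} = \pi_1(e_{x,x',a,a'})$ and $F_{y,y',b,b'} = \pi_2(f_{y,y',b,b'})$; by Theorem \ref{p_ucptau2} (applied to $\pi_1|_{\cl T_{X,A}}$ and $\pi_2|_{\cl T_{Y,B}}$), these are the entries of stochastic operator matrices $E$ and $F$, which form a commuting pair since $\pi_1$ and $\pi_2$ have commuting ranges. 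Finally, reading off (\ref{eq_gammas}) as above shows $\Gamma_s = \Gamma_{E,F,\xi}$, so $\Gamma$ is quantum commuting.

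I expect the main obstacle — though it is more bookkeeping than a genuine difficulty — to be the passage between the commuting tensor product $\cl T_{X,A}\otimes_{\rm c}\cl T_{Y,B}$ and the maximal C*-tensor product $\cl C_{X,A}\otimes_{\max}\cl C_{Y,B}$: one must invoke the characterisation (stated in the excerpt) that $\cl S\otimes_{\rm c}\cl T \subseteq_{\rm c.o.i.} C^*_u(\cl S)\otimes_{\max}C^*_u(\cl T)$, combined with Corollary \ref{c_ucst}, and be careful that extending states via Arveson and restricting states both interact correctly with the formula (\ref{eq_gammas}) defining $\Gamma_s$ — which they do, since that formula depends only on the values of the state on the product generators $e_{x,x',a,a'}\otimes f_{y,y',b,b'}$. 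Everything else is a direct application of Theorem \ref{p_ucptau2}, Stinespring/GNS, and the fact that commutation of generating sets passes to the generated C*-algebras.
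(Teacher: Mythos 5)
Your proposal is correct and follows essentially the same route as the paper: Theorem \ref{p_ucptau2} to pass between stochastic operator matrices and representations of $\cl C_{X,A}$, the universal property of $\otimes_{\max}$ for commuting ranges, the GNS construction, and the complete order embedding $\cl T_{X,A}\otimes_{\rm c}\cl T_{Y,B}\subseteq_{\rm c.o.i.}\cl C_{X,A}\otimes_{\max}\cl C_{Y,B}$ coming from Corollary \ref{c_ucst}. The only cosmetic differences are that you argue the cycle (i)$\Rightarrow$(iii)$\Rightarrow$(ii)$\Rightarrow$(i) and work with a vector state, whereas the paper proves (i)$\Leftrightarrow$(iii) directly with a general normal state and handles (ii)$\Leftrightarrow$(iii) by the state restriction/extension argument you also use.
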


\begin{proof}
(i)$\Rightarrow$(iii)
Let $H$ be a Hilbert space, 
$E\in M_X\otimes M_A\otimes \cl B(H)$, $F\in M_Y\otimes M_B\otimes \cl B(H)$ 
form a commuting pair of stochastic operator matrices, 
and $\tau\in\cl T(H)^+$ be such that $\Gamma = \Gamma_{E\cdot F,\tau}$. 
By Theorem \ref{p_ucptau2}, there exist 
representations $\pi_X$ and $\pi_Y$ of 
$\cl C_{X,A}$ and $\cl C_{Y,B}$, respectively, such that 
$E_{x,x',a,a'} = \pi_X(e_{x,x',a,a'})$ and $F_{y,y',b,b'} = \pi_Y(e_{y,y',b,b'})$
for all $x,x'\in X$, $y,y'\in Y$, $a,a'\in A$, $b,b'\in B$.
Since $\cl C_{X,A}$ (resp. $\cl C_{Y,B}$) is generated by 
the elements $e_{x,x',a,a'}$, $x,x'\in X$, $a,a'\in A$ (resp. $f_{y,y',b,b'}$, $y,y'\in Y$, $b,b'\in B$), 
$\pi_X$ and $\pi_Y$ have commuting ranges.
Let $\pi_X\times\pi_Y$ be the (unique) *-representation $\cl C_{X,A}\otimes_{\max}\cl C_{Y,B}\to \cl B(H)$ 
such that $(\pi_X\times\pi_Y)(u\otimes v) = \pi_X(u)\pi_Y(v)$, $u\in \cl C_{X,A}$, $v\in \cl C_{Y,B}$. 
By (\ref{eq_EcdotF}), 
\begin{eqnarray*}
& & \left\langle\Gamma_{E\cdot F,\tau}(e_{x} e_{x'}^*\otimes e_{y} e_{y'}^*),e_{a} e_{a'}^*\otimes e_{b}e_{b'}^*\right\rangle\\
& & = \left\langle E_{x,x',a,a'}F_{y,y',b,b'},\tau\right\rangle
= \left\langle(\pi_X\times\pi_Y)(e_{x,x',a,a'}\otimes f_{y,y',b,b'}),\tau\right\rangle.
\end{eqnarray*}
Letting $s(w) = \langle(\pi_X\times\pi_Y)(w),\tau\rangle$, 
$w\in \cl C_{X,A}\otimes_{\max}\cl C_{Y,B}$, we have $\Gamma = \Gamma_s$.

(iii)$\Rightarrow$(i) Let $s$ be a state on $\cl C_{X,A}\otimes_{\max}\cl C_{Y,B}$ and write 
$\pi_s$ and $\xi_s$ for the corresponding  GNS representation of $\cl C_{X,A}\otimes_{\max}\cl C_{Y,B}$ and  
for its cyclic vector, respectively. Then $E:=(\pi_s(e_{x,x',a,a'}\otimes 1))_{x,x',a,a'}$ and 
$F:=(\pi_s(1\otimes f_{y,y',b,b'})_{y,y',b,b'}$ form a commuting pair of
stochastic operator matrices; moreover, 
for $x,x'\in X$, $y,y'\in Y$, $a,a'\in A$ and $b,b'\in B$, we have 
\begin{eqnarray*}
\left\langle\Gamma_s(e_{x}e_{x'}^*\otimes e_{y}e_{y'}^*), e_{a}e_{a'}^*\otimes e_{b}e_{b'}^*\right\rangle
& = & 
s(e_{x,x',a,a'}\otimes f_{y,y',b,b'})\\ 
& = & 
\langle \pi_s(e_{x,x',a,a'}\otimes f_{y,y',b,b'})\xi_s,\xi_s\rangle\\
& = & 
\langle E_{x,x',a,a'} F_{y,y',b,b'}\xi_s,\xi_s\rangle\\
& = & 
\left\langle\Gamma_{E,F,\xi_s}(e_{x} e_{x'}^*\otimes e_{y} e_{y'}^*), e_{a}e_{a'}^*\otimes e_{b} e_{b'}^*\right\rangle.
\end{eqnarray*}

(ii)$\Leftrightarrow$(iii) 
By Corollary \ref{c_ucst} and \cite[Theorem 6.4]{kptt}, 
$\cl T_{X,A}\otimes_{\rm c} \cl T_{Y,B}$ sits completely order isomorphically in $\cl C_{X,A}\otimes_{\max} \cl C_{Y,B}$;
thus the states of $\cl T_{X,A}\otimes_{\rm c} \cl T_{Y,B}$ are precisely the restrictions of the states
of $\cl C_{X,A}\otimes_{\max} \cl C_{Y,B}$.
\end{proof}

\begin{corollary}\label{c_closed}
The set $\cl Q_{\rm qc}$ is closed and convex.
\end{corollary}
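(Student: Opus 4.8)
The plan is to read the convexity off Theorem \ref{t_dqc}, and to deduce closedness from a compactness argument applied to the states on the commuting tensor product. Convexity is the easy half: by Theorem \ref{t_dqc}, $\Gamma \in \cl Q_{\rm qc}$ if and only if $\Gamma = \Gamma_s$ for some state $s$ on $\cl T_{X,A} \otimes_{\rm c} \cl T_{Y,B}$, the assignment $s \mapsto \Gamma_s$ is linear by Remark \ref{r_ling}, and the set $S(\cl T_{X,A} \otimes_{\rm c} \cl T_{Y,B})$ of states is convex; hence its image $\cl Q_{\rm qc}$ is convex.

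For closedness, I would argue as follows. Suppose $(\Gamma_n)_{n}$ is a sequence in $\cl Q_{\rm qc}$ converging (in the finite-dimensional space $\cl L(M_{XY}, M_{AB})$) to some $\Gamma$. Write $\Gamma_n = \Gamma_{s_n}$ with $s_n$ a state on the operator system $\cl S := \cl T_{X,A} \otimes_{\rm c} \cl T_{Y,B}$, which is finite-dimensional (it is spanned by the products $e_{x,x',a,a'} \otimes f_{y,y',b,b'}$). The state space $S(\cl S)$ is a weak-$*$ compact convex subset of the finite-dimensional dual $\cl S^{\rm d}$, so by passing to a subsequence we may assume $s_n \to s$ for some state $s$ on $\cl S$. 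Since evaluation at each generator $e_{x,x',a,a'} \otimes f_{y,y',b,b'}$ is continuous, formula (\ref{eq_gammas}) gives $\Gamma_{s_n} \to \Gamma_s$ entrywise; by uniqueness of limits, $\Gamma = \Gamma_s$, and so $\Gamma \in \cl Q_{\rm qc}$ by Theorem \ref{t_dqc} again.

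The only point requiring a little care — and the one I would flag as the main obstacle — is making sure the range of $s \mapsto \Gamma_s$ really is all of $\cl Q_{\rm qc}$ and that the topology on states matches the topology on $\cl Q_{\rm qc}$; both are handled cleanly by Theorem \ref{t_dqc} (every quantum commuting correlation arises from such a state, and conversely) together with the finite-dimensionality of $\cl T_{X,A}$ and $\cl T_{Y,B}$, which guarantees $\cl S$ is finite-dimensional so that its state space is compact. Alternatively, one can bypass states entirely and mimic the Carathéodory-plus-compactness argument used in Remark \ref{r_lqqnsi} for $\cl Q_{\rm loc}$: every $\Gamma \in \cl Q_{\rm qc}$ lies in a fixed finite-dimensional real space, so a limit of such correlations is again represented by a limit of the underlying data; but the state-space route above is shorter given Theorem \ref{t_dqc} is already in hand.
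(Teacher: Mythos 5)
Your proof is correct and follows essentially the same route as the paper: both rest on Theorem \ref{t_dqc} together with the (weak$^*$) compactness of the state space of $\cl T_{X,A}\otimes_{\rm c}\cl T_{Y,B}$, the paper phrasing it as the image of that compact convex set under the affine map $s\mapsto\Gamma_s$, while you extract a convergent subsequence of states. The finite-dimensionality observation you make is exactly what justifies the paper's one-line "homeomorphism" claim, so there is no substantive difference.
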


\begin{proof}
By Theorem \ref{t_dqc} and Remark \ref{r_ling}, the map $s\to \Gamma_s$ is an affine bijection 
from the state space of $\cl T_{X,A} \otimes_{\rm c} \cl T_{Y,B}$ onto $\cl Q_{\rm qc}$. 
It is straightforward that it is also a homeomorphism, 
when its domain is equipped with the weak* topology.
Since the state space of $\cl T_{X,A} \otimes_{\rm c} \cl T_{Y,B}$ is 
weak* compact, its range is (convex and) closed. 
\end{proof}

\begin{theorem}\label{t_dqa}
Let $X,Y,A,B$ be finite sets and $\Gamma : M_{XY}\to M_{AB}$ be a linear map. The following are equivalent:
\begin{itemize}
\item[(i)] $\Gamma$ is an approximately quantum QNS correlation;

\item[(ii)] there exists a state $s : \cl T_{X,A} \otimes_{\min} \cl T_{Y,B} \to \bb{C}$ such that $\Gamma = \Gamma_s$;

\item[(iii)] there exists a state $s : \cl C_{X,A} \otimes_{\min} \cl C_{Y,B} \to \bb{C}$ such that $\Gamma = \Gamma_s$.
\end{itemize}
\end{theorem}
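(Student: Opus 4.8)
The plan is to mimic the structure of the proof of Theorem \ref{t_dqc}, replacing the maximal/commuting tensor product by the minimal one throughout, and using that the minimal tensor product is the relevant one for spatial (i.e.\ tensor-factor) constructions. The equivalence (ii)$\Leftrightarrow$(iii) should be immediate from Corollary \ref{c_ucst} together with the functoriality of $\otimes_{\min}$ for complete order embeddings: since $\cl T_{X,A}\subseteq_{\rm c.o.i.}\cl C_{X,A}$ and $\cl T_{Y,B}\subseteq_{\rm c.o.i.}\cl C_{Y,B}$, we get $\cl T_{X,A}\otimes_{\min}\cl T_{Y,B}\subseteq_{\rm c.o.i.}\cl C_{X,A}\otimes_{\min}\cl C_{Y,B}$, so that states of the former are exactly restrictions of states of the latter, and $\Gamma_s$ depends only on the restriction.

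For (i)$\Rightarrow$(ii), I would first handle a single quantum QNS correlation $\Gamma = \Gamma_{E\odot F,\sigma}$ with $E\in M_X\otimes M_A\otimes\cl L(H_A)$, $F\in M_Y\otimes M_B\otimes\cl L(H_B)$ and $\sigma$ a state on $H_A\otimes H_B$. By Theorem \ref{p_ucptau2} there are unital $*$-representations $\pi_X:\cl C_{X,A}\to\cl B(H_A)$ and $\pi_Y:\cl C_{Y,B}\to\cl B(H_B)$ with $\pi_X(e_{x,x',a,a'})=E_{x,x',a,a'}$ and $\pi_Y(f_{y,y',b,b'})=F_{y,y',b,b'}$. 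The map $\pi_X\otimes\pi_Y:\cl C_{X,A}\otimes_{\min}\cl C_{Y,B}\to\cl B(H_A\otimes H_B)$ is then a well-defined unital $*$-homomorphism (this is exactly where minimality is used, via the spatial nature of $\otimes_{\min}$), and setting $s(w)=\langle(\pi_X\otimes\pi_Y)(w),\sigma\rangle$ gives a state with $\Gamma=\Gamma_s$, using Remark \ref{r_split} and the formula (\ref{eq_EcdotF}) to match coefficients. Then, since the set of states $s$ on $\cl C_{X,A}\otimes_{\min}\cl C_{Y,B}$ is weak*-compact and $s\mapsto\Gamma_s$ is weak*-to-norm continuous with affine image, and since by Definition \ref{d_qqns}(ii) an approximately quantum correlation is a norm limit of quantum ones, the image of the state space is a closed set containing $\cl Q_{\rm q}$ and hence contains $\cl Q_{\rm qa}$.

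For (iii)$\Rightarrow$(i) — equivalently (ii)$\Rightarrow$(i) — I would argue that a state $s$ on the minimal tensor product can be approximated in the weak* topology by states of a nicer form. The cleanest route: a state on $\cl C_{X,A}\otimes_{\min}\cl C_{Y,B}$ restricts to a positive functional on the \emph{spatial} C*-algebraic tensor product $C^*(\cl C_{X,A})\otimes_{\min}C^*(\cl C_{Y,B})$ generated inside some faithful representation; finite-dimensional approximation (compressions to finite-rank projections, or approximating by vector states on $H_A\otimes H_B$ with $H_A,H_B$ finite-dimensional) then yields states of the form $\langle(\pi_X\otimes\pi_Y)(\cdot)\xi,\xi\rangle$ with finite-dimensional $\pi_X,\pi_Y$ and a unit vector $\xi$, which by Theorem \ref{p_ucptau2} and Remark \ref{r_split} give quantum QNS correlations; passing to the limit gives membership in $\cl Q_{\rm qa}$.

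The main obstacle is the finite-dimensional approximation in (iii)$\Rightarrow$(i): one must pass from an abstract state on $\cl C_{X,A}\otimes_{\min}\cl C_{Y,B}$ to vector states for \emph{finite-dimensional} stochastic operator matrices, while preserving the stochastic (channel) structure on both sides. The delicate point is that arbitrary compressions need not preserve the relation $\Tr_A E = I_X\otimes I_H$; the fix is to compress the GNS representation of $s$ to an increasing net of finite-dimensional subspaces that are asymptotically invariant for the generators and that contain (asymptotically) the cyclic vector, so that the compressed generators asymptotically satisfy (\ref{eq_bc3}) and one can correct them to exact stochastic operator matrices with vanishing error — exactly the standard argument showing $\cl C_{\rm qa}$ is governed by $\otimes_{\min}$ in the classical case.
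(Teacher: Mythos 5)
Your proposal is correct and follows essentially the same route as the paper: (ii)$\Leftrightarrow$(iii) via injectivity of $\otimes_{\min}$ and Corollary \ref{c_ucst}; (i)$\Rightarrow$(ii) by representing finite-dimensional stochastic operator matrices through Theorem \ref{p_ucptau2}/Lemma \ref{l_rr} and passing to a weak* limit (the paper takes a cluster point of states $s_{1/n}$, which is the same compactness argument you package as "closed image of the state space"); and (iii)$\Rightarrow$(i) by approximating $s$ by convex combinations of vector states in a faithful spatial representation (amalgamated into a single vector via a direct-sum amplification) and then cutting down by finite-rank projections. The only substantive difference is that the "delicate point" you flag is not actually an issue: if $E$ is stochastic then $\Tr_A\bigl((I_X\otimes I_A\otimes P)E(I_X\otimes I_A\otimes P)\bigr) = I_X\otimes P$, which \emph{is} the identity on $P H$, so the compression is exactly a stochastic operator matrix on the compressed space (and positivity is trivially preserved); the paper therefore compresses directly, normalising only the cut-down vector $\xi_{\alpha,\beta}$, and your proposed GNS asymptotic-invariance correction, while harmless, is unnecessary.
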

\begin{proof}
The proof is along the lines of the proof of \cite[Theorem 2.8]{pt_QJM};
we include the details for the convenience of the reader.

(iii)$\Rightarrow$(i) 
Let $\pi_X : \cl C_{X,A}\to\cl B(H_X)$ and $\pi_Y : \cl C_{Y,B}\to\cl B(H_Y)$ be faithful *-representations. 
Then $\pi_X\otimes\pi_Y : \cl C_{X,A} \otimes_{\min} \cl C_{Y,B}\to \cl B(H_X\otimes H_Y)$ 
is a faithful *-representation of $\cl C_{X,A} \otimes_{\min} \cl C_{Y,B}$. 
Let $s$ be a state satisfying (iii).  
By \cite[Corollary 4.3.10]{kadison-ringrose}, $s$ can be approximated in the 
weak* topology by elements of the convex hull of  vector states on  
$(\pi_X\otimes\pi_Y)(\cl C_{X,A} \otimes_{\min} \cl C_{Y,B})$;
thus, given $\varepsilon > 0$, 
there exist unit vectors 
$\xi_1,\ldots,\xi_n\in H_X\otimes H_Y$ and positive scalars $\lambda_1,\ldots,\lambda_n$ with $\sum_{i=1}^n\lambda_i=1$ such that
$$\left|s(e_{x,x',a,a'}\otimes f_{y,y',b,b'})
- \sum_{i=1}^n\lambda_i\left\langle \left(\pi_X(e_{x,x',a,a'})\otimes \pi_Y(f_{y,y',b,b'})\right)\xi_i,\xi_i\right\rangle\right|
< \varepsilon,$$
for all $x,x'\in X$, $y,y'\in Y$, $a,a'\in A$ and $b,b'\in B$.
Let $\xi = \oplus_{i=1}^n\sqrt{\lambda_i}\xi_i\in \mathbb C^{n}\otimes(H_X\otimes H_Y)$; then $\|\xi\| = 1$.  
Set $E_{x,x',a,a'} = I_n \otimes \pi_X(e_{x,x',a,a'})$ and $F_{y,y',b,b'} = \pi_Y(f_{y,y',b,b'})$. 
Then $(E_{x,x'a,a'})_{x,x',a,a'}$ (resp. $(F_{y,y',b,b'})_{y,y',b,b'}$)
is a stochastic operator matrix on $\bb{C}^n\otimes H_X$ (resp. $H_Y$), and 
$$\left|s\left(e_{x,x',a,a'}\otimes f_{y,y',b,b'}\right) 
- \left\langle E_{x,x'a,a'}\otimes F_{y,y',b,b'}\xi,\xi\right\rangle\right| < \varepsilon.$$

It follows that $\Gamma_s$ is in the closure of the set of correlations of the form 
$\Gamma_{E\odot F,\xi}$, where $E$ and $F$ act on, possibly infinite dimensional, Hilbert spaces 
$H$ and $K$. 
Given such a correlation $\Gamma_{E\odot F,\xi}$, let 
$(P_{\alpha})_{\alpha}$ (resp. $(Q_{\beta})_{\beta}$) be a net of finite rank projections on $H$ (resp. $K$)
such that $P_{\alpha} \to_{\alpha} I_H$ (resp. $Q_{\beta} \to_{\beta} I_K$) in the strong operator topology. 
Set $H_{\alpha} = P_{\alpha}H$ (resp. $K_{\beta} = Q_{\beta}K$), 
$E_{\alpha} = (I_X\otimes I_A\otimes P_{\alpha})E(I_X\otimes I_A\otimes P_{\alpha})$ 
(resp. $F_{\beta} = (I_Y\otimes I_B\otimes Q_{\beta})F(I_Y\otimes I_B\otimes Q_{\beta})$), 
and $\xi_{\alpha,\beta} = \frac{1}{\|(P_{\alpha}\otimes Q_{\beta})\xi\|}(P_{\alpha}\otimes Q_{\beta})\xi$
(note that $\xi_{\alpha,\beta}$ is eventually well-defined). 
Then $E_{\alpha}$ (resp. $F_{\beta}$) is a stochastic operator matrix acting on $H_{\alpha}$ (resp. $K_{\beta}$), 
and $\Gamma_{E_{\alpha}\odot F_{\beta},\xi_{\alpha,\beta}} \to_{(\alpha,\beta)} \Gamma_{E\odot F,\xi}$
along the product net. 
It follows that $\Gamma_s \in \cl Q_{\rm qa}$.

(i)$\Rightarrow$(iii) 
Given $\varepsilon > 0$, 
let $E$ and $F$ be stochastic operator matrices acting on finite dimensional Hilbert spaces 
$H_X$ and $H_Y$, respectively, 
and $\xi \in H_X\otimes H_Y$ be a unit vector, such that 
$$\left|\left\langle\Gamma(e_{x}e_{x'}^*\otimes e_{y}e_{y'}^*), e_{a}e_{a'}\otimes e_{b}e_{b'}^*\right\rangle
- \left\langle \left(E_{x,x',a,a'}\otimes F_{y,y',b,b'}\right)\xi,\xi\right\rangle\right| < \varepsilon,$$
for all $x,x'\in X$, $y,y'\in Y$, $a,a'\in A$ and $b,b'\in B$.
By Lemma \ref{l_rr}, there exists a *-representation $\pi_X$ (resp. $\pi_Y$) 
of $\cl C_{X,A}$ (resp. $\cl C_{Y,B}$) on $H_X$ (resp. $H_Y$) such that
$E_{x,x',a,a'} = \pi_X(e_{x,x',a,a'})$ (resp. $F_{y,y',b,b'} = \pi_Y(f_{y,y',b,b'})$), 
$x,x'\in X$, $a,a'\in A$ (resp. $y,y'\in Y$, $b,b'\in B$).
Let $s_\varepsilon$ be the state on $\cl C_{X,A}\otimes_{\min}\cl C_{Y,B}$ given by
$$s_\varepsilon\left(u\otimes v\right) = \left\langle \left(\pi_X(u)\otimes \pi_Y(v)\right)\xi,\xi\right\rangle,$$
and $s$ be a cluster point of the sequence $\{s_{1/n}\}_n$ in the weak* topology. Then
\begin{eqnarray*}
s\left(e_{x,x',a,a'}\otimes f_{y,y',b,b'}\right)
& =& 
\lim_{n\to\infty}s_{1/n}\left(e_{x,x',a,a'}\otimes f_{y,y',b,b'}\right)\\
& =& 
\left\langle\Gamma(e_{x} e_{x'}^*\otimes e_{y}e_{y'}^*), e_{a}e_{a'}\otimes e_{b}e_{b'}^* \right\rangle,
\end{eqnarray*}
giving $\Gamma = \Gamma_s$. 

(ii)$\Leftrightarrow$(iii) follows from the fact that 
$\cl T_{X,A} \otimes_{\min} \cl T_{Y,B} \subseteq_{\rm c.o.i} \cl C_{X,A} \otimes_{\min} \cl C_{Y,B}.$
\end{proof}

Recall \cite{ptt} that, given any Archimedean ordered unit (AOU) space $V$, 
there exists a (unique) operator system $\omin(V)$ (resp. $\omax(V)$) with 
underlying space $V$, 
called the \emph{minimal operator system} (resp. the \emph{maximal operator system}) of $V$
that has the property that 
every positive map from an operator system $\cl T$ into $V$ 
(resp. from $V$ into an operator system $\cl T$) 
is automatically completely positive as a map
from $\cl T$ into $\omin(V)$ (resp. from $\omax(V)$ into $\cl T$). 
If $V$ is in addition an operator system, we denote by 
$\omin(V)$ (resp. $\omax(V)$) the minimal (resp. maximal) operator system of the AOU space, underlying $V$.

\begin{lemma}\label{l_fdom}
Let $V$ and $W$ be finite dimensional AOU spaces with units $e$ and $f$, respectively. 
An element $u\in \omax(V)\otimes_{\max} \omax(W)$ is positive if and only if 
$u = \sum_{i=1}^k v_i \otimes w_i$,
for some $v_i\in V^+$, $w_i\in W^+$, $i = 1,\dots,k$.
\end{lemma}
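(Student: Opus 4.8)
The plan is to identify the positive cone of $\omax(V)\otimes_{\max}\omax(W)$ with the cone
$$\cl C:=\Big\{\sum_{i=1}^k v_i\otimes w_i\ :\ v_i\in V^+,\ w_i\in W^+,\ k\in\bb{N}\Big\}\subseteq V\otimes W.$$
The inclusion $\cl C\subseteq(\omax(V)\otimes_{\max}\omax(W))^+$ (the ``if'' direction) is immediate from the definition of the maximal operator system tensor product: for $v\in V^+$ and $w\in W^+$ the element $v\otimes w$ already lies in the generating cone $C_1^{\max}(\omax(V),\omax(W))$ (take $k=l=1$, $P=v$, $Q=w$ and the scalar contraction $1$), hence so does any finite sum, and $C_1^{\max}\subseteq(\omax(V)\otimes_{\max}\omax(W))^+$.

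For the reverse inclusion I would proceed in three steps, the first of which I expect to be the main obstacle since it is where finite-dimensionality enters essentially. \emph{Step 1: $\cl C$ is closed.} Fix faithful states $\phi$ on $V$ and $\psi$ on $W$ (available because $V,W$ are finite dimensional); then $B_V:=\{v\in V^+:\phi(v)=1\}$ and $B_W:=\{w\in W^+:\psi(w)=1\}$ are compact convex bases of $V^+$ and $W^+$, so $K:=\{v\otimes w: v\in B_V,\ w\in B_W\}$ is compact, $\conv(K)$ is compact by Carath\'{e}odory, and $0\notin\conv(K)$ since $\phi\otimes\psi\equiv 1$ on $K$. Hence $\cl C$, being the cone generated by the compact convex set $\conv(K)$ which avoids the origin, is closed. \emph{Step 2: matrix cones of $\omax(V)$.} The same compactness argument, applied with $M_k^+$ and the normalised trace in place of $V^+$ and $\phi$, shows that $\{\sum_i R_i\otimes v_i: R_i\in M_k^+,\ v_i\in V^+\}$ is a closed cone; since this is precisely the cone whose Archimedeanisation defines $M_k(\omax(V))^+$, and it already contains its matrix order unit $I_k\otimes e$, it coincides with $M_k(\omax(V))^+$. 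Likewise $M_l(\omax(W))^+=\{\sum_j S_j\otimes w_j: S_j\in M_l^+,\ w_j\in W^+\}$.

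\emph{Step 3: $C_1^{\max}(\omax(V),\omax(W))\subseteq\cl C$, and conclusion.} Given $P=\sum_i R_i\otimes v_i\in M_k(\omax(V))^+$, $Q=\sum_j S_j\otimes w_j\in M_l(\omax(W))^+$ and $A\in M_{1,kl}$, reshuffling the tensor legs yields $P\otimes Q=\sum_{i,j}(R_i\otimes S_j)\otimes(v_i\otimes w_j)$ in $M_{kl}(\omax(V)\otimes\omax(W))$, so that $A(P\otimes Q)A^*=\sum_{i,j}\mu_{i,j}\,(v_i\otimes w_j)$ with scalars $\mu_{i,j}=A(R_i\otimes S_j)A^*\geq 0$; this lies in $\cl C$. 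Finally, if $u\in(\omax(V)\otimes_{\max}\omax(W))^+$, then by the Archimedean description of the maximal tensor cone $u+\varepsilon(e\otimes f)\in C_1^{\max}(\omax(V),\omax(W))\subseteq\cl C$ for every $\varepsilon>0$, and letting $\varepsilon\to 0$ and invoking Step 1 gives $u\in\cl C$. This proves the ``only if'' direction and completes the argument.
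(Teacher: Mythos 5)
Your proof is correct, and it reorganises the argument in a way that differs from the paper's, although both ultimately rest on the same finite-dimensional compactness phenomenon. The paper never proves that the cone $D$ of sums of elementary positive tensors is closed, nor that the matrix cones of $\omax(V)$ coincide with the non-Archimedeanised cones $\{\sum_i R_i\otimes v_i : R_i\in M_k^+, v_i\in V^+\}$; instead it proves a ``relative closedness'' claim (if $u+u_\epsilon\in D$ with $\|u_\epsilon\|\to 0$ then $u\in D$), obtained by Carath\'{e}odory plus the fact that order bounds give norm bounds, and then applies this claim three times, peeling off the Archimedean $\epsilon$-perturbations successively at the level of $S$, of $T$, and of the final element $u+\epsilon 1$. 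You instead prove genuine closedness of the relevant cones via faithful states and the standard ``cone over a compact convex base missing the origin'' argument (the existence of faithful states on finite-dimensional AOU spaces and the compactness of the bases $B_V$, $B_W$ are standard, though worth a sentence), deduce the stronger intermediate fact that for finite-dimensional $V$ the Archimedeanisation in the definition of $M_k(\omax(V))^+$ is redundant, and are then left with a single $\epsilon$-limit at the tensor-product level. What your route buys is a cleaner logical structure and a reusable statement about $\omax$ of finite-dimensional AOU spaces; what the paper's route buys is independence from the choice of a faithful state and a single approximation lemma tailored to the nested perturbations, with the norm control extracted directly from the order structure. One small notational caution: you write $C_1^{\max}(\omax(V),\omax(W))$ both for the pre-cone $\{A(P\otimes Q)A^*\}$ and implicitly contrast it with the positive cone of the maximal tensor product; in the literature this symbol usually denotes the Archimedeanised cone, so it would be safer to give the pre-cone a separate name (say $D_1^{\max}$) when asserting $u+\epsilon(e\otimes f)\in D_1^{\max}\subseteq\cl C$.
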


\begin{proof}
Let $D$ be the set of all sums of elementary tensors $v\otimes w$ with 
$v \in V^+$ and $w\in W^+$. 
We claim that if, for every $\epsilon > 0$, there exists $u_{\epsilon}\in D$ such that
$\|u_{\epsilon}\| \to_{\epsilon\to 0} 0$  and 
$u + u_{\epsilon} \in D$ for every $\epsilon > 0$, then $u\in D$. 
Assume, without loss of generality, that $\|u_{\epsilon}\|\leq 1$ for all $\epsilon > 0$. 
Set $L = 2\dim(V)\dim(W) + 1$ and, using Carath\'{e}odory's Theorem, 
write 
$$
u + u_{\epsilon} =  \sum_{j=1}^L v_j^{(\epsilon)} \otimes w_j^{(\epsilon)},$$
where 
$v_j^{(\epsilon)} \in V^+$,
$w_j^{(\epsilon)} \in W^+$ and $\|v_j^{(\epsilon)}\| = \|w_j^{(\epsilon)}\|$
for all $j = 1,\dots,L$ and all $\epsilon > 0$. 
Since $v_j^{(\epsilon)} \otimes w_j ^{(\epsilon)} \leq u + u_{\epsilon}$ and $\|u + u_{\epsilon}\|\leq \|u\| + 1$ for all $\epsilon > 0$, 
we have $\|v_j^{(\epsilon)}\| \leq \sqrt{\|u\| + 1}$ and $\|w_j^{(\epsilon)}\| \leq \sqrt{\|u\| + 1}$, $j = 1,\dots,L$.
By compactness, we may assume that 
$v_j^{(\epsilon)} \to_{\epsilon \to 0} v_j$ and 
$w_j^{(\epsilon)} \to_{\epsilon \to 0} w_j$ for all $j = 1,\dots,L$.
It follows that $u = \sum_{j=1}^L v_j \otimes w_j \in D$.

Let
\begin{equation}\label{eq_T0}
S_0 = \sum_{p = 1}^l a_p \otimes v_p \ \mbox{ and } \ T_0 = \sum_{q = 1}^r b_q \otimes w_q,
\end{equation}
for some $a_p\in M_n$, $v_p\in V^+$, $p = 1,\dots, l$, and $b_q\in M_m^+$, $w_q\in W^+$, $q = 1,\dots, r$.
If $\alpha\in M_{1,nm}$ then 
$$\alpha(S_0\otimes T_0)\alpha^* = \sum_{p=1}^l \sum_{q=1}^r \left(\alpha(a_p\otimes b_q)\alpha^*\right) v_p\otimes w_q
\in D.$$
Suppose that $S\in M_n(\omax(V))^+$ and $\alpha\in M_{1,nm}$. 
By the definition of the maximal tensor product \cite{kptt},  if $\epsilon > 0$ then 
$S + \epsilon 1_n$ has the form of $S_0$ in (\ref{eq_T0}). 
Hence
$$\alpha \left(S\otimes T_0\right) \alpha^* + \epsilon  \alpha \left(1_n \otimes T_0\right) \alpha^*
= \alpha \left((S + \epsilon 1_n) \otimes T_0\right) \alpha^*\in D.$$
Since 
$\alpha \left(1_n \otimes T_0\right) \alpha^* \in D$, 
the previous paragraph shows that 
$$\alpha \left(S\otimes T_0\right) \alpha^*\in D.$$

Now let $T\in M_m(\omax(W))^+$, and write $T + \epsilon 1_m$ in the form of $T_0$ in (\ref{eq_T0}). 
Then 
$$\alpha \left(S\otimes T\right) \alpha^* + \epsilon  \alpha \left(S \otimes 1_m\right) \alpha^*
= \alpha \left(S \otimes (T + \epsilon 1_m)\right) \alpha^*\in D.$$
By the previous paragraph, $\alpha \left(S \otimes 1_m\right) \alpha^*\in D$; by the first 
paragraph, $\alpha \left(S\otimes T\right) \alpha^* \in D$. 

Let $u\in \left(\omax(V)\otimes_{\max} \omax(W)\right)^+$. 
By the definition of the maximal tensor product \cite{kptt}, 
for every $\epsilon > 0$, there exist $n,m\in \bb{N}$, 
$S\in M_n(\omax(V))^+$, $T\in M_m(\omax(W))^+$ and $\alpha \in M_{1,nm}$, such that 
$u + \epsilon 1 = \alpha \left(S\otimes T\right) \alpha^*$. 
By the previous and the first paragraph, $u\in D$. 
\end{proof}

\begin{theorem}\label{t_locd}
Let $X,Y,A,B$ be finite sets and $\Gamma : M_{XY}\to M_{AB}$ be a linear map. The following are equivalent:
\begin{itemize}
\item[(i)] $\Gamma$ is a local QNS correlation;
\item[(ii)] there exists a state $s : \omin(\cl T_{X,A}) \otimes_{\min} \omin(\cl T_{Y,B}) \to \bb{C}$ such that $\Gamma = \Gamma_s$.
\end{itemize}
\end{theorem}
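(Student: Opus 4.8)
The plan is to mimic the structure of the previous three characterization theorems (Theorems \ref{th_qnsma}, \ref{t_dqc}, \ref{t_dqa}), exploiting the duality established in Proposition \ref{p_dualt} and the universality properties of $\omin$ and $\omax$. The key observation is that local QNS correlations are convex combinations of tensor products $\Phi\otimes\Psi$ of quantum channels, and, by Choi's Theorem together with Theorem \ref{p_ucptau2}, such channels correspond exactly to unital completely positive maps $\cl T_{X,A}\to M_A$ and $\cl T_{Y,B}\to M_B$, i.e. to states on the appropriate operator systems after dualizing. So I would first unwind the definition: $\Gamma = \sum_{k=1}^m\lambda_k\,\Phi_k\otimes\Psi_k$ with $\Phi_k:M_X\to M_A$, $\Psi_k:M_Y\to M_B$ quantum channels, and by the last assertion of Theorem \ref{p_coor} / the remarks after it, the Choi matrix of $\Phi_k$ (resp. $\Psi_k$) is a stochastic operator matrix acting on $\bb{C}$, hence by Theorem \ref{p_ucptau2} gives a state $s_k^1$ on $\cl T_{X,A}$ with $s_k^1(e_{x,x',a,a'}) = (\Phi_k)$-Choi-entries, and similarly a state $s_k^2$ on $\cl T_{Y,B}$.

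Next I would pass to the tensor product. Set $s = \sum_k \lambda_k\, s_k^1\otimes s_k^2$. Since each $s_k^1$ is a state on $\cl T_{X,A}$ and each $s_k^2$ a state on $\cl T_{Y,B}$, and since a state is in particular positive, the product functional $s_k^1\otimes s_k^2$ is positive on $\omin(\cl T_{X,A})\otimes_{\min}\omin(\cl T_{Y,B})$ — this is where the $\omin$/$\min$ combination is essential: on $\omin(V)$ every positive functional is automatically completely positive, so the pair $(s_k^1,s_k^2)$ is a jointly completely positive bilinear form, and by the defining property of $\otimes_{\max}$ (which, however, for the $\min$ tensor product we instead get from the fact that product states are always $\min$-continuous and positive). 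Concretely: $s_k^1$ and $s_k^2$ extend to states of the C*-algebras $\cl C_{X,A}$ and $\cl C_{Y,B}$ by Corollary \ref{c_ucst}, hence $s_k^1\otimes s_k^2$ is a state of $\cl C_{X,A}\otimes_{\min}\cl C_{Y,B}$, and its restriction to $\omin(\cl T_{X,A})\otimes_{\min}\omin(\cl T_{Y,B})$ is well-defined provided $\omin(\cl T_{X,A})\otimes_{\min}\omin(\cl T_{Y,B})\subseteq_{\rm c.o.i.}\cl T_{X,A}\otimes_{\min}\cl T_{Y,B}\subseteq_{\rm c.o.i.}\cl C_{X,A}\otimes_{\min}\cl C_{Y,B}$; the second inclusion is standard (injectivity of $\min$), and the first holds because $\omin(\cl S)$ and $\cl S$ have the same underlying order-unit space, so the identity map $\omin(\cl S)\to\cl S$ is completely positive — one must check it is a complete order embedding at the level of the $\min$ tensor product, which follows from the functoriality of $\otimes_{\min}$. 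A convex combination of states is a state, so $s$ is a state, and by construction $\Gamma = \Gamma_s$, giving (i)$\Rightarrow$(ii).

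For (ii)$\Rightarrow$(i), let $s$ be a state on $\omin(\cl T_{X,A})\otimes_{\min}\omin(\cl T_{Y,B})$. Here is where Lemma \ref{l_fdom} enters, applied to the \emph{dual} picture: by Proposition \ref{p_dualt} (and finite dimensionality, $\cl T^{\rm dd}\cong_{\rm c.o.i.}\cl T$), the dual of $\omax(\cl T_{X,A}^{\rm d})$ is $\omin(\cl T_{X,A})$ up to complete order isomorphism, and by \cite[Proposition 1.9]{fp} (as used in \eqref{eq_PXA}) the dual of $\omin(\cl T_{X,A})\otimes_{\min}\omin(\cl T_{Y,B})$ is $\omax(\cl T_{X,A}^{\rm d})\otimes_{\max}\omax(\cl T_{Y,B}^{\rm d})$. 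Thus a state $s$ corresponds to a positive element of $\omax(\cl L_{X,A})\otimes_{\max}\omax(\cl L_{Y,B})$ (using $\Lambda$ to identify $\cl T_{X,A}^{\rm d}$ with $\cl L_{X,A}$), which by Lemma \ref{l_fdom} decomposes as $\sum_{i=1}^k C_i\otimes D_i$ with $C_i\in\cl L_{X,A}^+$ and $D_i\in\cl L_{Y,B}^+$. After normalizing, each $C_i$ (up to scalar) is the Choi matrix of a stochastic operator matrix on $\bb{C}$, i.e. of a quantum channel $\Phi_i:M_X\to M_A$ (by Theorem \ref{p_ucptau2} and the last statement of Theorem \ref{p_coor}), and similarly $D_i$ yields $\Psi_i:M_Y\to M_B$; the scalars combine into convex weights because $s$ is unital, i.e. $s(1\otimes 1) = \sum_i \langle C_i,I\rangle\langle D_i,I\rangle = 1$, and tracking through \eqref{eq_gammas} one gets $\Gamma_s = \sum_i\lambda_i\,\Phi_i\otimes\Psi_i$, a local QNS correlation.

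The main obstacle I anticipate is the bookkeeping in the duality chain — verifying that the identification $(\omin(\cl T_{X,A})\otimes_{\min}\omin(\cl T_{Y,B}))^{\rm d}\cong\omax(\cl L_{X,A})\otimes_{\max}\omax(\cl L_{Y,B})$ is a genuine complete order isomorphism compatible with $\Lambda$, so that Lemma \ref{l_fdom} can be invoked in the form stated (its hypotheses are about $\omax(V)\otimes_{\max}\omax(W)$ for finite-dimensional AOU spaces $V,W$, which is exactly the situation here with $V = \cl L_{X,A}$, $W = \cl L_{Y,B}$). A secondary nuisance is making sure the positive-cone decomposition really produces \emph{trace-preserving} maps, i.e. honest quantum channels and not merely completely positive maps — but this is forced by the condition $C_i\in\cl L_{X,A}^+$ together with the relation $\sum_a (C_i)_{x,x,a,a} = \delta_{x,x'}c_i$ defining $\cl L_{X,A}$, combined with positivity, exactly as in the proof of Theorem \ref{th_qnsma}. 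Once these identifications are pinned down, both implications are short.
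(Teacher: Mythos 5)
Your (ii)$\Rightarrow$(i) argument is essentially the paper's own: identify $s$, via \cite[Theorem 9.9]{kavruk_thesis}, \cite[Proposition 1.9]{fp} and Proposition \ref{p_dualt}, with a positive element of $\omax(\cl L_{X,A})\otimes_{\max}\omax(\cl L_{Y,B})$, decompose it by Lemma \ref{l_fdom} into a sum of elementary tensors of positives, and read off quantum channels from the normalised Choi matrices; trace preservation does indeed come from membership in $\cl L_{X,A}$ and $\cl L_{Y,B}$, exactly as you indicate.

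In (i)$\Rightarrow$(ii), however, your justification rests on a false claim: $\omin(\cl T_{X,A})\otimes_{\min}\omin(\cl T_{Y,B})$ does \emph{not} embed completely order isomorphically (nor even order isomorphically at the first matrix level) into $\cl T_{X,A}\otimes_{\min}\cl T_{Y,B}$, and the identity $\omin(\cl S)\to\cl S$ is positive but in general not completely positive — functoriality of $\otimes_{\min}$ gives a completely positive map in the \emph{opposite} direction, $\cl T_{X,A}\otimes_{\min}\cl T_{Y,B}\to\omin(\cl T_{X,A})\otimes_{\min}\omin(\cl T_{Y,B})$. If the two tensor products had the same states, Theorems \ref{t_dqa} and \ref{t_locd} would force $\cl Q_{\rm loc}=\cl Q_{\rm qa}$, contradicting Bell's theorem; so restricting an arbitrary state of $\cl C_{X,A}\otimes_{\min}\cl C_{Y,B}$ does not in general yield a state of $\omin(\cl T_{X,A})\otimes_{\min}\omin(\cl T_{Y,B})$ — that failure is precisely the gap between $\cl Q_{\rm qa}$ and $\cl Q_{\rm loc}$. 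The step you actually need is true and elementary for the specific functional you construct: a product $s_k^1\otimes s_k^2$ of states is positive on the cone of $\omin(\cl T_{X,A})\otimes_{\min}\omin(\cl T_{Y,B})$, since positivity in a minimal tensor product is tested against pairs of unital completely positive maps into matrix algebras, in particular against pairs of states, and states of $\omin(\cl S)$ are just states of $\cl S$. Alternatively, argue as the paper does: the functional $\sum_k\lambda_k\,\phi_k\otimes\psi_k$ corresponds, under the same duality, to a sum of elementary tensors of positive elements, hence to a positive element of $\omax(\cl T_{X,A}^{\rm d})\otimes_{\max}\omax(\cl T_{Y,B}^{\rm d})$. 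With that one-line repair your proof coincides with the paper's.
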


\begin{proof} 
(ii)$\Rightarrow$(i) 
Let $s : \omin(\cl T_{X,A}) \otimes_{\min} \omin(\cl T_{Y,B}) \to \bb{C}$ be a state. 
Using \cite[Theorem 9.9]{kavruk_thesis} and \cite[Proposition 1.9]{fp}, 
we can identify $s$ with an element of 
$\left(\omax(\cl T_{X,A}^{\rm d}) \otimes_{\max} \omax(\cl T_{Y,B}^{\rm d})\right)^+$.
By Lemma \ref{l_fdom}, there exist states $\phi_i\in \left(\cl T_{X,A}^{\rm d}\right)^+$ and 
$\psi_i\in \left(\cl T_{Y,B}^{\rm d}\right)^+$, and non-negative scalars $\lambda_i$, $i = 1,\dots,m$, 
such that $s\equiv \sum_{i=1}^m \lambda_i \phi_i\otimes \psi_i$.  
Set $E_i = (\phi_i(e_{x,x',a,a'}))_{x,x',a,a'}$ (resp. $F_i = (\psi_i(f_{y,y',b,b'}))_{y,y',b,b'}$), 
and let $\Phi_i : M_X\to M_A$ (resp. $\Psi_i : M_Y\to M_B$) be the quantum channel with Choi matrix 
$E_i$ (resp. $F_i$), $i = 1,\dots,m$. 
Then $\Gamma_s = \sum_{i=1}^m \lambda_i \Phi_i\otimes \Psi_i$. 

(i)$\Rightarrow$(ii) 
Write $\Gamma = \sum_{i=1}^m \lambda_i \Phi_i\otimes \Psi_i$ as a convex combination of 
quantum channels
$\Phi_i : M_X\to M_A$ and $\Psi_i : M_Y\to M_B$, $i = 1,\dots,m$, 
and let $s$ be a functional on $\cl T_{X,A}\otimes \cl T_{Y,B}$ such that $\Gamma = \Gamma_s$. 
Let $E_i \in (M_X\otimes M_A)^+$ (resp. $F_i \in (M_Y\otimes M_B)^+$)
be the Choi matrix of $\Phi_i$ (resp. $\Psi_i$); thus, $E_i$ (resp. $F_i$) is a 
stochastic operator matrix acting on $\mathbb C$.
By Theorem \ref{p_ucptau2}, there exist positive functionals $\phi_i : \cl T_{X,A}\to \bb{C}$ and 
$\psi_i : \cl T_{Y,B}\to \bb{C}$ such that 
$(\phi_i(e_{x,x',a',a}))_{x,x',a,a'} = E_i$ and $(\psi_i(f_{y,y',b',b}))_{y,y',b,b'} = F_i$, $i = 1,\dots,m$. 
It is now straightforward to see that $s$ is the functional corresponding to $\sum_{i=1}^m \lambda_i\phi_i\otimes \psi_i$
and is hence, by Lemma \ref{l_fdom}, a state on $ \omin(\cl T_{X,A}) \otimes_{\min} \omin(\cl T_{Y,B})$. 
\end{proof}


\section{Classical-to-quantum no-signalling correlations}\label{s_ctqqnsc}

In this section, we consider the set of
classical-to-quantum no-signalling correlations, and provide descriptions of its 
various subclasses in terms of canonical operator systems.


\subsection{Definition and subclasses}\label{ss_dsc}

Let $X$, $Y$, $A$ and $B$ be finite sets and $H$ be a Hilbert space.

\begin{definition}\label{d_CQNS}
A family $\Theta = (\sigma_{x,y})_{x\in X, y\in Y}$ of states 
in $M_{AB}$ 
is called a \emph{classical-to-quantum no-signalling (CQNS) correlation} if
\begin{equation}\label{eq_wxy}
\Tr\mbox{}_A \sigma_{x',y} = \Tr\mbox{}_A \sigma_{x'',y} \mbox{ and } \Tr\mbox{}_B \sigma_{x,y'} = \Tr\mbox{}_B \sigma_{x,y''},
\end{equation}
for all $x,x',x''\in X$ and all $y,y',y''\in Y$.
\end{definition}

A stochastic operator matrix $E\in M_X\otimes M_A\otimes \cl B(H)$ will be called 
\emph{semi-classical} if $L_{e_x e_{x'}^*}(E) = 0$ whenever $x\neq x'$, that is, if
$$E = \sum_{x\in X} e_x e_x^* \otimes E_x,$$
for some $E_x\in \left(M_A\otimes\cl B(H)\right)^+$ with $\Tr_A E_x = I_H$, $x\in X$. 
We write $E = (E_x)_{x\in X}$; note that, in its own right, $E_x$ is a stochastic 
operator matrix in $\cl L(\bb{C})\otimes M_A\otimes \cl B(H)$, $x\in X$. 

Suppose that 
$E = (E_x)_{x\in X}$ and $F = (F_y)_{y\in Y}$ form a commuting pair of
semi-classical stochastic operator matrices, acting on a Hilbert space $H$ and
$\sigma$ is a vector state on $\cl B(H)$.
The family formed by the states 
\begin{equation}\label{eq_cqnscq}
\sigma_{x,y} = L_{\sigma}(E_x \cdot F_y), \ \ \ x\in X, y\in Y,
\end{equation}
is a CQNS correlation; 
indeed, by Proposition \ref{p_dotp}, 
$\Tr_A \sigma_{x,y} = L_{\sigma}(F_y)$ and $\Tr_B \sigma_{x,y} = L_{\sigma}(E_x)$ for all $x,y$.
We call the CQNS correlations of this form \emph{quantum commuting}. 
Similarly, if 
$(E_x)_{x\in X}$
(resp. $(F_y)_{y\in Y}$) 
is a semi-classical stochastic operator matrix on $H_A$ (resp. $H_B$) 
and $\sigma$ is a vector state on $\cl L(H_A\otimes H_B)$, the family formed by 
$$\sigma_{x,y} = L_{\sigma}(E_x\odot F_y), \ \ \ x\in X, y\in Y,$$
will be called a 
\emph{quantum} CQNS correlation. 
A CQNS correlation $\Theta = (\sigma_{x,y})_{x\in X, y\in Y}$ will be called 
\emph{approximately quantum} if there exist quantum CQNS correlations 
$\Theta_n = (\sigma_{x,y}^{(n)})_{x\in X, y\in Y}$, $n\in \bb{N}$, such that 
$$\sigma_{x,y}^{(n)} \to_{n\to\infty} \sigma_{x,y}, \ \ \ x\in X, y\in Y.$$
Finally, $\Theta$ will be called 
\emph{local} if there exist states $\sigma_{i,x}^{A}$ (resp. $\sigma_{i,y}^{B}$) in $M_A$
(resp. $M_B$) and scalars $\lambda_i > 0$, $i = 1,\dots,m$, such that 
$$\sigma_{x,y} = \sum_{i=1}^m  \lambda_i \sigma_{i,x}^{A} \otimes \sigma_{i,y}^{B}\ \ \ x\in X, y\in Y.$$

If $\cl E : \cl D_{XY}\to M_{AB}$ is a (classical-to-quantum) channel, we set 
$\Gamma_{\cl E} = \cl E\circ \Delta_{XY}$; thus, 
$\Gamma_{\cl E}$ is a (quantum) channel from $M_{XY}$ to $M_{AB}$.
Given a CQNS correlation $\Theta = \left(\sigma_{x,y}\right)_{x\in X, y\in Y}$, we let 
$\cl E_{\Theta} : \cl D_{XY}\to M_{AB}$ be the channel given by 
$$\cl E_{\Theta}\left(e_x e_x^*\otimes e_y e_y^*\right) = \sigma_{x,y}, \ \ \ x\in X, y\in Y,$$
and 
$\Gamma_{\Theta} = \Gamma_{\cl E_{\Theta}}$. 
In the sequel, we will often identify $\Theta$ with the channel $\cl E_{\Theta}$. 
For ${\rm x}\in \{{\rm loc}, {\rm q}, {\rm qa}, {\rm qc}, {\rm ns}\}$, 
we write 
$\cl {CQ}_{\rm x}$ for the set of all CQNS correlations of class ${\rm x}$; thus, the elements of 
$\cl {CQ}_{\rm x}$ will often be considered as channels from $\cl D_{XY}$ to $M_{AB}$. 
Similarly to the proof of Proposition \ref{p_genstok}, it can be shown that 
quantum and quantum commuting CQNS correlations can be defined
using normal (not necessarily vector) states.

In the next lemma, for (finite) sets $X$ and $A$ and a Hilbert space $H$, 
we let for brevity 
$$\tilde{\Delta}_X := \Delta_X\otimes \id\mbox{}_A\otimes \id\mbox{}_{\cl B(H)} : 
M_X\otimes M_A\otimes \cl B(H)\to \cl D_X\otimes M_A\otimes \cl B(H)$$
and
$$\tilde{\Delta}_{X,A} := \Delta_X\otimes \Delta_A \otimes \id\mbox{}_{\cl B(H)} : 
M_X\otimes M_A\otimes \cl B(H)\to \cl D_X\otimes \cl D_A\otimes \cl B(H).$$

\begin{lemma}\label{l_prs}
Let $H$ be a Hilbert space, $E \in M_X\otimes M_A\otimes \cl B(H)$ be a stochastic operator matrix
and $\sigma\in \cl T(H)$ be a state. 
Set $E' = \tilde{\Delta}_X(E)$ and $E'' = \tilde{\Delta}_{X,A}(E)$.
Then $E'$ (resp. $E''$) is a semi-classical (resp. classical) stochastic operator matrix,
\begin{equation}\label{eq_DeltaX}
\Gamma_{E,\sigma}\circ \Delta_X = \Gamma_{E',\sigma}  \ \mbox{ and } \ 
\Delta_A\circ \Gamma_{E,\sigma}\circ \Delta_X = \Gamma_{E'',\sigma}.
\end{equation}
Moreover, if $F \in M_Y\otimes M_B\otimes \cl B(H)$ is a stochastic operator matrix that 
forms a commuting pair with $E$ then 
\begin{equation}\label{eq_DeltaXY}
\tilde{\Delta}_{XY}(E\cdot F) = \tilde{\Delta}_X(E)\cdot \tilde{\Delta}_Y(F).
\end{equation}
\end{lemma}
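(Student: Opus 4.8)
The plan is to verify the three assertions essentially by unravelling definitions, using the Choi-matrix description of the relevant channels together with the formula (\ref{eq_gammad}) for $\Gamma_{E,\sigma}$ and the defining property (\ref{eq_long}) of the product $E\cdot F$. First I would check that $E' = \tilde{\Delta}_X(E)$ and $E'' = \tilde{\Delta}_{X,A}(E)$ are again stochastic operator matrices of the claimed special forms. Positivity of $E'$ and $E''$ follows because $\tilde{\Delta}_X$ and $\tilde{\Delta}_{X,A}$ are completely positive (being ampliations of the conditional expectations $\Delta_X$, $\Delta_X\otimes\Delta_A$). The normalisation $\Tr_A E' = I_X\otimes I_H$ follows from $\Tr_A E = I_X\otimes I_H$ since $\Delta_X$ commutes with the partial trace over $A$ and fixes $I_X$; the same computation handles $E''$. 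By construction $L_{e_x e_{x'}^*}(E') = \delta_{x,x'} E_{x,x}$ with $E_{x,x} = L_{e_x e_x^*}(E) \in (M_A\otimes\cl B(H))^+$, so $E'$ is semi-classical with $E' = (E_{x,x})_{x\in X}$, and similarly $E''$ kills the off-diagonal entries in both the $X$- and $A$-indices, hence is classical.

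Next I would establish the two identities in (\ref{eq_DeltaX}). For the first, both sides are linear maps $M_X\to M_A$, so it suffices to evaluate on matrix units $e_x e_{x'}^*$ and pair against $e_a e_{a'}^*$. Using (\ref{eq_gammad}) and the fact that $\Delta_X(e_x e_{x'}^*) = \delta_{x,x'} e_x e_x^*$, the left-hand side gives $\delta_{x,x'} L_{e_x e_x^*\otimes\sigma}(E) = \delta_{x,x'}\langle\sigma, E_{x,x,a,a'}\rangle$ on the $(a,a')$-entry, while the right-hand side gives $L_{e_x e_{x'}^*\otimes\sigma}(E') = \langle\sigma, E'_{x,x',a,a'}\rangle$, and $E'_{x,x',a,a'} = \delta_{x,x'}E_{x,x,a,a'}$ by definition of $\tilde\Delta_X$; these agree. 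The second identity in (\ref{eq_DeltaX}) follows from the first by post-composing with $\Delta_A$: indeed $\Delta_A\circ\Gamma_{E',\sigma} = \Gamma_{E'',\sigma}$ by the same matrix-unit computation, now using that $\Delta_A$ on $M_A$ corresponds to applying $\tilde\Delta_A$ to the Choi matrix $E'$, and $\tilde\Delta_A\tilde\Delta_X(E) = \tilde\Delta_{X,A}(E) = E''$.

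Finally, for (\ref{eq_DeltaXY}) I would use the explicit formula $E\cdot F = (E_{x,x',a,a'}F_{y,y',b,b'})$ established inside the proof of Proposition \ref{p_dotp}. Applying $\tilde\Delta_{XY} = \Delta_X\otimes\Delta_Y\otimes\id_A\otimes\id_B\otimes\id_{\cl B(H)}$ (after the appropriate reshuffle) simply replaces $e_x e_{x'}^*\otimes e_y e_{y'}^*$ by $\delta_{x,x'}\delta_{y,y'} e_x e_x^*\otimes e_y e_y^*$, so the $(x,x',y,y',a,a',b,b')$-entry of $\tilde\Delta_{XY}(E\cdot F)$ is $\delta_{x,x'}\delta_{y,y'} E_{x,x,a,a'}F_{y,y,b,b'}$. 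On the other hand $\tilde\Delta_X(E)\cdot\tilde\Delta_Y(F)$ is, again by the formula from Proposition \ref{p_dotp}, the operator matrix with entries $(\tilde\Delta_X(E))_{x,x',a,a'}(\tilde\Delta_Y(F))_{y,y',b,b'} = \delta_{x,x'}\delta_{y,y'} E_{x,x,a,a'}F_{y,y,b,b'}$; note this product is well-defined because $(\tilde\Delta_X(E), \tilde\Delta_Y(F))$ is still a commuting pair, the entries of $\tilde\Delta_X(E)$ lying in the C*-algebra generated by the $E_{x,x',a,a'}$ and those of $\tilde\Delta_Y(F)$ in the commutant. The two entrywise expressions coincide, proving (\ref{eq_DeltaXY}). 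I do not expect any genuine obstacle here; the only point requiring a little care is keeping the tensor-leg bookkeeping straight when invoking the concrete description of $E\cdot F$, and confirming that $\tilde\Delta_X(E)$ and $\tilde\Delta_Y(F)$ still commute so that their product $\cdot$ is legitimately defined.
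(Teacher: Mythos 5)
Your proposal is correct and follows essentially the same route as the paper: write out $\tilde{\Delta}_X(E)$ explicitly, verify the first identity in (\ref{eq_DeltaX}) by evaluating on matrix units via (\ref{eq_gammad}), handle the second identity the same way, and prove (\ref{eq_DeltaXY}) by observing both sides equal the entrywise expression $\delta_{x,x'}\delta_{y,y'}E_{x,x,a,a'}F_{y,y,b,b'}$ from the concrete formula for $E\cdot F$ in Proposition \ref{p_dotp}. The extra details you supply (complete positivity of the ampliated conditional expectations, the trace normalisation, and the fact that the compressed matrices still form a commuting pair) are correct and merely make explicit what the paper leaves implicit.
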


\begin{proof}
Note that, if $E = \sum_{x,x'\in X} \sum_{a,a'\in A} e_x e_{x'}^* \otimes e_a e_{a'}^* \otimes E_{x,x',a,a'}$
then 
$$\tilde{\Delta}_X(E) = \sum_{x\in X} \sum_{a,a'\in A} e_x e_{x}^* \otimes e_a e_{a'}^* \otimes E_{x,x,a,a'}.$$
We now have 
$$\left \langle \Gamma_{E,\sigma}(\Delta_X(e_{x} e_{x'}^*)), e_{a} e_{a'}^* \right\rangle 
= \delta_{x,x'} \langle E_{x,x',a,a'},\sigma\rangle = 
\left \langle \Gamma_{E',\sigma}(e_{x} e_{x'}^*), e_{a} e_{a'}^* \right\rangle$$
for all $x,x'\in X$ and all $a,a'\in A$.
The second identity  in (\ref{eq_DeltaX}) is equally straightforward.
Finally, for (\ref{eq_DeltaXY}), notice that, if 
$E = \left(E_{x,x',a,a'}\right)$ and $F = \left(F_{y,y',b,b'}\right)$ then
both sides of the identity are equal to 
$$\sum_{x\in X} \sum_{y\in Y} \sum_{a,a'\in A} \sum_{b,b'\in B}
e_x e_{x}^* \otimes e_y e_{y}^*  \otimes e_a e_{a'}^* \otimes e_b e_{b'}^*  \otimes E_{x,x,a,a'}F_{y,y,b,b'}.$$
\end{proof}

\begin{theorem}\label{th_charcq}
Fix ${\rm x}\in \{{\rm loc}, {\rm q}, {\rm qa}, {\rm qc}, {\rm ns}\}$. 
If $\Gamma\in \cl Q_{\rm x}$ then $\Gamma|_{\cl D_{XY}}\in \cl{CQ}_{\rm x}$; 
conversely, 
if $\cl E\in \cl{CQ}_{\rm x}$ then $\Gamma_{\cl E} \in \cl Q_{\rm x}$.
Moreover, for a channel $\cl E : \cl D_{XY}\to M_{AB}$, we have that

\begin{itemize}
\item[(i)] 
$\cl E \in \cl{CQ}_{\rm qc}$ if and only if 
$\Gamma_{\cl E} = \Gamma_{E\cdot F,\sigma}$, where
$(E,F)$ is a commuting pair of semi-classical 
stochastic operator matrices, acting on a Hilbert spaces $H$,
and $\sigma$ is a normal state on $\cl B(H)$;

\item[(ii)] 
$\cl E \in \cl{CQ}_{\rm q}$ if and only if $\Gamma_{\cl E} = \Gamma_{E\odot F,\sigma}$, where
$E$ and $F$ are semi-classical stochastic operator matrices, acting on finite dimensional Hilbert spaces $H_A$ and 
$H_B$, respectively, and $\sigma$ is a state on $\cl L(H_A\otimes H_B)$.
\end{itemize}
\end{theorem}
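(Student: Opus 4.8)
The plan is to deduce Theorem \ref{th_charcq} from the corresponding results for QNS correlations (Theorems \ref{th_qnsma}, \ref{t_dqc}, \ref{t_dqa}, \ref{t_locd}) together with the compression Lemma \ref{l_prs}, which is exactly the technical bridge relating a stochastic operator matrix $E$ to its semi-classical compression $\tilde\Delta_X(E)$. The key observation is that a channel $\cl E : \cl D_{XY}\to M_{AB}$ and its ``inflation'' $\Gamma_{\cl E} = \cl E\circ\Delta_{XY}$ determine each other, and that $\Gamma_{\cl E}$ is always of the form $\Gamma\circ\Delta_{XY}$ for $\Gamma := \Gamma_{\cl E}$ itself; conversely, for any $\Gamma\in\cl Q_{\rm x}$ the restriction $\Gamma|_{\cl D_{XY}}$ is a channel $\cl D_{XY}\to M_{AB}$, and one must check that the pair $(\sigma_{x,y})$ with $\sigma_{x,y} = \Gamma(e_xe_x^*\otimes e_ye_y^*)$ satisfies the CQNS no-signalling conditions \eqref{eq_wxy}. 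This last point is immediate from the QNS conditions \eqref{eq_qns1}--\eqref{eq_qns2} applied to the traceless elements $e_{x'}e_{x'}^* - e_{x''}e_{x''}^*$, so the containment $\Gamma\in\cl Q_{\rm x}\Rightarrow \Gamma|_{\cl D_{XY}}\in\cl{CQ}_{\rm x}$ reduces to tracking representations through the compression.

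First I would handle the structural statements (i) and (ii), which are really the heart of the theorem. For (i): if $\cl E\in\cl{CQ}_{\rm qc}$ then by definition $\sigma_{x,y} = L_\sigma(E_x\cdot F_y)$ for a commuting pair of semi-classical stochastic operator matrices $E = (E_x)_x$, $F = (F_y)_y$; writing $E$ and $F$ as genuine stochastic operator matrices over $(X,A)$ and $(Y,B)$ (as in the definition of semi-classical), one checks directly from \eqref{eq_long} and \eqref{eq_cqnscq} that $\Gamma_{\cl E} = \Gamma_{E\cdot F,\sigma}$. Conversely, given $\Gamma = \Gamma_{E\cdot F,\sigma}\in\cl Q_{\rm qc}$ (Proposition \ref{p_genstok} lets us take an arbitrary normal state), Lemma \ref{l_prs} gives $\Gamma\circ\Delta_{XY} = \Gamma_{\tilde\Delta_{XY}(E\cdot F),\sigma} = \Gamma_{\tilde\Delta_X(E)\cdot\tilde\Delta_Y(F),\sigma}$ via \eqref{eq_DeltaXY}, and $\tilde\Delta_X(E)$, $\tilde\Delta_Y(F)$ are semi-classical and still form a commuting pair (their entries are among those of $E$, $F$). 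Hence $\Gamma|_{\cl D_{XY}}$, viewed as a channel on $\cl D_{XY}$, lies in $\cl{CQ}_{\rm qc}$. Statement (ii) is proved the same way, now using $E\odot F$ in place of $E\cdot F$: semi-classicality is preserved under compression, $\tilde\Delta$ commutes with $\odot$ (since $\odot$ is just $\cdot$ after tensoring with identities, so \eqref{eq_DeltaXY} applies), and the state $\sigma$ on $\cl L(H_A\otimes H_B)$ is unchanged. One should note that finite-dimensionality of $H_A$, $H_B$ is retained throughout.

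Then the remaining classes follow by combining (i), (ii) with the tensor-product descriptions. For ${\rm x} = {\rm qa}$: $\cl E\in\cl{CQ}_{\rm qa}$ iff $\Gamma_{\cl E}$ is a limit of quantum CQNS inflations iff (by (ii) and continuity of $\cl E\mapsto\Gamma_{\cl E}$) $\Gamma_{\cl E}\in\overline{\cl Q_{\rm q}} = \cl Q_{\rm qa}$; the converse uses that $\Gamma\in\cl Q_{\rm qa}$ implies $\Gamma\circ\Delta_{XY}$ is a limit of $\Gamma_{E_n\odot F_n,\sigma_n}\circ\Delta_{XY}$, each of which lies in $\cl{CQ}_{\rm q}$ by the compression argument. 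For ${\rm x} = {\rm loc}$: a local $\Gamma = \sum_i\lambda_i\Phi_i\otimes\Psi_i$ composed with $\Delta_{XY} = \Delta_X\otimes\Delta_Y$ yields $\sum_i\lambda_i(\Phi_i\circ\Delta_X)\otimes(\Psi_i\circ\Delta_Y)$, whose associated states $\sigma_{x,y}$ factor as $\sum_i\lambda_i\sigma_{i,x}^A\otimes\sigma_{i,y}^B$; the converse is the obvious reverse reading. The case ${\rm x} = {\rm ns}$ is the most elementary: $\cl E\in\cl{CQ}_{\rm ns}$ iff \eqref{eq_wxy} holds iff the Choi matrix of $\Gamma_{\cl E}$ satisfies conditions (b), (c) of the proof of Theorem \ref{th_qnsma} on its diagonal block structure, which is exactly what the $(X\times Y,A\times B)$-inflation of a QNS correlation requires. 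The main obstacle I anticipate is purely bookkeeping: keeping the permutations of tensor legs straight when passing between $E\cdot F$, $E\odot F$, their compressions, and the Choi picture, and verifying in each class that ``semi-classical'' is genuinely preserved (and not accidentally strengthened to ``classical'') under $\tilde\Delta_X$ — but no new idea beyond Lemma \ref{l_prs} should be needed.
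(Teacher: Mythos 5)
Your proposal is correct and follows essentially the same route as the paper's proof: the forward (restriction) directions rest on Lemma \ref{l_prs} and the identity $\tilde{\Delta}_{XY}(E\cdot F)=\tilde{\Delta}_X(E)\cdot\tilde{\Delta}_Y(F)$ (with $E\odot F$ handled as $\tilde E\cdot\tilde F$), the converse directions for ${\rm qc}$, ${\rm q}$, ${\rm loc}$, ${\rm ns}$ are the same direct verifications from the definitions, and the ${\rm qa}$ case is the same continuity argument for $\cl E\mapsto\cl E\circ\Delta_{XY}$. The tensor-product theorems you cite at the outset are not actually needed, as your own argument shows, and your passing appeal to a Proposition \ref{p_genstok}-type reduction to handle non-vector states matches the paper's remark to the same effect.
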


\begin{proof}
It is trivial that if $\Gamma\in \cl Q_{\rm ns}$ then 
$\Gamma|_{\cl D_{XY}}\in \cl{CQ}_{\rm ns}$.
Conversely, suppose that $\cl E\in \cl{CQ}_{\rm ns}$, and let $\rho_X\in M_X$ and $\rho_Y\in M_Y$ be states, 
with $\Tr(\rho_X) = 0$. 
By (\ref{eq_wxy}), 
$$\Tr\mbox{}_A\Gamma_{\cl E}\left(\rho_X\otimes \rho_Y\right) = 
\sum_{x\in X} \sum_{y\in Y} \langle\rho_X e_x,e_x\rangle \langle\rho_Y e_y,e_y\rangle \Tr\mbox{}_A \sigma_{x,y} = 0$$
and, by symmetry, $\Gamma_{\cl E} \in \cl Q_{\rm ns}$.

Let $E\in M_X\otimes M_A\otimes \cl B(H)$ and $F\in M_Y\otimes M_B\otimes \cl B(H)$
form a commuting pair of stochastic operator matrices and $\sigma\in \cl T(H)$ be a state. 
It follows from Lemma \ref{l_prs} that 
$$\Gamma_{E\cdot F,\sigma}|_{\cl D_{XY}} 
= \Gamma_{\Delta_{XY}(E\cdot F),\sigma}|_{\cl D_{XY}}
= \Gamma_{\Delta_{X}(E)\cdot \Delta_{Y}(F),\sigma}|_{\cl D_{XY}} \in \cl{CQ}_{\rm qc}.$$
Conversely, suppose that $\cl E_{\Theta}\in \cl{CQ}_{\rm qc}$, where
$\Theta = (\sigma_{x,y})_{x\in X, y\in Y}$ is a CQNS correlation. 
Let $H$, $\sigma$, $E$ and $F$ be such that (\ref{eq_cqnscq}) holds;
then $\Gamma_{\Theta} = \Gamma_{E\cdot F,\sigma}$. 
A similar argument applies in the case ${\rm x} = {\rm q}$, and 
the case ${\rm x} = {\rm qa}$ follows from the fact that the map $\cl E \to \cl E\circ \Delta_{XY}$,
from $\cl L(\cl D_{XY},M_{AB})$ into $\cl L(M_{XY},M_{AB})$, is continuous. 
Finally, if $\sigma_{x,y} = \sigma_x\otimes \sigma^y$, where $\sigma_x\in M_A$ 
(resp. $\sigma^y\in M_B$) is a state, $x\in X$ (resp. $y\in Y$),
and $\Phi : M_X\to M_A$ (resp. $\Psi : M_Y\to M_B$) is the channel given by 
$\Phi(e_x e_{x'}^*) = \delta_{x,x'} \sigma_x$ (resp. $\Psi(e_y e_{y'}^*) = \delta_{y,y'} \sigma^y$),
then $\Gamma_{\cl E} = \Phi\otimes \Psi$, and the case ${\rm x} = {\rm loc}$ follows. 
\end{proof}


\subsection{Description in terms of states}\label{ss_dts}

We next introduce an operator system, universal for 
classical-to-quantum no-signalling correlations in a similar 
manner that $\cl T_{X,A}$ is universal for the (fully) quantum correlations, and
describe the subclasses of CQNS correlations
via states on tensor products of its copies.

Let 
$$\cl B_{X,A} = \underbrace{M_A\ast_1\cdots\ast_1 M_A}_{|X| \mbox{ times}},$$
a free product, amalgamated over the unit. 
For each $x\in X$, write $\{e_{x,a,a'} : a,a'\in A\}$ for the canonical matrix unit system of the $x$-th copy of $M_A$, and let
$$\cl R_{X,A} = {\rm span}\{e_{x,a,a'} : x\in X, a,a'\in A\},$$
considered as an operator subsystem of $\cl B_{X,A}$.

Given operator systems $\cl S_1,\dots,\cl S_n$, their \emph{coproduct} 
$\cl S = \cl S_1\oplus_1\cdots\oplus_1\cl S_n$ is an operator system, equipped with 
complete order embeddings $\iota_i : \cl S_i \to \cl S$, 
characterised by the universal property that, 
whenever $\cl R$ is an operator system and $\phi_i : \cl S_i \to \cl R$ is a unital completely positive map, $i = 1,\dots,n$, 
there exists a unique unital completely positive map $\phi : \cl S\to \cl R$ such that 
$\phi \circ \iota_i = \phi_i$, $i = 1,2,\dots,n$. 
We refer the reader to \cite[Section 8]{kavruk_JOT} for a detailed account of the coproduct of operator systems.

\begin{remark}\label{l_crxa}
{\rm 
Let $\cl A_i$, $i = 1,\dots,n$, be unital C*-algebras and 
$\cl S = {\rm span}\{a_i : a_i\in \cl A_i, i = 1,\dots,n\}$, considered as an operator subsystem of 
the free product $\cl A_1\ast_1\cdots\ast_1 \cl A_n$, amalgamated over the unit. 
It was shown in \cite[Theorem 5.2]{fkpt_NYJ} that $\cl S \cong_{\rm c.o.i.}\cl A_1 \oplus_1 \cdots \oplus_1 \cl A_n$.
In particular, we have
\begin{equation}\label{eq_intomx}
\cl R_{X,A} \cong \underbrace{M_A\oplus_1\cdots \oplus_1 M_A}_{|X| \mbox{ times}}.
\end{equation}
An application of \cite[Lemma 2.8]{pt_QJM} now shows that
\begin{equation}\label{eq_intomxy}
\cl R_{X,A} \otimes_{\rm c} \cl R_{Y,B} \subseteq_{\rm c.o.i.} \cl B_{X,A} \otimes_{\max} \cl B_{Y,B}.
\end{equation}
}
\end{remark}

\begin{theorem}\label{th_ucptau4}
Let $H$ be a Hilbert space and $\phi : \cl R_{X,A}\to \cl B(H)$ be a linear map.
The following are equivalent:
\begin{itemize}
\item[(i)] $\phi$ is a unital completely positive map;
\item[(ii)] $\left(\left(\phi(e_{x,a,a'})\right)_{a,a'\in A}\right)_{x\in X}$ is a semi-classical 
stochastic operator matrix.
\end{itemize}
\end{theorem}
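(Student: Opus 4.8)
The plan is to reduce this to the already-established Theorem~\ref{th_ucptau2} via the coproduct structure of $\cl R_{X,A}$ recorded in Remark~\ref{l_crxa}. First I would observe that, by \eqref{eq_intomx}, $\cl R_{X,A}$ is completely order isomorphic to the coproduct $M_A \oplus_1 \cdots \oplus_1 M_A$ of $|X|$ copies of $M_A$, with the $x$-th coproduct embedding $\iota_x : M_A \to \cl R_{X,A}$ sending the matrix unit $e_{a,a'}$ of $M_A$ to $e_{x,a,a'}$. By the universal property of the coproduct, a linear map $\phi : \cl R_{X,A} \to \cl B(H)$ is unital completely positive if and only if each composition $\phi \circ \iota_x : M_A \to \cl B(H)$ is unital completely positive; the forward direction is immediate since $\iota_x$ is a complete order embedding (hence unital completely positive), and the converse is exactly the universal property, which produces a unique unital completely positive $\phi$ agreeing with the given pieces on each copy.

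Next I would translate the condition on the pieces. A unital linear map $\psi : M_A \to \cl B(H)$ is completely positive precisely when its Choi matrix $(\psi(e_{a,a'}))_{a,a'\in A} \in M_A \otimes \cl B(H)$ is positive, by Choi's Theorem; and unitality of $\psi$ forces $\sum_{a\in A}\psi(e_{a,a}) = \psi(I_A) = I_H$, i.e. $\Tr_A$ of that Choi matrix equals $I_H$. Thus setting $E_x := (\phi(e_{x,a,a'}))_{a,a'\in A} \in M_A \otimes \cl B(H)$, the map $\phi$ is unital completely positive iff for every $x \in X$ we have $E_x \in (M_A\otimes \cl B(H))^+$ and $\Tr_A E_x = I_H$. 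By the description of semi-classical stochastic operator matrices given just before Lemma~\ref{l_prs} — namely $E = \sum_{x\in X} e_x e_x^* \otimes E_x$ with each $E_x$ positive and $\Tr_A E_x = I_H$ — this says exactly that $((\phi(e_{x,a,a'}))_{a,a'\in A})_{x\in X}$ is a semi-classical stochastic operator matrix. This closes the equivalence; one should also remark that, as in the last paragraph of Theorem~\ref{th_ucptau2}, conversely every semi-classical stochastic operator matrix arises this way, since its blocks $(E_x)_x$ define unital completely positive maps $M_A \to \cl B(H)$ which the coproduct property assembles into a unital completely positive $\phi$ on $\cl R_{X,A}$.

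I do not anticipate a serious obstacle: the content is entirely packaged in Remark~\ref{l_crxa} (via \cite[Theorem 5.2]{fkpt_NYJ}) plus Choi's Theorem. The only point requiring a little care is making explicit that the coproduct embeddings $\iota_x$ are complete order embeddings with the stated action on matrix units — this is part of the definition of the coproduct as recalled in the excerpt — and that unitality of $\phi$ is equivalent to unitality of each $\phi \circ \iota_x$, which holds because $\iota_x(I_A)$ is the common unit of $\cl R_{X,A}$ for every $x$ (the amalgamation is over the unit). An alternative, slightly more hands-on route would bypass the coproduct and instead use that $\cl R_{X,A} \subseteq_{\rm c.o.i.} \cl B_{X,A}$ together with Arveson's extension and Stinespring's theorem applied to $\cl B_{X,A}$, mimicking the proof of (i)$\Rightarrow$(ii) in Theorem~\ref{th_ucptau2}; but the coproduct argument is cleaner and I would present that one.
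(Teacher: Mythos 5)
Your proposal is correct and follows essentially the same route as the paper's proof: restrict $\phi$ to each copy of $M_A$ inside the coproduct, apply Choi's Theorem to identify the blocks $(\phi(e_{x,a,a'}))_{a,a'}$ as Choi matrices of unital completely positive maps, and use the universal property of $\cl R_{X,A} \cong M_A\oplus_1\cdots\oplus_1 M_A$ to assemble the pieces back into a unital completely positive map equal to $\phi$. The paper's argument is just a terser version of yours, so no further comparison is needed.
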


\begin{proof}
(i)$\Rightarrow$(ii) 
The restriction $\phi_x$ of $\phi$ to the $x$-th copy of $M_A$ is a unital completely positive map. 
By Choi's Theorem, $\left(\phi_x(e_{x,a,a'})\right)_{a,a'}$ is a
stochastic operator matrix in $M_A\otimes \cl B(H)$ for every $x\in X$; 
thus, $\left(\left(\phi(e_{x,a,a'})\right)_{a,a'\in A}\right)_{x\in X}$ is a semi-classical stochastic operator matrix.

(ii)$\Rightarrow$(i) 
For each $x\in X$, let $\phi_x : M_A\to \cl B(H)$ be the linear map defined by letting 
$\phi_x(e_{a}e_{a'}^*) = \phi(e_{x,a,a'})$. 
By Choi's Theorem, $\phi_x$ is a (unital) completely positive map. By the universal property of the 
coproduct, there exists a (unique) unital completely positive map $\psi : \cl R_{X,A}\to \cl B(H)$
whose restriction to the $x$-th copy of $M_A$ coincides with $\phi_x$.
It follows that $\psi = \phi$, and hence $\phi$ is completely positive. 
\end{proof}

\begin{remark}\label{r_ctoq}
{\rm 
By \cite[Theorem 5.1]{fkpt_NYJ}, 
$\cl R_{X,A}$ is an operator system quotient of $M_{XA}$. 
Now \cite[Proposition 1.8]{fp} shows that, if
$$\cl Q_{X,A} = \left\{\oplus_{x\in X} T_x \in \oplus_{x\in X} M_A
: \exists \ c\in \bb{C} \mbox{ s.t. } \Tr T_x = c, x\in X\right\},$$
then the linear map $\Lambda_{\rm cq} : \cl R_{X,A}^{\rm d} \to \cl Q_{X,A}$, given by 
$$\Lambda_{\rm cq}(\phi) = \oplus_{x\in X} \left(\phi(e_{x,a,a'})\right)_{a,a'},$$
is a well-defined unital complete order isomorphism.
}
\end{remark}


We denote the canonical generators of $\cl R_{Y,B}$ by $f_{y,b,b'}$, $y\in Y$, $b,b'\in B$.
Given a functional $t : \cl R_{X,A} \otimes \cl R_{Y,B} \to \bb{C}$, we let
$\cl E_t : \cl D_{XY}\to M_{AB}$ be the linear map defined by
$$\cl E_t\left(e_xe_x^*\otimes e_y e_y^*\right)
= \sum_{a,a'\in A}\sum_{b,b'\in B} t\left(e_{x,a,a'} \otimes f_{y,b,b'}\right) e_ae_{a'}^*\otimes e_be_{b'}^*.$$
We note that $t\to \cl E_t$ is a linear map from $(\cl R_{X,A} \otimes \cl R_{Y,B})^*$ into $\cl L(\cl D_{XY},M_{AB})$.

Theorem \ref{th_ucptau4} and the universal property of the coproduct imply the existence of 
a unital completely positive map
$\beta_{X,A} : \cl R_{X,A}\to \cl T_{X,A}$ such that 
$$\beta_{X,A}(e_{x,a,a'}) = e_{x,x,a,a'}, \ \ \ x\in X, a,a'\in A.$$
Similarly, the matrix $(\delta_{x,x'} e_{x,a,a'})_{x,x',a,a'}$ is stochastic, and 
Theorem \ref{p_ucptau2} implies the existence of a unital completely positive map
$\beta_{X,A}' : \cl T_{X,A}\to \cl R_{X,A}$ such that
$$\beta_{X,A}'(e_{x,x',a,a'}) = \delta_{x,x'} e_{x,a,a'}, \ \ \ x,x'\in X, a,a'\in A.$$
It is clear that 
$$\beta_{X,A}'\circ \beta_{X,A} = \id\mbox{}_{\cl R_{X,A}}.$$

\begin{theorem}\label{th_cqdes}
The map $t\to \cl E_t$ is an affine isomorphism  
\begin{itemize}
\item[(i)] from the state space of $\cl R_{X,A} \otimes_{\max} \cl R_{Y,B}$ onto $\cl{CQ}_{\rm ns}$;

\item[(ii)] from the state space of $\cl R_{X,A} \otimes_{\rm c} \cl R_{Y,B}$ onto $\cl{CQ}_{\rm qc}$;

\item[(iii)] from the state space of $\cl R_{X,A} \otimes_{\min} \cl R_{Y,B}$ onto $\cl{CQ}_{\rm qa}$;

\item[(iv)] from the state space of $\omin\left(\cl R_{X,A}\right) \otimes_{\min} \omin\left(\cl R_{Y,B}\right)$ onto $\cl{CQ}_{\rm loc}$.
\end{itemize}
\end{theorem}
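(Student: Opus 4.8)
The plan is to mirror, in the classical-to-quantum setting, the chain of arguments already carried out for $\cl T_{X,A}$ in Section \ref{s_dtp} (Theorems \ref{th_qnsma}, \ref{t_dqc}, \ref{t_dqa} and \ref{t_locd}), substituting $\cl R_{X,A}$ for $\cl T_{X,A}$, $\cl B_{X,A}$ for $\cl C_{X,A}$, the universal c.p.\ characterisation of Theorem \ref{th_ucptau4} for that of Theorem \ref{p_ucptau2}, and the duality $\Lambda_{\rm cq} : \cl R_{X,A}^{\rm d}\to \cl Q_{X,A}$ of Remark \ref{r_ctoq} for the duality $\Lambda$ of Proposition \ref{p_dualt}. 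First I would record that $t\mapsto \cl E_t$ is affine and injective: injectivity is clear since the values $t(e_{x,a,a'}\otimes f_{y,b,b'})$ are read off from $\cl E_t$, and affineness is immediate from linearity of $t\mapsto \cl E_t$. So in each of (i)--(iv) it remains to identify the image of the relevant state space with the stated correlation class.

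For (i), given $\cl E\in \cl{CQ}_{\rm ns}$ I would argue, exactly as in Theorem \ref{th_qnsma}(i)$\Rightarrow$(ii), that the "Choi-type" array $C^{x,y}_{a,a',b,b'}$ determined by the states $\sigma_{x,y}$ is, for each fixed $x$, a positive element of $M_A\otimes M_B$ with well-defined $B$-marginal independent of $x$ (by \eqref{eq_wxy}), hence an element of $\cl Q_{X,A}\otimes\cl Q_{Y,B}$ that is positive in $M_{XAB}\otimes\cdots$; injectivity of $\otimes_{\min}$ places it in $(\cl Q_{X,A}\otimes_{\min}\cl Q_{Y,B})^+$, and then \cite[Proposition 1.9]{fp} together with Remark \ref{r_ctoq} gives $(\cl R_{X,A}\otimes_{\max}\cl R_{Y,B})^{\rm d}\cong_{\rm c.o.i.}\cl Q_{X,A}\otimes_{\min}\cl Q_{Y,B}$, yielding the desired state $t$. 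The converse uses that $(e_{x,a,a'}\otimes f_{y,b,b'})$ is positive in $M_{XAB\cdots}(\cl R_{X,A}\otimes_{\max}\cl R_{Y,B})$ (analogue of \eqref{eq_TC}, available since $\cl R_{X,A}\subseteq_{\rm c.o.i.}\cl B_{X,A}$), so $\cl E_t$ sends diagonal rank-one projections to positive matrices, while the amalgamation relations $\sum_a e_{x,a,a}=1$ give trace preservation and the no-signalling marginal conditions. For (ii), I would combine Theorem \ref{th_charcq}(i) — which identifies $\cl{CQ}_{\rm qc}$ with the correlations $\Gamma_{E\cdot F,\sigma}$ for commuting pairs of semi-classical stochastic operator matrices — with Theorem \ref{th_ucptau4}, which shows such matrices are exactly the unital c.p.\ maps $\cl R_{X,A}\to\cl B(H)$, hence exactly restrictions of unital $*$-representations of $\cl B_{X,A}$; this gives a state on $\cl B_{X,A}\otimes_{\max}\cl B_{Y,B}$, and \eqref{eq_intomxy} (i.e.\ $\cl R_{X,A}\otimes_{\rm c}\cl R_{Y,B}\subseteq_{\rm c.o.i.}\cl B_{X,A}\otimes_{\max}\cl B_{Y,B}$) transfers this to a state on the commuting tensor product, exactly as in Theorem \ref{t_dqc}(ii)$\Leftrightarrow$(iii).

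For (iii), I would follow Theorem \ref{t_dqa}: a state on $\cl R_{X,A}\otimes_{\min}\cl R_{Y,B}$ extends (via $\cl R_{X,A}\otimes_{\min}\cl R_{Y,B}\subseteq_{\rm c.o.i.}\cl B_{X,A}\otimes_{\min}\cl B_{Y,B}$) to a state on the minimal C*-tensor product, which by \cite[Corollary 4.3.10]{kadison-ringrose} is weak*-approximated by convex combinations of vector states on $\pi_X\otimes\pi_Y$, and the same direct-sum-of-vectors trick plus finite-rank-truncation argument realises $\cl E_t$ as a limit of quantum CQNS correlations; conversely a quantum CQNS correlation gives vector states on minimal C*-tensor products whose weak* cluster point is the required $t$. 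For (iv), I would invoke Theorem \ref{th_charcq} (the $\mathrm{x}=\mathrm{loc}$ case) together with Lemma \ref{l_fdom}: using \cite[Theorem 9.9]{kavruk_thesis} and \cite[Proposition 1.9]{fp} a state on $\omin(\cl R_{X,A})\otimes_{\min}\omin(\cl R_{Y,B})$ corresponds to a positive element of $\omax(\cl R_{X,A}^{\rm d})\otimes_{\max}\omax(\cl R_{Y,B}^{\rm d})$, which by Lemma \ref{l_fdom} is a sum $\sum_i\lambda_i\phi_i\otimes\psi_i$ of product positive functionals; each $\phi_i$ (via Theorem \ref{th_ucptau4} applied in dimension one, i.e.\ Choi) is the Choi functional of a quantum channel $\Phi_i:M_X\to M_A$, giving $\cl E_t$ as a convex combination of product channels, and conversely. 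The main obstacle I anticipate is not conceptual but bookkeeping: one must be careful that the "Choi array" for a \emph{classical} input set is indexed only by a single $x$ (not a pair $x,x'$) so that the relevant predual object is $\cl Q_{X,A}=\{\oplus_x T_x : \Tr T_x \text{ independent of } x\}$ rather than $\cl L_{X,A}$, and that the semi-classical-stochastic-operator-matrix language of Theorem \ref{th_charcq} is consistently matched with the coproduct/free-product language of Theorem \ref{th_ucptau4} and Remark \ref{l_crxa}; once these identifications are pinned down each of the four parts is a faithful transcription of its $\cl T_{X,A}$-counterpart.
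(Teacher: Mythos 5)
Your plan is essentially sound, but it takes a much longer road than the paper does, and it contains one step that is wrong as stated. The paper's proof of Theorem \ref{th_cqdes} is a two-line reduction: it uses the unital completely positive maps $\beta_{X,A}:\cl R_{X,A}\to\cl T_{X,A}$ and $\beta_{X,A}':\cl T_{X,A}\to\cl R_{X,A}$ (defined just before the theorem, with $\beta_{X,A}'\circ\beta_{X,A}=\id$), the observation that $\Gamma_s|_{\cl D_{XY}}=\cl E_t$ when $t=s\circ(\beta_{X,A}\otimes\beta_{Y,B})$, functoriality of all four tensor products, and Theorems \ref{th_qnsma}, \ref{t_dqc}, \ref{t_dqa}, \ref{t_locd} together with Theorem \ref{th_charcq}. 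Given a state $t$ on $\cl R_{X,A}\otimes_\tau\cl R_{Y,B}$ one passes to $s=t\circ(\beta'_{X,A}\otimes\beta'_{Y,B})$ and reads off $\cl E_t=\Gamma_s|_{\cl D_{XY}}\in\cl{CQ}_{\rm x}$; conversely, given $\cl E\in\cl{CQ}_{\rm x}$ one writes $\Gamma_{\cl E}=\Gamma_s$ and sets $t=s\circ(\beta_{X,A}\otimes\beta_{Y,B})$. Your approach instead re-proves each of the four parts from scratch by transcribing the Section \ref{s_dtp} arguments with $\cl R_{X,A}$, $\cl B_{X,A}$, Theorem \ref{th_ucptau4} and Remark \ref{r_ctoq} in place of $\cl T_{X,A}$, $\cl C_{X,A}$, Theorem \ref{p_ucptau2} and Proposition \ref{p_dualt}. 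Parts (i), (iii) and (iv) of your transcription do go through (your bookkeeping remark about $\cl Q_{X,A}$ versus $\cl L_{X,A}$ is exactly the right point to watch, and in (iv) the normalised positive functionals give classical-to-quantum channels $\cl D_X\to M_A$ rather than channels on $M_X$, which is what the definition of $\cl{CQ}_{\rm loc}$ requires); what your route buys is an explicit dual description of $(\cl R_{X,A}\otimes_{\max}\cl R_{Y,B})^{\rm d}$, at the cost of repeating several pages of argument that the paper recycles wholesale.

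The genuine problem is in your part (ii). You claim that unital completely positive maps $\cl R_{X,A}\to\cl B(H)$ (equivalently, semi-classical stochastic operator matrices) are ``exactly restrictions of unital $*$-representations of $\cl B_{X,A}$.'' This is false: $\cl B_{X,A}$ is \emph{not} the universal C*-cover of $\cl R_{X,A}$ (the paper's remark at the end of Section \ref{s_crs} makes this explicit: for $|X|=1$ one has $\cl B_{X,A}\cong M_A$, whereas $C^*_u(M_A)$ is much larger, and a generic unital completely positive map on $M_A$ is not a $*$-homomorphism). Boca's theorem only gives a unital completely positive extension to $\cl B_{X,A}$, and for a commuting pair of semi-classical stochastic operator matrices these extensions need not have commuting ranges, so your construction of a state on $\cl B_{X,A}\otimes_{\max}\cl B_{Y,B}$ in the correlation-to-state direction of (ii) does not work as written. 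The repair is short: by Theorem \ref{th_ucptau4} the pair $(E,F)$ yields unital completely positive maps $\phi_E:\cl R_{X,A}\to\cl B(H)$ and $\phi_F:\cl R_{Y,B}\to\cl B(H)$ with commuting ranges, and the defining property of the commuting tensor product makes $u\otimes v\mapsto\phi_E(u)\phi_F(v)$ unital completely positive on $\cl R_{X,A}\otimes_{\rm c}\cl R_{Y,B}$; composing with the state $\sigma$ gives the required $t$ directly, with no detour through $\cl B_{X,A}\otimes_{\max}\cl B_{Y,B}$. (The opposite direction of (ii) is fine: a state on $\cl R_{X,A}\otimes_{\rm c}\cl R_{Y,B}$ extends, by (\ref{eq_intomxy}), to a state on $\cl B_{X,A}\otimes_{\max}\cl B_{Y,B}$, whose GNS representation produces a commuting pair of semi-classical stochastic operator matrices, and Theorem \ref{th_charcq} (i) finishes the argument.) Alternatively, adopting the paper's reduction via $\beta_{X,A}$, $\beta'_{X,A}$ avoids the issue altogether, since u.c.p.\ maps of $\cl T_{X,A}$ \emph{are} restrictions of $*$-representations of $\cl C_{X,A}=C^*_u(\cl T_{X,A})$ by Corollary \ref{c_ucst}.
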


\begin{proof}
It is clear that the map $t\to \cl E_t$ is bijective.
It is also straightforward to see that, for a linear functional 
$s : \cl T_{X,A}\otimes \cl T_{Y,B}\to \bb{C}$, 
we have $\Gamma_s|_{\cl D_{XY}} = \cl E_t$, where 
$t = s\circ \left(\beta_{X,A}\otimes \beta_{Y,B}\right)$.
The claims now follow from Theorems \ref{th_qnsma}, \ref{t_dqc}, \ref{t_dqa}, \ref{t_locd}, \ref{th_charcq}
and the functoriality of the involved tensor products.
\end{proof}

As a consequence of Theorem \ref{th_cqdes}, we see that 
the sets $\cl{CQ}_{\rm qc}$ and $\cl{CQ}_{\rm loc}$ are closed
(as are $\cl{CQ}_{\rm ns}$ and $\cl{CQ}_{\rm qa}$).

\medskip

\noindent {\bf Remark. } 
As in Theorems \ref{t_dqc} and \ref{t_dqa}, the classes $\cl{CQ}_{\rm qc}$ and $\cl{CQ}_{\rm qa}$
can be equivalently described via states on the C*-algebraic tensor products 
$\cl B_{X,A} \otimes_{\max} \cl B_{Y,B}$ and $\cl B_{X,A} \otimes_{\min} \cl B_{Y,B}$,
respectively. For the class $\cl{CQ}_{\rm qa}$, this is a direct consequence of the injectivity of the minimal 
tensor product in the operator system category, while for the class $\cl{CQ}_{\rm qc}$, this is a 
consequence of Remark \ref{l_crxa}.


\section{Classical reduction and separation}\label{s_crs}

Let $X$ and $A$ be finite sets.
We let 
$$\cl A_{X,A} = \underbrace{\ell^{\infty}_A \ast_1\cdots \ast_1 \ell^{\infty}_A}_{|X| \mbox{ times}},$$
where the free product is amalgamated over the unit, 
and 
$$\cl S_{X,A} = \underbrace{\ell^{\infty}_A \oplus_1 \cdots \oplus_1 \ell^{\infty}_A}_{|X| \mbox{ times}},$$
the operator system coproduct of $|X|$ copies of $\ell^{\infty}_A$. 
Note that, by \cite[Theorem 5.2]{fkpt_NYJ} (see Remark \ref{l_crxa}), 
$\cl S_{X,A}$ is an operator subsystem of $\cl A_{X,A}$.
We let $(e_{x,a})_{a\in A}$ be the canonical basis of the $x$-th copy of $\ell^{\infty}_A$ inside $\cl S_{X,A}$;
thus, $\cl S_{X,A}$ is generated, as a vector space, by $\{e_{x,a} : x\in X, a\in A\}$, and the relations 
$$\sum_{a\in A} e_{x,a} = 1, \ \ \ x\in X,$$
are satisfied.
Note that, by the universal property of the operator system coproduct, 
$\cl S_{X,A}$ is characterised by the following property: 
whenever $H$ is a Hilbert space and $\{E_{x,a} : x\in X, a\in A\}$ is a family of positive operators on $H$ 
such that $(E_{x,a})_{a\in A}$ is a POVM for every $x\in X$, there exists a (unique) unital completely positive map
$\phi : \cl S_{X,A}\to \cl B(H)$ such that $\phi(e_{x,a}) = E_{x,a}$, $x\in X$, $a\in A$.

We denote by $\frak{E}$ the map sending a quantum channel $\Gamma : M_{XY}\to M_{AB}$ to 
$\Gamma|_{\cl D_{XY}}$
(and recall that $\frak{N}$ stands for the map sending $\Gamma$ to $\cl N_{\Gamma} = \Delta_{AB}\circ \Gamma|_{\cl D_{XY}}$); 
Remark \ref{th_onedq} below 
justifies calling $\frak{E}$ and $\frak{N}$ \emph{classical reduction maps}. 
The forward implications all follow similarly to the one in (ii), which was shown in Theorem \ref{th_charcq}, 
while the reverse ones can be seen after an application of Lemma \ref{l_prs}.
We recall that we identify $\cl C_{\rm ns}$ with the set $\{\cl N_p : p \mbox{ an NS correlation}\}$.

\begin{remark}\label{th_onedq}
Let $X$, $Y$, $A$ and $B$ be finite sets, ${\rm x}\in \{{\rm loc}, {\rm q}, {\rm qa}, {\rm qc}, {\rm ns}\}$,
$p\in \cl C_{\rm x}$ and $\cl E\in \cl{CQ}_{\rm x}$. 
The following hold:
\begin{itemize}
\item[(i)] $p \in \cl C_{\rm x}$ $\Leftrightarrow$ $\cl E_p \in \cl{CQ}_{\rm x}$ $\Leftrightarrow$ $\Gamma_p \in \cl Q_{\rm x}$;

\item[(ii)] $\cl E \in \cl{CQ}_{\rm x}$ $\Leftrightarrow$ $\Gamma_{\cl E} \in \cl Q_{\rm x}$.
\end{itemize}
\noindent
Moreover, the maps 
$\frak{E} : \cl{Q}_{\rm x} \to \cl{CQ}_{\rm x}$ and
$\frak{N} : \cl{CQ}_{\rm x} \to \cl C_{\rm x}$ are well-defined and surjective.
\end{remark}


We identify an element $\cl N$ of $\cl C_{\rm x}$ with the 
corresponding classical-to-quantum channel from $\cl D_{XY}$ into $M_{AB}$, and an element 
$\cl E$ of $\cl{CQ}_{\rm x}$ with the corresponding quantum channel from $M_{XY}$ into $M_{AB}$.
The subsequent table summarises the inclusions between the various classes of correlations:

$$
\begin{matrix}
\cl C_{\rm loc} & \subset & \cl C_{\rm q} & \subset & \cl C_{\rm qa} & \subset & \cl C_{\rm qc} & \subset & \cl C_{\rm ns}\\
\cap & & \cap & & \cap & & \cap & & \cap\\
\cl{CQ}_{\rm loc} & \subset & \cl{CQ}_{\rm q} & \subset & \cl{CQ}_{\rm qa} & \subset & \cl{CQ}_{\rm qc} & \subset & \cl{CQ}_{\rm ns}\\
\cap & & \cap & & \cap & & \cap & & \cap\\
\cl Q_{\rm loc} & \subset & \cl Q_{\rm q} & \subset & \cl Q_{\rm qa} & \subset & \cl Q_{\rm qc} & \subset & \cl Q_{\rm ns}.
\end{matrix}
$$

\medskip

\noindent 
By Bell's Theorem, $\cl C_{\rm loc} \neq \cl C_{\rm q}$ for all subsets $X,Y,A,B$ of cardinality at least $2$.
By Remark \ref{th_onedq}, we have that $\cl{CQ}_{\rm loc} \neq \cl{CQ}_{\rm q}$ and 
$\cl Q_{\rm loc} \neq \cl Q_{\rm q}$. 
By \cite{slofstra}, $\cl C_{\rm q} \neq \cl C_{\rm qa}$ for some finite sets $X$, $Y$, $A$ and $B$
(see also \cite{dpp})
and hence
$\cl{CQ}_{\rm q} \neq \cl{CQ}_{\rm qa}$ and 
$\cl Q_{\rm q} \neq \cl Q_{\rm qa}$ for a suitable choice of sets. 
The inequality $\cl C_{\rm qc} \neq \cl C_{\rm ns}$ is well-known
(it follows e.g. from \cite[Theorem 7.11]{fkpt}),
implying that $\cl{CQ}_{\rm qc} \neq \cl{CQ}_{\rm ns}$ and $\cl Q_{\rm qc} \neq \cl Q_{\rm ns}$.

It was recently shown \cite{jnvwy} that the inequality $\cl C_{\rm qa} \neq \cl C_{\rm qc}$ 
also holds true for suitable sets $X$, $Y$, $A$ and $B$, thus resolving the long-standing Tsirelson Problem 
and, by \cite{jnpp} and \cite{oz}, the Connes Embedding Problem, in the negative.
It thus follows from Remark \ref{th_onedq} that, for this choice of sets, 
$\cl{CQ}_{\rm qa} \neq \cl{CQ}_{\rm qc}$ and $\cl Q_{\rm qa} \neq \cl Q_{\rm qc}$. 
We next strengthen these inequalities.

\begin{lemma}\label{l_xaybe}
Let $X_i$ and $A_i$ be finite sets, $i = 1,2$, with $X_1\subseteq X_2$ and $A_1\subseteq A_2$. 
There exist unital completely positive maps
$\iota_1 : \cl S_{X_1,A_1}\to \cl S_{X_2,A_2}$
and $\iota_2 : \cl S_{X_2,A_2}\to \cl S_{X_1,A_1}$ such that $\iota_2\circ\iota_1 = \id$. 
\end{lemma}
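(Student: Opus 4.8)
The plan is to construct the two maps explicitly using the coproduct structure of $\cl S_{X,A}$ and the universal property stated just before the lemma. Recall that $\cl S_{X,A}$ is the operator system coproduct of $|X|$ copies of $\ell^\infty_A$, generated by $\{e_{x,a}: x\in X, a\in A\}$ subject to $\sum_{a\in A}e_{x,a}=1$ for each $x$, and that it is characterised by the property that any family $\{E_{x,a}\}$ with $(E_{x,a})_{a\in A}$ a POVM for each $x$ induces a unique unital completely positive map $\phi:\cl S_{X,A}\to\cl B(H)$ with $\phi(e_{x,a})=E_{x,a}$.

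\textbf{Construction of $\iota_1$.} First I would fix a basepoint $a_0\in A_1$ (which exists since $A_1\subseteq A_2$, assuming $A_1\neq\emptyset$; if $A_1=\emptyset$ the statement is vacuous) and, for each $x\in X_1$, define operators in $\cl S_{X_2,A_2}$ by $E_{x,a}=e_{x,a}$ for $a\in A_1\setminus\{a_0\}$ and $E_{x,a_0}=\sum_{a\in A_2\setminus A_1}e_{x,a}+e_{x,a_0}$, viewing these inside $\cl S_{X_2,A_2}$. Each $E_{x,a}$ is positive (a sum of the positive generators $e_{x,a}$), and $\sum_{a\in A_1}E_{x,a}=\sum_{a\in A_2}e_{x,a}=1$, so $(E_{x,a})_{a\in A_1}$ is a POVM in $\cl S_{X_2,A_2}$ (or more precisely in $\cl B(H)$ after representing $\cl S_{X_2,A_2}$ faithfully; since the universal property of the coproduct is phrased for maps into $\cl B(H)$, I would pass to a faithful representation, build the map there, and conclude it lands in $\cl S_{X_2,A_2}$ because the generators do). By the universal property of $\cl S_{X_1,A_1}$, there is a unique unital completely positive map $\iota_1:\cl S_{X_1,A_1}\to\cl S_{X_2,A_2}$ with $\iota_1(e_{x,a})=E_{x,a}$ for $x\in X_1$, $a\in A_1$.

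\textbf{Construction of $\iota_2$.} Going the other way, for each $x\in X_2$ I would define a POVM $(F_{x,a})_{a\in A_1}$ on (a representation of) $\cl S_{X_1,A_1}$ as follows: if $x\in X_1$, set $F_{x,a}=e_{x,a}$ for each $a\in A_1$; if $x\in X_2\setminus X_1$, pick any fixed $x_1\in X_1$ and set $F_{x,a}=e_{x_1,a}$, a legitimate POVM since $\sum_{a\in A_1}e_{x_1,a}=1$. Then the universal property of $\cl S_{X_2,A_2}$ yields a unital completely positive map $\iota_2:\cl S_{X_2,A_2}\to\cl S_{X_1,A_1}$ with $\iota_2(e_{x,a})=F_{x,a}$. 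The remaining point is to check $\iota_2\circ\iota_1=\id_{\cl S_{X_1,A_1}}$: since $\cl S_{X_1,A_1}$ is spanned by the $e_{x,a}$ with $x\in X_1$, $a\in A_1$, it suffices to verify $\iota_2(\iota_1(e_{x,a}))=e_{x,a}$ for such $x,a$. For $x\in X_1$ and $a\in A_1\setminus\{a_0\}$ this is immediate from $\iota_1(e_{x,a})=e_{x,a}$ (in $\cl S_{X_2,A_2}$, with $x\in X_1$) and $\iota_2(e_{x,a})=e_{x,a}$; for $a=a_0$, $\iota_2(\iota_1(e_{x,a_0}))=\iota_2\big(\sum_{a\in A_2\setminus A_1}e_{x,a}+e_{x,a_0}\big)=\sum_{a\in A_2\setminus A_1}e_{x,a_1}\!\restriction$---wait, here I must be careful: for $x\in X_1$, $\iota_2(e_{x,a})=e_{x,a}$ for every $a\in A_1$, but $\iota_2(e_{x,a})$ for $a\in A_2\setminus A_1$ needs $F_{x,a}$ defined for all $a\in A_1$ only, so I should instead choose the POVM indexing for $\cl S_{X_2,A_2}$ over $A_2$ with a collapsing map; concretely, redefine $F_{x,a_0}$ for $x\in X_1$ to absorb the extra indices, i.e. use $(F_{x,a})_{a\in A_1}$ with $F_{x,a_0}:=e_{x,a_0}+\sum_{a\in A_2\setminus A_1}$ of nothing --- the cleanest fix is to define $\iota_2$ via a POVM over $A_1$: for $x\in X_1$ put $F_{x,a}=e_{x,a}$ ($a\in A_1\setminus\{a_0\}$) and $F_{x,a_0}=1-\sum_{a\in A_1\setminus\{a_0\}}e_{x,a}=e_{x,a_0}$, and interpret the generators $e_{x,a}$ of $\cl S_{X_2,A_2}$ with $a\in A_2\setminus A_1$ as being sent by $\iota_2$ to $0$; this is consistent because such generators satisfy $e_{x,a}\le 1$ and appear only additively. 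Then $\iota_2(\iota_1(e_{x,a_0}))=\sum_{a\in A_2\setminus A_1}0+e_{x,a_0}=e_{x,a_0}$, as desired.

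\textbf{Main obstacle.} The genuine subtlety---and the step I would be most careful about---is the bookkeeping in making $\iota_2$ well-defined so that the composition collapses correctly, namely arranging the ``forget the new outputs'' behaviour on the extra generators $e_{x,a}$, $a\in A_2\setminus A_1$, in a way that is realised by an honest unital completely positive map (i.e. that it genuinely comes from a POVM via the universal property, not merely a linear functional prescription). Sending these generators to $0$ is legitimate precisely because one can take the POVM $(F_{x,a})_{a\in A_2}$ on the target with $F_{x,a}=0$ for $a\in A_2\setminus A_1$ and $(F_{x,a})_{a\in A_1}$ as above, which is still a POVM; then $\iota_2\circ\iota_1$ acts as the identity on every generator of $\cl S_{X_1,A_1}$, and since unital completely positive maps are linear and these generators span, $\iota_2\circ\iota_1=\id$. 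Everything else is a routine application of the universal property of the coproduct and Remark~\ref{l_crxa}.
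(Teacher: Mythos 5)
Your construction is correct and rests on essentially the same idea as the paper's proof: both define $\iota_1$ by absorbing the new output letters into a chosen old one and $\iota_2$ by reusing a fixed input $x_1\in X_1$ for the new inputs, with everything legitimised by the universal (POVM) property of the coproduct $\cl S_{X,A}$ and the composition checked on the spanning set $\{e_{x,a}\}$. The only differences are cosmetic: the paper proceeds by induction on single-element enlargements and has $\iota_2$ split the merged outputs as $\frac{1}{2}e_{x,a_1}$ each, whereas you build the maps in one step and send the surplus generators $e_{x,a}$, $a\in A_2\setminus A_1$, to $0$ (a perfectly valid POVM choice), so no gap remains.
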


\begin{proof}
Denote the canonical generators of $\cl S_{X_1,A_1}$ by $e_{x,a}$, and of $\cl S_{X_2,A_2}$ -- by $f_{x,a}$.
By induction, it suffices to prove the claim in two cases.

\smallskip

\noindent {\it Case 1. } $X_1 = X_2$ and $A_2 = A_1 \cup \{a_2\}$, where $a_2\not\in A_1$. 

\smallskip

Let $a_1\in A_1$. Define the maps $\iota_1$ and $\iota_2$ by setting
\begin{equation*}
\iota_1(e_{x,a}) 
= \left\{ 
\begin{array}{ll}
f_{x,a} & \text{if } a\in A_1\setminus \{a_1\} \text{,} \\ 
f_{x,a_1} + f_{x,a_2} & a = a_1\text{,}
\end{array}
\right.
\end{equation*}
and
\begin{equation*}
\iota_2(f_{x,a}) 
= \left\{ 
\begin{array}{ll}
e_{x,a} & \text{if } a\in A_1\setminus \{a_1\}\text{,} \\ 
\frac{1}{2} e_{x,a_1} & a \in \{a_1,a_2\}\text{.}
\end{array}
\right.
\end{equation*}

\smallskip

\noindent {\it Case 2. } $A_2 = A_1$ and $X_2 = X_1 \cup \{x_2\}$, where $x_2\not\in X_1$.

\smallskip

Let $x_1\in X_1$. Define 
$\iota_1(e_{x,a}) = f_{x,a}$, $x\in X_1$, $a\in A_1$, and 
\begin{equation*}
\iota_2(f_{x,a}) 
= \left\{ 
\begin{array}{ll}
e_{x,a} & \text{if } x\in X_1\text{,} \\ 
e_{x_1,a} & x = x_2\text{.}
\end{array}
\right.
\end{equation*}

\smallskip

By the universal property of the operator systems $\cl S_{X,A}$, $\iota_1$ and $\iota_2$ are unital completely positive maps, and 
the condition $\iota_2\circ\iota_1 = \id$ is readily verified. 
\end{proof}

\begin{theorem}\label{th_cep}
For all finite sets $X$, $Y$, $A$ and $B$ of sufficiently large cardinality, the following hold true:
\begin{itemize}
\item[(i)] $\cl Q_{\rm qa}(X,Y,A,B) \neq \cl Q_{\rm qc}(X,Y,A,B)$;

\item[(ii)] $\cl{CQ}_{\rm qa}(X,Y,A,B) \neq \cl{CQ}_{\rm qc}(X,Y,A,B)$;

\item[(iii)] $\cl T_{X,A}\otimes_{\min} \cl T_{Y,B} \neq \cl T_{X,A}\otimes_{\rm c} \cl T_{Y,B}$;

\item[(iv)] $\cl R_{X,A}\otimes_{\min} \cl R_{Y,B} \neq \cl R_{X,A}\otimes_{\rm c} \cl R_{Y,B}$;

\item[(v)] $\cl B_{X,A}\otimes_{\min} \cl B_{Y,B} \neq \cl B_{X,A}\otimes_{\max} \cl B_{Y,B}$;

\item[(vi)] $\cl C_{X,A}\otimes_{\min} \cl C_{Y,B} \neq \cl C_{X,A}\otimes_{\max} \cl C_{Y,B}$.
\end{itemize}
\end{theorem}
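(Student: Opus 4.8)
The plan is to reduce everything to the known separation $\cl C_{\rm qa}\neq\cl C_{\rm qc}$ from \cite{jnvwy}, using the classical reduction maps of Remark \ref{th_onedq} together with the dilation/lifting results of Lemma \ref{l_xaybe}. The key point is that all six inequalities are, by the tensor-product descriptions established in Sections \ref{s_dtp} and \ref{ss_dts}, equivalent to a separation at the level of correlation classes, so it suffices to exhibit sets $X,Y,A,B$ for which $\cl Q_{\rm qa}(X,Y,A,B)\neq\cl Q_{\rm qc}(X,Y,A,B)$, and then propagate this separation upward and downward through the diagram.

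First I would fix finite sets $X_0,Y_0,A_0,B_0$, of cardinality at least $2$, for which $\cl C_{\rm qa}(X_0,Y_0,A_0,B_0)\neq\cl C_{\rm qc}(X_0,Y_0,A_0,B_0)$, as guaranteed by \cite{jnvwy}. Choose an NS correlation $p\in\cl C_{\rm qc}\setminus\cl C_{\rm qa}$. By Remark \ref{th_onedq}(i), $\Gamma_p\in\cl Q_{\rm qc}$, and if we had $\Gamma_p\in\cl Q_{\rm qa}$ then the same remark would force $p\in\cl C_{\rm qa}$, a contradiction; hence $\cl Q_{\rm qa}(X_0,Y_0,A_0,B_0)\neq\cl Q_{\rm qc}(X_0,Y_0,A_0,B_0)$, and likewise $\cl{CQ}_{\rm qa}\neq\cl{CQ}_{\rm qc}$ for these sets. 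This gives (i) and (ii) for the particular choice $(X_0,Y_0,A_0,B_0)$. To upgrade to \emph{all} sufficiently large sets, I would enlarge the alphabets: given $X\supseteq X_0$, etc., Lemma \ref{l_xaybe} supplies unital completely positive maps $\iota_1:\cl S_{X_0,A_0}\to\cl S_{X,A}$ and $\iota_2$ in the other direction with $\iota_2\circ\iota_1=\id$, and the same construction (identical formulas on the matrix-unit systems) produces analogous pairs for $\cl T$ and $\cl R$. Composing a state on the larger tensor product with $\iota_1\otimes\iota_1$ and using $\iota_2\circ\iota_1=\id$ shows that a separation for $(X_0,Y_0,A_0,B_0)$ pulls back to one for $(X,Y,A,B)$; concretely, if the relevant min- and c-tensor norms agreed for the larger sets, the retraction would force them to agree for the smaller ones. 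This yields (i) and (ii) for all $X,Y,A,B$ containing $X_0,Y_0,A_0,B_0$.

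Next I would translate the correlation-class separations into operator-system statements. By Theorem \ref{th_qnsma} and Theorem \ref{t_dqc}, $\cl Q_{\rm ns}$ and $\cl Q_{\rm qc}$ correspond to the state spaces of $\cl T_{X,A}\otimes_{\max}\cl T_{Y,B}$ and $\cl T_{X,A}\otimes_{\rm c}\cl T_{Y,B}$ respectively, while by Theorem \ref{t_dqa} the class $\cl Q_{\rm qa}$ corresponds to states on $\cl T_{X,A}\otimes_{\min}\cl T_{Y,B}$. Since $s\mapsto\Gamma_s$ is an affine isomorphism on state spaces in each case, $\cl Q_{\rm qa}\neq\cl Q_{\rm qc}$ forces the state spaces of $\cl T_{X,A}\otimes_{\min}\cl T_{Y,B}$ and $\cl T_{X,A}\otimes_{\rm c}\cl T_{Y,B}$ to differ, hence these operator systems differ, giving (iii). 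The same argument with Theorem \ref{th_cqdes} in place of Theorems \ref{th_qnsma}--\ref{t_locd} yields (iv). For (v) and (vi), I would invoke Remark \ref{l_crxa} (equation (\ref{eq_intomxy})), which embeds $\cl R_{X,A}\otimes_{\rm c}\cl R_{Y,B}$ completely order isomorphically into $\cl B_{X,A}\otimes_{\max}\cl B_{Y,B}$, together with the injectivity of the minimal operator-system tensor product giving $\cl R_{X,A}\otimes_{\min}\cl R_{Y,B}\subseteq_{\rm c.o.i.}\cl B_{X,A}\otimes_{\min}\cl B_{Y,B}$; a coincidence of the two C*-tensor norms on $\cl B$ would then restrict to a coincidence on $\cl R$, contradicting (iv), and symmetrically Corollary \ref{c_ucst} with \cite[Theorem 6.4]{kptt} handles (vi) via $\cl T_{X,A}\otimes_{\rm c}\cl T_{Y,B}\subseteq_{\rm c.o.i.}\cl C_{X,A}\otimes_{\max}\cl C_{Y,B}$ and $\cl T_{X,A}\otimes_{\min}\cl T_{Y,B}\subseteq_{\rm c.o.i.}\cl C_{X,A}\otimes_{\min}\cl C_{Y,B}$, contradicting (iii).

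The main obstacle I anticipate is the bookkeeping in the enlargement step: one must check that the retraction maps from Lemma \ref{l_xaybe} (and their $\cl T$- and $\cl R$-analogues) are genuinely compatible with the identifications $s\leftrightarrow\Gamma_s$ and $t\leftrightarrow\cl E_t$, i.e. that pulling a QNS (or CQNS) correlation over the larger alphabet back along $\iota_2\circ\iota_1$ really recovers the original correlation over the smaller one, so that the separation is not lost. This is a direct verification on the canonical generators, but it is the place where sign errors or a mismatched normalization (note the factor $\tfrac12$ in Case 1 of Lemma \ref{l_xaybe}) could derail the argument; once it is in place, the rest is formal functoriality of $\otimes_{\min}$, $\otimes_{\rm c}$ and $\otimes_{\max}$.
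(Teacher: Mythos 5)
Your overall architecture is the same as the paper's: start from $p\in\cl C_{\rm qc}\setminus\cl C_{\rm qa}$ supplied by \cite{jnvwy}, use the retraction trick of Lemma \ref{l_xaybe} to propagate the separation to all larger alphabets, transfer to the quantum and classical-to-quantum classes, and then convert (i)--(ii) into the operator-system statements (iii)--(vi) exactly as in the paper (Theorems \ref{t_dqc}, \ref{t_dqa}, \ref{th_cqdes}, Remark \ref{l_crxa}, Corollary \ref{c_ucst} and \cite[Theorem 6.4]{kptt}); that last translation step in your proposal is correct and coincides with the paper's. The genuine difference is \emph{where} the enlargement is performed. The paper never touches $\cl T_{X,A}$ or $\cl R_{X,A}$ at this stage: it represents $p$ as a state on $\cl S_{X_0,A_0}\otimes_{\rm c}\cl S_{Y_0,B_0}$ via \cite[Corollary 3.2]{lmprsstw}, pushes it to the larger sets with the $\cl S$-level maps of Lemma \ref{l_xaybe}, uses \cite[Corollary 3.3]{lmprsstw} to see that the resulting correlation is not approximately quantum, and only then invokes Remark \ref{th_onedq} to obtain (i) and (ii) for the enlarged sets. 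You instead enlarge at the level of $\cl Q$/$\cl{CQ}$ by postulating retraction pairs for $\cl T_{X,A}$ and $\cl R_{X,A}$ themselves.

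That is exactly where your argument has a gap: the claim that ``the same construction (identical formulas on the matrix-unit systems)'' yields ucp maps $\cl T_{X_0,A_0}\rightleftarrows\cl T_{X,A}$ with composition the identity is not correct as stated. Lemma \ref{l_xaybe} only prescribes images of the diagonal generators, and the literal matrix-unit extension (summing over the merged output indices in both legs, with a $\tfrac12$ on the way back) is not even unital: if $m$ merges $a_2$ into $a_1$ and one sets $e_{x,x',a,a'}\mapsto\sum_{\tilde a\in m^{-1}(a),\,\tilde a'\in m^{-1}(a')}f_{x,x',\tilde a,\tilde a'}$, then $\sum_{a\in A_0}e_{x,x',a,a}\mapsto\delta_{x,x'}1+f_{x,x',a_1,a_2}+f_{x,x',a_2,a_1}$, so the relation (\ref{eq_relt}) is violated and Theorem \ref{p_ucptau2} does not apply. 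Retraction pairs for $\cl T_{X,A}$ and $\cl R_{X,A}$ \emph{can} be built, but by a genuinely matrix-level construction rather than a transcription of Lemma \ref{l_xaybe}: for instance, going down one compresses along the inclusion $\bb{C}^{A_0}\hookrightarrow\bb{C}^{A}$ (sending all generators involving the new index to $0$, which still gives a stochastic operator matrix since a zero block is allowed), and going up one modifies the defining isometry $(V_{a,x})$ of Theorem \ref{p_coor} so that only the $(a_1,a_1)$ diagonal generator is sent to a sum $e_{x,x',a_1,a_1}+e_{x,x',a_2,a_2}$, all off-diagonal generators going to single generators; one then checks stochasticity and invokes Theorem \ref{p_ucptau2}. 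So your plan is repairable, but the verification you flagged as ``direct'' is precisely the non-trivial step; the simpler repair is the paper's own route, which does the enlargement entirely at the $\cl S_{X,A}$ level via \cite{lmprsstw} and gets (i)--(ii) afterwards from Remark \ref{th_onedq}, thereby avoiding any need for $\cl T$- or $\cl R$-level retractions.
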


\begin{proof}
By \cite{jnvwy}, there exist (finite) sets $X_0$, $Y_0$, $A_0$ and $B_0$ and 
an NS correlation
$$p \in \cl C_{\rm qc}(X_0,Y_0,A_0,B_0) \setminus \cl C_{\rm qa}(X_0,Y_0,A_0,B_0).$$
Using \cite[Corollary 3.2]{lmprsstw}, let $s$ be a state on $\cl S_{X_0,A_0}\otimes_{\rm c} \cl S_{Y_0,B_0}$ 
such that 
\begin{equation}\label{eq_x0y0}
p(a,b|x,y) = s(e_{x,a}\otimes e_{y,b})
\end{equation}
for all $x\in X_0$, $y\in Y_0$, $a\in A_0$ and $b\in B_0$. 
Assume that $X_0\subseteq X$, $Y_0\subseteq Y$, $A_0\subseteq A$ and $B_0\subseteq B$. 
Write $\iota_i^A$ (resp. $\iota_i^B$), $i = 1,2$, for the maps arising from Lemma \ref{l_xaybe} for the 
operator systems $\cl S_{X_0,A_0}$ and $\cl S_{X,A}$ (resp. $\cl S_{Y_0,B_0}$ and $\cl S_{Y,B}$). 
By the functoriality of the commuting tensor product, the map 
$t := s\circ (\iota_2^A\otimes \iota_2^B)$ is a state on $\cl S_{X,A}\otimes_{\rm c} \cl S_{Y,B}$. 
The NS correlation $q \in \cl C_{\rm qc}(X,Y,A,B)$ arising from $t$ as in (\ref{eq_x0y0})
does not belong to the class $\cl C_{\rm qa}$. 
Indeed, if $q\in \cl C_{\rm qa}$ then, by \cite[Corollary 3.3]{lmprsstw}, $t$ is a state on 
$\cl S_{X,A}\otimes_{\min} \cl S_{Y,B}$, and hence
$s = t\circ (\iota_1^A\otimes \iota_1^B)$ (see Lemma \ref{l_xaybe}) 
is a state on $\cl S_{X_0,A_0}\otimes_{\min} \cl S_{Y_0,B_0}$
which, in view of \cite[Corollary 3.3]{lmprsstw}, contradicts the fact that $p$ is not approximately quantum.

It follows that $\cl C_{\rm qa}(X,Y,A,B) \neq \cl C_{\rm qc}(X,Y,A,B)$ for all 
sets $X$, $Y$, $A$ and $B$ of sufficiently large cardinality. 
Parts (i) and (ii) now follow from Remark \ref{th_onedq}. 
Claim (iii) follows from Theorems \ref{t_dqc} and \ref{t_dqa}, while (iv) -- from (ii) and Theorem \ref{th_cqdes}.
Finally, (v) follows from (iv) and Remark \ref{l_crxa}, and (vi) follows from 
(iii), Corollary \ref{c_ucst} and \cite[Theorem 6.4]{kptt}.
\end{proof}

Recall that an operator system $\cl S$ is said to possess
the \emph{operator system local lifting property (OSLLP)} \cite{kptt_adv}
if, whenever $\cl A$ is a unital C*-algebra, $\cl I\subseteq \cl A$ is a two-sided ideal,
$\cl T\subseteq \cl S$ is a finite dimensional operator subsystem and 
$\varphi : \cl T\to \cl A/\cl I$ is a unital completely positive map, there exists 
a unital completely positive map $\psi : \cl T\to \cl A$ such that $\varphi = q\circ \psi$ 
(here $q : \cl A\to \cl A/\cl I$ denotes the quotient map). 
We conclude this section with showing that the operator systems we introduced possess OSLPP.

\begin{proposition}\label{l_opsysq}
Let $\cl S$ be an operator system quotient of $M_k$, for some $k\in \bb{N}$, and $H$ be a Hilbert space.  
Then $\cl S \otimes_{\min}\cl B(H) \cong_{\rm c.o.i.} \cl S \otimes_{\max}\cl B(H)$, and hence 
$\cl S$ possesses OSLLP. 
\end{proposition}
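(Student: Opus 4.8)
The plan is to prove the statement in two stages: first establish the completely order isomorphism $\cl S \otimes_{\min}\cl B(H) \cong_{\rm c.o.i.} \cl S \otimes_{\max}\cl B(H)$, and then derive OSLLP as a formal consequence. For the first stage, I would exploit the fact that $M_k$ is a nuclear C*-algebra, so the functorial property of operator-system quotients will carry nuclearity-type information down to $\cl S$. Specifically, write $\cl S = M_k/J$ for a kernel $J$. Since $M_k$ is a finite-dimensional C*-algebra, it is $(\min,\max)$-nuclear in the operator system category; more relevantly, $M_k \otimes_{\min} \cl B(H) = M_k \otimes_{\max} \cl B(H)$ because $M_k$ is nuclear as a C*-algebra. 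The key tool is that quotient maps behave well with respect to the maximal tensor product: if $q : M_k \to \cl S$ is a complete quotient map, then $q \otimes \id : M_k \otimes_{\max} \cl B(H) \to \cl S \otimes_{\max} \cl B(H)$ is still a complete quotient map (this is the functoriality of $\otimes_{\max}$ for complete quotients, as developed in Kavruk–Paulsen–Todorov–Tomforde). On the other hand, $q \otimes \id : M_k \otimes_{\min} \cl B(H) \to \cl S \otimes_{\min} \cl B(H)$ is completely positive. Chasing through these maps with the identification $M_k \otimes_{\min} \cl B(H) = M_k \otimes_{\max} \cl B(H)$ will force the min and max structures on $\cl S \otimes \cl B(H)$ to coincide.

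More precisely, the second stage uses the standard characterization (from Kavruk–Paulsen–Todorov–Tomforde, ``Quotients, exactness, and nuclearity in the operator system category'') that an operator system $\cl S$ has OSLLP if and only if $\cl S \otimes_{\min} \cl B(H) = \cl S \otimes_{\max} \cl B(H)$ for every Hilbert space $H$ — indeed it suffices to have this for $H = \ell^2$. So once the tensor identity is established, OSLLP follows immediately by citing that equivalence. I would phrase this carefully: OSLLP is equivalent to $\cl S \otimes_{\min} \cl B(\ell^2) \cong_{\rm c.o.i.} \cl S \otimes_{\max} \cl B(\ell^2)$, which is the special case $H = \ell^2$ of what we just proved.

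For the core tensor identity, the cleanest route is: let $u \in M_n(\cl S \otimes \cl B(H))$ be positive in the minimal structure; we must show it is positive in the maximal structure. Since $q \otimes \id_{\cl B(H)} : M_k \otimes_{\max} \cl B(H) \to \cl S \otimes_{\max} \cl B(H)$ is a complete quotient map, it is enough to show that $u$ lifts to a positive element of $M_n(M_k \otimes_{\max} \cl B(H))$ — but here I must be careful, since complete quotient maps lift positive elements only up to an arbitrarily small perturbation $\epsilon 1$. The right statement is: for every $\epsilon > 0$, $u + \epsilon(1 \otimes 1)$ lifts. Now $M_k \otimes_{\max} \cl B(H) = M_k \otimes_{\min} \cl B(H)$ since $M_k$ is nuclear, and $q \otimes \id : M_k \otimes_{\min} \cl B(H) \to \cl S \otimes_{\min} \cl B(H)$ is completely positive, so the image of a positive lift is positive in $\cl S \otimes_{\min} \cl B(H)$ — that direction is automatic and not what we need. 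Instead, I would argue the reverse: take $u \geq 0$ in $\cl S \otimes_{\min} \cl B(H)$, use that $M_k \otimes_{\min}\cl B(H) \to \cl S \otimes_{\min} \cl B(H)$ is a complete quotient map as well (quotients are functorial for $\otimes_{\min}$ onto their images when $\cl S$ is exact — and $M_k/J$ with $M_k$ finite dimensional is exact), lift $u + \epsilon 1$ to a positive $\tilde u \in M_k \otimes_{\min} \cl B(H) = M_k \otimes_{\max}\cl B(H)$, then push forward through the max-quotient map $q \otimes \id : M_k \otimes_{\max}\cl B(H) \to \cl S \otimes_{\max}\cl B(H)$ to conclude $u + \epsilon 1 \geq 0$ in the max structure; letting $\epsilon \to 0$ and using that the max cone is closed finishes it.

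The main obstacle is getting the two functoriality facts about $q \otimes \id$ to line up correctly: I need $q \otimes \id$ to be a \emph{complete quotient map} for $\otimes_{\min}$ (which requires exactness of $\cl S$, guaranteed because $\cl S$ is a quotient of the finite-dimensional — hence exact — C*-algebra $M_k$, by the exactness results in Kavruk's work and Kavruk–Paulsen–Todorov–Tomforde) and simultaneously a complete quotient map for $\otimes_{\max}$ (which is the general functoriality of $\otimes_{\max}$ under complete quotients, always true). Combined with the nuclearity identity $M_k \otimes_{\min} \cl B(H) = M_k \otimes_{\max} \cl B(H)$, these three facts close the argument. I would also remark that $\cl S$ being a quotient of $M_k$ applies in our setting via Corollary \ref{c_quotxa} (for $\cl T_{X,A}$) and Remark \ref{r_ctoq} (for $\cl R_{X,A}$), so this proposition indeed establishes OSLLP for all the operator systems introduced in the paper, recovering \cite[Theorem 4.9]{harris} as a special case.
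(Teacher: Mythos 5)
Your second stage is fine: citing \cite[Theorem 8.6]{kptt_adv} to pass from $\cl S\otimes_{\min}\cl B(H)=\cl S\otimes_{\max}\cl B(H)$ to OSLLP is exactly what the paper does, and the $\epsilon$-perturbation/Archimedean closing you sketch is harmless. The gap is in the first stage, at the single step on which everything hinges: the claim that $q\otimes\id_{\cl B(H)}:M_k\otimes_{\min}\cl B(H)\to\cl S\otimes_{\min}\cl B(H)$ is a complete quotient map (even approximately, i.e.\ that min-positive elements admit min-positive lifts up to $\epsilon 1$). The appeal to ``exactness of $\cl S$'' does not deliver this. Exactness in the sense of \cite{kptt_adv} concerns quotients of a C*-algebra by a two-sided ideal appearing in the \emph{other} tensor factor: it says $\cl S\otimes_{\min}(\cl A/\cl I)\cong(\cl S\otimes_{\min}\cl A)/(\cl S\bar\otimes\cl I)$. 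Here the quotient is an operator-system quotient by a kernel occurring in the \emph{first} factor, with $\cl B(H)$ fixed in the second, and the minimal tensor product is injective but not projective, so there is no general functoriality result of the kind you invoke. (Your parenthetical justification that $M_k/J$ is exact ``because $M_k$ is finite dimensional'' is also not a valid inference: exactness does not pass to operator-system quotients by any general principle; it happens to hold here, but only via Kavruk's exactness/lifting duality, whose proof is of the same nature as the argument you are trying to avoid.) Worse, granting nuclearity of $M_k$ and projectivity of $\otimes_{\max}$ — the two facts you do use correctly — your min-quotient claim is \emph{equivalent} to the conclusion $\cl S\otimes_{\min}\cl B(H)=\cl S\otimes_{\max}\cl B(H)$: if min equals max then $q\otimes\id$ is a min-quotient map because it is a max-quotient map, and conversely as you argue. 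So as written the proposal assumes a statement of the same strength as the proposition, justified by a misapplied citation.

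What is actually needed is a construction of the positive lift in $M_k\otimes_{\min}\cl B(H)$, and this is where the paper works: a positive $u\in M_n(\cl S\otimes_{\min}\cl B(H))$ is converted, via \cite[Proposition 6.1]{kavruk_JOT} and finite dimensionality of $\cl S$, into a completely positive map $\phi_u:\cl S^{\rm d}\to M_n(\cl B(H))$; since $q^*:\cl S^{\rm d}\to M_k^{\rm d}$ is a complete order embedding (\cite[Proposition 1.8]{fp}), Arveson's extension theorem produces a completely positive $\psi:M_k^{\rm d}\to M_n(\cl B(H))$ with $\psi\circ q^*=\phi_u$, which corresponds back to a positive element $v\in M_k\otimes_{\min}M_n(\cl B(H))$; nuclearity of $M_k$ makes $v$ max-positive, and pushing it forward under $q\otimes\id$ (only complete positivity for $\otimes_{\max}$ is needed, not projectivity) gives a max-positive element which a basis computation shows equals $u$ exactly, with no $\epsilon$. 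If you want to keep your architecture, you must either reproduce this duality-plus-Arveson argument to prove your lifting claim, or quote Kavruk's duality theorem (finite-dimensional $\cl S$ has OSLLP iff $\cl S^{\rm d}$ is exact) together with the observation that $\cl S^{\rm d}\cong J^\perp\subseteq M_k$ and that operator subsystems of $M_k$ have OSLLP by Arveson extension plus the lifting theorem for the nuclear C*-algebra $M_k$ — but neither of these is what your current text says.
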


\begin{proof}
Let $J\subseteq M_k$ be a kernel such that $\cl S = M_k/J$; write $q : M_k\to \cl S$ for the quotient map. 
By \cite[Proposition 1.8]{fp}, the dual $q^* : \cl S^{\rm d}\to M_k^{\rm d}$ is a complete order embedding. 

Fix $u\in M_n\left(\cl S\otimes_{\min}\cl B(H)\right)^+$; after a canonical identification, we consider 
$u$ as an element of $\left(\cl S\otimes_{\min} M_n\left(\cl B(H)\right)\right)^+$.
Let $\{S_1,\dots,S_m\}$ be a basis of $\cl S$, and write $u = \sum_{i=1}^m S_i \otimes T_i$, 
for some $T_i\in M_n\left(\cl B(H)\right)$, $i = 1,\dots,m$. 
By \cite[Proposition 6.1]{kavruk_JOT}, 
the map $\phi_u : \cl S^{\rm d} \to M_n\left(\cl B(H)\right)$, given by 
$\phi_u(f) = \sum_{i=1}^m f(S_i) T_i$, is completely positive. By Arveson's Extension Theorem, 
there exists a completely positive map $\psi : M_k^{\rm d} \to M_n\left(\cl B(H)\right)$
with $\psi \circ q^* = \phi_u$. 
Let $S'_i\in M_k$ be such that $q(S'_i) = S_i$, $i = 1,\dots,m$, and 
let $\{S_i' : i = m+1,\dots,k^2\}$ be a basis of $J$. Then 
$\{S'_1,\dots,S'_m,S'_{m+1},\dots,S'_{k^2}\}$ is a basis of $M_k$.
Let 
$$v = \sum_{i=1}^{k^2} S'_i \otimes T_i' \in M_k\otimes M_n\left(\cl B(H)\right)$$ 
be an element such that 
$$\psi(g) = \sum_{i=1}^{k^2} g(S'_i) T_i', \ \ \ g\in M_k^{\rm d};$$
by \cite[Proposition 6.1]{kavruk_JOT}, $v\in \left(M_k\otimes_{\min} M_n\left(\cl B(H)\right)\right)^+$. 
Since $M_k$ is nuclear, 
$v$ belongs to $\left(M_k \otimes_{\max} M_n\left(\cl B(H)\right)\right)^+$.
Let $w = (q\otimes \id)(v)$; by the functoriality of the maximal tensor product, 
$w\in (\cl S \otimes_{\max} M_n(\cl B(H)))^+$.
We have
$$w = \sum_{i=1}^{k^2} q(S_i')\otimes T_i' = \sum_{i=1}^{m} S_i\otimes T_i'.$$
For all $f\in \cl S^{\rm d}$, we have 
$$\sum_{i=1}^{m} f(S_i)T_i' 
= 
\sum_{i=1}^{k^2} q^*(f)(S_i')T_i' = \psi(q^*(f))
= 
\phi_u(f) = 
\sum_{i=1}^{m} f(S_i)T_i.$$
It follows that $T_i = T_i'$, $i = 1,\dots,m$, and hence $u = w$. 
Thus, $u\in M_n\left(\cl S\otimes_{\max} \cl B(H)\right)^+$, and 
it follows from \cite[Theorem 8.6]{kptt_adv} that $\cl S$ possesses OSLLP. 
\end{proof}

Proposition \ref{l_opsysq}, combined with Corollary \ref{c_quotxa} and Remark \ref{r_ctoq},
yield the following corollary. 

\begin{corollary}\label{c_OSLLP}
Let $X$ and $A$ be finite sets. Then $\cl T_{X,A}$ and $\cl R_{X,A}$ possess OSLPP. 
\end{corollary}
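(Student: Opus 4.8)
The plan is to deduce Corollary \ref{c_OSLLP} directly from Proposition \ref{l_opsysq}, which asserts that any operator system that arises as an operator system quotient of a full matrix algebra $M_k$ possesses OSLLP. So the task reduces entirely to exhibiting $\cl T_{X,A}$ and $\cl R_{X,A}$ as such quotients; everything else is already done.

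For $\cl T_{X,A}$: Corollary \ref{c_quotxa} states precisely that $\cl T_{X,A}$ is completely order isomorphic to the quotient $M_{XA}/J_{X,A}$, where $J_{X,A}$ is a kernel in $M_{XA}$. Since $M_{XA} = M_{|X|\cdot|A|}$ is a full matrix algebra, Proposition \ref{l_opsysq} applies with $k = |X|\cdot|A|$ and $\cl S = \cl T_{X,A}$, giving OSLLP for $\cl T_{X,A}$.

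For $\cl R_{X,A}$: Remark \ref{r_ctoq} records (citing \cite[Theorem 5.1]{fkpt_NYJ}) that $\cl R_{X,A}$ is an operator system quotient of $M_{XA}$. Again $M_{XA}$ is a full matrix algebra, so Proposition \ref{l_opsysq} applies with $k = |X|\cdot|A|$ and $\cl S = \cl R_{X,A}$, yielding OSLLP for $\cl R_{X,A}$.

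Thus the proof is essentially a two-line assembly: invoke Corollary \ref{c_quotxa} and Remark \ref{r_ctoq} to write each operator system as $M_k/J$, then apply Proposition \ref{l_opsysq}. There is no real obstacle here — the genuine work was carried out in establishing Proposition \ref{l_opsysq} (the local lifting property of matrix-algebra quotients, via the duality with completely positive maps out of $\cl S^{\mathrm d}$, Arveson extension, and nuclearity of $M_k$) and in the structural identifications of $\cl T_{X,A}$ and $\cl R_{X,A}$ as quotients of $M_{XA}$. The only point to be mildly careful about when writing it up is simply to name the right ambient matrix algebra, namely $M_{XA} \cong M_{|X|\,|A|}$, for both operator systems.
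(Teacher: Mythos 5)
Your proposal is correct and follows exactly the route the paper takes: Corollary \ref{c_quotxa} and Remark \ref{r_ctoq} exhibit $\cl T_{X,A}$ and $\cl R_{X,A}$ as operator system quotients of the full matrix algebra $M_{XA}$, and Proposition \ref{l_opsysq} then gives OSLLP. Nothing further is needed.
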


\noindent {\bf Remark. } 
It is worth noting the different nature of the C*-algebras $\cl A_{X,A}$ and $\cl B_{X,A}$ on one hand, 
and $\cl C_{X,A}$ on the other. This is best seen in the special case where $|X| = 1$, when
$\cl A_{X,A} \cong \cl D_A$, $\cl B_{X,A} \cong M_A$ and $\cl C_{X,A} \cong C^*_u(M_A)$.


\section{Quantum versions of synchronicity}\label{s_fc}

Let $X$ and $A$ be finite sets, $Y = X$ and $B = A$.
We will often distinguish the notation for 
$X$ vs. $Y$ (resp. $A$ vs. $B$) although they coincide, 
in order to make clear with respect to which term in a tensor product a partial trace is taken. 
An NS correlation $p = \left\{\left(p(a,b|x,y)\right)_{a,b\in A}  : x,y\in X\right\}$
is called \emph{synchronous} \cite{psstw} if 
$$p(a,b|x,x) = 0 \ \ \ x\in X, a,b\in A, a\neq b.$$
In this section, we examine possible quantum versions of the notion of synchronicity.
Our main motivation is the following result, which was proved in \cite{psstw}.

\begin{theorem}\label{th_psstw}
Let $p$ be an NS correlation. Then 
\begin{itemize}
\item[(i)] $p$ is synchronous and quantum commuting if and only if there exists a trace
$\tau : \cl A_{X,A}\to \bb{C}$ such that 
\begin{equation}\label{eq_sytr}
p(a,b|x,y) = \tau\left(e_{x,a}e_{y,b}\right), \ \ \ x,y\in X, a,b\in A;
\end{equation}
\item[(ii)] $p$ is synchronous and quantum if and only if there exist a 
finite dimensional C*-algebra $\cl A$, a trace $\tau_{\cl A}$ on $\cl A$ and 
a *-homomorphism $\pi : \cl A_{X,A} \to \cl A$ such that (\ref{eq_sytr}) holds for the trace $\tau = \tau_{\cl A}\circ \pi$;
\item[(iii)] $p$ is synchronous and local if and only if there exist an abelian 
C*-algebra $\cl A$, a trace $\tau_{\cl A}$ on $\cl A$ and 
a *-homomorphism $\pi : \cl A_{X,A} \to \cl A$ such that (\ref{eq_sytr}) holds for the trace $\tau = \tau_{\cl A}\circ \pi$.
\end{itemize}
\end{theorem}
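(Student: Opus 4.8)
The plan is to prove the three statements in parallel, reducing each to a GNS-type construction together with the characterisations of NS correlations from \cite{psstw, lmprsstw}. Recall that, by \cite[Corollary 3.2]{lmprsstw} (quoted in the proof of Theorem \ref{th_cep}), a correlation $p$ is quantum commuting if and only if there is a state $s$ on $\cl S_{X,A}\otimes_{\rm c}\cl S_{Y,B}$ with $p(a,b|x,y) = s(e_{x,a}\otimes e_{y,b})$, and $p$ is local (resp.\ quantum) iff the analogous statement holds with $\otimes_{\rm c}$ replaced by $\omin\otimes_{\min}\omin$ (resp.\ with $s$ arising from finite-dimensional representations). So the content is: \emph{for synchronous $p$, these states are exactly the ones coming from traces on $\cl A_{X,A}$}.

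First I would prove the ``only if'' direction of (i). Let $p$ be synchronous and quantum commuting, realised by POVMs $(E_{x,a})_a$ and $(F_{y,b})_b$ on $H$ with $E_{x,a}F_{y,b}=F_{y,b}E_{x,a}$ and a unit vector $\xi$, so $p(a,b|x,y)=\langle E_{x,a}F_{y,b}\xi,\xi\rangle$. The synchronicity condition $p(a,b|x,x)=0$ for $a\ne b$, together with $\sum_b F_{x,b}=I$, forces $E_{x,a}F_{x,a}\xi = E_{x,a}\xi$ and hence, by a standard Cauchy--Schwarz / positivity argument (as in \cite{psstw}), $E_{x,a}\xi = F_{x,a}\xi$ and $E_{x,a}$ acts as a projection on the cyclic subspace generated by $\xi$ under the algebra generated by all $E_{x,a}, F_{y,b}$. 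One then checks that the vector state $T\mapsto\langle T\xi,\xi\rangle$ restricted to the C*-algebra $\cl C$ generated by the $E_{x,a}$ is tracial: $\langle E_{x,a}E_{x',a'}\xi,\xi\rangle = \langle E_{x,a}F_{x',a'}\xi,\xi\rangle = \langle F_{x',a'}E_{x,a}\xi,\xi\rangle = \langle E_{x,a}\xi, F_{x',a'}\xi\rangle = \langle E_{x',a'}E_{x,a}\xi,\xi\rangle$, and extend from products of generators by linearity and density. Since $(E_{x,a})_a$ is a PVM-like family on the relevant invariant subspace, the universal property of $\cl A_{X,A} = \ell^\infty_A \ast_1 \cdots \ast_1 \ell^\infty_A$ gives a $*$-homomorphism $\cl A_{X,A}\to\cl C$, and composing with the trace yields $\tau$ with $\tau(e_{x,a}e_{y,b}) = \langle E_{x,a}F_{y,b}\xi,\xi\rangle = p(a,b|x,y)$.

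For the ``if'' direction of (i): given a trace $\tau$ on $\cl A_{X,A}$, perform the GNS construction to get $(\pi_\tau, H_\tau, \xi_\tau)$ with $\xi_\tau$ cyclic and $\langle\pi_\tau(\cdot)\xi_\tau,\xi_\tau\rangle = \tau$. Traciality gives a commuting right action; concretely, set $E_{x,a} = \pi_\tau(e_{x,a})$ acting by left multiplication and $F_{y,b} = J\pi_\tau(e_{y,b})J$ where $J$ is the modular conjugation (Tomita--Takesaki, or the elementary finite/tracial version: $J(\pi_\tau(u)\xi_\tau) = \pi_\tau(u^*)\xi_\tau$). These are commuting PVMs, and $\langle E_{x,a}F_{y,b}\xi_\tau,\xi_\tau\rangle = \tau(e_{x,a}e_{y,b}^*) = \tau(e_{x,a}e_{y,b}) = p(a,b|x,y)$ using that the $e_{y,b}$ are self-adjoint; synchronicity is automatic since $\tau(e_{x,a}e_{x,b}) = \tau(e_{x,b}e_{x,a}\cdot\text{proj}) $ vanishes for $a\ne b$ because $e_{x,a}e_{x,b}=0$ in $\cl A_{X,A}$. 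Parts (ii) and (iii) then follow by restricting this machine: if $p$ is additionally quantum, standard finite-dimensional approximation / the fact that synchronous quantum correlations come from finite-dimensional C*-algebras (again \cite{psstw}) gives the factorisation through a finite-dimensional $\cl A$; if $p$ is additionally local, the same argument forces the generated algebra to be abelian, since a local model is a convex combination of deterministic ones and the corresponding $\cl A_{X,A}$-representation lands in $\ell^\infty$ of the outcome space.

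The main obstacle I anticipate is the ``only if'' direction of the quantum (ii) and local (iii) cases: one must argue that synchronicity plus finite-dimensionality (resp.\ locality) of the \emph{correlation} forces the trace to factor through a finite-dimensional (resp.\ abelian) quotient of $\cl A_{X,A}$, rather than merely through some finite von Neumann algebra. This requires either invoking the finite-dimensionality results of \cite{psstw} directly, or redoing the argument that a synchronous quantum correlation admits a finite-dimensional tracial realisation via the structure of the von Neumann algebra generated by the $E_{x,a}$ on the cyclic subspace. Everything else is bookkeeping around GNS and the universal properties already established for $\cl A_{X,A}$ and $\cl S_{X,A}$ in the excerpt and in \cite{fkpt_NYJ, lmprsstw}.
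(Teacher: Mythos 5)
The paper does not prove Theorem \ref{th_psstw} at all: it is quoted verbatim from \cite{psstw}, so there is no internal argument to compare against, and your reconstruction follows essentially the same route as the original proof in that reference (Cauchy--Schwarz plus synchronicity forcing $E_{x,a}\xi=F_{x,a}\xi$, traciality of the vector state on the algebra generated by the $E_{x,a}$, the universal property of $\cl A_{X,A}$, and GNS together with the commutant/right-multiplication action for the converse). Two steps in your sketch need tightening to be a complete proof: traciality must be checked on arbitrary words in the generators, which requires the inductive identity $E_{x_1,a_1}\cdots E_{x_k,a_k}\xi=F_{x_k,a_k}\cdots F_{x_1,a_1}\xi$ rather than only the degree-two computation you display; and to invoke the universal property of $\cl A_{X,A}$ you need honest projections summing to $1$, which you get by passing to the GNS representation of the tracial state, where faithfulness of the trace and $\tau(E_{x,a}E_{x,b})=0$ for $a\neq b$ give $E_{x,a}E_{x,b}=0$ and hence $E_{x,a}^2=E_{x,a}$. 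Also, the obstacle you anticipate in part (ii) is milder than you suggest: in a quantum model the spaces $H_A$, $H_B$ are finite dimensional by definition, so the C*-algebra generated by the $E_{x,a}$ is automatically finite dimensional and the trace factors through it without any further approximation; part (iii) follows, as you say, because synchronicity forces each component of a local decomposition to be a common deterministic strategy, yielding an abelian quotient.
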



\subsection{Fair correlations}\label{ss_fc}

If $\cl A$ is a unital C*-algebra, we write $\cl A^{\rm op}$ for the opposite C*-algebra of $\cl A$; 
recall that $\cl A^{\rm op}$ has the same underlying set (whose elements will be denoted by $u^{\rm op}$, for $u\in \cl A$),
the same involution, linear structure and norm, 
and multiplication given by $u^{\rm op} v^{\rm op} = (vu)^{\rm op}$, $u,v\in \cl A$.
For a subset $\cl S\subseteq \cl A$, we let  
$\cl S^{\rm op} = \left\{u^{\rm op} : u\in \cl S\right\}$. 

For a Hilbert space $H$, we denote by $H^{\rm d}$ its Banach space dual; if 
$K$ is a(nother) Hilbert space and $T \in \cl B(H,K)$, we denote by 
$T^{\rm d}$ its adjoint, acting from $K^{\rm d}$ into $H^{\rm d}$. 
We note the relation 
\begin{equation}\label{eq_Tds}
(T^*)^{\rm d} = (T^{\rm d})^*, \ \ \ T\in \cl B(H,K).
\end{equation}
It is straightforward to see that if $\cl A$ is a C*-algebra and $\pi : \cl A\to \cl B(H)$ is a (faithful)
*-representation then the map $\pi^{\rm op} : \cl A^{\rm op}\to \cl B(H^{\rm d})$, given by 
$\pi^{\rm op}(u^{\rm op}) = \pi(u)^{\rm d}$, is a (faithful) *-representation. 
Note that the transposition map $u\to (u^t)^{\rm op}$ is a *-isomorphism between $M_X$  and $M_X^{\rm op}$. 
It was shown in \cite{kps} that 
there exists a *-isomorphism $\partial _{\cl A}: \cl A_{X,A}\to \cl A_{X,A}^{\rm op}$ such that 
$\partial_{\cl A}(e_{x,a}) = e_{x,a}^{\rm op}$, $x\in X$, $a\in A$. 
The following analogous statements for $\cl C_{X,A}$ and $\cl B_{X,A}$ will be needed later. 

\begin{lemma}\label{l_op}
Let $X$ and $A$ be finite sets. 
\begin{itemize}
\item[(i)]
There exists a *-isomorphism $\partial : \cl C_{X,A}\to \cl C_{X,A}^{\rm op}$ such that 
$$\partial(e_{x,x',a,a'}) = e_{x',x,a',a}^{\rm op}, \ \ \ x,x'\in X, a,a'\in A.$$

\item[(ii)] 
There exists a *-isomorphism $\partial _{\cl B}: \cl B_{X,A}\to \cl B_{X,A}^{\rm op}$ such that 
$$\partial_{\cl B}(e_{x,a,a'}) = e_{x,a',a}^{\rm op}, \ \ \ x\in X, a,a'\in A.$$
\end{itemize}
\end{lemma}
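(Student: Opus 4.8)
The plan is to construct the desired $*$-isomorphisms by exhibiting a ternary (resp.\ $*$-) representation of the defining relations on a suitable ``flipped'' Hilbert space and invoking the universal properties already established in the excerpt. I will treat part (i) first and then indicate how part (ii) follows by the same (indeed slightly simpler) mechanism.

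For part (i), first recall from Section~\ref{s_uopsys} that $\cl C_{X,A}$ is the right C*-algebra of the universal TRO $\cl V_{X,A}$, and that by Lemma~\ref{l_rr} every unital $*$-representation of $\cl C_{X,A}$ arises from a representation $(V_{a,x})_{a,x}$ of the relations~(\ref{eq_bc}) as $\pi_V(e_{x,x',a,a'}) = V_{a,x}^*V_{a',x'}$. The idea is: fix a faithful representation $V=(V_{a,x})_{a,x}\in \cl B(H^X,K^A)$ of~(\ref{eq_bc}), so that $\cl C_{X,A}$ may be taken concretely as the C*-algebra generated by $\{V_{a,x}^*V_{a',x'}\}$ acting on $H$. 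Consider the Banach space duals $H^{\rm d}$, $K^{\rm d}$ and the adjoint operators $V_{a,x}^{\rm d}\in \cl B(K^{\rm d},H^{\rm d})$; equivalently work with the conjugate Hilbert spaces $\overline{H}$, $\overline{K}$. Set $W_{a,x} := (V_{a,x})^{\rm d}\colon K^{\rm d}\to H^{\rm d}$. I will check that the block matrix $W := (W_{a,x})_{a,x}$, suitably regarded as an element of $\cl B((H^{\rm d})^A,(K^{\rm d})^X)$ (note the roles of $X$ and $A$ are swapped), is a representation of the relations of the ``transposed'' TRO; concretely, using~(\ref{eq_Tds}) one computes
\begin{equation}\label{eq_wflip}
W_{a',x'}^* W_{a,x} = (V_{a',x'}^{\rm d})^* (V_{a,x})^{\rm d} = (V_{a',x'}^*)^{\rm d}(V_{a,x})^{\rm d} = \left(V_{a,x}V_{a',x'}^*\right)^{\rm d},
\end{equation}
and the isometry relation $\sum_{a} V_{a,x}^*V_{a,x'} = \delta_{x,x'}I$ dualises to $\sum_a V_{a,x}V_{a,x'}^{*}$-type relations; one needs to verify that after the flip these are exactly the relations making $(W_{a,x})$ an honest representation of~(\ref{eq_bc}) with the indices $x\leftrightarrow x'$, $a\leftrightarrow a'$ transposed in the appropriate way. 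The map $\pi_W$ of $\cl C_{X,A}$ then satisfies $\pi_W(e_{x,x',a,a'}) = W_{a,x}^*W_{a',x'} = (V_{a',x'}V_{a,x}^*)^{\rm d}$; composing with the canonical $*$-anti-isomorphism $T\mapsto T^{\rm d}$ of $\cl B(H)$ onto $\cl B(H^{\rm d})^{\rm op}$ (which sends $V_{a',x'}V_{a,x}^*\mapsto \pi_V(e_{x',x,a',a})$, reading off the generator indices) one obtains a $*$-homomorphism $\partial\colon\cl C_{X,A}\to\cl C_{X,A}^{\rm op}$ with $\partial(e_{x,x',a,a'}) = e_{x',x,a',a}^{\rm op}$. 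Applying the same construction to $\cl C_{X,A}^{\rm op}$ (or checking $\partial^{\rm op}\circ\partial = \id$ directly on generators, which suffices since the $e_{x,x',a,a'}$ generate) gives that $\partial$ is a $*$-isomorphism.

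For part (ii), the argument is the direct analogue but easier, since $\cl B_{X,A} = M_A *_1 \cdots *_1 M_A$ is a free product of matrix algebras. On each copy of $M_A$ the transposition $e_{a}e_{a'}^* \mapsto (e_{a'}e_a^*)^{\rm op}$ is a $*$-isomorphism $M_A\to M_A^{\rm op}$ (as already noted in the excerpt for $M_X$), hence by the universal property of the free product amalgamated over the unit these assemble into a $*$-homomorphism $\partial_{\cl B}\colon \cl B_{X,A}\to \cl B_{X,A}^{\rm op}$ with $\partial_{\cl B}(e_{x,a,a'}) = e_{x,a',a}^{\rm op}$; its inverse is built the same way, so it is a $*$-isomorphism. (One can also deduce this from~\cite{kps} applied blockwise, or by the TRO argument above with $\cl C$ replaced by the relevant structure, but the free-product route is cleanest.)

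\textbf{Main obstacle.} The genuinely delicate point is part (i): one must be careful about the bookkeeping of which tensor leg ($X$ or $A$) the dualisation swaps, and verify that the dualised family $(W_{a,x})$ really does satisfy the relations~(\ref{eq_bc}) rather than some a priori different set of relations --- in particular that the ``column isometry'' condition $\sum_a V_{a,x}^*V_{a,x'}=\delta_{x,x'}I$ transforms correctly under $T\mapsto T^{\rm d}$, given that taking adjoints reverses products. Equivalently, the subtlety is identifying $\cl C_{X,A}^{\rm op}$ with $\cl C_{X,A}$ under the index transposition $(x,x',a,a')\mapsto(x',x,a',a)$; once the relations are seen to be invariant under this transposition (which is exactly the symmetry built into~(\ref{eq_bc}): the relation $\sum_a e_{x,x',a,a}=\delta_{x,x'}1$ is visibly symmetric under $x\leftrightarrow x'$, and the TRO identities are symmetric under the combined flip), the existence of $\partial$ and $\partial_{\cl B}$ is forced, and checking $\partial^2 = \id$ on generators closes the argument. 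Everything else is routine.
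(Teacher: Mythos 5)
Your part (ii) is fine and is essentially the paper's argument: transpose each copy of $M_A$ and invoke the universal property of the amalgamated free product. Your overall plan for part (i) — dualise a concrete representation coming from an isometry $V=(V_{a,x})_{a,x}$, invoke the universal property, and conclude by symmetry — is also the paper's route, but your execution has a genuine gap at exactly the point you flag as the ``main obstacle,'' and as written it fails. With your choice $W_{a,x}:=V_{a,x}^{\rm d}\colon K^{\rm d}\to H^{\rm d}$, the relation you would need in order to invoke Theorem \ref{p_coor}/Lemma \ref{l_rr} is $\sum_{a}W_{a,x}^*W_{a,x'}=\delta_{x,x'}I$, but
$$\sum_{a\in A}W_{a,x}^*W_{a,x'}=\sum_{a\in A}(V_{a,x}^*)^{\rm d}V_{a,x'}^{\rm d}=\Bigl(\sum_{a\in A}V_{a,x'}V_{a,x}^*\Bigr)^{\rm d},$$
and the quantity $\sum_a V_{a,x'}V_{a,x}^*$ is not controlled by the hypothesis: the block matrix $V$ is only an isometry ($V^*V=I$), not a co-isometry, so nothing in the relations (\ref{eq_bc}) forces this to be $\delta_{x,x'}I$. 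Swapping ``the roles of $X$ and $A$'' does not help, since the universal relations involve only column sums of $V^*V$-type products. Relatedly, your products $W_{a,x}^*W_{a',x'}=(V_{a',x'}V_{a,x}^*)^{\rm d}$ are duals of elements of the \emph{left} C*-algebra of the TRO (products $VV^*$ acting on $K$), not of the generators $E_{x,x',a,a'}=V_{a,x}^*V_{a',x'}$; in particular the assertion that $T\mapsto T^{\rm d}$ sends $V_{a',x'}V_{a,x}^*$ to $\pi_V(e_{x',x,a',a})$ is false, and the appeal to the ``visible symmetry'' of (\ref{eq_bc}) is precisely the thing that needs proving, not a substitute for the check.

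The fix is the paper's choice: set $W_{a,x}:=\bigl(V_{a,x}^{\rm d}\bigr)^*=\bigl(V_{a,x}^*\bigr)^{\rm d}\colon H^{\rm d}\to K^{\rm d}$. Then, using (\ref{eq_Tds}),
$$\sum_{a\in A}W_{a,x'}^*W_{a,x}=\sum_{a\in A}V_{a,x'}^{\rm d}\bigl(V_{a,x}^*\bigr)^{\rm d}=\Bigl(\sum_{a\in A}V_{a,x}^*V_{a,x'}\Bigr)^{\rm d}=\delta_{x,x'}I_{H^{\rm d}},$$
so $(W_{a,x})_{a,x}$ is a representation of the relations (\ref{eq_bc}), and
$$W_{a,x}^*W_{a',x'}=V_{a,x}^{\rm d}\bigl(V_{a',x'}^*\bigr)^{\rm d}=\bigl(V_{a',x'}^*V_{a,x}\bigr)^{\rm d}=E_{x',x,a',a}^{\rm d}.$$
Now Theorem \ref{p_ucptau2} (applied on the range of a faithful representation $\pi$ with $\pi(e_{x,x',a,a'})=E_{x,x',a,a'}$) yields a *-homomorphism sending $E_{x,x',a,a'}$ to $E_{x',x,a',a}^{\rm d}$, and the identification $u^{\rm op}\mapsto \pi(u)^{\rm d}$ of $\cl C_{X,A}^{\rm op}$ with its image in $\cl B(H^{\rm d})$ turns this into the desired $\partial\colon\cl C_{X,A}\to\cl C_{X,A}^{\rm op}$, with surjectivity/injectivity obtained by applying the same construction to the opposite algebra (your ``$\partial^{\rm op}\circ\partial=\id$ on generators'' remark is the right way to phrase this; note that $\partial^2$ itself does not typecheck). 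With this correction your proof coincides with the paper's.
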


\begin{proof}
(i) Let $\pi : \cl C_{X,A}\to \cl B(H)$ be a faithful *-representation. Write $E_{x,x',a,a'} = \pi(e_{x,x',a,a'})$, 
$x,x'\in X$, $a,a'\in A$. 
Using Theorem \ref{p_coor}, let $K$ be a Hilbert space and $(V_{a,x})_{a,x} : H^X\to K^A$ be an isometry such that 
$E_{x,x',a,a'} = V_{a,x}^* V_{a',x'}$, $x,x'\in X$, $a,a'\in A$. 
Let $W_{a,x} = \left(V_{a,x}^{\rm d}\right)^*$; thus, $W_{a,x}\in \cl B(H^{\rm d},K^{\rm d})$, $x\in X$, $a\in A$. 
Using (\ref{eq_Tds}), we have 
$$\sum_{a\in A} W_{a,x'}^*W_{a,x} 
=  
\sum_{a\in A} V_{a,x'}^{\rm d} \left(V_{a,x}^*\right)^{\rm d}
= 
\sum_{a\in A} \left(V_{a,x}^*V_{a,x'}\right)^{\rm d}
= 
\delta_{x,x'} I^{\rm d};$$
thus, $\left(W_{a,x}\right)_{a,x}$ is an isometry. 
By Theorem \ref{p_coor}, if $F_{x,x',a,a'} = W_{a,x}^*W_{a',x'}$, $x,x'\in X$, $a,a'\in A$, then 
$\left(F_{x,x',a,a'}\right)_{x,x',a,a'}$ is a stochastic operator matrix. 
Note that
$$F_{x,x',a,a'} = V_{a,x}^{\rm d}\left(V_{a',x'}^{\rm d}\right)^* = \left(V_{a',x'}^*V_{a,x}\right)^{\rm d}
= E_{x',x,a',a}^{\rm d}.$$
By the universal property of $\cl C_{X,A}$, there exists a *-homomorphism 
$\pi' : \pi\left(\cl C_{X,A}\right) \to \cl B\left(H^{\rm d}\right)$ such that 
$$\pi'\left(E_{x,x',a,a'}\right) = E_{x',x,a',a}^{\rm d}, \ \ \ x,x'\in X, a,a'\in A.$$
By the paragraph before Lemma \ref{l_op},
$\pi'\circ \pi$ can be regarded as a *-homo-morphism from $\cl C_{X,A}$ into $\cl C_{X,A}^{\rm op}$,
which maps $e_{x,x',a,a'}$ to $e_{x',x,a',a}^{\rm op}$. The claim follows by symmetry. 

(ii) The words of the form $e_{x_1,a_1,a_1'}\ldots e_{x_k,a_k,a_k'}$ span a dense $*$-subalgebra of $\cl B_{X,A}$ . 
As  $u\mapsto (u^t)^{\rm op}$ is a *-isomorphism from $M_A$ to $M_A^{\rm op}$ that 
maps the matrix unit $e_{a} e_{a'}^*$ to $\left(e_{a'} e_{a}^*\right)^{\rm op}$, 
the universal property of the free product implies that the map 
$\partial_{\cl B}$
given by 
$$\partial_{\cl B}( e_{x_1,a_1,a_1'}\ldots e_{x_k,a_k,a_k'})
= e_{x_1,a_1',a_1}^{\rm op}\ldots e_{x_k,a_k',a_k}^{\rm op}$$
extends to the desired *-isomorphism.
\end{proof}

If $\cl U$ is a subspace of a $C^*$-algebra $\cl A$, we call a linear functional $s : \cl U\otimes \cl U^{\rm op} \to\bb{C}$ \emph{fair} if
\begin{equation}\label{seq}
s(u\otimes 1) = s(1\otimes u^{\rm op})\text{ for all } u\in \cl U.
\end{equation}

It will be convenient to write $t_Y$ for the transpose map on $M_Y$.  
A state $\rho \in M_{XY}$ will be called \emph{fair} if 
$\Tr_X\left(({\rm id}\otimes t_Y)(\rho)\right) = \Tr_Y\left(({\rm id}\otimes t_Y)(\rho)\right)$. We write
$\Sigma_{X} = \{\rho\in M_{XY}^+: \rho \mbox{ a fair state}\}$, and
observe that an element $\rho = (\rho_{x,x',y,y'})\in M_{XY}^+$ belongs to $\Sigma_{X}$ precisely when
\begin{eqnarray*}
\sum_{x\in X} \sum_{y,y'\in Y} \rho_{x,x,y,y'}e_{y'}e_{y}^* = \sum_{x,x'\in X} \sum_{y\in Y}  \rho_{x,x',y,y}e_xe_{x'}^*,
\end{eqnarray*}
that is, when
\begin{equation}\label{rho}
\sum_{x\in X} \rho_{x,x,z,z'} = \sum_{y\in X} \rho_{z',z,y,y}, \ \ z,z'\in X.
\end{equation} 
We let $\Sigma_{X}^{\rm cl} = \Sigma_{X}\cap \cl D_{XY}$; thus, a state $\rho = (\rho_{x,y})_{x,y}\in \cl D_{XY}^+$
is in $\Sigma_{X}^{\rm cl}$ precisely when 
\begin{equation}\label{rho2}
\sum_{x\in X} \rho_{x,z} = \sum_{y\in X} \rho_{z,y}, \ \ z\in X.
\end{equation} 
It follows from (\ref{rho}) and (\ref{rho2}) that  
\begin{equation}\label{rho3}
\Delta_{XY}(\Sigma_{X})= \Sigma_{X}^{\rm cl}.
\end{equation}

\begin{definition}
A QNS correlation $\Gamma: M_{XY}\to M_{AB}$ 
(resp. a CQNS correlation $\cl E: \cl D_{XY}\to M_{AB}$, 
an NS correlation $\cl N : \cl D_{XY}\to \cl D_{AB}$) 
is called \emph{fair} if $\Gamma\left(\Sigma_{X}\right)\subseteq \Sigma_{A}$
(resp. $\cl E\left(\Sigma_{X}^{\rm cl}\right)\subseteq \Sigma_{A}$,
$\cl N\left(\Sigma_{X}^{\rm cl}\right)\subseteq \Sigma_{A}^{\rm cl}$).
\end{definition}

\begin{theorem}\label{sym}
Let $\Gamma$ be a QNS correlation. 
\begin{itemize}
\item[(i)] $\Gamma$ is fair if and only if there exists a state  
$s : \cl T_{X,A}\otimes_{\max} \cl T_{X,A}\to\mathbb C$ such that 
$\Gamma = \Gamma_s$ and the state $s\circ({\rm id}\otimes\partial)^{-1}$ is fair;

\item[(ii)] $\Gamma$ is fair and belongs to $\cl Q_{\rm qc}$ if and only if there exists a state  
$s : \cl T_{X,A}\otimes_{\rm c} \cl T_{X,A}\to\mathbb C$ such that $\Gamma = \Gamma_s$
and the state $s\circ({\rm id}\otimes\partial)^{-1}$ is fair;

\item[(iii)] $\Gamma$ is fair and belongs to $\cl Q_{\rm qa}$ if and only if there exists a  state  
$s : \cl T_{X,A}\otimes_{\min} \cl T_{X,A}\to\mathbb C$ such that $\Gamma = \Gamma_s$
and the state $s\circ({\rm id}\otimes\partial)^{-1}$ is fair;

\item[(iv)] 
$\Gamma$ is fair and belongs to $\cl Q_{\rm loc}$ if and only if there exists a  state  
$s : \omin(\cl T_{X,A}) \otimes_{\min} \omin(\cl T_{Y,B}) \to\mathbb C$ such that 
$\Gamma = \Gamma_s$
and the state $s\circ({\rm id}\otimes\partial)^{-1}$ is fair.
\end{itemize}
\end{theorem}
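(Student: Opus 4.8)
The plan is to leverage the existing state-correspondence theorems (\ref{th_qnsma}, \ref{t_dqc}, \ref{t_dqa}, \ref{t_locd}) in combination with a single extra ingredient: an explicit translation of the fairness condition $\Gamma(\Sigma_X)\subseteq \Sigma_A$ into a linear relation on the associated state $s$, and then identifying that relation with the requirement that $s\circ(\id\otimes\partial)^{-1}$ (a functional on $\cl T_{X,A}\otimes\cl T_{X,A}^{\rm op}$, and — via Lemma \ref{l_op}(i) — equivalently back on $\cl C_{X,A}\otimes\cl C_{X,A}^{\rm op}$) is fair in the sense of (\ref{seq}). Since all four items share the same underlying equivalence and differ only in which operator system tensor product is used, I would prove the ``fairness'' half once, phrased at the level of the functional $s$, and then cite the four correlation theorems for the class membership.

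First I would unwind what $\Gamma$ being fair means in terms of the Choi data. By (\ref{rho}), $\Sigma_X$ is cut out inside $M_{XY}^+$ by the affine equations $\sum_{x}\rho_{x,x,z,z'}=\sum_{y}\rho_{z',z,y,y}$; since $\Sigma_X$ spans (as a real vector space) the hyperplane defined by these equations and $\Gamma$ is linear, $\Gamma(\Sigma_X)\subseteq\Sigma_A$ is equivalent to $\Gamma$ mapping the corresponding difference vectors $\rho$ (those with $\Tr_X((\id\otimes t_Y)\rho)=\Tr_Y((\id\otimes t_Y)\rho)$) into the analogous kernel on the output side. Testing against the matrix units $\rho = e_xe_{x'}^*\otimes(e_ze_{z'}^* \text{ vs } e_{z'}e_z^*)$-type combinations and pairing the output with $e_ae_{a'}^*\otimes e_be_{b'}^*$, the condition (\ref{rho}) on the output, applied to $\Gamma(\rho)=\Gamma_s(\rho)$ with $\Gamma_s$ given by (\ref{eq_gammas}), becomes a symmetry relation among the numbers $C^{x,x',y,y'}_{a,a',b,b'}=s(e_{x,x',a,a'}\otimes f_{y,y',b,b'})$: roughly, partial-tracing the first leg on the output must agree with partial-tracing the second, under the transpose swap on both the input index pair and the output index pair. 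I would carry out this bookkeeping carefully once; it is the only genuine computation.

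Next, I would interpret the resulting relation on $s$ through the isomorphism $\partial:\cl C_{X,A}\to\cl C_{X,A}^{\rm op}$ of Lemma \ref{l_op}(i), which sends $e_{x,x',a,a'}\mapsto e_{x',x,a',a}^{\rm op}$ — precisely the double transpose (on $X$-indices and on $A$-indices) that appeared in the previous step. Thus $s\circ(\id\otimes\partial)^{-1}$ is the functional $\tilde s$ on $\cl T_{X,A}\otimes \cl T_{X,A}^{\rm op}$ with $\tilde s(e_{x,x',a,a'}\otimes e_{y,y',b,b'}^{\rm op}) = s(e_{x,x',a,a'}\otimes f_{y',y,b',b})$, and the fairness relation on $s$ becomes exactly $\tilde s(u\otimes 1) = \tilde s(1\otimes u^{\rm op})$ for all $u$ (it suffices to check this on the spanning elements $e_{x,x',a,a'}$ and use that both sides are linear and that $1 = \sum_a e_{x,x,a,a}$ by (\ref{eq_relt})). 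Since $\partial$ is a $*$-isomorphism, it is a complete order isomorphism, so $s$ is a state on $\cl T_{X,A}\otimes_\alpha\cl T_{X,A}$ iff $\tilde s$ is a state on $\cl T_{X,A}\otimes_\alpha\cl T_{X,A}^{\rm op}$, for $\alpha\in\{\max,\rm c,\min\}$ and likewise for the $\omin$-of-the-dual version in (iv); this is where functoriality of the tensor products enters. Combining: $\Gamma$ is fair $\iff$ its state $s$ satisfies the symmetry relation $\iff$ $s\circ(\id\otimes\partial)^{-1}$ is fair.

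Finally, for each item I splice in the class description: $\Gamma\in\cl Q_{\rm ns}$ (all QNS) corresponds to a state on $\otimes_{\max}$ by Theorem \ref{th_qnsma}, giving (i); $\Gamma\in\cl Q_{\rm qc}$ to $\otimes_{\rm c}$ by Theorem \ref{t_dqc}, giving (ii); $\Gamma\in\cl Q_{\rm qa}$ to $\otimes_{\min}$ by Theorem \ref{t_dqa}, giving (iii); and $\Gamma\in\cl Q_{\rm loc}$ to a state on $\omin(\cl T_{X,A})\otimes_{\min}\omin(\cl T_{Y,B})$ by Theorem \ref{t_locd}, giving (iv). In each case fairness of $\Gamma$ transfers to the fairness of $s\circ(\id\otimes\partial)^{-1}$ by the computation above, and conversely. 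I expect the main obstacle to be purely the index-juggling in the first step — getting the transposes on the $X$- and $A$-legs to line up so that the output-side version of (\ref{rho}) matches the definition of $\partial$ on the nose — rather than anything conceptual; once that identification is clean, the four parts follow formally from the already-established correspondences and the functoriality of operator system tensor products.
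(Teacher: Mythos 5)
Your plan is essentially the paper's own proof: plug the four correspondence theorems (\ref{th_qnsma}, \ref{t_dqc}, \ref{t_dqa}, \ref{t_locd}) into a single computation that translates $\Gamma(\Sigma_X)\subseteq\Sigma_A$ into the condition that $s\circ(\id\otimes\partial)^{-1}$ satisfies (\ref{seq}), using $\sum_{a}e_{x,x',a,a}=\delta_{x,x'}1$ and the fact that $\partial$ sends $e_{x,x',a,a'}$ to $e_{x',x,a',a}^{\rm op}$, which is exactly the double transpose produced by the partial-trace/transpose bookkeeping. The direction ``$s\circ(\id\otimes\partial)^{-1}$ fair $\Rightarrow$ $\Gamma$ fair'' comes out of that computation for an arbitrary $\rho\in\Sigma_X$, exactly as in the paper.

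The one step you assert without proof is the converse-direction claim that $\Sigma_X$ spans (over $\bb{R}$, hence over $\bb{C}$) the subspace of $M_{XX}$ cut out by the linear relations (\ref{rho}), which is what licenses testing $\Gamma$ against non-positive matrix-unit combinations and hence recovering (\ref{seqop}) for every generator $e_{y,y',b,b'}$. Fairness of $\Gamma$ a priori only yields (\ref{seqop}) for the elements $u=\sum_{y,y'}\bigl(\sum_x\rho_{x,x,y,y'}\bigr)e_{y,y',b,b'}$ with $\rho\in\Sigma_X$, so what you really need is that the partial traces $\Tr_X\rho$, $\rho\in\Sigma_X$, span all of $M_X$. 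Your spanning claim is in fact true and easy: the subspace defined by (\ref{rho}) is selfadjoint and contains $I_{XX}$, hence is spanned by its positive elements, whose normalisations lie in $\Sigma_X$; so the gap is fillable in one line, and arguably gives a cleaner converse. The paper instead proves the needed surjectivity by hand, first taking $\rho=e_xe_x^*\otimes e_xe_x^*$ to get (\ref{seqop}) for $u=e_{x,x,b,b'}$, and then $\rho=1\otimes\omega^{\rm t}+\omega\otimes 1$ with $\omega=\alpha(e_ze_z^*+e_{z'}e_{z'}^*)+\beta e_ze_{z'}^*+\bar\beta e_{z'}e_z^*$, $\alpha\geq|\beta|$, to extract the off-diagonal generators $e_{z,z',b,b'}$. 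Either way, once that point is made explicit your argument goes through, and parts (i)--(iv) follow formally as you say.
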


\begin{proof}
We only show (i); the proofs of (ii)-(iv) are similar. 
Let $\Gamma$ be a QNS correlation. 
By Theorem \ref{th_qnsma}, 
there exists a state $s\in \cl T_{X,A}\otimes_{\max} \cl T_{X,A}\to\mathbb C$ such that
$\Gamma = \Gamma_s$.
The condition 
$$\Tr\mbox{}_A\left(({\rm id}\otimes t_B)(\Gamma(\rho)\right) 
= \Tr\mbox{}_B\left(({\rm id}\otimes t_B)(\Gamma(\rho)\right)$$ 
is equivalent to
\begin{equation}\label{eq_sbb}
\sum_{a\in A} \left\langle({\rm id}\otimes t_B(\Gamma(\rho))e_{a}\otimes e_{b'},e_a\otimes e_b\right\rangle
= \sum_{a\in A} \left\langle({\rm id}\otimes t_B(\Gamma(\rho))e_{b'}\otimes e_a,e_b\otimes e_a\right\rangle, 
\end{equation}
 $b,b'\in B$.
Note that
$$\Gamma(\rho) 
= \sum_{a,a'\in A}\sum_{b,b'\in A}\sum_{x,x'\in X} \sum_{y,y'\in X} \rho_{x,x',y,y'}s(e_{x,x',a,a'}\otimes e_{y,y',b,b'})e_ae_{a'}^*\otimes e_{b}e_{b'}^*$$
and hence 
\begin{eqnarray*}
& & ({\rm id}\otimes t_B)(\Gamma(\rho))\\
&=&\sum_{a,a'\in A}\sum_{b,b'\in A}\sum_{x,x'\in X} \sum_{y,y'\in X} \rho_{x,x',y,y'}s(e_{x,x',a,a'}\otimes e_{y,y',b,b'})e_ae_{a'}^*\otimes e_{b'}e_{b}^*.
\end{eqnarray*}
Thus, letting $\mu_{y,y'}^{(1)} = \sum_{x\in X} \rho_{x,x,y,y'}$, we have that 
the left hand side of (\ref{eq_sbb}) coincides with 
\begin{eqnarray*}
&&\sum_{a\in A} \sum_{x,x'\in X} \sum_{y,y'\in X} \rho_{x,x',y,y'}s\left(e_{x,x',a,a}\otimes e_{y,y',b',b}\right)\\
&&= \sum_{x,x'\in X} \sum_{y,y'\in X} \rho_{x,x',y,y'}s\left(\left(\sum_{a\in A} e_{x,x',a,a}\right)\otimes e_{y,y',b',b}\right)\\
&&= \sum_{x,x'\in X} \sum_{y,y'\in X} \rho_{x,x',y,y'}\delta_{x,x'}s\left(1\otimes e_{y,y',b',b}\right)
= s\left(1\otimes \sum_{y,y'\in X}\mu_{y,y'}^{(1)}e_{y,y',b',b}\right)\\
&&= s\circ({\rm id}\otimes\partial)^{-1}\left(1\otimes \sum_{y,y'\in X}\mu_{y,y'}^{(1)}e_{y',y,b,b'}^{\rm op}\right).
\end{eqnarray*} 
Similarly, 
letting $\mu_{x,x'}^{(2)} = \sum_{y\in X}\rho_{x,x',y,y}$, we have that the right hand side of (\ref{eq_sbb}) 
coincides with
\begin{eqnarray*}
&&\sum_{a\in A} \sum_{x,x'\in X} \sum_{y,y'\in X} \rho_{x,x',y,y'}s\left(e_{x,x',b,b'}\otimes e_{y,y',a,a}\right)\\
&&= \sum_{x,x'\in X} \sum_{y,y'\in X} \rho_{x,x',y,y'}s\left(e_{x,x',b,b'}\otimes \left(\sum_{a\in A}e_{y,y',a,a}\right)\right)\\
&&= \sum_{x,x'\in X} \sum_{y,y'\in X} \rho_{x,x',y,y'}\delta_{y,y'}s\left(e_{x,x',b,b'}\otimes 1\right)\\
&&= s\left(\sum_{x,x'\in X}\mu_{x,x'}^{(2)}e_{x,x',b,b'}\otimes 1\right),
\end{eqnarray*}
that is, with
$$s\circ({\rm id}\otimes\partial)^{-1}\left(\sum_{y,y'\in X}\mu_{y',y}^{(2)}e_{y',y,b,b'}\otimes 1\right).$$
Let now $\rho\in\Sigma_{X}$. By (\ref{rho}), $\mu_{y,y'}^{(1)} = \mu_{y',y}^{(2)}$.  
Hence, if $s\circ({\rm id}\otimes\partial)^{-1}$
is fair then $\Gamma(\rho)\in \Sigma_{A}$, that is, $\Gamma$ is fair. 

Conversely, assuming that $\Gamma$ is fair, the previous paragraph shows that 
\begin{equation}\label{seqop}
s\circ({\rm id}\otimes\partial)^{-1}(u\otimes 1) = s\circ({\rm id}\otimes\partial)^{-1}(1\otimes u^{\rm op})
\end{equation}
for any $u$ of the form 
$u = \sum_{y,y'\in X}(\sum_x\rho_{x,x,y,y'})e_{y,y',b,b'}$ with $\rho\in \Sigma_{X}$. 
Letting
$\rho = e_xe_x^*\otimes e_xe_x^*\in\Sigma_{X}$ we conclude that 
(\ref{seqop}) holds for $u = e_{x,x,b,b'}$, $x\in X$, $b,b'\in A$. 
Letting $\rho = 1\otimes \omega^t + \omega\otimes 1$, where 
$\omega = \alpha (e_ze_{z}^*+e_{z'}e_{z'}^*)+\beta e_ze_{z'}^*+\bar\beta e_{z'}e_{z}^*$, $z\ne z'$, 
with $\alpha\geq |\beta|$, we obtain that
(\ref{seqop}) holds for 
$u = \alpha(2\sum_{y\in X}e_{y,y,b,b'}+|X|e_{z,z,b,b'}+|X|e_{z',z',b,b'})+\beta |X| e_{z',z,b,b'}+\bar\beta |X|e_{z,z',b,b'}$.
From this we deduce that (\ref{seqop}) holds for any 
$u = e_{y,y',b,b'}$, $y,y'\in X$, $b,b'\in A$. 
 \end{proof}

Let $\cl S\subseteq \cl B(K)$ be an operator system. We let 
$\cl S^{\rm op} = \{u^{\rm d} : u\in \cl S\}$, considered as an 
operator subsystem of $\cl B(K^{\rm d})$. Note that $\cl S^{\rm op}$ is well-defined: if 
$\phi : \cl S\to \cl B(\tilde{K})$ is a unital complete isometry, then the map $\tilde{\phi} : \cl S^{\rm op}\to \cl B(\tilde{K}^{\rm d})$,
given by $\tilde{\phi}(u^{\rm d}) = \phi(u)^{\rm d}$, is also unital and completely isometric. We thus write $u^{\rm op} = u^{\rm d}$ 
in the (abstract) operator system $\cl S^{\rm op}$. 

For a linear map $\Phi : M_X\to M_A$, let 
$\Phi^{\sharp} : M_X\to M_A$ be the linear map given by 
$\Phi^{\sharp}(\omega) = \Phi(\omega^{\rm t})^{\rm t}$.

\begin{lemma}\label{l_opopsys}
Let $\cl S$ be an operator system.
\begin{itemize}
\item[(i)]
If $\phi : \cl S\to \cl B(H)$ be a unital completely positive map 
then the map $\phi^{\rm op} : \cl S^{\rm op} \to \cl B(H^{\rm d})$, given by 
$\phi^{\rm op}(u^{\rm op}) = \phi(u)^{\rm d}$, is unital and completely positive.

\item[(ii)] Up to a canonical *-isomorphism, $C^*_u(\cl S^{\rm op}) = C^*_u(\cl S)^{\rm op}$.

\item[(iii)]
If $\Phi : M_X\to M_A$ is a completely positive map then so is $\Phi^{\sharp}$.
\end{itemize}
\end{lemma}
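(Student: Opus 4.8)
The plan is to verify the three parts in order, each reducing to an explicit manipulation of the adjoint (Banach-space dual) construction. For (i), I would first recall that, since $\cl S \subseteq \cl B(K)$, the matrix order on $\cl S^{\rm op} \subseteq \cl B(K^{\rm d})$ is the one inherited from $\cl B(K^{\rm d})$, and that the transposition-type map $u \mapsto u^{\rm d}$ from $\cl B(K)$ to $\cl B(K^{\rm d})$ is a unital, positive, \emph{linear} (not conjugate-linear) map whose amplifications $T \mapsto T^{\rm d}$ on $M_n(\cl B(K)) \cong \cl B(K^n)$ are again of the same form. Concretely, if $(u_{i,j})_{i,j} \in M_n(\cl S)^+$, then $(u_{i,j})_{i,j}$, viewed in $\cl B(K^n)^+$, has dual $((u_{j,i})^{\rm d})_{i,j}$ sitting in $\cl B((K^{\rm d})^n)^+$ (note the transpose of indices, coming from $(T^{\rm d})$ reversing the order of composition, exactly as in the relation $(T^*)^{\rm d} = (T^{\rm d})^*$ recorded in \eqref{eq_Tds} and used in the proof of Lemma \ref{l_op}). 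Hence $u \mapsto u^{\rm d}$ is completely positive from $\cl S$ to $\cl S^{\rm op}$, and unital. Composing with the given unital completely positive $\phi : \cl S \to \cl B(H)$ — or rather, observing that $\phi^{\rm op}(u^{\rm op}) = \phi(u)^{\rm d}$ is the composition of $\phi$ (amplified) with the dual map on $\cl B(H)$ — one gets that $\phi^{\rm op} : \cl S^{\rm op} \to \cl B(H^{\rm d})$ is unital and completely positive. The one point to be careful about is the index transpose in the matrix amplifications; I would state it once and then use it silently.

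For (ii), the plan is to check that $C^*_u(\cl S)^{\rm op}$, together with the embedding $\iota^{\rm op} : \cl S^{\rm op} \to C^*_u(\cl S)^{\rm op}$, $\iota^{\rm op}(u^{\rm op}) = \iota(u)^{\rm op}$, satisfies the universal property defining $C^*_u(\cl S^{\rm op})$; uniqueness of the universal C*-cover (noted right before Corollary \ref{c_ucst}) then gives the claimed canonical *-isomorphism. By part (i) applied to the identity, $\iota^{\rm op}$ is a unital complete order embedding (it is the composite of the complete order embedding $\iota$ with the complete order isomorphism $u \mapsto u^{\rm op}$). Given a Hilbert space $H$ and a unital completely positive $\psi : \cl S^{\rm op} \to \cl B(H)$, apply part (i) to $\psi$ to obtain a unital completely positive map $\psi^{\rm op} : (\cl S^{\rm op})^{\rm op} = \cl S \to \cl B(H^{\rm d})$; the universal property of $C^*_u(\cl S)$ yields a *-representation $\pi : C^*_u(\cl S) \to \cl B(H^{\rm d})$ with $\pi \circ \iota = \psi^{\rm op}$, and then $\pi^{\rm op} : C^*_u(\cl S)^{\rm op} \to \cl B(H^{\rm d})^{\rm op} = \cl B(H)$ (using that $(\cl B(H^{\rm d}))^{\rm op} \cong \cl B(H)$ via the natural identification $H^{\rm dd} = H$, and that $\psi^{\rm op\, op} = \psi$ under this identification) is a *-representation extending $\psi$. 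That establishes the universal property and hence (ii).

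For (iii), I would simply unwind the definition: $\Phi^{\sharp}(\omega) = \Phi(\omega^{\rm t})^{\rm t}$ is the composition $\ttt_A \circ \Phi \circ \ttt_X$, where $\ttt_Z : M_Z \to M_Z$ is transposition. Transposition on $M_Z$ is a *-isomorphism onto $M_Z^{\rm op}$, hence in particular a complete order isomorphism of $M_Z$ onto $M_Z$ up to the canonical identification $M_Z^{\rm op} \cong M_Z$ (transpose is positive, and its matricial amplifications are again conjugations by the flip combined with transpose, which are positive) — this is exactly the observation used in the proof of Lemma \ref{l_op}(ii) that $u \mapsto (u^{\rm t})^{\rm op}$ is a *-isomorphism of $M_A$ onto $M_A^{\rm op}$. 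Composing the completely positive map $\Phi$ with completely positive maps on both sides therefore yields a completely positive map $\Phi^{\sharp}$.

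\textbf{Main obstacle.} None of the three steps is deep; the one place that requires genuine care — and where a sloppy argument would go wrong — is the bookkeeping of the index transpose in the matrix amplifications of $u \mapsto u^{\rm d}$ (equivalently, the anti-multiplicativity encoded in $(ST)^{\rm d} = T^{\rm d} S^{\rm d}$ and in $(T^*)^{\rm d} = (T^{\rm d})^*$). Getting that right is what makes $\cl S^{\rm op}$ an \emph{operator system} rather than merely a positively-ordered space, and it is the only subtlety in establishing that $u \mapsto u^{\rm op}$ is a complete order isomorphism $\cl S \to \cl S^{\rm op}$, on which parts (i) and (ii) both rest.
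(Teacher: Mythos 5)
Your overall route is the same as the paper's (a matrix-level verification for (i), transfer of the universal property for (ii), identification of $\Phi^{\sharp}$ with an "op" of $\Phi$ for (iii)), but there is a genuine error in how you package the complete-positivity argument, and it recurs in all three parts: the map $u\mapsto u^{\rm d}$ (equivalently $u\mapsto u^{\rm op}$) from $\cl S$ to $\cl S^{\rm op}$ is \emph{not} completely positive and is \emph{not} a complete order isomorphism, and likewise the transpose map $\omega\mapsto\omega^{\rm t}$ on $M_Z$ is not completely positive. The point is that the $n$-th amplification of $u\mapsto u^{\rm d}$ is the entrywise map $(u_{i,j})\mapsto (u_{i,j}^{\rm d})$, whereas the dual of the block matrix is $\left((u_{j,i})^{\rm d}\right)_{i,j}$ -- it contains an extra block transpose, so the two are different maps; only the latter preserves positivity. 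Concretely, for $\cl S=M_2$ the map $u\mapsto u^{\rm op}$ becomes, after the *-isomorphism $u^{\rm op}\mapsto u^{\rm t}$, the transpose, the standard example of a positive map that is not $2$-positive: the block transpose of the positive matrix $\sum_{i,j}e_ie_j^*\otimes e_ie_j^*$ is the flip, which has a negative eigenvalue. Hence your sentence ``Hence $u\mapsto u^{\rm d}$ is completely positive from $\cl S$ to $\cl S^{\rm op}$'', the closing claim that $u\mapsto u^{\rm op}$ is a complete order isomorphism ``on which parts (i) and (ii) both rest'', and, in (iii), the assertion that $\Phi^{\sharp}$ is a composition of $\Phi$ with completely positive transpositions on both sides, are all false as stated, so the composition-of-CP-maps arguments built on them do not prove the statements.

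The repair is exactly the computation you have already displayed, organized as in the paper: the two index transposes must be made to cancel \emph{around} $\phi$, rather than each leg being CP. If $(u_{i,j}^{\rm op})_{i,j}\in M_n(\cl S^{\rm op})^+$, dualizing the whole block matrix gives $(u_{j,i})_{i,j}\in M_n(\cl S)^+$; complete positivity of $\phi$ gives $(\phi(u_{j,i}))_{i,j}\geq 0$; dualizing this block matrix re-transposes the indices and yields $\left(\phi(u_{i,j})^{\rm d}\right)_{i,j}=\left(\phi^{\rm op}(u_{i,j}^{\rm op})\right)_{i,j}\geq 0$. This proves (i); the same computation with $\phi=\iota$ shows directly that $\iota^{\rm op}$ is a complete order embedding, after which your universal-property argument for (ii) is fine and agrees with the paper's. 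For (iii), do not invoke complete positivity of the transpose on $M_Z$; instead note, as the paper does, that $u\mapsto(u^{\rm t})^{\rm op}$ is a *-isomorphism (hence complete order isomorphism) of $M_X$ onto $M_X^{\rm op}$ -- onto the opposite algebra, not onto $M_X$ itself -- and that under these identifications on both sides $\Phi^{\sharp}$ coincides with the map $u^{\rm op}\mapsto\Phi(u)^{\rm op}$, whose complete positivity is given by the computation above (which does not use unitality); equivalently, at each matrix level the entrywise transposes combine with the block transposes into full transposes, and full transposition does preserve positivity.
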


\begin{proof}
(i) 
Represent $\cl S\subseteq \cl B(K)$ as a concrete operator system. 
Then $\cl S^{\rm op}\subseteq \cl B(K^{\rm d})$. 
Suppose that $u_{i,j}\in \cl S$, $i,j = 1,\dots,n$, are such that 
$(u_{i,j}^{\rm d})_{i,j}\in M_n(\cl B(K^{\rm d}))^+$. 
Then 
$(u_{j,i})_{i,j} = (u_{i,j}^{\rm d})_{i,j}^{\rm d}\in M_n(\cl B(K))^+$
and hence $(\phi(u_{j,i}))_{i,j}\in M_n(\cl B(H))^+$. Thus, 
$$\left(\phi^{\rm op}(u_{i,j}^{\rm op})\right)_{i,j} = \left(\phi(u_{i,j})^{\rm d}\right)_{i,j} \in M_n\left(\cl B(H^{\rm d})\right)^+.$$

(ii) Suppose that $\psi : \cl S^{\rm op}\to \cl B(H)$ is a unital completely positive map. By
(i), $\psi^{\rm op} : \cl S\to \cl B(H^{\rm d})$ is (unital and) completely positive. 
By the universal property of the maximal C*-cover, there exists a *-homomorphism 
$\pi : C^*_u(\cl S) \to \cl B(H^{\rm d})$ extending $\psi^{\rm op}$. 
It follows that $\pi^{\rm op} : C^*_u(\cl S)^{\rm op} \to \cl B(H)$ is a *-homomorphism that extends $\psi$. 
Thus, $C^*_u(\cl S)^{\rm op}$ satisfies the universal property of the C*-cover of $\cl S^{\rm op}$.

(iii) The transposition is a (unital) complete order isomorphism from $M_X$ onto $M_X^{\rm op}$. 
The statement follows after observing that, under
the latter identification, $\Phi^{\sharp}$ coincides with $\Phi^{\rm op}$. 
\end{proof}

\begin{corollary}\label{c_lqnssy}
A local QNS correlation 
$\Gamma$ is fair if and only if 
$\Gamma = \sum_{i=1}^m$ $\lambda_i \Phi_i\otimes\Psi_i$ 
for some quantum channels $\Phi_i, \Psi_i : M_X\to M_A$ and scalars $\lambda_i\geq 0$, $i = 1,\dots,m$, 
$\sum_{i=1}^m \lambda_i = 1$, such that
\begin{equation}\label{eq_lsy}
\sum_{i=1}^m \lambda_i \Phi_i = \sum_{i=1}^m \lambda_i \Psi_i^\sharp.
\end{equation}
\end{corollary}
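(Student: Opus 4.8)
The plan is to combine the characterization of local QNS correlations from Theorem \ref{t_locd} with the fairness criterion of Theorem \ref{sym}(iv), and then unravel what the auxiliary fairness condition on $s\circ(\id\otimes\partial)^{-1}$ says once $s$ is written as a convex combination of product states. First I would recall that, by Theorem \ref{t_locd} and Lemma \ref{l_fdom}, a local QNS correlation has the form $\Gamma=\Gamma_s$ where $s$ corresponds to $\sum_{i=1}^m\lambda_i\phi_i\otimes\psi_i$ with $\phi_i$ a state on $\cl T_{X,A}$ (equivalently, with Choi matrix a stochastic operator matrix over $\bb C$, i.e.\ a quantum channel $\Phi_i:M_X\to M_A$) and $\psi_i$ a state on $\cl T_{Y,B}=\cl T_{X,A}$ (a quantum channel $\Psi_i:M_X\to M_A$). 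Via the formula (\ref{eq_gammas}) one checks directly that $\Gamma_s=\sum_{i=1}^m\lambda_i\Phi_i\otimes\Psi_i$, where $\Phi_i$ (resp.\ $\Psi_i$) is the channel with Choi matrix $(\phi_i(e_{x,x',a,a'}))$ (resp.\ $(\psi_i(f_{y,y',b,b'}))$).

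Next I would translate the fairness condition. By Theorem \ref{sym}(iv), $\Gamma$ is fair and local precisely when there is such an $s$ with $s\circ(\id\otimes\partial)^{-1}$ fair, i.e.\ $s\circ(\id\otimes\partial)^{-1}(u\otimes 1)=s\circ(\id\otimes\partial)^{-1}(1\otimes u^{\rm op})$ for all $u\in\cl T_{X,A}$. Using $\partial(e_{x,x',a,a'})=e_{x',x,a',a}^{\rm op}$ (Lemma \ref{l_op}), the equation $s(u\otimes 1)=s(1\otimes\partial(u))$ for $u=e_{x,x',a,a'}$ becomes, after inserting $s=\sum_i\lambda_i\phi_i\otimes\psi_i$ and using $\phi_i(1)=\psi_i(1)=1$,
\begin{equation}\label{eq_pf_fair}
\sum_{i=1}^m\lambda_i\phi_i(e_{x,x',a,a'})=\sum_{i=1}^m\lambda_i\psi_i(e_{x',x,a',a}),\ \ \ x,x'\in X,\ a,a'\in A.
\end{equation}
The left side of (\ref{eq_pf_fair}) is the $((x,a),(x',a'))$-entry of the Choi matrix of $\sum_i\lambda_i\Phi_i$. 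The right side is the $((x,a),(x',a'))$-entry of the Choi matrix of $\sum_i\lambda_i\Psi_i^{\sharp}$: indeed, by Lemma \ref{l_opopsys}(iii) and the identification of $\Phi^{\sharp}$ with $\Phi^{\rm op}$ under transposition, the Choi matrix of $\Psi_i^{\sharp}$ has $((x,a),(x',a'))$-entry equal to the $((x',a'),(x,a))$-entry of the Choi matrix of $\Psi_i$, which is exactly $\psi_i(e_{x',x,a',a})$. Since a channel is determined by its Choi matrix, (\ref{eq_pf_fair}) holding for all $x,x',a,a'$ is equivalent to (\ref{eq_lsy}).

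I would then assemble the two directions. For the forward direction, given a fair local $\Gamma$, choose $s$ as in Theorem \ref{sym}(iv), write it via Lemma \ref{l_fdom} as $\sum_i\lambda_i\phi_i\otimes\psi_i$, set $\Phi_i,\Psi_i$ to be the associated channels, note $\Gamma=\sum_i\lambda_i\Phi_i\otimes\Psi_i$, and observe that fairness of $s\circ(\id\otimes\partial)^{-1}$ gives (\ref{eq_pf_fair}) and hence (\ref{eq_lsy}). For the converse, given channels $\Phi_i,\Psi_i$ with (\ref{eq_lsy}), the correlation $\sum_i\lambda_i\Phi_i\otimes\Psi_i$ is local by Definition \ref{d_lqns}, the associated functional $s$ is a state on $\omin(\cl T_{X,A})\otimes_{\min}\omin(\cl T_{Y,B})$ by Theorem \ref{t_locd}, and reversing the computation above shows $s\circ(\id\otimes\partial)^{-1}$ is fair on the generators, hence everywhere by linearity; Theorem \ref{sym}(iv) then yields that $\Gamma$ is fair. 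The main obstacle I anticipate is purely bookkeeping: keeping the transposes, the $\rm op$-flip in $\partial$, and the index reversal in the Choi matrix of $\Psi_i^{\sharp}$ consistent, so that the two sides of (\ref{eq_pf_fair}) really do match up the entries of the Choi matrices of $\sum_i\lambda_i\Phi_i$ and $\sum_i\lambda_i\Psi_i^{\sharp}$ without an unwanted extra transpose; this is where I would be most careful. Everything else is a direct application of the already-established Theorems \ref{t_locd}, \ref{sym}(iv) and Lemmas \ref{l_fdom}, \ref{l_op}, \ref{l_opopsys}.
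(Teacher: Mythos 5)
Your proposal is correct and follows essentially the same route as the paper's proof: invoke Theorem \ref{sym}(iv) together with the product-state decomposition from Theorem \ref{t_locd}/Lemma \ref{l_fdom}, evaluate the fairness condition on the generators $e_{x,x',a,a'}$ using $\partial(e_{x,x',a,a'})=e_{x',x,a',a}^{\rm op}$, and identify the resulting right-hand side with the Choi matrix of $\sum_i\lambda_i\Psi_i^{\sharp}$, exactly as the paper does via the computation $\langle\Psi_i^{\sharp}(e_xe_{x'}^*),e_ae_{a'}^*\rangle=\psi_i(e_{x',x,a',a})$. The only blemish is the intermediate phrase ``$s(1\otimes\partial(u))$'' (it should be $s(1\otimes\partial^{-1}(u^{\rm op}))$), but since $\partial^{-1}(e_{x,x',a,a'}^{\rm op})=e_{x',x,a',a}$ your displayed identity is the correct one and the argument is unaffected.
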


\begin{proof}
Suppose that $\Gamma$ is fair and, using Theorem \ref{sym}, 
write $\Gamma = \Gamma_s$, where 
$s$ is a state on $\omin(\cl T_{X,A}) \otimes_{\min} \omin(\cl T_{Y,B})$ such that $s\circ({\rm id}\otimes\partial)^{-1}$ is fair. 
As in the proof of Theorem \ref{t_locd}, 
identify $s$ with a convex combination $\sum_{i=1}^m \lambda_i \phi_i\otimes\psi_i$, where 
$\phi_i$ and $\psi_i$ are states on $\cl T_{X,A}$, $i = 1,\dots,m$; 
then the fairness condition is equivalent to 
\begin{equation}\label{eq_redu}
\sum_{i=1}^m \lambda_i \phi_i(u) = \sum_{i=1}^m \lambda_i \psi_i(\partial^{-1}(u^{\rm op})), \ \ u \in \cl T_{X,A}.
\end{equation}
Let $\Phi_i$ and $\Psi_i$ be the quantum channels from $M_X$ to $M_A$, 
corresponding to $\phi_i$ and $\psi_i$, respectively; then 
$\Gamma = \sum_{i=1}^m\lambda_i \Phi_i\otimes\Psi_i$.
Let
$\tilde \psi_i : u \mapsto \psi_i((\partial^{-1}(u^{\rm op}))$, $u\in \cl T_{X,A}$.  
By Lemma \ref{l_op}, $\tilde\psi_i$ is a state. 
Moreover, 
\begin{eqnarray*}
&&\langle\Psi_i^\sharp(e_xe_{x'}^*), e_ae_{a'}^*\rangle
= \langle\Psi_i(e_{x'}e_x^*)^t,e_ae_{a'}^*\rangle
= \langle\Psi_i(e_{x'}e_x^*),e_{a'}e_{a}^*\rangle\\
&& = \psi_i(e_{x',x,a',a})
= \psi_i(\partial^{-1}(e_{x,x',a,a'}^{\rm op}))
= \tilde\psi_i(e_{x,x',a,a'}),
\end{eqnarray*}
that is, the quantum channel $\Psi_i^\sharp$ corresponds to $\tilde \psi_i$.
Identity (\ref{eq_lsy}) now follows from (\ref{eq_redu}). 
The converse implication follows by reversing the previous steps.
\end{proof}

\begin{corollary}\label{c_fairo}
\begin{itemize}
\item[(i)] 
A CQNS correlation $\cl E$ is fair
if and only if there is a state  
$t : \cl R_{X,A}\otimes_{\max} \cl R_{X,A}\to\mathbb C$ such that  $t\circ({\rm id}\otimes\partial_{\cl B})^{-1}$ 
is fair and $\cl E = \cl E_t$.
Similar descriptions hold for fair correlations in the classes $\cl{CQ}_{\rm qc}$, 
$\cl{CQ}_{\rm qa}$ and $\cl{CQ}_{\rm loc}$.

\item[(ii)] An NS correlation $p$ is fair 
if and only if there is a state  
$t : \cl S_{X,A}\otimes_{\max} \cl S_{X,A}\to\mathbb C$ such that 
$t(u\otimes 1) = t(1\otimes u)$, $u\in \cl S_{X,A}$, and 
$$p(a,b|x,y) = t(e_{x,a}\otimes e_{y,b}), \ \ \ x,y\in X, a,b\in A.$$
Similar descriptions hold for fair correlations in the classes $\cl{C}_{\rm qc}$, 
$\cl{C}_{\rm qa}$ and $\cl{C}_{\rm loc}$.
\end{itemize}
\end{corollary}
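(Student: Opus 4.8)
The plan is to deduce Corollary \ref{c_fairo} from Theorem \ref{sym} (and its CQNS/NS analogues) in exactly the same way that Corollary \ref{c_lqnssy} was deduced, by transporting the fairness condition through the canonical unital completely positive maps $\beta_{X,A} : \cl R_{X,A}\to \cl T_{X,A}$ and $\beta'_{X,A}: \cl T_{X,A}\to \cl R_{X,A}$ (and, for part (ii), the corresponding maps $\cl S_{X,A}\to \cl R_{X,A}$ induced by $\ell^\infty_A \hookrightarrow M_A$ together with the conditional expectation $M_A\to\ell^\infty_A$). The key point is that these maps intertwine the relevant flip isomorphisms: $\partial_{\cl B}\circ\beta'_{X,A} = (\beta'_{X,A})^{\rm op}\circ\partial$ and $\beta_{X,A}\circ\partial_{\cl B}^{-1}(u^{\rm op}) = \partial^{-1}(\beta_{X,A}(u)^{\rm op})$ on generators, which is immediate from Lemma \ref{l_op} once one checks the formulas $\partial(e_{x,x',a,a'}) = e_{x',x,a',a}^{\rm op}$ and $\partial_{\cl B}(e_{x,a,a'}) = e_{x,a',a}^{\rm op}$ on the defining generators.

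First I would prove part (i). Given a CQNS correlation $\cl E$, Theorem \ref{th_cqdes} provides a state $t$ on the appropriate tensor product of two copies of $\cl R_{X,A}$ with $\cl E = \cl E_t$, so the content is the equivalence between fairness of $\cl E$ and fairness of $t\circ({\rm id}\otimes\partial_{\cl B})^{-1}$. For the forward direction I would use that $\Gamma_{\cl E} = \Gamma_s$ for $s = t\circ(\beta_{X,A}\otimes\beta_{Y,B})$ pulled back along the coproduct maps, observe via Lemma \ref{l_prs} (specifically $\tilde\Delta_{XY}(E\cdot F) = \tilde\Delta_X(E)\cdot\tilde\Delta_Y(F)$ and (\ref{rho3})) that $\cl E$ fair is equivalent to $\Gamma_{\cl E}$ fair as a QNS correlation restricted to the diagonal, and then invoke Theorem \ref{sym}(i). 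The computation in the proof of Theorem \ref{sym} already isolates that $\Gamma = \Gamma_s$ is fair iff the identity
\begin{equation*}
s\circ({\rm id}\otimes\partial)^{-1}(e_{x,x,b,b'}\otimes 1) = s\circ({\rm id}\otimes\partial)^{-1}(1\otimes e_{x,x,b,b'}^{\rm op})
\end{equation*}
holds for all $x\in X$, $b,b'\in A$ (the passage from these diagonal generators to all generators in the $\cl T$ case used traceless $\rho$; in the CQNS case $\rho$ is already diagonal, so only the diagonal generators $e_{x,x,b,b'}$ arise, which is precisely what the $\cl R_{X,A}$-generators $e_{x,b,b'}$ map to under $\beta_{X,A}$). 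Rewriting this through $\beta'_{X,A}$ and the intertwining relation with $\partial_{\cl B}$ turns it into the fairness of $t\circ({\rm id}\otimes\partial_{\cl B})^{-1}$ as a functional on $\cl R_{X,A}\otimes\cl R_{X,A}^{\rm op}$. The converse reverses these steps. The analogous statements for $\cl{CQ}_{\rm qc}$, $\cl{CQ}_{\rm qa}$, $\cl{CQ}_{\rm loc}$ follow verbatim using parts (ii), (iii), (iv) of Theorem \ref{th_cqdes} together with the functoriality of $\otimes_{\rm c}$, $\otimes_{\min}$ and $\omin(\cdot)\otimes_{\min}\omin(\cdot)$, since $\beta_{X,A}$, $\beta'_{X,A}$, $\iota_1$, $\iota_2$ are all unital completely positive and hence respect each of these tensor products.

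For part (ii) I would specialise part (i) (or rather its proof scheme) one step further: an NS correlation $p$ is identified with the classical channel $\cl N_p$ and, through Remark \ref{th_onedq}, with the CQNS correlation $\cl E_p$; the fairness of $\cl N_p$ as an element of $\cl C_{\rm ns}$ is equivalent to fairness of $\cl E_p$ because $\Delta_{AB}$ carries $\Sigma_A$ into $\Sigma_A^{\rm cl}$ by (\ref{rho3}) and is compatible with the partial-trace conditions defining $\Sigma$. Here the relevant operator system is $\cl S_{X,A}$, the coproduct of copies of $\ell^\infty_A$, and the analogue of $\partial_{\cl B}$ is the identity: on $\ell^\infty_A$ the transposition $u\mapsto (u^{\rm t})^{\rm op}$ fixes diagonal matrices, so $\partial_{\cl S}(e_{x,a}) = e_{x,a}^{\rm op}$, whence fairness of $t$ reads $t(u\otimes 1) = t(1\otimes u)$ with no flip. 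The relation $p(a,b|x,y) = t(e_{x,a}\otimes e_{y,b})$ is exactly the correspondence from \cite[Corollary 3.2]{lmprsstw} already invoked in the proof of Theorem \ref{th_cep}. Again the $\cl C_{\rm qc}$, $\cl C_{\rm qa}$, $\cl C_{\rm loc}$ variants follow from the functoriality of the tensor products and the fact that $\cl S_{X,A}$ sits completely order isomorphically in $\cl A_{X,A}$.

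The main obstacle is bookkeeping rather than conceptual: one must be careful that the flip isomorphisms $\partial$, $\partial_{\cl B}$, $\partial_{\cl S}$ act on generators with the correct index transposition (note $\partial$ swaps \emph{both} the $X$-indices and the $A$-indices, whereas $\partial_{\cl B}$ swaps only the $A$-indices, and $\partial_{\cl S}$ swaps nothing), so that the intertwining relations $\beta'_{X,A}\circ\partial^{-1} = \partial_{\cl B}^{-1}\circ(\beta'_{X,A})^{\rm op}$ genuinely hold on all of $\cl T_{X,A}$ — they do, because both sides are completely positive and agree on the generators $e_{x,x',a,a'}$. A secondary subtlety is confirming that, in the CQNS and NS cases, the reduction of the fairness computation of Theorem \ref{sym} involves \emph{only} the diagonal generators $e_{x,x,b,b'}$ (equivalently $e_{x,b,b'}$), so no traceless-state argument is needed and the maps $\beta_{X,A}$, $\iota_i$ suffice; this is visible from the fact that in those classes $\rho$ ranges over $\Sigma_X^{\rm cl}\subseteq\cl D_{XY}$, forcing $x = x'$ and $y = y'$ in (\ref{rho2}).
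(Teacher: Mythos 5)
Your proposal is correct and takes essentially the same route as the paper: both deduce the corollary from Theorem \ref{sym} (together with Theorem \ref{th_cqdes} and, for (ii), the correspondence with states on $\cl S_{X,A}$-tensor products) by transporting the fairness condition through the unital completely positive maps $\beta_{X,A}$, $\beta'_{X,A}$ and their intertwining with $\partial$ and $\partial_{\cl B}$ on generators, which the paper leaves implicit and you spell out. Two harmless slips worth noting: the state on $\cl T_{X,A}\otimes\cl T_{X,A}$ should be $s = t\circ(\beta'_{X,A}\otimes\beta'_{X,A})$ (equivalently $t = s\circ(\beta_{X,A}\otimes\beta_{X,A})$), not $s = t\circ(\beta_{X,A}\otimes\beta_{Y,B})$, and in the proof of Theorem \ref{sym} the off-diagonal generators are reached using fair (positive) states $\rho$, not traceless ones — neither affects your argument, since in the CQNS/NS cases only the diagonal choices $\rho = e_xe_x^*\otimes e_xe_x^*$ are needed, as you correctly observe.
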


\begin{proof}
We only give details for (i). 
Let $\cl E : \cl D_{XY}\to M_{AB}$ be a fair CQNS correlation. By (\ref{rho3}), 
$\cl E \circ \Delta_{XY} : M_{XY}\to M_{AB}$ is a fair QNS correlation. 
By Theorem \ref{sym} (i), $\cl E \circ \Delta_{XY} = \Gamma_s$, for some state $s$ on 
$\cl T_{X,A}\otimes_{\max} \cl T_{X,A}$ such that $s\circ({\rm id}\otimes\partial)^{-1}$ is fair.
It follows that $\cl E = \cl E_t$, where 
$t := s\circ (\beta_{X,A}\otimes \beta_{X,A})$ is a  state on 
$\cl R_{X,A}\otimes_{\max} \cl R_{X,A}$ and  $t\circ({\rm id}\otimes\partial_{\cl B})^{-1}$ is fair. 
Conversely, if $\cl E = \cl E_t$ for some  state $t$ on $\cl R_{X,A}\otimes_{\max} \cl R_{X,A}$  such that $t\circ({\rm id}\otimes\partial_{\cl B})^{-1}$ is fair then
$\Gamma_{\cl E} = \Gamma_s$, where $s := t\circ (\beta'_{X,A}\otimes \beta'_{X,A})$ and $s\circ({\rm id}\otimes\partial)^{-1}$ is fair. 
By Theorem \ref{sym} (i), $\Gamma_{\cl E}$ is fair, and hence so is $\cl E$.
The statements regarding $\cl{CQ}_{\rm qc}$, 
$\cl{CQ}_{\rm qa}$ and $\cl{CQ}_{\rm loc}$ follow after a straightforward modification of the argument. 
\end{proof}

\noindent {\bf Remark. } It follows from Theorem \ref{th_psstw}, Theorem \ref{sym} and Corollary \ref{c_fairo}
that fair correlations can be viewed as a non-commutative, and less restrictive, version of synchronous correlations.


\subsection{Tracial QNS correlations}\label{ss_tc}

Let $\cl A$ be a unital C*-algebra, $\tau : \cl A\to \bb{C}$ be a state 
and  $\cl A^{\rm op}$ be  the opposite C*-algebra of $\cl A$.
By the paragraph before Theorem 6.2.7 in \cite{bo}, the linear functional
$s_{\tau} : \cl A\otimes_{\max}\cl A^{\rm op}\to \bb{C}$, 
given by $s_{\tau}(u\otimes v^{\rm op}) = \tau(uv)$, is a state. 

A positive element $E\in M_X\otimes M_A\otimes \cl A$ will be called a \emph{stochastic $\cl A$-matrix} if 
$(\id\otimes\id\otimes\pi)(E)$ is a stochastic operator matrix for some faithful *-representation of $\cl A$. 
Such an $E$ will be called \emph{semi-classical}
if it belongs to $\cl D_X\otimes M_A\otimes \cl A$. 

Let $E = (g_{x,x',a,a'})_{x,x',a,a'}$ be a stochastic $\cl A$-matrix, and set 
$$E^{\rm op} = (g_{x',x,a',a}^{\rm op})_{x,x',a,a'} \in M_X\otimes M_A\otimes \cl A^{\rm op};$$
Lemma \ref{l_op} shows that 
$E^{\rm op}$ is a stochastic $\cl A^{\rm op}$-matrix. 
Thus, after a permutation of the tensor factors, we can consider 
$E\otimes E^{\rm op}$ as an element of $\left(M_{XA}\otimes M_{XA}\otimes(\cl A\otimes_{\max} \cl A^{\rm op})\right)^+$. 
By Theorem \ref{p_ucptau2}, there exists a *-homomorphism
$\pi_E : \cl C_{X,A}\to \cl A$, such that $\pi_E(e_{x,x',a,a'}) = g_{x,x',a,a'}$ for all $x,x',a,a'$.
By Corollary \ref{c_ucst} and Lemma \ref{l_opopsys}, $C^*_u(\cl T_{X,A}^{\rm op}) \equiv \cl C_{X,A}^{\rm op}$;
thus, 
$$\cl T_{X,A}\otimes_{\rm c} \cl T_{X,A}^{\rm op} \subseteq_{\rm c.o.i.} \cl C_{X,A} \otimes_{\max} \cl C_{X,A}^{\rm op}.$$
Write 
\begin{equation}\label{eq_fEtau}
f_{E,\tau} = s_{\tau}\circ (\pi_E\otimes \pi_E^{\rm op})\circ(\id\otimes\partial);
\end{equation}
we have that $f_{E,\tau}$ is a state on $\cl T_{X,A}\otimes_{\rm c}\cl T_{X,A}$, and 
$$f_{E,\tau}(e_{x,x',a,a'} \otimes e_{y,y',b,b'}) = \tau(g_{x,x',a,a'} g_{y',y,b',b}), 
x,x',y,y'\in X, a,a',b,b'\in A.$$
In the sequel, we write $\Gamma_{E,\tau} = \Gamma_{f_{E,\tau}}$; 
by Theorem \ref{t_dqc}, $\Gamma_{E,\tau} \in \cl Q_{\rm qc}$.
By Theorem \ref{p_ucptau2}, we may assume, without loss of generality, that 
$\cl A = \cl C_{X,A}$ and $E = (e_{x,x',a,a'})_{x,x',a,a'}$. In this case, we will abbreviate 
$\Gamma_{E,\tau}$ to $\Gamma_{\tau}$.

\begin{definition}\label{d_tracial}
A QNS correlation  $\Gamma$ is called 
\begin{itemize}
\item[(i)] \emph{tracial} if 
$\Gamma = \Gamma_{\tau}$, where $\tau : \cl C_{X,A}\to \bb{C}$ is a trace;

\item[(ii)] \emph{quantum tracial} if there exists a finite dimensional C*-algebra $\cl A$, 
a trace $\tau_{\cl A}$ on $\cl A$ and a *-homomorphism $\pi : \cl C_{X,A}\to \cl A$ such that 
$\Gamma = \Gamma_{\tau_{\cl A}\circ \pi}$;

\item[(iii)] \emph{locally tracial} if 
there exists an abelian C*-algebra $\cl A$, 
a state $\tau_{\cl A}$ on $\cl A$ and a *-homomorphism $\pi : \cl C_{X,A}\to \cl A$ such that 
$\Gamma = \Gamma_{\tau_{\cl A}\circ \pi}$.
\end{itemize}
\end{definition}

\begin{theorem}\label{th_belo}
Let $X$ and $A$ be finite sets.
\begin{itemize}
\item[(i)] 
If $\Gamma$ is a quantum tracial QNS correlation then $\Gamma\in \cl Q_{\rm q}$;
\item[(ii)]
A QNS correlation $\Gamma : M_{XX}\to M_{AA}$ is locally tracial if and only if 
there exists quantum channels $\Phi_j : M_X\to M_A$, $j = 1,\dots,k$, such that 
\begin{equation}\label{eq_Gsp}
\Gamma = \sum_{j=1}^k \lambda_j \Phi_j \otimes \Phi_j^{\sharp}
\end{equation}
as a convex combination. 
In particular, if $\Gamma$ is a locally tracial QNS correlation then $\Gamma\in \cl Q_{\rm loc}$.
\end{itemize}
\end{theorem}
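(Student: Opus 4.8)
\textbf{Proof plan for Theorem \ref{th_belo}.}

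The plan is to prove the two statements by tracking how the trace structure on $\cl C_{X,A}$ interacts with the formula $\Gamma_\tau = \Gamma_{f_{E,\tau}}$, using the representation theory already established. For (i), I would start from a quantum tracial $\Gamma$, so $\Gamma = \Gamma_{\tau_{\cl A}\circ\pi}$ with $\cl A$ finite dimensional, $\tau_{\cl A}$ a trace on $\cl A$, and $\pi:\cl C_{X,A}\to\cl A$ a $*$-homomorphism. Set $E = (\pi(e_{x,x',a,a'}))_{x,x',a,a'}$, a stochastic $\cl A$-matrix. Since $\cl A$ is finite dimensional, the GNS representation $(\pi_{\tau_{\cl A}}, H_{\tau_{\cl A}}, \xi_{\tau_{\cl A}})$ of the trace $\tau_{\cl A}$ acts on a finite dimensional Hilbert space, and the standard fact (see \cite{bo}, paragraph before Theorem 6.2.7) is that $s_{\tau_{\cl A}}$ extends to a vector state on $\cl A\otimes_{\min}\cl A^{\rm op}$ via the representation $u\otimes v^{\rm op}\mapsto \pi_{\tau_{\cl A}}(u) J\pi_{\tau_{\cl A}}(v)^* J$ on $H_{\tau_{\cl A}}$, where $J$ is the modular conjugation. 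Unwinding the definition \eqref{eq_fEtau} of $f_{E,\tau}$ through $\pi_E\otimes\pi_E^{\rm op}$ and $\id\otimes\partial$, I would exhibit finite dimensional semi-classical-free stochastic operator matrices $\tilde E$ and $\tilde F$ acting respectively on $H_{\tau_{\cl A}}$ and its conjugate, together with a unit vector, so that $\Gamma_\tau = \Gamma_{\tilde E\odot\tilde F,\xi}$; this is exactly the form required by Definition \ref{d_qqns}, giving $\Gamma\in\cl Q_{\rm q}$. The point is that traciality on a finite dimensional algebra forces the commuting model to collapse into a tensor (spatial) model via modular theory.

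For (ii), in the locally tracial case $\cl A$ is abelian, so $\cl A = C(\Omega)$ for a compact Hausdorff space $\Omega$, $\tau_{\cl A}$ is a probability measure $\nu$ on $\Omega$, and $\cl A^{\rm op} = \cl A$. A $*$-homomorphism $\pi:\cl C_{X,A}\to C(\Omega)$ is the same as a family of $*$-homomorphisms $\pi_\omega:\cl C_{X,A}\to\bb{C}$ indexed by $\omega\in\Omega$, i.e. by Theorem \ref{p_ucptau2} each $\pi_\omega$ corresponds to a stochastic operator matrix on $\bb{C}$, hence to the Choi matrix of a quantum channel $\Phi_\omega : M_X\to M_A$. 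Then for each $\omega$ the expression $f_{E,\tau}$ evaluated pointwise yields $\Phi_\omega\otimes\Phi_\omega^{\sharp}$ (the $\partial$ and the opposite bookkeeping produce precisely the $\sharp$-twist, as in the computation in the proof of Corollary \ref{c_lqnssy}), and integrating against $\nu$ gives $\Gamma = \int_\Omega \Phi_\omega\otimes\Phi_\omega^{\sharp}\,d\nu(\omega)$. Since the set of channels $M_X\to M_A$ is compact and finite dimensional, a Carath\'eodory/extreme-point argument (exactly as in the proof of Remark \ref{r_lqqnsi}) replaces the integral by a finite convex combination $\sum_{j=1}^k \lambda_j\Phi_j\otimes\Phi_j^{\sharp}$, proving the "only if" direction; the "if" direction is routine by reversing these steps, taking $\cl A = \bb{C}^k$ with the weighted counting measure and $\pi$ the direct sum of the point evaluations corresponding to the $\Phi_j$. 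The final sentence, $\Gamma\in\cl Q_{\rm loc}$, is then immediate from Definition \ref{d_lqns} together with Lemma \ref{l_opopsys}(iii), which guarantees $\Phi_j^{\sharp}$ is again a quantum channel.

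I expect the main obstacle to be in part (i): correctly identifying the finite dimensional stochastic operator matrices and the vector that realise $\Gamma_\tau$ as an element of $\cl Q_{\rm q}$. The subtlety is that $\Gamma_\tau$ is defined through the commuting tensor product $\cl C_{X,A}\otimes_{\max}\cl C_{X,A}^{\rm op}$, and one must verify that, for a trace on a \emph{finite dimensional} algebra, the GNS construction makes the two commuting copies act as $\pi_{\tau_{\cl A}}(\cl A)$ and its commutant $J\pi_{\tau_{\cl A}}(\cl A)J$ inside $\cl B(H_{\tau_{\cl A}})$ with $H_{\tau_{\cl A}}$ finite dimensional, and that the twist by $\partial$ together with the identification $\cl A^{\rm op}\cong$ the commutant correctly converts the "$\pi_E^{\rm op}$" data into a genuine second tensor leg $\tilde F$ acting on a conjugate space. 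Once this dictionary is set up carefully, matching the matrix entries of $\tilde E\odot\tilde F$ against the formula $f_{E,\tau}(e_{x,x',a,a'}\otimes e_{y,y',b,b'}) = \tau(g_{x,x',a,a'}g_{y',y,b',b})$ is a direct, if slightly tedious, computation, and the finite dimensionality of $\cl A$ (hence of $H_{\tau_{\cl A}}$) is what delivers membership in $\cl Q_{\rm q}$ rather than merely $\cl Q_{\rm qa}$.
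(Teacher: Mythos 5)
Your proposal is correct and follows essentially the paper's route: part (ii) is the paper's argument almost verbatim (pointwise channels $\Phi_\omega$ from the one-dimensional representations of $\cl C_{X,A}$, an integral representation, and a Carath\'{e}odory reduction as in Remark \ref{r_lqqnsi}; for the converse, a direct sum of the one-dimensional representations attached to the $\Phi_j$ with the weighted state on $\cl D_k$). For part (i) your variation is only cosmetic: you purify $s_{\tau_{\cl A}}$ into an explicit vector state on $H_{\tau_{\cl A}}$ tensored with its conjugate via the standard form, whereas the paper takes any faithful finite dimensional representation $\cl A\subseteq \cl L(H)$, uses $E^{\rm op}$ acting on $H^{\rm d}$ (from the proof of Lemma \ref{l_op}), views $s_{\tau_{\cl A}}$ by nuclearity as a state on $\cl A\otimes_{\min}\cl A^{\rm op}$, extends it to a state $\sigma$ on $\cl L(H\otimes H^{\rm d})$, and invokes the remark preceding Remark \ref{r_qqnsns} that arbitrary (not necessarily pure) states suffice for membership in $\cl Q_{\rm q}$ -- so the commutant-versus-tensor-leg subtlety you flag is sidestepped rather than resolved by modular theory.
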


\begin{proof}
(i) Suppose that $H$ is a finite dimensional 
Hilbert space on which $\cl A$ acts faithfully
and  let $\pi:\cl C_{X,A}\to\cl A$ be as in Definition \ref{d_tracial} (ii).
Let $E_{x,x',a,a'} = \pi(e_{x,x',a,a'})$ and 
$E = \left(E_{x,x',a,a'}\right)_{x,x',a,a'}$.
By the proof of Lemma \ref{l_op},
$E^{\rm op} := \left(E^{\rm d}_{x',x,a',a}\right)_{x,x',a,a'}$ is a stochastic operator matrix.
Let $\sigma$ be any positive functional on $\cl L(H\otimes H^{\rm d})$ that extends the state 
$s_{\tau_{\cl A}}$ which, by nuclearity, may be considered as a state on $\cl A\otimes_{\min}\cl A^{\rm op}$.
Then $\Gamma_{\tau} = \Gamma_{E\odot E^{\rm op},\sigma}$ and, by the paragraph 
before Remark \ref{r_qqnsns}, $\Gamma_{\tau}\in \cl Q_{\rm q}$. 

(ii) 
Suppose that $\Phi_j : M_X\to M_A$, $j = 1,\dots,k$, are quantum channels and 
$\Gamma$ is the convex combination (\ref{eq_Gsp}). 
Letting $\left(\lambda_{x,x',a,a'}^{(j)}\right)_{a,a'} = \Phi_j\left(e_x e_{x'}^*\right)$, $x,x'\in X$,
we have that the matrix $C_j = \left(\lambda_{x,x',a,a'}^{(j)}\right)_{x,x',a,a'}$ is a stochastic $\bb{C}$-matrix. 
By Theorem \ref{p_ucptau2}, there exists a (unique) *-representation $\pi_j : \cl C_{X,A}\to \bb{C}$ such that
$\pi_j(e_{x,x',a,a'}) = \lambda_{x,x',a,a'}^{(j)}$, $x,x'\in X$, $a,a'\in A$. 
Let $\pi : \cl C_{X,A} \to \cl D_k$ be the *-representation given by 
$$\pi\left(u\right) = \sum_{j=1}^k \pi_j\left(u\right) e_j e_j^*, \ \ \ u\in \cl C_{X,A},$$
and let $\tau_k : \cl D_k\to \bb{C}$ be the state defined by 
$\tau_k\left((\mu_j)_{j=1}^k\right) = \sum_{j=1}^k \lambda_j \mu_j$. 
We have 
\begin{eqnarray*}
& & \Gamma_{\tau_k\circ\pi}\left(e_{x}e_{x'}^* \otimes e_{y}e_{y'}^*\right)\\
& = & 
\sum_{a,a'\in A} \sum_{b,b'\in B} (\tau_k\circ\pi)(e_{x,x',a,a'}e_{y',y,b',b}) e_a e_{a'}^* \otimes e_b e_{b'}^* \\
& = & 
\sum_{a,a'\in A} \sum_{b,b'\in B} \tau_k\left(\sum_{j=1}^k \pi_j(e_{x,x',a,a'}e_{y',y,b',b}) e_j e_j^*\right)
e_a e_{a'}^* \otimes e_b e_{b'}^* \\
& = & 
\sum_{j=1}^k \lambda_j
\sum_{a,a'\in A} \sum_{b,b'\in B} \lambda_{x,x',a,a'}^{(j)} \lambda_{y',y,b',b}^{(j)}
e_a e_{a'}^* \otimes e_b e_{b'}^* \\
& = & 
\sum_{j=1}^k \lambda_j
\left(\sum_{a,a'\in A}  \lambda_{x,x',a,a'}^{(j)} e_a e_{a'}^*\right)
\otimes
\left(\sum_{b,b'\in B} \lambda_{y',y,b',b}^{(j)} e_b e_{b'}^*\right) \\
& = & 
\sum_{j=1}^k \lambda_j \Phi_j\left(e_{x}e_{x'}^*\right)\otimes  \Phi_j^{\sharp}\left(e_{y} e_{y'}^*\right).
\end{eqnarray*}

Conversely, 
let $\cl A$ be a unital abelian C*-algebra, $\tau_{\cl A} : \cl A\to \bb{C}$ a state, and  
$\pi : \cl C_{X,A}\to \cl A$ a *-homomorphism such that $\Gamma = \Gamma_{\tau_{\cl A}\circ\pi}$.
Without loss of generality,
assume that $\cl A = C(\Omega)$, where $\Omega$ is a compact Hausdorff topological space, 
and $\mu$ is a Borel probability measure on $\Omega$ such that 
$\tau_{\cl A}(f) = \int_{\Omega} fd\mu$, $f\in \cl A$.
Set $h_{x,x',a,a'} = \pi(e_{x,x',a,a'})$, $x,x'\in X$, $a,a'\in A$. 
For each $s\in \Omega$, let 
$\Phi(s) : M_X\to M_A$ be the quantum channel given by 
$\Phi(s)\left(e_{x}e_{x'}^*\right) = \left(h_{x,x',a,a'}(s)\right)_{a,a'}$. 
We have 
\begin{eqnarray*}
& & \Gamma\left(e_{x}e_{x'}^* \otimes e_{y}e_{y'}^*\right)\\
& = & 
\sum_{a,a'\in A} \sum_{b,b'\in A} 
\left(\int_{\Omega} h_{x,x',a,a'}(s) h_{y',y,b',b}(s) d\mu(s)\right) e_{a}e_{a'}^* \otimes e_{b}e_{b'}^*\\
& = & 
\int_{\Omega} \Phi(s)\left(e_{x}e_{x'}^*\right) \otimes \Phi(s)^{\sharp}\left(e_{y}e_{y'}^*\right) d\mu(s).
\end{eqnarray*}
It follows that $\Gamma$ is in the closed hull of the set of all 
correlations of the form (\ref{eq_Gsp}).
An argument using Carath\'{e}odory's Theorem,
similar to the one in the proof of Remark \ref{r_lqqnsi}, 
shows that $\Gamma$ has the form (\ref{eq_Gsp}).
\end{proof}

\begin{remark}\label{r_contra}
{\bf (i) } 
{\rm Every tracial QNS correlation $\Gamma = \Gamma_{E,\tau}$ is fair. Indeed, writing 
$E = (g_{x,x',a,a'})$, we have
\begin{eqnarray*}
f_{E,\tau} \circ (\id\otimes\partial)^{-1}(e_{x,x',a,a'}\otimes 1) 
& = & \tau(g_{x,x',a,a'})\\ 
& = & f_{E,\tau} \circ (\id\otimes\partial)^{-1}(1 \otimes e_{x,x',a,a'}^{\rm op}).
\end{eqnarray*}
It can be seen from Corollary \ref{c_lqnssy} and Theorem \ref{th_belo} 
(see the closing remarks of this section) that the converse does not hold true. }

\smallskip

{\bf (ii) } 
{\rm The set of all tracial (resp. quantum tracial, locally tracial) QNS correlations over $(X,A)$ is convex. 
Indeed, suppose that $\cl A$ (resp. $\cl B$) is a unital C*-algebra, $\tau_{\cl A}$ 
(resp. $\tau_{\cl B}$) a trace on $\cl A$ and $E$ (resp. $F$) a stochastic $\cl A$-matrix (resp. a stochastic $\cl B$-matrix). 
Let $\lambda \in (0,1)$, $\cl C = \cl A\oplus \cl B$, $\tau : \cl C\to \bb{C}$ be given by 
$\tau(u\oplus v) = \lambda \tau_{\cl A}(u) + (1-\lambda) \tau_{\cl B}(v)$, and $G = E\oplus F$, considered as an 
element of $M_X\otimes M_A\otimes \cl C$. Then $G$ is a stochastic $\cl C$-matrix and 
$$\lambda \Gamma_{E,\tau_{\cl A}} + (1-\lambda) \Gamma_{F,\tau_{\cl B}} = \Gamma_{G,\tau}.$$
}


{\bf (iii) } 
{\rm 
It is straightforward from Theorem \ref{th_psstw} that, if $p\in \cl C_{\rm qc}$ 
(resp. $p\in \cl C_{\rm q}$, $p\in \cl C_{\rm loc}$) is synchronous then 
$\Gamma_p$ is a tracial (resp. quantum tracial, locally tracial) QNS correlation. 
By \cite[Theorem 4.2]{dpp}, the set $\cl C_{\rm q}^{\rm s}$ of synchronous quantum NS correlations is not closed 
if $|X| = 5$ and $|A| = 2$.
Let $p\in \overline{\cl C_{\rm q}^{\rm s}}\setminus \cl C_{\rm q}^{\rm s}$. 
Then $p$ is a synchronous NS correlation and does not lie in $\cl C_{\rm q}$.
Assume that $\Gamma_p$ is quantum tracial. By Theorem \ref{th_belo}, 
$\Gamma_p\in \cl Q_{\rm q}$ and hence, by Remark \ref{th_onedq}, $p\in \cl C_{\rm q}$, a contradiction.  
It follows that the set of quantum tracial NS correlations is not closed. 
}

\smallskip

{\bf (iv) } 
{\rm 
The set of all tracial QNS correlations is closed; this can be seen via a standard argument (see e.g. \cite{mr2}):
Assuming that $(\Gamma_n)_{n\in \bb{N}}$ is a sequence of tracial QNS correlations 
converging to the QNS correlation $\Gamma$,
let $\cl A_n$ be a unital C*-algebra with a trace $\tau_n$, and 
$E_n = \left(g^{(n)}_{x,x',a,a'}\right)$ be a stochastic $\cl A_n$-matrix 
such that $\Gamma_n = \Gamma_{E_n,\tau_n}$. 
Let $\cl A$ be the tracial ultraproduct of the family $\{(\cl A_n,\tau_n)\}_{n\in \bb{N}}$ with respect to a non-trivial 
ultrafilter $\frak{u}$ \cite[Section 4]{had-li}.
Write $\tau$ for the trace on $\cl A$ and $E = (g_{x,x',a,a'})$ for the class of $\oplus_{n\in \bb{N}} E_n$ in $\cl A$. 
Then 
\begin{eqnarray*}
\left\langle \Gamma(e_x e_{x'}^* \otimes e_y e_{y'}^*),e_a e_{a'}^* \otimes e_b e_{b'}^* \right\rangle
& = & 
\lim_{n\to \infty} \tau_n\left(g^{(n)}_{x,x',a,a'} g^{(n)}_{y',y,b',b}\right)\\ 
& = & 
\tau\left(g_{x,x',a,a'} g_{y',y,b',b}\right).
\end{eqnarray*} 
}
\end{remark}

We next show that the class of all tracial QNS correlations, 
as well as each of the the subclasses of quantum tracial and locally tracial QNS correlations, 
have natural classes of invariant states. 
Given a unital C*-algebra $\cl A$, a trace $\tau : \cl A\to \bb{C}$
and a stochastic $\cl A$-matrix $E = (g_{z,z'})_{z,z'} \in \cl L(\bb{C})\otimes M_Z\otimes \cl A$, 
let $\omega^{E,\tau} = \left(\omega^{E,\tau}_{z,z',u,u'}\right)\in M_{ZZ}$ be 
defined by 
$$\omega^{E,\tau}_{z,z',u,u'} = \tau(g_{z,z'} g_{u',u}), \ \ \ z,z',u,u'\in Z.$$
Equivalently, let $E^{\rm op}$ be the stochastic $\cl A^{\rm op}$-matrix $\left(g_{u',u}^{\rm op}\right)$, 
and recall that
$s_{\tau} : \cl A\otimes_{\max}\cl A^{\rm op} \to \bb{C}$ is the state given by 
$s_{\tau}(u\otimes v^{\rm op}) = \tau(uv)$.
Then $\omega^{E,\tau} = L_{s_{\tau}}\left(E\otimes E^{\rm op}\right),$ 
where
$$L_{s_{\tau}} : M_{ZZ}\otimes \left(\cl A\otimes_{\max} \cl A^{\rm op}\right) \to M_{ZZ}$$
is the corresponding slice. It follows that $\omega^{E,\tau}$ is a state.

\begin{definition}\label{d_clexs}
Let $Z$ be a finite set. A state $\omega\in M_{ZZ}$ is called
\begin{itemize}
\item[(i)] \emph{C*-reciprocal} if there exists a unital C*-algebra algebra $\cl A$,
a trace $\tau$ on $\cl A$ and a stochastic $\cl A$-matrix $E\in M_Z\otimes \cl A$ such that 
$\omega = \omega^{E,\tau}$;
\item[(ii)] \emph{quantum reciprocal} if it is C*-reciprocal, and the C*-algebra $\cl A$ from 
(i) can be chosen to be finite dimensional;
\item[(iii)] \emph{locally reciprocal} if it is C*-reciprocal, and the 
C*-algebra $\cl A$ from (i) can be chosen to be abelian.
\end{itemize}
\end{definition}

We will denote by $\Upsilon(Z)$ 
(resp. $\Upsilon_{\rm q}(Z)$, $\Upsilon_{\rm loc}(Z)$)
the set of all C*-reciprocal (resp. quantum reciprocal, locally reciprocal) states in $M_{ZZ}$.

\begin{theorem}\label{p_ee}
Let $\Gamma$ be a QNS correlation.
\begin{itemize}
\item[(i)] If $\Gamma$ is tracial then $\Gamma\left(\Upsilon(X)\right)\subseteq \Upsilon(A)$;
\item[(ii)] If $\Gamma$ is quantum tracial then  $\Gamma\left(\Upsilon_{\rm q}(X)\right)\subseteq \Upsilon_{\rm q}(A)$;
\item[(iii)] If $\Gamma$ is locally tracial then $\Gamma\left(\Upsilon_{\rm loc}(X)\right)\subseteq \Upsilon_{\rm loc}(A)$.
\end{itemize}
\end{theorem}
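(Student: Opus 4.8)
The plan is to unwind all three statements simultaneously via the correspondence between tracial QNS correlations and their associated traces and stochastic $\cl A$-matrices, transporting a reciprocal state through the correlation by composing the underlying traces. First I would fix a tracial $\Gamma$, so by Definition \ref{d_tracial} there is a unital C*-algebra $\cl B$ (equal to $\cl C_{X,A}$ in the tracial case, finite dimensional in the quantum tracial case, abelian in the locally tracial case, after precomposing with the relevant $*$-homomorphism $\pi$ and pushing the trace forward) with trace $\tau_{\cl B}$ and stochastic $\cl B$-matrix $E = (g_{x,x',a,a'})_{x,x',a,a'}\in M_X\otimes M_A\otimes \cl B$ such that $\Gamma = \Gamma_{E,\tau_{\cl B}}$; concretely $\langle\Gamma(e_xe_{x'}^*\otimes e_ye_{y'}^*),e_ae_{a'}^*\otimes e_be_{b'}^*\rangle = \tau_{\cl B}(g_{x,x',a,a'}g_{y',y,b',b})$. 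Next I would take an input state $\omega \in \Upsilon(X)$ (resp. $\Upsilon_{\rm q}(X)$, $\Upsilon_{\rm loc}(X)$), so by Definition \ref{d_clexs} there is a unital C*-algebra $\cl A$ of the appropriate type, a trace $\tau_{\cl A}$ and a stochastic $\cl A$-matrix $D = (h_{x,x'})_{x,x'}\in M_X\otimes \cl A$ with $\omega_{x,x',u,u'} = \tau_{\cl A}(h_{x,x'}h_{u',u})$.

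The key step is to produce the output C*-algebra and trace. I would form $\cl C = \cl A\otimes_{\max}\cl B$ (or $\cl A\otimes_{\min}\cl B$; when one factor is finite dimensional or abelian these coincide, keeping us in the right subclass) with the product trace $\tau = \tau_{\cl A}\otimes\tau_{\cl B}$, and define operators in $M_A\otimes\cl C$ by contracting the $M_X$-index of $E$ against $D$: set
\begin{equation*}
k_{a,a'} = \sum_{x,x'\in X} h_{x,x'}\otimes g_{x,x',a,a'} \in \cl A\otimes\cl B,
\end{equation*}
and let $G = (k_{a,a'})_{a,a'} \in M_A\otimes\cl C$. The first thing to verify is that $G$ is a stochastic $\cl C$-matrix: using a faithful representation and Theorem \ref{p_coor}, $G = L_{?}$ of $D\otimes E$ in the appropriate sense, and the normalisation $\sum_a k_{a,a} = \sum_{x,x'} h_{x,x'}\otimes(\sum_a g_{x,x',a,a}) = \sum_{x,x'}h_{x,x'}\otimes\delta_{x,x'}1 = \sum_x h_{x,x}\otimes 1 = 1\otimes 1$ follows from (\ref{eq_bc3})/(\ref{eq_relt}) for $E$ together with $\sum_x h_{x,x} = 1$ for the stochastic matrix $D$; positivity of $G$ follows because it is a Schur-type product of two positive operators living in commuting tensor legs, exactly as in the proof of Proposition \ref{p_dotp}. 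Then I would compute, for $a,a',b,b'\in A$,
\begin{equation*}
\tau(k_{a,a'}k_{b',b}) = \sum_{x,x',u,u'} \tau_{\cl A}(h_{x,x'}h_{u',u})\,\tau_{\cl B}(g_{x,x',a,a'}g_{u',u,b',b}) = \sum_{x,x',u,u'}\omega_{x,x',u,u'}\langle\Gamma(e_xe_{x'}^*\otimes e_ue_{u'}^*),e_ae_{a'}^*\otimes e_be_{b'}^*\rangle,
\end{equation*}
and check that the right-hand side equals $\Gamma(\omega)_{a,a',b,b'}$, i.e. $\omega^{G,\tau} = \Gamma(\omega)$; this is a bookkeeping identity matching the definition (\ref{eq_gammas}) of $\Gamma_s$ with $s = f_{E,\tau_{\cl B}}$ against the pairing of $\omega$ as a Choi matrix. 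Hence $\Gamma(\omega) \in \Upsilon(A)$, and since $\cl C$ is finite dimensional when both $\cl A$ and $\cl B$ are, and abelian when both are abelian, cases (ii) and (iii) follow verbatim.

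The main obstacle I anticipate is bookkeeping with the three simultaneous tensor legs and the transpose twists built into $\omega^{E,\tau}$ and $f_{E,\tau}$ (the $\rm op$ algebra and the swap $g_{u',u}$ versus $g_{u,u'}$): one must be careful that the slicing $L_{s_\tau}$ used to define $\omega^{E,\tau}$ matches the action of $\Gamma_{E,\tau_{\cl B}}$ on Choi matrices, and that forming $G$ above does not secretly require $\cl A$ and $\cl B$ to commute inside a single algebra — which is why the maximal tensor product $\cl A\otimes_{\max}\cl B$ is the correct receptacle, and why the reduction to $\cl A\otimes_{\min}\cl B$ in the finite-dimensional and abelian cases (nuclearity) is what keeps us inside the prescribed subclasses. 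A secondary, purely technical point is handling the case $\cl A = \cl C_{X,A}$ in (i): there $\tau_{\cl B} = \tau$ is a trace on $\cl C_{X,A}$ and $E = (e_{x,x',a,a'})$, so the construction specialises cleanly, but one should note that $\cl C$ need not be $\cl C_{A,A}$ — it only needs to be \emph{some} unital C*-algebra carrying a trace and a stochastic matrix, which is all Definition \ref{d_clexs}(i) demands.
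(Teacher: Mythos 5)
Your construction is essentially the paper's own proof: the paper forms the same contracted matrix $h_{a,a'}=\sum_{x,x'}g_{x,x'}\otimes e_{x,x',a,a'}$ in $\cl A\otimes_{\max}\cl C_{X,A}$ with the product trace, checks it is a stochastic matrix, and computes $\tau_{\cl B}(h_{a,a'}h_{b',b})=\Gamma(\omega)_{a,a',b,b'}$, with (ii) and (iii) following because the product trace factors through a finite-dimensional (resp.\ abelian) algebra. The one step you leave loose --- positivity of $G$ --- the paper settles by an explicit factorisation $G=W^*W$ with $W=\bigl(\sum_{x}V_x\otimes V_{a,x}\bigr)_{a\in A}$ built from the isometries of Theorem \ref{p_coor}, which is cleaner than the appeal to Proposition \ref{p_dotp} (your slicing idea, contracting the two $M_X$ legs of $D\otimes E$ against the maximally entangled vector functional, also works but needs to be said).
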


\begin{proof}
(i) 
Let $\tau$ be a trace on $\cl C_{X,A}$, $\cl A$ be a C*-algebra, $\tau_{\cl A}$ be a trace on $\cl A$, and 
$E = (g_{x,x'})_{x,x'}\in M_X\otimes \cl A$ be a stochastic $\cl A$-matrix. 
Set $\omega = \Gamma_{\tau}\left(\omega^{E,\tau_{\cl A}}\right)$ and write $\omega = \left(\omega_{a,a',b,b'}\right)_{a,a',b,b'}$. 
Let $\cl B = \cl A\otimes_{\max} \cl C_{X,A}$ and 
$\tau_{\cl B} = \tau_{\cl A}\otimes \tau$ be the product trace on $\cl B$ \cite[Proposition 3.4.7]{bo}. 
Set 
$$h_{a,a'} = \sum_{x,x'\in X} g_{x,x'} \otimes e_{x,x',a,a'}, \ \ \ a,a'\in A;$$
thus, $F := (h_{a,a'})_{a,a'}\in M_A\otimes \cl B$. Moreover, 
\begin{eqnarray*}
\Tr\mbox{}_A F 
& = & \sum_{a\in A} h_{a,a} 
= \sum_{x,x'\in X} g_{x,x'} \otimes \left(\sum_{a\in A} e_{x,x',a,a}\right)\\
& = & 
\sum_{x,x'\in X} g_{x,x'} \otimes \delta_{x,x'}1
= 
\sum_{x \in X} g_{x,x} \otimes 1 = 1_{\cl B}.
\end{eqnarray*}
To see that $F$ is positive, we assume that $\cl A$ and $\cl C_{X,A}$ are faithfully 
represented and let $V_x$ and $V_{a,x}$ be operators such that 
$(V_x)_x$ is a row isometry, $(V_{a,x})_{a,x}$ is an isometry,
$g_{x,x'} = V_x^*V_{x'}$ and $e_{x,x',a,a'} = V_{a,x}^*V_{a',x'}$, $x,x' \in X$, $a,a'\in A$. 
Letting 
$W = \left(\sum_{x\in X} V_x\otimes V_{a,x}\right)_{a\in A}$, considered as row operator, we have that $F = W^*W$.
Hence $F$ is a stochastic $\cl B$-matrix. 
In addition, for $a,a',b,b'\in A$ we have 
$$\tau_{\cl B}\left(h_{a,a'}h_{b',b}\right) 
= \sum_{x,x'\in X}\sum_{y,y'\in X} \tau_{\cl A}\left(g_{x,x'} g_{y',y}\right) \tau\left(e_{x,x',a,a'}e_{y',y,b',b}\right)
= \omega_{a,a',b,b'},$$
implying that $\omega = \omega^{F,\tau_{\cl B}}$.

(ii) and (iii)
follow from the fact that if the C*-algebra $\cl A$ is finite dimensional (resp. abelian) 
and $\tau$ factors through a finite-dimensional 
(resp. abelian) C*-algebra then so does $\tau_{\cl B}$.
\end{proof}

\begin{remark}\label{r_rec}
{\rm 
{\bf (i) } 
The state $\frac{1}{|X|^2}I_{XX}$ is locally reciprocal, and hence
it follows from Theorem \ref{p_ee} that 
\begin{eqnarray}\label{eq_Ypsloc}
\Upsilon_{\rm loc} (A) 
& = & 
\left\{\frac{1}{|X|^2}\Gamma(I_{XX}) : X \mbox{ finite, } \Gamma \mbox{ loc. tracial QNS correlation}\right\}\nonumber\\
& = & 
\left\{\Gamma(1) : \Gamma : \bb{C}\to M_{AA} \mbox{ loc. tracial QNS correlation}\right\}.
\end{eqnarray}
Similar descriptions hold for $\Upsilon_{\rm q}(A)$ and $\Upsilon(A)$. 
Remark \ref{r_contra} thus implies that the sets $\Upsilon(A)$, 
$\Upsilon_{\rm q}(A)$ and $\Upsilon_{\rm loc}(A)$ are convex and the sets 
$\Upsilon(A)$ and $\Upsilon_{\rm loc}(A)$ are closed.

\smallskip

{\bf (ii) } 
Recall that a state $\rho \in M_{XX}$ is called de Finetti  \cite{ckr} if 
there exist states $\omega_i\in M_Z$, $i = 1,\dots, k$, such that
$\rho = \sum_{j=1}^k \lambda_j \omega_j \otimes \omega_j$
as a convex combination.
By (\ref{eq_Ypsloc}) and Theorem \ref{th_belo}, 
$$\Upsilon_{\rm loc} (X) = {\rm conv}\left\{\omega \otimes \omega^{\rm t} : \omega \mbox{ a state in } M_X\right\}.$$
Thus, the locally reciprocal states can be viewed as twisted de Finetti states. 
The presence of the transposition in our case is required in view of the necessity to employ opposite C*-algebras.
Thus, quantum reciprocal states can be viewed as an entanglement assisted version of (twisted) de Finetti states, while
C*-reciprocal states -- as their commuting model version.

\smallskip

{\bf (iii) } 
C*-reciprocal states are closely related to factorisable channels introduced in \cite{delaroche}
(see also \cite{haag-musat, mr}, to which we refer the reader for the definition used here).
Indeed, factorisable channels have Choi matrices of the form 
$\tau(g_{x,x'}h_{y',y})_{x,x',y,y'}$, where $\tau$ is a faithful normal trace on a von Neumann algebra $\cl A$, 
and $(g_{x,x'})_{x,x'}$ and $(h_{y,y'})_{y,y'}$ are matrix unit systems -- a special type of stochastic $\cl A$-matrices
(see \cite[Proposition 3.1]{mr}).
Equivalently, the Choi matrices of factorisable channels 
$\Phi : M_X\to M_X$ can be described \cite[Definition 3.1]{haag-musat} as 
the matrices of the form $\left(\tau(v_{a,x}^*v_{a',x'})\right)_{x,x',a,a'}$, where 
$V = (v_{a,x})_{a,x}\in M_X(\cl A)$ is a \emph{unitary} matrix. 
Note that, if $E$ is the stochastic operator matrix corresponding to $V$,
then the QNS correlation $\Gamma = \Gamma_{E,\tau}$ has marginal channels $\Gamma_A(\cdot) = \Gamma(\cdot\otimes I)$
and $\Gamma_B(\cdot) = \Gamma(I\otimes \cdot)$ that coincide with $\Phi$. 
We can thus view tracial QNS correlations as generalised couplings of factorisable channels.
Here, by a coupling of the pair $(\Phi,\Psi)$ of channels, we mean a channel $\Gamma$ with $\Gamma_A = \Phi$ and 
$\Gamma_B = \Psi$ -- a generalisation of classical coupling of probability distributions in the sense of 
optimal transport \cite{villani}. 
}
\end{remark}


\subsection{Tracial CQNS correlations}\label{ss_tccq}

In this subsection, we define a tracial version of CQNS correlations. 
Let $\cl A$ be a unital C*-algebra, $\tau : \cl A\to \bb{C}$ be a trace and 
$E\in \cl D_X\otimes M_A\otimes \cl A$ be a semi-classical stochastic $\cl A$-matrix. 
Write $E = (g_{x,a,a'})_{x,a,a'}$; 
thus, $(g_{x,a,a'})_{a,a'}\in \left(M_A\otimes \cl A\right)^+$ and $\sum_{a\in A} g_{x,a,a} = 1$, for each $x\in X$.
Set $E^{\rm op} = (g_{x,a',a}^{\rm op})_{x,a,a'}$; thus, $E^{\rm op} \in \cl D_X\otimes M_A\otimes \cl A^{\rm op}$
and Lemma \ref{l_op} shows that
$E^{\rm op}$ is a semi-classical stochastic $\cl A^{\rm op}$-matrix. 
Let $\phi_{E,x} : M_A\to \cl A$ be the unital completely positive map given by 
$\phi_{E,x}(e_ae_{a'}^*) = g_{x,a,a'}$. By Boca's Theorem \cite{boca}, there exists a unital completely positive map 
$\phi_E : \cl B_{X,A}\to \cl A$ such that $\phi_E(e_{x,a,a'}) = g_{x,a,a'}$, $x\in X$, $a,a'\in A$.
Let $\phi_E^{\rm op} : \cl B_{X,A}^{\rm op}\to \cl A^{\rm op}$ be the
map given by $\phi_E^{\rm op}(u^{\rm op}) = \phi_{E}(u)^{\rm op}$, which is completely positive by Lemma \ref{l_opopsys}. 
Write 
$$f_{E,\tau} = s_{\tau}\circ (\phi_E\otimes \phi_E^{\rm op})\circ(\id\otimes\partial_{\cl B});$$
thus, by (\ref{eq_intomxy}) $f_{E,\tau}$ is a state on $\cl R_{X,A}\otimes_{\rm c}\cl R_{X,A}$. 
Note that
$$f_{E,\tau}\left(e_{x,a,a'} \otimes e_{y,b,b'}\right) = \tau\left(g_{x,a,a'} g_{y,b',b}\right), \ 
x,y \in X, a,a',b,b'\in A.$$
In the sequel, we write $\cl E_{E,\tau} = \cl E_{f_{E,\tau}}$; 
by Theorem \ref{th_cqdes}, $\cl E_{E,\tau} \in \cl{CQ}_{\rm qc}$.

\begin{definition}\label{d_cqtr}
A CQNS correlation $\cl E$ is called 
\begin{itemize}
\item[(i)] \emph{tracial} if 
$\cl E = \cl E_{E,\tau}$, where $E\in \cl D_X\otimes M_A\otimes \cl A$ is a semi-classical 
stochastic $\cl A$-matrix for some unital C*-algebra $\cl A$ and $\tau : \cl A\to \bb{C}$ is a trace;

\item[(ii)] \emph{quantum tracial} if 
it is tracial and the C*-algebra as in (i) can be chosen to be finite dimensional;
\item[(iii)] \emph{locally tracial} if it 
it is tracial and the C*-algebra as in (i) can be chosen to be abelian.
\end{itemize}
\end{definition}

\begin{proposition}\label{p_exex34}
Let $\cl E : \cl D_{XX}\to M_{AA}$ be a CQNS correlation. 
\begin{itemize}
\item[(i)] If $\cl E$ is quantum tracial then $\cl E\in \cl{CQ}_{\rm q}$;
\item[(ii)] $\cl E$ is locally tracial if and only if 
there exist channels $\cl E_j : \cl D_X\to M_A$, $j = 1,\dots,k$, such that 
\begin{equation}\label{eq_Esh}
\cl E = \sum_{j=1}^k \lambda_j \cl E_j\otimes \cl E_j^{\sharp}.
\end{equation} 
In particular, 
if $\cl E$ is locally tracial then $\cl E\in \cl{CQ}_{\rm loc}$.
\end{itemize}
\end{proposition}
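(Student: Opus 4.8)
The plan is to mirror the proof of Theorem~\ref{th_belo}, using the bridge between CQNS correlations and semi-classical stochastic operator matrices provided by Theorem~\ref{th_charcq} and the fact established in Subsection~\ref{ss_tccq} that $\cl E_{E,\tau}$ corresponds, via $\Gamma_{\cl E}=\cl E\circ\Delta_{XY}$, to the QNS correlation $\Gamma_{E\cdot E^{\rm op},\sigma}$ (respectively $\Gamma_{E\odot E^{\rm op},\sigma}$ in the quantum case) built from the semi-classical data. Concretely, for (i), if $\cl E$ is quantum tracial, pick a finite-dimensional $\cl A$ acting faithfully on a finite-dimensional Hilbert space $H$, a trace $\tau_{\cl A}$ and a semi-classical stochastic $\cl A$-matrix $E=(g_{x,a,a'})$ with $\cl E=\cl E_{E,\tau_{\cl A}}$. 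As in the proof of Lemma~\ref{l_op}, $E^{\rm op}=(g_{x,a',a}^{\rm op})$ is a semi-classical stochastic $\cl A^{\rm op}$-matrix acting on $H^{\rm d}$. By nuclearity of finite-dimensional C*-algebras, $s_{\tau_{\cl A}}$ extends to a state $\sigma$ on $\cl L(H\otimes H^{\rm d})$, and one checks, exactly as before, that $\Gamma_{\cl E}=\Gamma_{E\odot E^{\rm op},\sigma}$ with $E$ and $E^{\rm op}$ semi-classical. By Theorem~\ref{th_charcq}(ii), $\cl E\in\cl{CQ}_{\rm q}$.

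For (ii), the forward direction: suppose $\cl E=\cl E_{E,\tau_{\cl A}}$ with $\cl A$ abelian, say $\cl A=C(\Omega)$ and $\tau_{\cl A}(f)=\int_\Omega f\,d\mu$. Writing $h_{x,a,a'}=\pi(e_{x,a,a'})$ (equivalently the entries of $E$), for each $s\in\Omega$ let $\cl E(s):\cl D_X\to M_A$ be the channel given by $\cl E(s)(e_xe_x^*)=(h_{x,a,a'}(s))_{a,a'}$. Then a direct computation — identical in shape to the one in the proof of Theorem~\ref{th_belo}(ii) but with $X$-indices only appearing diagonally because $E$ is semi-classical — shows
\[
\cl E(e_xe_x^*\otimes e_ye_y^*)=\int_\Omega \cl E(s)(e_xe_x^*)\otimes \cl E(s)^{\sharp}(e_ye_y^*)\,d\mu(s),
\]
so $\cl E$ lies in the closed convex hull of the set of correlations of the form (\ref{eq_Esh}); a Carath\'eodory argument as in the proof of Remark~\ref{r_lqqnsi} upgrades this to an actual finite convex combination. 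For the converse, given $\cl E=\sum_{j=1}^k\lambda_j\cl E_j\otimes\cl E_j^{\sharp}$, set $(\lambda^{(j)}_{x,a,a'})_{a,a'}=\cl E_j(e_xe_x^*)$; each $\bigl(\lambda^{(j)}_{x,a,a'}\bigr)_{x,a,a'}$ is a semi-classical stochastic $\bb{C}$-matrix, so by Theorem~\ref{th_ucptau4} there is a $*$-representation $\pi_j:\cl B_{X,A}\to\bb{C}$ (a character) with $\pi_j(e_{x,a,a'})=\lambda^{(j)}_{x,a,a'}$. Let $\pi:\cl B_{X,A}\to\cl D_k$, $\pi(u)=\sum_j\pi_j(u)e_je_j^*$, let $\tau_k((\mu_j)_j)=\sum_j\lambda_j\mu_j$, and take $E$ to be the semi-classical stochastic $\cl D_k$-matrix with entries $\pi(e_{x,a,a'})$. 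Using $\cl E_j^{\sharp}(e_ye_y^*)=\sum_{b,b'}\lambda^{(j)}_{y,b',b}e_be_{b'}^*$, one verifies $\cl E_{E,\tau_k}=\sum_j\lambda_j\cl E_j\otimes\cl E_j^{\sharp}=\cl E$, so $\cl E$ is locally tracial. The final clause $\cl E\in\cl{CQ}_{\rm loc}$ is then immediate from the definition of $\cl{CQ}_{\rm loc}$ once (\ref{eq_Esh}) is established, since $\cl E_j^{\sharp}$ is a channel by Lemma~\ref{l_opopsys}(iii).

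The only mild subtlety — and the step to handle with care rather than a genuine obstacle — is bookkeeping the semi-classical structure through the $\id\otimes\partial_{\cl B}$ twist in the definition of $f_{E,\tau}$, i.e.\ making sure the relevant slice identity reads $f_{E,\tau}(e_{x,a,a'}\otimes e_{y,b,b'})=\tau(g_{x,a,a'}g_{y,b',b})$ so that the transpose $\sharp$ appears on the second tensor leg; this is exactly what was recorded in Subsection~\ref{ss_tccq}, and it is what produces $\cl E_j^{\sharp}$ rather than $\cl E_j$ in (\ref{eq_Esh}). Everything else is a transcription of the proof of Theorem~\ref{th_belo} with $\cl T_{X,A}$, $\cl C_{X,A}$, $\Gamma_{E,\tau}$, $\Gamma_s$, $\odot$ and Theorem~\ref{p_ucptau2} replaced by $\cl R_{X,A}$, $\cl B_{X,A}$, $\cl E_{E,\tau}$, $\cl E_t$, the semi-classical $\odot$ and Theorem~\ref{th_ucptau4}, together with Theorem~\ref{th_charcq} to pass between $\cl E$ and $\Gamma_{\cl E}$ and between the various classes.
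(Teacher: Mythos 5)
Your part (i) and the forward direction of (ii) are correct: they re-run, at the semi-classical level, exactly the arguments used for Theorem~\ref{th_belo}, and then invoke Theorem~\ref{th_charcq}; the paper itself takes a slightly shorter route, lifting $E$ to the stochastic operator matrix $\tilde E=(\delta_{x,x'}g_{x,a,a'})$, observing that $\Gamma_{\cl E}=\Gamma_{\tilde\tau}$ with $\tilde\tau=\tau\circ\pi_{\tilde E}$ a trace on $\cl C_{X,A}$ factoring through a finite-dimensional (resp.\ abelian) algebra, and then quoting Theorem~\ref{th_belo} together with Remark~\ref{th_onedq}. Either way is fine, and your bookkeeping of the $\id\otimes\partial_{\cl B}$ twist (producing $\cl E_j^{\sharp}$ on the second leg) is the right point to watch.

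There is, however, a genuine error in your converse of (ii): you assert that Theorem~\ref{th_ucptau4} yields \emph{$*$-representations} $\pi_j:\cl B_{X,A}\to\bb{C}$ with $\pi_j(e_{x,a,a'})=\lambda^{(j)}_{x,a,a'}$. Theorem~\ref{th_ucptau4} only produces unital completely positive maps on $\cl R_{X,A}$ (which extend to u.c.p.\ maps on $\cl B_{X,A}$ by Boca's theorem, not to $*$-homomorphisms), and in fact $\cl B_{X,A}$ has \emph{no} characters when $|A|\geq 2$, since a character would restrict to a character of a copy of the simple noncommutative algebra $M_A$. This is precisely where the analogy with the converse of Theorem~\ref{th_belo}(ii) breaks down: there the relevant algebra is $\cl C_{X,A}$, whose universal property (Theorem~\ref{p_ucptau2}) does furnish one-dimensional $*$-representations from scalar stochastic matrices. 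The damage is local, because no representation of $\cl B_{X,A}$ is actually needed: simply define
\[
E \;=\; \sum_{x\in X} e_xe_x^*\otimes\Bigl(\sum_{j=1}^k \cl E_j(e_xe_x^*)\otimes e_je_j^*\Bigr)\;\in\;\cl D_X\otimes M_A\otimes \cl D_k,
\]
which is visibly positive with $\Tr_A$-marginal equal to $1$, hence a semi-classical stochastic $\cl D_k$-matrix, and your verification $\cl E_{E,\tau_k}=\sum_{j=1}^k\lambda_j\,\cl E_j\otimes\cl E_j^{\sharp}$ then goes through verbatim since $f_{E,\tau_k}(e_{x,a,a'}\otimes e_{y,b,b'})=\sum_j\lambda_j\lambda^{(j)}_{x,a,a'}\lambda^{(j)}_{y,b',b}$. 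Alternatively, follow the paper: write $\Gamma_{\cl E}=\sum_j\lambda_j(\cl E_j\circ\Delta_X)\otimes(\cl E_j\circ\Delta_X)^{\sharp}$, apply Theorem~\ref{th_belo}(ii) to get $\Gamma_{\cl E}=\Gamma_{\tau\circ\pi}$ with $\cl A$ abelian, and restrict the resulting stochastic operator matrix to its $(x,x)$-entries to obtain the required semi-classical $E$ with $\cl E=\cl E_{E,\tau}$.
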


\begin{proof}
(i) 
Suppose that $\cl E$ is quantum tracial and write 
$\cl E = \cl E_{E,\tau}$, where $E = (g_{x,a,a'})_{x,a,a'} \in \cl D_X\otimes M_A\otimes \cl A$ is a semi-classical 
stochastic $\cl A$-matrix for some finite dimensional C*-algebra $\cl A$ and a trace $\tau : \cl A\to \bb{C}$.
The matrix $\tilde{E} = \left(\delta_{x,x'}g_{x,a,a'}\right)_{x,x',a,a'}$ is a stochastic matrix in  
$M_X\otimes M_A\otimes \cl A$ and hence gives rise, via Theorem \ref{p_ucptau2}, to a 
canonical *-homomorphism $\pi_{\tilde{E}} : \cl C_{X,A}\to \cl A$. Letting $\tilde{\tau} = \tau\circ \pi_{\tilde{E}}$, we have 
that $\tilde{\tau}$ is a trace on $\cl C_{X,A}$ and 
$\Gamma_{\cl E} = \Gamma_{\tilde{\tau}}$. 
Thus, $\Gamma_{\cl E} \in \cl Q_{\rm q}$.
By Remark \ref{th_onedq}, $\cl E\in \cl{CQ}_{\rm q}$.

(ii) We fix $\cl A$, $\tau$ and $E$ as in (i), with $\cl A$ abelian. The trace $\tilde{\tau}$, 
defined in the proof of (i), now factors through 
an abelian C*-algebra, and hence $\Gamma_{\cl E}$ is locally tracial.  
By Theorem \ref{th_belo}, there exists 
quantum channels $\Phi_j : M_X\to M_A$, $j = 1,\dots,k$, such that
$\Gamma_{\cl E} = \sum_{j=1}^k \Phi_j\otimes \Phi_j^{\sharp}$ as a convex combination. 
Letting $\cl E_j = \Phi_j|_{\cl D_X}$, $j = 1,\dots,k$, we see that $\cl E$ has the form (\ref{eq_Esh}). 

Conversely, suppose that $\cl E$ has the form (\ref{eq_Esh}).
By Theorem \ref{th_belo}, 
there exists an abelian C*-algebra $\cl A$, a *-representation $\pi : \cl C_{X,A}\to \cl A$ and a trace 
$\tau$ on $\cl A$ such that $\Gamma_{\cl E} = \Gamma_{\tau\circ\pi}$. 
The stochastic operator matrix $E = \left(\pi(e_{x,x,a,a'})\right)_{x,a,a'}$ is semi-classical and $\cl E = \cl E_{E,\tau}$.
\end{proof}

We now specialise Definition \ref{d_clexs} to states in $\cl D_{XX}$, that is, 
bipartite probability distributions. 
A probability distribution $q = (q(x,y))_{x,y\in X}$ on $X\times X$ will be called 
\emph{C*-reciprocal} if there exists a C*-algebra $\cl A$, a POVM $(g_x)_{x\in X}$ in $\cl A$ and a trace 
$\tau : \cl A\to \bb{C}$ such that $q(x,y) = \tau(g_x g_y)$, $x,y\in X$. 
If $\cl A$ can be chosen to be finite dimensional (resp. abelian), we call $q$ \emph{quantum reciprocal}
(resp. \emph{locally reciprocal}). 
We denote by $\Upsilon^{\rm cl}(X)$ 
(resp. $\Upsilon^{\rm cl}_{\rm q}(X)$, $\Upsilon^{\rm cl}_{\rm loc}(X)$)
the (convex) set of all C*-reciprocal (resp. quantum reciprocal, locally reciprocal) 
probability distributions on $X\times X$.

It can be seen as in Remark \ref{r_rec}
that the class of locally reciprocal probability distributions coincides with the 
well-known class of \emph{exchangeable} probability distributions, that is, the 
convex combinations of the form 
$$q(x,y) = \sum_{i=1}^n \lambda_i q_i(x) q_i(y), \ \ \ x,y\in X,$$
where $q_i$ is a probability distribution on $X$, $i = 1,\dots,n$.
Thus, C*-reciprocal and quantum reciprocal probability distributions can be viewed as
quantum versions of exchangeable distributions.

It is straightforward to see that, writing $\Delta = \Delta_{XX}$, we have 
$$\Delta(\Upsilon(X)) = \Upsilon^{\rm cl}(X), \ 
\Delta(\Upsilon_{\rm q}(X)) = \Upsilon^{\rm cl}_{\rm q}(X) \mbox{ and } 
\Delta(\Upsilon_{\rm loc}(X)) = \Upsilon^{\rm cl}_{\rm loc}(X).$$
These relations, combined with Theorem \ref{p_ee}, easily yield the following proposition, 
whose proof is omitted.

\begin{proposition}\label{p_cqnsexp}
Let $\cl E : \cl D_{XX}\to M_{AA}$ be a CQNS correlation. 
\begin{itemize}
\item[(i)] If $\cl E$ is tracial then $\cl E\left(\Upsilon^{\rm cl}(X)\right)\subseteq \Upsilon(A)$;
\item[(ii)] If $\cl E$ is quantum tracial then 
$\cl E\left(\Upsilon_{\rm q}^{\rm cl}(X)\right)\subseteq \Upsilon_{\rm q}(A)$;
\item[(iii)] If $\cl E$ is locally tracial then 
$\cl E\left(\Upsilon_{\rm loc}^{\rm cl}(X)\right)\subseteq \Upsilon_{\rm loc}(A)$.
\end{itemize}
\end{proposition}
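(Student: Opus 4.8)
The plan is to deduce the three inclusions from their fully quantum counterparts in Theorem \ref{p_ee}, using the two facts recorded just before the statement: the identity $\Gamma_{\cl E} = \cl E\circ \Delta_{XX}$, valid for every CQNS correlation $\cl E : \cl D_{XX}\to M_{AA}$, and the equalities $\Delta_{XX}(\Upsilon(X)) = \Upsilon^{\rm cl}(X)$, $\Delta_{XX}(\Upsilon_{\rm q}(X)) = \Upsilon^{\rm cl}_{\rm q}(X)$ and $\Delta_{XX}(\Upsilon_{\rm loc}(X)) = \Upsilon^{\rm cl}_{\rm loc}(X)$.

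The only step that needs a word of proof is that the trace-type of $\cl E$ is inherited by $\Gamma_{\cl E}$; this is already contained in the proof of Proposition \ref{p_exex34}. Indeed, if $\cl E$ is tracial, write $\cl E = \cl E_{E,\tau}$ with $E = (g_{x,a,a'})_{x,a,a'}\in \cl D_X\otimes M_A\otimes \cl A$ a semi-classical stochastic $\cl A$-matrix and $\tau$ a trace on $\cl A$; then $\tilde E = (\delta_{x,x'}g_{x,a,a'})_{x,x',a,a'}$ is a stochastic $\cl A$-matrix, it determines (via Theorem \ref{p_ucptau2}) a $*$-homomorphism $\pi_{\tilde E}\colon \cl C_{X,A}\to \cl A$, and $\Gamma_{\cl E} = \Gamma_{\tilde\tau}$ where $\tilde\tau = \tau\circ \pi_{\tilde E}$ is a trace on $\cl C_{X,A}$. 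Hence $\Gamma_{\cl E}$ is a tracial QNS correlation, and when $\cl A$ is in addition finite-dimensional (resp. abelian), $\tilde\tau$ factors through $\cl A$, so $\Gamma_{\cl E}$ is quantum tracial (resp. locally tracial).

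Granting this, part (i) follows at once: given $q\in \Upsilon^{\rm cl}(X)$, choose by the first paragraph an $\omega\in \Upsilon(X)$ with $\Delta_{XX}(\omega) = q$; then $\cl E(q) = \cl E\bigl(\Delta_{XX}(\omega)\bigr) = \Gamma_{\cl E}(\omega)$, and since $\Gamma_{\cl E}$ is a tracial QNS correlation, Theorem \ref{p_ee}(i) gives $\Gamma_{\cl E}(\omega)\in \Upsilon(A)$, i.e. $\cl E(q)\in \Upsilon(A)$. Parts (ii) and (iii) are proved by the identical reduction, with $\Upsilon$, $\Upsilon^{\rm cl}$ replaced by $\Upsilon_{\rm q}$, $\Upsilon^{\rm cl}_{\rm q}$ (resp. $\Upsilon_{\rm loc}$, $\Upsilon^{\rm cl}_{\rm loc}$), using the corresponding refinement of traciality of $\Gamma_{\cl E}$ from the second paragraph and Theorem \ref{p_ee}(ii) (resp. (iii)). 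There is no real obstacle in this argument; the point one should not skip is the transfer of traciality in the second paragraph, after which everything is a one-line reduction to Theorem \ref{p_ee}.
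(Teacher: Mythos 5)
Your proof is correct and follows exactly the route the paper intends: the paper omits the proof, remarking that the relations $\Delta_{XX}(\Upsilon(X)) = \Upsilon^{\rm cl}(X)$ (and their ${\rm q}$/${\rm loc}$ variants) combined with Theorem \ref{p_ee} yield the statement, which is precisely your reduction via $\Gamma_{\cl E} = \cl E\circ\Delta_{XX}$. Your explicit transfer of traciality from $\cl E$ to $\Gamma_{\cl E}$ is the same argument as in the proof of Proposition \ref{p_exex34} (and is recorded later as Theorem \ref{th_rtc}(ii)), so nothing essentially new or different is involved.
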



\subsection{Tracial NS correlations}\label{ss_tns}

The correlation classes introduced in Sections \ref{ss_tc} and \ref{ss_tccq} have a natural 
NS counterpart.
For a C*-algebra $\cl A$, equipped with a trace $\tau$, and a classical stochastic $\cl A$-matrix
$E\in \cl D_X\otimes \cl D_A\otimes \cl A$, say, $E = (g_{x,a})_{x,a}$ (so that $g_{x,a}\in \cl A^+$ for all 
$x\in X$ and all $a\in A$ and $\sum_{a\in A} g_{x,a} = 1$, $x\in X$), 
write 
$$p_{E,\tau}(a,b|x,y) = \tau(g_{x,a} g_{y,b}), \ \ \ x,y\in X, a,b \in A.$$
Similar arguments to the ones in Sections \ref{ss_tc} and \ref{ss_tccq} show that 
$p_{E,\tau}\in \cl C_{\rm qc}$.

\begin{definition}\label{d_nstr}
An NS correlation $p$ is called 
\begin{itemize}
\item[(i)] \emph{tracial} if 
it is of the form $p_{E,\tau}$, where $E$ is a classical 
stochastic $\cl A$-matrix for some unital C*-algebra $\cl A$ and $\tau : \cl A\to \bb{C}$ is a trace;
\item[(ii)] \emph{quantum tracial} if 
it is tracial and the C*-algebra $\cl A$ in (i) can be chosen to be finite dimensional;
\item[(iii)] \emph{locally tracial} if it 
it is tracial and the C*-algebra $\cl A$ in (i) can be chosen to be abelian.
\end{itemize}
\end{definition}

The next two propositions are analogous to Theorem \ref{th_belo} and \ref{p_ee}, respectively, 
and their proofs are omitted.

\begin{proposition}\label{p_exexex}
Let $p$ be an NS correlation. 
\begin{itemize}
\item[(i)] If $p$ is quantum tracial then $p \in \cl{C}_{\rm q}$;
\item[(ii)] $p$ is locally tracial if and only if 
$p = \sum_{j=1}^k \lambda_j q_j\otimes q_j$,
where $q_j = \{q_j(\cdot|x) : x\in X\}$, is a family of probability distributions, $j = 1,\dots,k$.
In particular, 
if $p$ is locally tracial then $p\in \cl{C}_{\rm loc}$.
\end{itemize}
\end{proposition}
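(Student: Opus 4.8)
Proposition \ref{p_exexex} is the classical analogue of Theorem \ref{th_belo}, so the plan is to transcribe that proof, dropping the internal matrix indices $a',b'$ throughout and replacing the sharp-adjoint $\Phi^\sharp$ by the identity (since a classical channel $\cl N : \cl D_X \to \cl D_A$ satisfies $\cl N^\sharp = \cl N$ after identifying diagonal algebras with their transposes). For part (i), I would start with a finite dimensional $\cl A$ faithfully represented on a finite dimensional Hilbert space $H$, a trace $\tau_{\cl A}$ on $\cl A$, and a $*$-homomorphism $\pi : \cl A_{X,A} \to \cl A$ realising $p = p_{E,\tau}$ with $\tau = \tau_{\cl A}\circ\pi$ and $E = (\pi(e_{x,a}))_{x,a}$ the associated classical stochastic $\cl A$-matrix. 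One then forms $E^{\rm op}$ via the analogue of Lemma \ref{l_op}(ii) (using $\partial_{\cl B}$, or rather its restriction to $\cl S_{X,A}$ and $\cl A_{X,A}$, since $p_{E,\tau}$ only involves the abelian generators), extends $s_{\tau_{\cl A}}$ — which by nuclearity of the finite-dimensional $\cl A$ is a state on $\cl A\otimes_{\min}\cl A^{\rm op}$ — to a state $\sigma$ on $\cl L(H\otimes H^{\rm d})$, and checks that $p = p$ arises from the quantum model $(E, E^{\rm op}, \sigma)$, i.e. from POVMs $(g_{x,a}\otimes 1)_a$ on $H$ and $(1\otimes g_{x,a}^{\rm d})_b$ on $H^{\rm d}$ against $\sigma$. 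This places $p$ in $\cl C_{\rm q}$ by the definition recalled in Remark \ref{r_qqnsns}.

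For part (ii), the forward direction starts from abelian $\cl A = C(\Omega)$ with $\tau_{\cl A}(f) = \int_\Omega f \, d\mu$ and $\pi : \cl A_{X,A}\to C(\Omega)$, so that $h_{x,a} := \pi(e_{x,a})$ are nonnegative continuous functions on $\Omega$ with $\sum_a h_{x,a} \equiv 1$. For each $s\in\Omega$ the family $(h_{x,a}(s))_a$ is a probability distribution, giving a classical channel $q(s) = \{(h_{x,a}(s))_a : x\in X\}$, and
\[
p(a,b|x,y) = \int_\Omega h_{x,a}(s) h_{y,b}(s)\, d\mu(s) = \int_\Omega q(s)(a|x)\, q(s)(b|y)\, d\mu(s),
\]
so $p$ lies in the closed convex hull of the correlations $q\otimes q$. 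A Carath\'eodory-type argument exactly as in the proof of Remark \ref{r_lqqnsi} then upgrades "closed convex hull" to a genuine finite convex combination $p = \sum_{j=1}^k \lambda_j q_j\otimes q_j$. For the converse, given such a finite convex combination one puts $\cl A = \cl D_k$, defines $\tau_k((\mu_j)_j) = \sum_j \lambda_j \mu_j$, and takes the classical stochastic $\cl D_k$-matrix $E = (g_{x,a})_{x,a}$ with $g_{x,a} = \sum_{j=1}^k q_j(a|x)\, e_j e_j^*$; a direct computation gives $\tau_k(g_{x,a} g_{y,b}) = \sum_j \lambda_j q_j(a|x) q_j(b|y) = p(a,b|x,y)$, so $p = p_{E,\tau_k}$ is locally tracial, and since each $q_j\otimes q_j \in \cl C_{\rm loc}$ and $\cl C_{\rm loc}$ is convex, $p\in\cl C_{\rm loc}$.

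I do not expect any genuine obstacle here: every ingredient (the opposite-algebra trick, the state $s_\tau$, nuclearity of finite-dimensional C*-algebras, the disintegration $\int_\Omega q(s)\otimes q(s)\, d\mu(s)$, and the Carath\'eodory closure argument) already appears in the proofs of Theorem \ref{th_belo} and Remark \ref{r_lqqnsi}, and the classical setting is strictly simpler — the stochastic matrices are diagonal, $\cl A_{X,A}$ replaces $\cl C_{X,A}$, and $\cl S_{X,A}$ replaces $\cl T_{X,A}$. The only point requiring a line of care is confirming that the classical marginals of $p_{E,\tau}$ built from a classical stochastic $\cl A$-matrix really do reduce the partial-trace conditions to the no-signalling conditions (\ref{eq_ns1}), (\ref{eq_ns2}), which follows from $\sum_a g_{x,a} = 1$ being central; this is exactly the computation sketched in Sections \ref{ss_tc} and \ref{ss_tccq} showing $p_{E,\tau}\in\cl C_{\rm qc}$, which the statement already grants.
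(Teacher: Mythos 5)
Your proposal is correct and is essentially the proof the paper intends: the paper omits the argument, stating only that it is analogous to Theorem \ref{th_belo}, and your transcription (finite-dimensional $\cl A$ with the opposite-algebra POVMs $(g_{x,a})_a$, $(g_{y,b}^{\rm d})_b$ and an extension of $s_{\tau}$ for (i); the $C(\Omega)$ disintegration $p=\int_\Omega q(s)\otimes q(s)\,d\mu(s)$ plus Carath\'eodory for the forward direction of (ii), and the $\cl D_k$ construction for the converse) is exactly that analogue, with the sharp map correctly trivialised in the classical setting. Only cosmetic slips remain (e.g.\ the POVM $(1\otimes g_{x,a}^{\rm d})_b$ should read $(1\otimes g_{y,b}^{\rm d})_b$, acting on $H\otimes H^{\rm d}$), which do not affect the argument.
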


\begin{proposition}\label{p_nsexp}
Let $\cl N : \cl D_{XX}\to \cl D_{AA}$ be an NS correlation. 
\begin{itemize}
\item[(i)] If $\cl N$ is tracial then $\cl N \left(\Upsilon^{\rm cl}(X)\right)\subseteq \Upsilon^{\rm cl}(A)$.

\item[(ii)] If $\cl N$ is quantum tracial then 
$\cl N\left(\Upsilon_{\rm q}^{\rm cl}(X)\right)\subseteq \Upsilon_{\rm q}^{\rm cl}(A)$.

\item[(iii)] If $\cl N$ is locally tracial then 
$\cl N\left(\Upsilon_{\rm loc}^{\rm cl}(X)\right)\subseteq \Upsilon_{\rm loc}^{\rm cl}(A)$.
\end{itemize}
\end{proposition}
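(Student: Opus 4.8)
The plan is to deduce this statement directly from its operator-algebraic counterpart, Theorem \ref{p_ee}, using the fact that the NS tracial classes are precisely the diagonal restrictions of the QNS tracial classes, together with the compatibility relation $\Delta(\Upsilon_{\rm x}(X)) = \Upsilon_{\rm x}^{\rm cl}(X)$ already recorded before Proposition \ref{p_cqnsexp}. First I would observe that, given an NS correlation $\cl N : \cl D_{XX} \to \cl D_{AA}$ that is tracial (resp. quantum tracial, locally tracial), the associated QNS correlation $\Gamma_{\cl N} : M_{XX} \to M_{AA}$ obtained from a classical stochastic $\cl A$-matrix $E = (g_{x,a})_{x,a}$ is itself tracial (resp. quantum tracial, locally tracial): indeed $E$, viewed in $M_X \otimes M_A \otimes \cl A$, is a (classical) stochastic $\cl A$-matrix, and a direct Choi-matrix computation, entirely parallel to the ones in Theorem \ref{th_belo}, shows that $\Gamma_{\cl N} = \Gamma_{E,\tau}$ with the same trace $\tau$. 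The key point is that $\Gamma_{\cl N}$ restricts back to $\cl N$ on the diagonal: $\Delta_{AA} \circ \Gamma_{\cl N}|_{\cl D_{XX}} = \cl N$, which follows from the fact that $\cl N$ is already diagonal-to-diagonal and from Lemma \ref{l_prs}.

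The next step is to chase a C*-reciprocal (resp. quantum, locally reciprocal) probability distribution $q \in \Upsilon^{\rm cl}(X)$ through this diagram. By the relation $\Delta(\Upsilon_{\rm x}(X)) = \Upsilon^{\rm cl}_{\rm x}(X)$, there is a C*-reciprocal (resp. quantum, locally reciprocal) state $\omega \in \Upsilon_{\rm x}(X) \subseteq M_{XX}$ with $\Delta_{XX}(\omega) = q$; concretely, if $q(x,y) = \tau(g_x g_y)$ for a POVM $(g_x)_{x\in X}$ in $\cl A$, one simply takes $\omega = \omega^{E',\tau}$ for the stochastic $\cl A$-matrix $E' = \sum_{x} e_x e_x^* \otimes g_x \in M_X \otimes \cl A$, and checks $\Delta_{XX}(\omega) = q$. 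Applying Theorem \ref{p_ee} to $\Gamma_{\cl N}$ gives $\Gamma_{\cl N}(\omega) \in \Upsilon_{\rm x}(A)$, and then $\cl N(q) = \cl N(\Delta_{XX}(\omega)) = \Delta_{AA}(\Gamma_{\cl N}(\omega)) \in \Delta(\Upsilon_{\rm x}(A)) = \Upsilon^{\rm cl}_{\rm x}(A)$, which is exactly the desired conclusion in each of the three cases.

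Alternatively — and this is probably the cleaner route to actually write out — one can argue entirely within the classical/abelian-corner setting, mirroring the proof of Theorem \ref{p_ee} verbatim but with $\cl C_{X,A}$ replaced by a suitable universal C*-algebra for POVM families (or just by working with the given $\cl A$ and a POVM $(g_{x,a})$ directly): given $\cl N = p_{E,\tau}$ with $E = (g_{x,a})$ and $q = q_{E'',\tau_{\cl A}}$ with $E'' = (h_x)$, form $\cl B = \cl A \otimes_{\max} \cl C$ (where $\cl C$ carries the POVM realizing $\cl N$), set $k_a = \sum_{x} h_x \otimes g_{x,a}$, verify $\sum_a k_a = 1$ and $k_a \geq 0$ using the isometric/row-isometric realizations of $(h_x)$ and $(g_{x,a})$, and compute $\tau_{\cl B}(k_a k_b) = \sum_{x,y} \tau_{\cl A}(h_x h_y) \tau(g_{x,a} g_{y,b}) = (\cl N(q))(a,b)$, so that $\cl N(q) = p_{(k_a)_a, \tau_{\cl B}}$ is C*-reciprocal; finite-dimensionality and commutativity are preserved under these operations, giving (ii) and (iii). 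The main obstacle is purely bookkeeping: ensuring the two realizations live in the correct (commuting, amalgamated) tensor factors so that $(k_a)_a$ genuinely is a POVM in $\cl B$ and the product trace $\tau_{\cl B}$ restricts correctly — but this is exactly the computation already carried out in the proof of Theorem \ref{p_ee}, so no new difficulty arises, and the proposition is stated with proof omitted precisely because it is this routine transcription.
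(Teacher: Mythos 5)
Your proposal is correct and, in its second route, coincides with the proof the paper leaves out: Proposition \ref{p_nsexp} is stated as "analogous to Theorem \ref{p_ee}", and the intended argument is exactly your transcription with $\cl B = \cl A\otimes_{\max}\cl C$, the product trace, and $k_a = \sum_x h_x\otimes g_{x,a}$ (your first route, reducing to Theorem \ref{p_ee} via Theorem \ref{th_rtc} and $\Delta(\Upsilon_{\rm x}(X)) = \Upsilon^{\rm cl}_{\rm x}(X)$, is equally valid and mirrors how Proposition \ref{p_cqnsexp} is derived). One small simplification: positivity of each $k_a$ is immediate, since every summand $h_x\otimes g_{x,a}$ is a tensor product of positive elements, so no isometric or row-isometric realisations are needed here, making this step easier than its counterpart in Theorem \ref{p_ee}.
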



\subsection{Reduction for tracial correlations}\label{ss_rftrco}

We next specialise the statements contained in Remark \ref{th_onedq} to tracial correlations.

\begin{theorem}\label{th_rtc}
Let $X$ and $A$ be finite sets, 
$p$ be an NS correlation and $\cl E$ be a CQNS correlation. 
The following hold:
\begin{itemize}
\item[(i)] $p$ is tracial (resp. quantum tracial, locally tracial, fair) if and only if 
$\cl E_p$ is tracial (resp. quantum tracial, locally tracial, fair), if and only if
$\Gamma_p$ is tracial (resp. quantum tracial, locally tracial, fair);

\item[(ii)] 
$\cl E$ is tracial (resp. quantum tracial, locally tracial, fair) if and only if
$\Gamma_{\cl E}$ is tracial (resp. quantum tracial, locally tracial, fair).
\end{itemize}
\noindent
Moreover, 
\begin{itemize}
\item[(iii)] 
the map $\frak{N}$
is a surjection from the class of all tracial (resp.  quantum tracial, locally tracial, fair) CQNS correlations 
onto the class of all tracial (resp.  quantum tracial, locally tracial) NS correlations;
\item[(iv)] 
the map $\frak{C}$ 
is a surjection from the class of all tracial (resp.  quantum tracial, locally tracial, fair) QNS correlations 
onto the class of all tracial (resp.  quantum tracial, locally tracial, fair) CQNS correlations.
\end{itemize}
\end{theorem}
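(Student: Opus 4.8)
\textbf{Proof strategy for Theorem \ref{th_rtc}.}

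The plan is to reduce everything to the identities proved in Lemma \ref{l_prs}, together with the concrete descriptions of the tracial (resp. quantum tracial, locally tracial) correlations in terms of a stochastic $\cl A$-matrix and a trace $\tau$ on $\cl A$, and of the fair correlations in terms of the symmetry condition on the associated state (Theorem \ref{sym}, Corollary \ref{c_fairo}, Corollary \ref{c_lqnssy}). The key structural observation is the following: if $E=(g_{x,x',a,a'})\in M_X\otimes M_A\otimes \cl A$ is a stochastic $\cl A$-matrix, then its ``diagonal compression'' $\tilde\Delta_X(E)=(\delta\text{-version})$ is a semi-classical stochastic $\cl A$-matrix, and $\tilde\Delta_{X,A}(E)$ is a classical stochastic $\cl A$-matrix; moreover these operations are compatible with the formation of $\Gamma_{E,\tau}$, $\cl E_{E,\tau}$ and $p_{E,\tau}$ in exactly the way that $\Gamma_{E,\sigma}\circ\Delta_X=\Gamma_{E',\sigma}$ and $\Delta_A\circ\Gamma_{E,\sigma}\circ\Delta_X=\Gamma_{E'',\sigma}$ in Lemma \ref{l_prs}. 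This is purely a matter of unwinding the definitions of $f_{E,\tau}$ and $\Gamma_s$, $\cl E_t$, using that $\pi_E$ and $\phi_E$ are related through the universal properties of $\cl C_{X,A}$, $\cl B_{X,A}$, $\cl A_{X,A}$ and the compatibility maps $\beta_{X,A}$, $\beta'_{X,A}$.

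\textbf{Steps, in order.} First I would record the diagonal-compression compatibility: for a stochastic $\cl A$-matrix $E$ and a trace $\tau$ on $\cl A$, one has $\Gamma_{E,\tau}\circ\Delta_{XY}=\Gamma_{\tilde\Delta_X(E),\tau}$ interpreted as a map vanishing off $\cl D_{XY}$, hence $\Gamma_{E,\tau}|_{\cl D_{XY}}=\cl E_{\tilde\Delta_X(E),\tau}$; similarly $\Delta_{AB}\circ\Gamma_{E,\tau}|_{\cl D_{XY}}=p_{\tilde\Delta_{X,A}(E),\tau}$ after identifying classical channels with NS correlations. This uses (\ref{eq_DeltaX}) and the identity $\tilde\Delta_{XY}(E\cdot F)=\tilde\Delta_X(E)\cdot\tilde\Delta_Y(F)$ of (\ref{eq_DeltaXY}), applied with $F=E^{\rm op}$ (note that $(\tilde\Delta_X(E))^{\rm op}=\tilde\Delta_X(E^{\rm op})$, which is immediate from the coordinate formula for $E^{\rm op}$). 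Conversely, given a semi-classical stochastic $\cl A$-matrix $E=(g_{x,a,a'})$, the matrix $(\delta_{x,x'}g_{x,a,a'})$ is a stochastic $\cl A$-matrix with $\Gamma_{(\delta_{x,x'}g_{x,a,a'}),\tau}=\Gamma_{\cl E_{E,\tau}}$, which handles the direction $\cl E$ tracial $\Rightarrow$ $\Gamma_{\cl E}$ tracial, and analogously for classical $\cl A$-matrices and NS correlations; this is exactly the argument used in the proof of Proposition \ref{p_exex34}(i)--(ii), extended to all three C*-algebraic refinements by observing that if $\cl A$ is finite dimensional (resp. abelian) then so is its image under the canonical $*$-homomorphism. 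For the ``fair'' column I would instead invoke Theorem \ref{sym} and Corollary \ref{c_fairo}: $\Gamma$ is fair iff $\Gamma=\Gamma_s$ with $s\circ(\id\otimes\partial)^{-1}$ fair, $\cl E$ is fair iff $\cl E=\cl E_t$ with $t\circ(\id\otimes\partial_{\cl B})^{-1}$ fair, and the compatibility $\Gamma_s|_{\cl D_{XY}}=\cl E_{s\circ(\beta_{X,A}\otimes\beta_{Y,B})}$ (proved inside Theorem \ref{th_cqdes}) transports fairness of $s$ to fairness of $t$ and back via $\beta'_{X,A}$, because $\beta_{X,A}$ intertwines $\partial$ and $\partial_{\cl B}$ on generators; one also uses (\ref{rho3}) to see that $\cl E$ maps $\Sigma_X^{\rm cl}$ into $\Sigma_A$ iff $\cl E\circ\Delta_{XY}$ maps $\Sigma_X$ into $\Sigma_A$.

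\textbf{Surjectivity statements (iii) and (iv).} These follow once the equivalences are in place: given a tracial (resp. quantum tracial, locally tracial) NS correlation $p=p_{E,\tau}$ with $E\in\cl D_X\otimes\cl D_A\otimes\cl A$ classical, the semi-classical stochastic $\cl A$-matrix $E'\in\cl D_X\otimes M_A\otimes\cl A$ obtained by the obvious inclusion $\cl D_A\subseteq M_A$ satisfies $\frak N(\cl E_{E',\tau})=p$, giving surjectivity of $\frak N$; likewise, given a tracial CQNS correlation $\cl E=\cl E_{E,\tau}$ with $E\in\cl D_X\otimes M_A\otimes\cl A$ semi-classical, the stochastic $\cl A$-matrix $(\delta_{x,x'}g_{x,a,a'})$ yields a tracial QNS correlation $\Gamma$ with $\frak C(\Gamma)=\Gamma|_{\cl D_{XY}}=\cl E$, giving surjectivity of $\frak C$. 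For the ``fair'' case in (iii) one must additionally check that $\cl E_{E',\tau}$ is fair whenever $p_{E,\tau}$ is, which is again the translation via Corollary \ref{c_fairo}. The main obstacle I anticipate is bookkeeping rather than conceptual: one has to be careful about which ``diagonal'' is taken (the $X$-diagonal versus the full $(X,A)$-diagonal), about the placement of the transposition/opposite-algebra twist when passing between $\partial$ on $\cl C_{X,A}$ and $\partial_{\cl B}$ on $\cl B_{X,A}$, and about the fact that $\beta_{X,A}$ and $\beta'_{X,A}$ do \emph{not} preserve traciality of states by themselves (they preserve the correlation, which is what matters). Once the compatibility of $\tilde\Delta_X$, $\tilde\Delta_{X,A}$ with the assignments $E\mapsto\Gamma_{E,\tau},\ \cl E_{E,\tau},\ p_{E,\tau}$ is cleanly stated, all four columns and both surjectivity claims collapse into routine verifications, and the theorem is a direct consequence of Remark \ref{th_onedq}, Lemma \ref{l_prs}, Theorem \ref{sym} and Corollary \ref{c_fairo}.
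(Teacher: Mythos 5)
Your proposal is correct and takes essentially the same route as the paper's own proof: compress a stochastic $\cl A$-matrix to its $X$- (resp.\ $(X,A)$-) diagonal to pass from QNS to CQNS to NS correlations, insert Kronecker deltas to go back up (all within the same C*-algebra, so the quantum tracial and locally tracial cases follow at once), and treat fairness via $\Delta_{XY}(\Sigma_X)=\Sigma_X^{\rm cl}$ and $\cl E_p=\Delta_{AB}\circ\cl E_p$, with (iii) and (iv) then being immediate from (i) and (ii). The only differences are cosmetic: the paper checks the compression identities directly on the generators rather than routing through Lemma \ref{l_prs} applied to the pair $(E,E^{\rm op})$ in a GNS representation, and it argues the fair column operationally rather than through Theorem \ref{sym} and Corollary \ref{c_fairo}.
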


\begin{proof} 
We prove first the statements about tracial correlations.

(i) Suppose that the NS correlation $p$ is tracial, and write $p(a,b|x,y) = \tau(g_{x,a}g_{y,b})$, 
$x,y\in X$, $a,b\in A$, for some trace $\tau$ on a unital C*-algebra $\cl A$ and 
matrix $F = (g_{x,a})_{x,a} \in \left(\cl D_X\otimes \cl D_A\otimes \cl A\right)^+$ with $\sum_{a\in A} g_{x,a} = 1$, $x\in X$. 
The matrix $F' = (\delta_{a,a'}g_{x,a})_{x,a,a'} \in \cl D_X\otimes M_A\otimes \cl A$ is a semi-classical 
stochastic $\cl A$-matrix and, trivially, $\cl E_p = \cl E_{F',\tau}$. 
Similarly, the family $F'' = (\delta_{a,a'}\delta_{x,x'}g_{x,a})_{x,x',a,a'} \in M_X\otimes M_A\otimes \cl A$ is a
stochastic $\cl A$-matrix and $\Gamma_p = \Gamma_{F'',\tau}$.

Conversely, suppose that $\Gamma_p = \Gamma_{E,\tau}$,
where $E = (g_{x,x',a,a'})_{x,x',a,a'}$ is a
stochastic $\cl A$-matrix and $\tau$ is a trace on the unital C*-algebra $\cl A$.
Then $E' := (g_{x,x,a,a'})_{x,a,a'}$ (resp. $E'' := (g_{x,x,a,a})_{x,a}$) is a semi-classical (resp. classical)
stochastic $\cl A$-matrix such that $\cl E_p = \cl E_{E',\tau}$ (resp. $p = p_{E'',\tau}$).

(ii) is similar to (i). 

(iii) follows from the fact that, if $E$ is a stochastic $\cl A$-matrix and $\tau$ is a trace on $\cl A$
such that $\Gamma = \Gamma_{E,\tau}$ then 
$\frak{C}(\Gamma) = \cl E_{E',\tau}$, where $E'$ is given as in the second paragraph of the proof.

(iv) is similar to (iii). 
All remaining statements about quantum tracial and locally tracial correlations are analogous. 

Turning to the case of fair correlations, (ii) follows from the equivalence
$$\cl E(\Sigma_X^{\rm cl})\subseteq\Sigma_A\Longleftrightarrow \Gamma_{\cl E}(\Sigma_X)=\cl E(\Delta_{X,Y}(\Sigma_X)).$$
For (i), observe that $\cl E_p=\Delta_{A,B}\circ \cl E_p$ and hence
$$\cl E_p(\Sigma_X^{\rm cl})\subseteq\Sigma_A\Longleftrightarrow \cl E_p(\Sigma_X^{\rm cl})\subseteq\Sigma_A^{\rm cl}\Longleftrightarrow \cl N_p(\Sigma_X^{\rm cl})\subseteq\Sigma_A^{\rm cl},$$
showing that $p$ is fair if and only if so is $\cl E_p$. 
As $\Gamma_p = \Gamma_{\cl E_p}$, the equivalence with fairness of $\Gamma_p$ follows from (ii).
\end{proof}

We conclude this section with a comparison between the different classes of correlations of synchronous type.
Note first that, if $p$ is a synchronous quantum commuting NS correlations then, by Theorem \ref{th_psstw}, 
$\cl N_p$ is a tracial NS correlation. In fact, the synchronous 
quantum commuting NS correlations arise precisely from 
classical stochastic $\cl A$-matrices $(g_{x,a})_{x,a}$, where each $(g_{x,a})_{a\in A}$ is a PVM, as opposed to 
POVM. 
Theorem \ref{sym} implies that tracial QNS correlations are necessarily fair. 
We summarise these inclusions below:

$$
\begin{matrix}
\mbox{synch. } \cl C_{\rm loc} & \subset & \mbox{loc. tr. NS} & \subset & \mbox{loc. tr. CQNS} & \subset & \mbox{loc. tr. QNS}\\
\cap & & \cap & & \cap & & \cap\\
\mbox{synch. } \cl C_{\rm q} & \subset & \mbox{q. tr. NS} & \subset & \mbox{q. tr. CQNS} & \subset & \mbox{q. tr. QNS}\\
\cap & & \cap & & \cap & & \cap\\
\mbox{synch. } \cl C_{\rm qc} & \subset & \mbox{tracial NS} & \subset & \mbox{tracial CQNS} & \subset & \mbox{tracial QNS}\\
& & \cap & & \cap & & \cap\\
& & \mbox{fair NS} & \subset & \mbox{fair CQNS}  & \subset &  \mbox{fair QNS} 
\end{matrix}
$$

The inclusions in the table are all strict. 
Indeed, for the first column this follows from \cite{dpp}.
It can be shown, using results 
on the completely positive semidefinite cone of matrices 
\cite{lp, blp} that $\Upsilon^{\rm cl}_{\rm loc} \neq \Upsilon^{\rm cl}_{\rm q}$ \cite{akt_new}. 
The properness of the first inclusion in the second column now follows from Remark \ref{r_rec}.
The properness of the second inclusion in the second column was pointed out in Remark \ref{r_contra} (iii), 
and Theorem \ref{th_rtc} implies that the first and the second inclusions in the third and the fourth column are proper.

Let $p = \{p(\cdot | x): x\in X\}$ and $q = \{q(\cdot| x): x\in X\}$ be 
families of distributions so that, for some $x\in X$, we have that 
$\supp p(\cdot |x)\cap\supp q(\cdot |x) = \emptyset$. 
Then $\tilde{p} = 1/2(p\otimes q + q\otimes p)$ is a fair NS correlation. 
However, $\tilde{p}$ is not tracial; indeed, assuming the contrary, 
we have that $\tilde{p} = \sum_{j=1}^m \lambda_j p_j\otimes p_j$ 
as a convex combination, 
where $\{p_j\}_{j=1}^m$ consists of families of probability distributions indexed by $X$.
Since 
$$\tilde{p}(a,a|x,x) = \frac{1}{2}(p(a|x) q(a|x) + q(a|x) p(a|x)) = 0, \ \ \ a\in A,$$  
we have $\sum_{j=1}^m \lambda_j p_j(a|x)^2 = 0$, 
and hence $p_j(a|x)=0$, for all $a\in A$ and all $j$, a contradiction. 
Thus, the last inclusion in the second column is strict, and by Theorem \ref{th_rtc} so are the 
last inclusions in the third and the fourth column.

Using Theorem \ref{th_belo} and Proposition \ref{p_exex34}, one can easily see that
the second and third inclusion on the first row are strict, and hence these inclusions are strict on 
all other rows as well.
Any NS correlation of the form 
$q\otimes q$, where $q = \{q(\cdot|x) : x\in X\}$ is a family of probability distributions with at least one $x$ 
having $|\supp q(\cdot|x)| > 1$, is not synchronous, but is locally tracial; thus, the first inclusion 
in the first, second and third rows are strict.


\section{Correlations as strategies for non-local games}\label{s_tqnlg}

In this section, we discuss how QNS correlations can be 
viewed as perfect strategies for quantum non-local games,
extending the analogous viewpoint on NS correlations to the quantum case. 
Let $X$, $Y$, $A$ and $B$ be finite sets. 
A \emph{non-local game} on $(X,Y,A,B)$ is a cooperative game, played by 
two players against a verifier, determined by a \emph{rule function} (which will often be identified with the game)
$\lambda : X\times Y\times A \times B \to \{0,1\}$. 
The set $X$ (resp. $Y$) is interpreted as a set of questions to, while the set $A$ (resp. $B$) 
as a set of answers of, player Alice (resp. Bob). 
In a single round of the game, the verifier feeds in a pair $(x,y)\in X\times Y$ and 
the players produce a pair $(a,b)\in A\times B$; they win the round if and only if
$\lambda(x,y,a,b) = 1$. 
An NS correlation $p$ on $X\times Y \times A\times B$ is called a 
\emph{perfect strategy} for the game $\lambda$ if 
$$\lambda(x,y,a,b) = 0 \ \Longrightarrow \ p(a,b|x,y) = 0.$$
The terminology is motivated by the fact that if, given a pair $(x,y)$ of questions, 
the players choose their answers according to the probability distribution $p(\cdot,\cdot|x,y)$, 
they will win every round of the game.


\subsection{Quantum graph colourings}\label{ss_gcg}

Let $G$ be a simple graph on a finite set $X$. 
For $x,y\in X$, we write $x\sim y$ if $\{x,y\}$ is an edge of $G$. 
By assumption, $x\sim y$ implies $x\neq y$; we write $x\simeq y$ if $x\sim y$ or $x = y$. 
A classical colouring of $G$ is a map $f : X\to A$, where $A$ is a finite set, 
such that 
$$x\sim y \ \ \ \Longrightarrow \ \ \ f(x)\neq f(y).$$
The chromatic number $\chi(G)$ of $G$ is the minimal cardinality $|A|$ of a set $A$ for which 
a classical colouring $f : X\to A$ of $G$ exists. 

The graph colouring game for $G$ (called henceforth the $G$-colouring game) \cite{cmnsw}
is the non-local game with $Y = X$, $B = A$, and rules
\begin{itemize}
\item[(i)] $x = y \ \ \Longrightarrow \ \ a = b$;
\item[(ii)] $x \sim y \ \  \Longrightarrow \ \ a \neq b$.
\end{itemize}
Thus, an NS correlation $p = \left\{(p(a,b|x,y))_{a,b\in A} : x,y\in X\right\}$ 
is a perfect strategy of the $G$-colouring game if 
\begin{itemize}
\item[(S)] $p$ is synchronous;
\item[(P)] $x \sim y \ \Rightarrow \ p(a,a|x,y) = 0$ for all $a$.
\end{itemize}

It is easy to see that if $p$ is a perfect strategy of the $G$-colouring game from the class $\cl C_{\rm loc}$
then $G$ possesses a classical colouring from the set $A$. 
Thus, the perfect strategies for the $G$-colouring game from $\cl C_{\rm x}$, where 
${\rm x} \in \{{\rm loc}, {\rm q}, {\rm qc}\}$
can be thought of as \emph{classical ${\rm x}$-colourings} of $G$.
The ${\rm x}$-chromatic number of $G$ is the parameter 
$$\chi\mbox{}_{\rm x}(G) = \min\left\{|A| : G \mbox{ has a classical } {\rm x}\mbox{-colouring by } A\right\};$$
in particular, $\chi_{\rm loc}(G = \chi(G)$ (see \cite{cmnsw, lmr, pt_QJM} and the references therein).

We call $p$ a \emph{$G$-proper} correlation if condition (P) is satisfied. 
For a finite set $A$, we let 
$\Omega_A$ be the non-normalised maximally entangled matrix in $M_{AA}$, namely,
$$\Omega_A = \sum_{a,b\in A} e_ae_b^*\otimes e_ae_b^*.$$

\begin{remark}\label{r_Omega}
Let $G$ be a graph with vertex set $X$.
An NS correlation $p$ over $(X,X,A,A)$ is $G$-proper if and only 
$$x \sim y \ \Longrightarrow \  \left\langle \cl E_p\left(e_xe_x^*\otimes e_y e_y^*\right),\Omega_A\right\rangle = 0.$$
\end{remark}

\begin{proof}
The claim is immediate from the fact that
\begin{eqnarray*}
& & \left\langle \cl E_p\left(e_xe_x^*\otimes e_y e_y^*\right),\Omega_A\right\rangle\\
& = & 
\sum_{a,b\in A} \sum_{a',b'\in A} p\left(a,b|x,y\right) 
\left\langle e_{a}e_{a}^*\otimes e_{b} e_{b}^*, e_{a'} e_{b'}^*\otimes e_{a'} e_{b'}^*\right\rangle\\
& = &
\sum_{a,b\in A} \sum_{a',b'\in A} p\left(a,b|x,y\right) 
\left\langle e_{a}e_{a}^*, e_{a'} e_{b'}^*\right\rangle\left\langle e_{b} e_{b}^*,e_{a'} e_{b'}^*\right\rangle\\
& = & 
\sum_{a\in A} p\left(a,a|x,y\right).
\end{eqnarray*}
\end{proof}

Remark \ref{r_Omega} allows to generalise the classical ${\rm x}$-colourings of a graph $G$ 
to the quantum setting as follows.

\begin{definition}\label{d_CQNScol}
Let $G$ be a graph with vertex set $X$. 
A CQNS correlation $\cl E : \cl D_{XX}\to M_{AA}$ is called \emph{$G$-proper} if 
$$x \sim y \ \Longrightarrow \  \left\langle \cl E(e_xe_x^*\otimes e_ye_y^*),\Omega_A\right\rangle = 0.$$
A $G$-proper CQNS correlation $\cl E$ is called
\begin{itemize}
\item[(i)] 
a \emph{quantum ${\rm loc}$-colouring of $G$ by $A$} if $\cl E$ is locally tracial;
\item[(ii)] 
a \emph{quantum ${\rm q}$-colouring of $G$ by $A$} if 
$\cl E$ is quantum tracial;
\item[(iii)] 
a \emph{quantum ${\rm qc}$-colouring of $G$ by $A$} if 
$\cl E$ is tracial.
\end{itemize}
\end{definition}

\noindent For ${\rm x}\in \{{\rm loc}, {\rm q}, {\rm qc}\}$, let 
$$\xi\mbox{}_{\rm x}(G) = \min\left\{|A| : \ \exists \mbox{ a quantum ${\rm x}$-colouring of $G$ by $A$}\right\}$$
be the \emph{quantum ${\rm x}$-chromatic number} of $G$.

Recall \cite{ss} that an \emph{orthogonal representation} of a graph $G$ with vertex set $X$ is a 
family $(\xi_x)_{x\in X}$ of unit vectors in $\bb{C}^k$ such that
$$x\sim y \ \ \ \Longrightarrow \ \ \ \left\langle \xi_x,\xi_y\right\rangle = 0.$$
The \emph{orthogonal rank} $\xi(G)$ of $G$ is given by 
$$\xi(G) = \min\left\{k : \ \exists \mbox{ an orthogonal representation of $G$ in  } \bb{C}^k\right\}.$$

\begin{proposition}\label{p_locor}
Let $G$ be a graph with vertex set $X$. The following are equivalent: 
\begin{itemize}
\item[(i)] the graph $G$ has an orthogonal representation in $\bb{C}^k$;

\item[(ii)] there exists a quantum ${\rm loc}$-colouring of $G$ by a set $A$ with $|A| = k$. 
\end{itemize}
\end{proposition}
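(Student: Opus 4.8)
The strategy is to translate an orthogonal representation directly into a locally tracial CQNS correlation of the form \eqref{eq_Esh}, and conversely to extract an orthogonal representation from the rank-one structure forced by $G$-properness of a locally tracial correlation. Both directions pass through the characterisation of locally tracial CQNS correlations in Proposition \ref{p_exex34}(ii): $\cl E = \sum_{j=1}^k \lambda_j \cl E_j \otimes \cl E_j^{\sharp}$ for channels $\cl E_j : \cl D_X \to M_A$.

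For (i)$\Rightarrow$(ii): given an orthogonal representation $(\xi_x)_{x \in X}$ in $\bb{C}^k$ with $|A| = k$, identify $\bb{C}^k$ with $\bb{C}^A$ and define a single channel $\cl E_0 : \cl D_X \to M_A$ by $\cl E_0(e_x e_x^*) = \xi_x \xi_x^*$, which is a legitimate quantum channel since each $\xi_x \xi_x^*$ is a density matrix (a rank-one projection, as $\xi_x$ is a unit vector). Then set $\cl E = \cl E_0 \otimes \cl E_0^{\sharp}$; by Proposition \ref{p_exex34}(ii) this is locally tracial. It remains to verify $G$-properness: for $x \sim y$, compute $\langle \cl E(e_x e_x^* \otimes e_y e_y^*), \Omega_A \rangle = \langle \xi_x \xi_x^* \otimes (\xi_y \xi_y^*)^{\rm t}, \Omega_A \rangle$, and using $\langle \sigma \otimes \rho^{\rm t}, \Omega_A \rangle = \Tr(\sigma \rho)$ one gets $\Tr(\xi_x \xi_x^* \xi_y \xi_y^*) = |\langle \xi_x, \xi_y \rangle|^2 = 0$. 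So $\cl E$ is a quantum ${\rm loc}$-colouring of $G$ by $A$.

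For (ii)$\Rightarrow$(i): suppose $\cl E = \sum_{j=1}^k \lambda_j \cl E_j \otimes \cl E_j^{\sharp}$ is $G$-proper with all $\lambda_j > 0$. For each $j$ write the density matrix $\cl E_j(e_x e_x^*) = \sigma_{j,x} \in M_A$; a computation as above gives $\langle \cl E(e_x e_x^* \otimes e_y e_y^*), \Omega_A \rangle = \sum_j \lambda_j \Tr(\sigma_{j,x} \sigma_{j,y})$, which is a sum of nonnegative terms (each $\Tr(\sigma_{j,x}\sigma_{j,y}) \geq 0$ as a trace of a product of positive matrices). Hence $G$-properness forces $\Tr(\sigma_{j,x}\sigma_{j,y}) = 0$, i.e. $\sigma_{j,x} \sigma_{j,y} = 0$ (orthogonal supports), for every $j$ and every edge $x \sim y$. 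Fixing any single $j$, the supports $\{\mathrm{ran}(\sigma_{j,x})\}_{x \in X}$ give mutually orthogonal subspaces of $\bb{C}^A$ along each edge; choosing a unit vector $\xi_x$ in $\mathrm{ran}(\sigma_{j,x})$ for each $x$ yields an orthogonal representation of $G$ in $\bb{C}^A$, hence in $\bb{C}^k$ with $k = |A|$.

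\textbf{Main obstacle.} The genuinely delicate point is the reduction from the general convex combination $\sum_j \lambda_j \cl E_j \otimes \cl E_j^{\sharp}$ to a usable orthogonal representation: one must be careful that it suffices to use the supports of the $\sigma_{j,x}$ for a \emph{single} fixed index $j$ (rather than needing some joint/direct-sum construction), and to confirm that $\Tr(\sigma_{j,x}\sigma_{j,y}) = 0$ for positive $\sigma_{j,x},\sigma_{j,y}$ really does imply orthogonality of ranges. Everything else — the identity $\langle \sigma \otimes \rho^{\rm t}, \Omega_A\rangle = \Tr(\sigma\rho)$, positivity of the summands, and the appeal to Proposition \ref{p_exex34}(ii) — is routine.
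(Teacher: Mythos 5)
Your proof is correct and follows essentially the same route as the paper's: both directions go through the decomposition $\cl E = \sum_j \lambda_j \cl E_j \otimes \cl E_j^{\sharp}$ from Proposition \ref{p_exex34}(ii), with positivity of the summands forcing each term to vanish on edges. Your use of $\langle \sigma\otimes\rho^{\rm t},\Omega_A\rangle = \Tr(\sigma\rho)$ and orthogonality of supports is just a slightly repackaged version of the paper's eigenvector argument, and the delicate points you flag (a single index $j$ suffices; $\Tr(\sigma_{j,x}\sigma_{j,y})=0$ for positive matrices gives orthogonal ranges) are handled correctly.
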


\begin{proof}
(i)$\Rightarrow$(ii) 
Suppose that $(\xi_x)_{x\in X}\subseteq \bb{C}^k$ is an orthogonal representation of $G$. 
Let 
$\cl E_0 : \cl D_{X}\to M_{A}$ be the quantum channel given by 
$$\cl E_0(e_x e_x^*) = \xi_x\xi_x^*, \ \ \ x\in X,$$
and set $\cl E = \cl E_0\otimes \cl E_0^{\sharp}$;
by Proposition \ref{p_exex34}, $\cl E$ is locally tracial.
If $x\sim y$ then 
\begin{eqnarray*}
\left\langle \cl E(e_x e_x^* \otimes e_y e_y^*),\Omega_A\right\rangle
& = & 
\sum_{a,b\in A} \left\langle \xi_x \xi_x^* \otimes \left(\xi_y \xi_y^*\right)^{\rm t}, e_a e_b^*\otimes e_a e_b^*\right\rangle\\
& = & 
\sum_{a,b\in A} \Tr\left(\left(\xi_x \xi_x^*\right) \left(e_a e_b^*\right)^{\rm t}\right) 
\Tr\left( \left( \xi_y \xi_y^* \right)^{\rm t} \left(e_a e_b^*\right)^{\rm t}\right)\\
& = & 
\sum_{a,b\in A} \Tr\left(\left(\xi_x \xi_x^*\right) \left(e_b e_a^*\right)\right) 
\Tr\left( \left( \xi_y \xi_y^* \right) \left(e_a e_b^*\right)\right)\\
& = & 
\sum_{a,b\in A} \left\langle \xi_x,e_a \right\rangle \left\langle e_b,\xi_x\right\rangle \left\langle \xi_y,e_b \right\rangle
\left\langle e_a,\xi_y\right\rangle\\
& = & 
\left(\sum_{a\in A} \left\langle \xi_x,e_a \right\rangle \left\langle e_a,\xi_y \right\rangle \right)
\left(\sum_{b\in A} \left\langle \xi_y,e_b\right\rangle \left\langle e_b,\xi_x \right\rangle \right)\\
& = &  
\left|\left\langle \xi_x,\xi_y\right\rangle\right|^2 = 0;
\end{eqnarray*}
thus, $\cl E$ is a quantum ${\rm loc}$-colouring of $G$. 

(ii)$\Rightarrow$(i) 
Suppose that $\cl E : \cl D_{XX}\to M_{AA}$ is a quantum ${\rm loc}$-colouring of $G$,
and write $\cl E = \sum_{j=1}^k \lambda_j \cl E_j\otimes \cl E_j^{\sharp}$ as a convex combination
with positive coefficients, 
where $\cl E_j : \cl D_X\to M_A$ is a quantum channel, $j = 1,\dots,k$.
Suppose that $x\sim y$. 
Then 
$$\sum_{j=1}^k \lambda_j \left\langle  \left(\cl E_j\otimes \cl E_j^{\sharp}\right)\left(e_x e_x^* \otimes e_y e_y^*\right),
\Omega_A\right\rangle = 0$$
and hence, by the non-negativity of each of the terms of the sum,
\begin{equation}\label{eq_E1}
\left\langle  \cl E_1(e_x e_x^*) \otimes \cl E_1^{\sharp}(e_y e_y^*),\Omega_A\right\rangle = 0.
\end{equation}
Let $\xi_x$ be a unit eigenvector of $\cl E_1(e_x e_x^*)$, corresponding to a positive eigenvalue, $x\in X$. 
Condition (\ref{eq_E1}) implies that 
$\left\langle  \xi_x \xi_x^* \otimes \left(\xi_y \xi_y^*\right)^{\rm t},\Omega_A\right\rangle = 0$,
which in turn means, by the arguments in the previous paragraph, that $\langle\xi_x,\xi_y\rangle = 0$.
\end{proof}

By Proposition \ref{p_locor}, $\xi_{\rm loc}(G) = \xi(G)$. Thus, 
the parameters $\xi_{\rm q}$ and $\xi_{\rm qc}$ 
can be viewed as quantum versions of the orthogonal rank.

\begin{proposition}\label{r_chxxix}
Let $G$ be a graph. Then 
\begin{itemize}
\item[(i)] $\xi_{\rm qc}(G) \leq \xi_{\rm q}(G) \leq \xi_{\rm loc}(G)$, and 
\item[(ii)] $\xi_{\rm x}(G)\leq \chi_{\rm x}(G)$ for ${\rm x}\in \{{\rm loc}, {\rm q},{\rm qc}\}$.
\end{itemize}
\end{proposition}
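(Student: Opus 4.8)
Proposition \ref{r_chxxix} is the expected monotonicity statement for the three quantum chromatic parameters, and the plan is to derive both inclusions by exhibiting, from a quantum $\mathrm{x}$-colouring witnessing the smaller class, a quantum $\mathrm{y}$-colouring with the same answer set for the larger class. Part (i) will follow immediately once I observe that the three properness-preserving classes of CQNS correlations are nested: locally tracial $\subseteq$ quantum tracial $\subseteq$ tracial. This nesting is built into Definition \ref{d_cqtr}, since an abelian C*-algebra is in particular finite dimensional (after compressing to the relevant finite-dimensional corner — more precisely, any locally tracial $\cl E$ of the form \eqref{eq_Esh} uses finitely many channels $\cl E_j$, hence arises from a finite-dimensional abelian, thus finite-dimensional, C*-algebra) and every finite-dimensional C*-algebra with trace is a unital C*-algebra with trace. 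Since a $G$-proper CQNS correlation that is locally tracial is therefore also quantum tracial, and one that is quantum tracial is also tracial, any set $A$ admitting a quantum $\mathrm{loc}$-colouring admits a quantum $\mathrm{q}$-colouring, and any set admitting a quantum $\mathrm{q}$-colouring admits a quantum $\mathrm{qc}$-colouring; taking minima over $|A|$ gives $\xi_{\mathrm{qc}}(G) \leq \xi_{\mathrm{q}}(G) \leq \xi_{\mathrm{loc}}(G)$.

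For part (ii), fix $\mathrm{x} \in \{\mathrm{loc}, \mathrm{q}, \mathrm{qc}\}$ and suppose $G$ has a classical $\mathrm{x}$-colouring by a set $A$ with $|A| = \chi_{\mathrm{x}}(G)$, i.e.\ a perfect strategy $p \in \cl C_{\mathrm{x}}$ for the $G$-colouring game, which is synchronous and $G$-proper. The plan is to show that $\cl E_p$ (equivalently $\Gamma_p$, via Theorem \ref{th_rtc} and Theorem \ref{th_onedq}-type reductions) is a quantum $\mathrm{x}$-colouring of $G$ by $A$ in the sense of Definition \ref{d_CQNScol}. First, by Remark \ref{r_contra}(iii), if $p \in \cl C_{\mathrm{qc}}$ (resp.\ $\cl C_{\mathrm q}$, $\cl C_{\mathrm{loc}}$) is synchronous then $\Gamma_p$ is tracial (resp.\ quantum tracial, locally tracial); by Theorem \ref{th_rtc}(i), the same holds for $\cl E_p$, so $\cl E_p$ lies in the correct correlation class. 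Second, since $p$ is a $G$-proper NS correlation, Remark \ref{r_Omega} gives exactly $x \sim y \Rightarrow \langle \cl E_p(e_x e_x^* \otimes e_y e_y^*), \Omega_A\rangle = 0$, which is the $G$-properness of $\cl E_p$ in Definition \ref{d_CQNScol}. Hence $\cl E_p$ is a quantum $\mathrm{x}$-colouring of $G$ by $A$, so $\xi_{\mathrm{x}}(G) \leq |A| = \chi_{\mathrm{x}}(G)$.

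The only mild subtlety — and the step I expect to need the most care — is matching the "loc/q/qc" labelling of classical colourings (defined via $\cl C_{\mathrm{loc}}, \cl C_{\mathrm q}, \cl C_{\mathrm{qc}}$) with the "loc/q/qc" labelling of quantum colourings (defined via locally tracial, quantum tracial, tracial CQNS correlations). This is precisely what Remark \ref{r_contra}(iii) together with Theorem \ref{th_rtc}(i) supplies: a synchronous $p$ in $\cl C_{\mathrm{loc}}$ (resp.\ $\cl C_{\mathrm q}$, $\cl C_{\mathrm{qc}}$) yields $\cl E_p$ locally tracial (resp.\ quantum tracial, tracial), so the labels are consistent and no relabelling loss occurs. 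A final cosmetic point: for $\mathrm{x} = \mathrm{loc}$ one may alternatively invoke Proposition \ref{p_locor} directly, since a classical colouring $f : X \to A$ yields the orthogonal representation $(e_{f(x)})_{x \in X}$ in $\bb{C}^{|A|}$, giving $\xi_{\mathrm{loc}}(G) = \xi(G) \leq \chi(G) = \chi_{\mathrm{loc}}(G)$; but the uniform argument above handles all three cases at once.
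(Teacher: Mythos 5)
Your proof is correct and follows essentially the same route as the paper: part (i) comes from the nesting of the three colouring classes, and part (ii) from passing a synchronous classical ${\rm x}$-colouring $p$ to $\cl E_p$, using the traciality transfer of Remark \ref{r_contra}(iii) and Theorem \ref{th_rtc}(i) together with Remark \ref{r_Omega} for $G$-properness. The only difference is that you make explicit, via the finite convex-combination form (\ref{eq_Esh}), why a locally tracial correlation is also quantum tracial (abelian algebras need not be finite dimensional), a point the paper's one-line justification of (i) leaves implicit.
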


\begin{proof}
(i) The inequalities follow from the fact that $\cl{CQ}_{\rm loc}\subseteq \cl{CQ}_{\rm q} \subseteq \cl{CQ}_{\rm qc}$.

(ii) 
Let $p$ be a synchronous NS correlation that is an ${\rm x}$-colouring of $G$ by a set $A$. 
By Theorem \ref{th_rtc}, $\cl E_p\in \cl{CQ}_{\rm x}$. By Remark \ref{r_Omega}, $\cl E_p$ is $G$-proper. 
Thus, $\xi_{\rm x}(G)\leq \chi_{\rm x}(G)$. 
\end{proof}

\noindent {\bf Remarks. (i) } 
There exist graphs $G$ for which $\xi(G) < \chi(G)$ (see e.g. \cite{ss}). 
By Proposition \ref{p_locor}, for such $G$ we have a 
strict inequality in Proposition \ref{r_chxxix} (ii) in the case 
${\rm x} = {\rm loc}$.
In \cite{mrob2}, an example of a graph $G$ on 13 vertices was exhibited with the property that 
$\xi(G) < \chi_{\rm q}(G)$. 
By Proposition \ref{r_chxxix} (i), for this graph $G$, we have a 
strict inequality in Proposition \ref{r_chxxix} (ii) in the case 
${\rm x} = {\rm q}$.
We do not know if a strict inequality can occur in the case ${\rm x} = {\rm qc}$.

\smallskip

{\bf (ii) } 
It was shown in \cite{mrob2} that there exists a graph $G$ such that 
$\chi_{\rm q}(G) < \xi(G)$. By Proposition \ref{r_chxxix} (ii), this implies 
$\xi_{\rm q}(G) < \xi(G)$. 
We do not whether $\xi_{\rm qc}(G)$ can be strictly smaller than $\xi_{\rm q}(G)$.

\smallskip

We next exhibit a lower bound on $\xi_{\rm qc}(G)$ in terms of the Lov\'{a}sz number $\theta(G)$ of $G$. 
We refer the reader to \cite{lo} for the definition and properties of the latter parameter.
We denote by $K_d$ the complete graph on $d$ vertices.
We will need some notation, which will also be essential in Subsection \ref{ss_qhncg}. 
If $\kappa\subseteq X\times X$, let 
$$\cl S_{\kappa} = {\rm span}\left\{e_x e_y^* : (x,y)\in \kappa\right\};$$
thus, $\cl S_{\kappa}$ is a linear subspace of $M_X$ which is a bimodule over the diagonal algebra $\cl D_X$. 
We write
$$E(G) = \{(x,y) \in X\times X : x\simeq y\} \mbox{ and }
E_0(G) = \{(x,y) \in X\times X : x\sim y\},$$
and let $\cl S_G := \cl S_{E(G)}$ be the \emph{graph operator system} of $G$ \cite{dsw}, 
and $\cl S_G^0 := \cl S_{E_0(G)}$ be the \emph{graph operator anti-system} of $G$ \cite{stahlke}
(here we use the terminology of \cite{btw}).

\begin{proposition}\label{p_lovasz}
Let $G$ be a graph with vertex set $X$. Then $\xi_{\rm qc} (G) \geq \sqrt{\frac{|X|}{\theta(G)}}$. 
Moreover, $\xi_{\rm q}(K_{d^2}) = \xi_{\rm qc}(K_{d^2}) = d$. 
\end{proposition}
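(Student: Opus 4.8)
\textbf{Proof proposal for Proposition \ref{p_lovasz}.}

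The plan is to prove the lower bound $\xi_{\rm qc}(G) \geq \sqrt{|X|/\theta(G)}$ by unwinding what a tracial $G$-proper CQNS correlation gives us and then extracting a feasible solution (or its dual) to the Lov\'asz number SDP. First I would take a quantum ${\rm qc}$-colouring of $G$ by a set $A$, i.e. a tracial CQNS correlation $\cl E = \cl E_{E,\tau}$ with $E = (g_{x,a,a'})_{x,a,a'}$ a semi-classical stochastic $\cl A$-matrix and $\tau$ a trace on $\cl A$, satisfying $\langle \cl E(e_xe_x^*\otimes e_ye_y^*),\Omega_A\rangle = 0$ whenever $x\sim y$. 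By the computation in Remark \ref{r_Omega} (transcribed to the operator setting via $\cl E(e_xe_x^*\otimes e_ye_y^*) = (\tau(g_{x,a,a'}g_{y,a',a}))_{a,a'}$), the $G$-properness condition becomes $\sum_{a\in A}\tau(g_{x,a,a}\,g_{y,a,a}) = 0$ for $x\sim y$; writing $p_{x,a} = g_{x,a,a}\in\cl A^+$ (the diagonal PVM-like entries, with $\sum_a p_{x,a} = 1$), this reads $\sum_a \tau(p_{x,a}p_{y,a}) = 0$, and since each summand $\tau(p_{x,a}p_{y,a}) = \tau(p_{x,a}^{1/2}p_{y,a}p_{x,a}^{1/2})\geq 0$, we get $\tau(p_{x,a}p_{y,a}) = 0$ for all $a$ when $x\sim y$. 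The key quantity is the matrix $M \in M_X$ with $M_{x,y} = \sum_{a\in A}\tau(p_{x,a}p_{y,a})$: it is positive semidefinite (it is a Gram-type matrix, being a sum over $a$ of matrices $(\tau(p_{x,a}p_{y,a}))_{x,y}$, each PSD by traciality and the GNS construction), its diagonal entries satisfy $M_{x,x} = \sum_a\tau(p_{x,a}^2)\leq \sum_a\tau(p_{x,a}) = \tau(1) = 1$, and $M_{x,y} = 0$ whenever $x\sim y$.

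Next I would relate the total mass of $M$ to $|X|$ and $|A|$. We have $\sum_{x,y\in X}M_{x,y} = \sum_{a\in A}\tau\big((\sum_x p_{x,a})^2\big) \geq 0$, and more usefully, by Cauchy--Schwarz in the trace (or convexity), $\tau\big((\sum_x p_{x,a})^2\big) \geq \frac{1}{|A|}\big(\sum_a \tau((\sum_x p_{x,a})^2)\big)$ — actually the clean route is: $\sum_{x,y}M_{x,y} = \sum_a \|\sum_x p_{x,a}\|_{2,\tau}^2 \geq \frac{1}{|A|}\big(\sum_a \|\sum_x p_{x,a}\|_{1,\tau}\big)^2 \cdot(\text{by Cauchy--Schwarz}) = \frac{1}{|A|}\big(\sum_a\tau(\sum_x p_{x,a})\big)^2 = \frac{1}{|A|}\big(\sum_x\tau(1)\big)^2 = \frac{|X|^2}{|A|}$, using $\|u\|_{1,\tau} = \tau(u)$ for $u\geq 0$ and $\sum_a\sum_x\tau(p_{x,a}) = \sum_x 1 = |X|$. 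So $M$ is a PSD matrix, vanishing on edges of $G$, with $M_{x,x}\leq 1$ and $\sum_{x,y}M_{x,y}\geq |X|^2/|A|$. Normalising $N = M/(\max_x M_{x,x})$, or more directly invoking the standard SDP formulation of $\theta(\bar G)$ — the Lov\'asz number of the complement, equivalently the characterisation $\theta(G) = \max\{\sum_{x,y}B_{x,y} : B\succeq 0,\ \operatorname{tr}B = 1,\ B_{x,y}=0\text{ for }x\sim y\}$ — the matrix $M/\operatorname{tr}(M)$ is feasible and gives $\theta(G)\geq \sum_{x,y}M_{x,y}/\operatorname{tr}(M) \geq (|X|^2/|A|)/|X| = |X|/|A|$, since $\operatorname{tr}(M) = \sum_x M_{x,x}\leq |X|$. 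Hence $|A|\geq |X|/\theta(G)$, and as this holds for every quantum ${\rm qc}$-colouring we get $\xi_{\rm qc}(G)\geq \sqrt{|X|/\theta(G)}$ — wait, the square root: here $|A|\geq |X|/\theta(G)$ already, so I should double-check against the claimed bound $\sqrt{|X|/\theta(G)}$; the square root in the statement suggests the intended estimate only controls $|A|^2$ rather than $|A|$, which happens if one instead bounds $M_{x,x}\leq$ something like $1/|A|$ is \emph{not} available — so the honest inequality chain may only yield $\sum_{x,y}M_{x,y}\geq |X|^2/|A|^2$ if one is forced to use a weaker diagonal bound; I would reconcile this by tracking constants carefully, and in any case the displayed bound $\sqrt{|X|/\theta(G)}$ follows a fortiori from $|A|\geq |X|/\theta(G)$ when $|X|/\theta(G)\geq 1$.

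For the second assertion, I would first show $\xi_{\rm q}(K_{d^2})\leq d$ by exhibiting an explicit quantum ${\rm q}$-colouring: take $X = [d^2]$, $A = [d]$, and identify $X \cong [d]\times[d]$; use the maximally entangled state on $\bb{C}^d\otimes\bb{C}^d$ together with the $d^2$ mutually ``unbiased-in-pairs'' rank-one POVMs / the construction behind $\xi(K_{d^2}) = d$ being unachievable classically but achievable quantumly — concretely, an orthogonal representation perspective won't work since $\xi(K_{d^2}) = d^2$, so the gain is genuinely quantum: one uses that $K_{d^2}$ has a quantum colouring by $d$ colours via a projective representation / the $d^2$-dimensional analogue appearing in \cite{mrob}, yielding via Proposition \ref{p_exex34} (or its quantum-tracial analogue) a quantum ${\rm q}$-colouring. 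Then the matching lower bound $\xi_{\rm qc}(K_{d^2})\geq d$ is immediate from the first part: $\theta(K_{d^2}) = 1$ (complete graph), so $\xi_{\rm qc}(K_{d^2})\geq \sqrt{|X|/\theta} = \sqrt{d^2/1} = d$. Combining with $\xi_{\rm qc}\leq\xi_{\rm q}\leq d$ from Proposition \ref{r_chxxix}(i) gives $\xi_{\rm q}(K_{d^2}) = \xi_{\rm qc}(K_{d^2}) = d$. The main obstacle I anticipate is the upper-bound construction for $\xi_{\rm q}(K_{d^2})\leq d$: one must produce a genuine \emph{quantum tracial} CQNS correlation (finite-dimensional C*-algebra with a trace, semi-classical stochastic matrix) that is $K_{d^2}$-proper with $|A| = d$, which amounts to finding, in $M_d\otimes M_d$ with the normalised trace, PVMs $(p_{(i,j),a})_{a\in[d]}$ indexed by $(i,j)\in[d]^2$ such that $\tau(p_{(i,j),a}p_{(k,l),a}) = 0$ whenever $(i,j)\neq(k,l)$ — i.e. a ``quantum Latin square''-type configuration; getting the orthogonality to hold simultaneously for \emph{all} distinct pairs and all colours is the delicate combinatorial-algebraic core, and I would likely import it from the known quantum-colouring literature rather than rederive it.
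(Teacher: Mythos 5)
The core step of your first paragraph is incorrect. For a CQNS correlation the output $\cl E(e_xe_x^*\otimes e_ye_y^*)$ is a full matrix in $M_{AA}$, and pairing it with $\Omega_A$ gives
$\langle \cl E(e_xe_x^*\otimes e_ye_y^*),\Omega_A\rangle=\sum_{a,a'\in A}\tau\bigl(g_{x,a,a'}\,g_{y,a',a}\bigr)$,
not the diagonal-only sum $\sum_{a}\tau(g_{x,a,a}g_{y,a,a})$. The computation in Remark \ref{r_Omega} that you invoke is valid only for NS correlations, whose outputs are diagonal; for tracial CQNS correlations the off-diagonal entries $g_{x,a,a'}$ ($a\neq a'$) enter the properness condition, their contributions $\tau(g_{x,a,a'}g_{y,a',a})$ need not be nonnegative, and they can (and in the extremal examples do) cancel the diagonal terms. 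Hence your Gram matrix $M_{x,y}=\sum_a\tau(p_{x,a}p_{y,a})$ need not vanish on edges, and the SDP feasibility argument collapses. This is not a repairable constant-tracking issue: the bound you derive, $|A|\geq |X|/\theta(G)$, is simply false for quantum ${\rm qc}$-colourings — applied to $K_{d^2}$ (where $\theta=1$) it would give $\xi_{\rm qc}(K_{d^2})\geq d^2$, contradicting the second half of the very proposition you are proving (and your "a fortiori" reconciliation therefore cannot stand). The correct argument must keep the whole family $(g_{x,a,a'})_{a,a'}$: one forms the completely positive map $\Psi:M_{AA}\to M_X$ built from the Choi-type matrix of the correlation, notes that properness kills the entries of $\Psi(I_{AA})$ on edges, but the diagonal of $\Psi(I_{AA})$ is only bounded by $|A|$ (via the isometry dilation $g_{x,a,a'}=V_{a,x}^*V_{a',x}$, so $\sum_{a,b}\tau(g_{x,a,b}g_{x,b,a})\leq|A|$, not $1$), while $\Psi(\Omega_A)=J_X$ and $\|\Omega_A\|=|A|$; combining these with $\theta(G)=\max\{\|I+S\|:S\perp\cl S_G,\ I+S\succeq 0\}$ gives $|X|\leq |A|^2\theta(G)$ — which is exactly where the square root comes from.

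The second assertion is also not established in your proposal. The lower bound $\xi_{\rm qc}(K_{d^2})\geq d$ does follow from the (correct form of the) first part, but the upper bound $\xi_{\rm q}(K_{d^2})\leq d$ is deferred to the literature, and the configuration you say you would import — PVMs $(p_{x,a})_{a\in[d]}$ in a tracial finite-dimensional algebra with $\tau(p_{x,a}p_{y,a})=0$ for all distinct $x,y\in[d^2]$ and all $a$ — cannot exist: it would be a synchronous quantum $d$-colouring of $K_{d^2}$, i.e.\ $\chi_{\rm q}(K_{d^2})\leq d$, which is false. The genuine witness must exploit precisely the off-diagonal freedom your reduction discarded: e.g.\ the generalized Pauli/teleportation-type choice $E_{x,z,z'}=\zeta^{(z'-z)b'}e_{z-a'}e_{z'-a'}^*$ for $x=(a',b')$, for which each $\langle\sigma_{x,y},\Omega_A\rangle$ vanishes for $x\neq y$ through phase cancellation across the terms with $a\neq a'$, even though the diagonal products $\tau(E_{x,z,z}E_{y,z,z})$ are nonzero.
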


\begin{proof}
Let $\cl A$ be a C*-algebra, $\tau : \cl A\to \bb{C}$ be a trace, $(E_{x,a,a'})\in \cl D_X\otimes M_A\otimes \cl A$ be a
semi-classical stochastic $\cl A$-matrix, and $\Theta = (\omega_{x,y})_{x,y\in X}$ be a quantum ${\rm qc}$-colouring of $G$, such that 
$$\omega_{x,y} = \left(\tau\left(E_{x,a,a'}E_{y,b',b}\right)\right)_{a,a',b,b'}, \ \ \ x,y\in X.$$
Assume, without loss of generality, that $\cl A\subseteq \cl B(H)$ as a unital C*-subalgebra and that $\xi\in H$ is a unit vector
with $\tau(u) = \langle u \xi,\xi\rangle$, $u \in \cl A$. 
Set $\xi_{x,a,a'} = E_{x,a,a'}\xi$, $x\in X$, $a,a'\in A$; then 
$$\omega_{x,y} = \left(\langle \xi_{x,a,a'}, \xi_{y,b,b'}\rangle\right)_{a,a',b,b'}, \ \ \ x,y\in X.$$
We note that 
\begin{equation}\label{eq_addtox}
\sum_{a\in A} \xi_{x,a,a} = \xi, \ \ \ x\in X.
\end{equation}
In addition, if $x\sim y$ then 
\begin{equation}\label{eq_Gproa}
\sum_{a,b\in A}\left\langle \xi_{x,a,b}, \xi_{y,a,b} \right\rangle 
= 
\sum_{a,b\in A}\left\langle \omega_{x,y}, e_a e_b^* \otimes e_a e_b^*\right\rangle
=
\left\langle \omega_{x,y},\Omega_A \right\rangle
= 0.
\end{equation}

Let 
$$Q_{a,a',b,b'} = \left(\langle \xi_{x,a,a'}, \xi_{y,b,b'}\rangle\right)_{x,y\in X}, \ \ \ a,a',b,b'\in A.$$
Note that, up to an application of the canonical shuffle,
$$\left(Q_{a,a',b,b'}\right)_{a,a',b,b'} = \left(\omega_{x,y}\right)_{x,y} 
= \left(\cl E_{\Theta}(e_x e_x^*\otimes e_y e_y^*)\right)_{x,y},$$
and hence, after another application of the canonical shuffle, Choi's Theorem implies that the linear map 
$\Psi : M_{AA}\to M_{X}$, given by 
$$\Psi\left(e_a e_b^* \otimes e_{a'} e_{b'}^*\right) = Q_{a,a',b,b'}, \ \ \ a,a',b,b'\in A,$$
is completely positive.
We have
$$\Psi(I_{AA}) = \sum_{a,b\in A} \Psi\left(e_a e_a^* \otimes e_b e_b^*\right) = \sum_{a,b\in A} Q_{a,b,a,b}.$$
By (\ref{eq_Gproa}), 
$$x\sim y \ \Longrightarrow \ \left\langle \Psi(I_{AA})e_x,e_y\right\rangle = 0.$$

By Theorem \ref{p_coor}, there exist operators $V_{a,x}$ such that $(V_{a,x})_{a,x}$ is an isometry and 
$E_{x,a,a'} = V_{a,x}^* V_{a',x}$, $x\in X$, $a,a'\in A$. 
Thus, if $x\in X$ then 
\begin{eqnarray*}
\left\langle \Psi(I_{AA})e_x,e_x\right\rangle 
& = & 
\sum_{a,b\in A}\left\langle \xi_{x,a,b}, \xi_{x,a,b} \right\rangle
= 
\sum_{a,b\in A} \left\|E_{x,a,b}\xi\right\|^2
\\
& = & 
\sum_{a,b\in A} \left\|V_{a,x}^* V_{b,x}\xi\right\|^2
\leq 
\sum_{a\in A} \sum_{b\in A} \left\|V_{b,x}\xi\right\|^2\\
& = & 
|A| \sum_{b\in A} \left\langle V_{b,x}\xi,V_{b,x}\xi\right\rangle
=  
|A| \left\langle \sum_{b\in A}  V_{b,x}^* V_{b,x}\xi,\xi\right\rangle = |A|.
\end{eqnarray*}
Write 
$\Psi(I_{AA}) = D + T$, where $D$ is diagonal and $T \perp \cl S_{G}$.  
We have shown that $D\leq |A|I_X$; thus $|A| I_X + T\in M_X^+$.
It follows that 
\begin{eqnarray}\label{eq_IXT}
\|\Psi(I_{AA})\|
& \leq & 
\left\||A| I_X + T\right\|\nonumber\\
& \leq & 
\max\left\{\left\||A| I_X + S\right\| : S\in \cl S_G^{\perp}, |A| I_X + S \in M_X^+\right\} 
=
|A|\theta(G).
\end{eqnarray}

Let $J_X$ be the matrix in $M_X$ all of whose entries are equal to one. 
By (\ref{eq_addtox}),
\begin{eqnarray}\label{eq_OAJ}
\Psi(\Omega_{A}) 
& = & 
\sum_{a,b\in A} \Psi\left(e_a e_b^*\otimes e_a e_b^*\right) 
= \sum_{a,b\in A} \left(\left\langle \xi_{x,a,a}, \xi_{y,b,b} \right\rangle\right)_{x,y}\nonumber\\
& = &  
\left(\left\langle \sum_{a\in A} \xi_{x,a,a}, \sum_{b\in A} \xi_{y,b,b} \right\rangle\right)_{x,y} = J_X.
\end{eqnarray}
By (\ref{eq_IXT}) and (\ref{eq_OAJ}), 
$$|X| = \left\|J_X \right\| \leq \left\|\Omega_A\right\| \left\|\Psi\right\| = 
|A| \left\|\Psi(I_{AA}) \right\| \leq |A|^2\theta(G).$$ 
Taking the minimum over all $|A|$ completes the proof of the inequality.

Realise 
$A = \mathbb Z_d = \{0,1,\ldots, d-1\}$ and let $X = A\times A$. 
Let $\zeta$ be a primitive $|A|$-th root of unity.
For $x = (a',b')$ and $y = (a'',b'')\in X$, let 
$$\xi_{x,y} = \frac{1}{\sqrt{d}}\zeta^{b''(a''-a')}\sum_{l=0}^{d-1}\zeta^{(b''-b')l}e_l\otimes e_{l-a'+a''}$$
and write $\sigma_{x,y} = \xi_{x,y}\xi_{x,y}^*$. 
We have 
\begin{eqnarray*}
\sigma_{x,y}
& = & 
\frac{1}{d} \sum_{l,n=0}^{d-1}\zeta^{(b''-b')(l-n)}e_le_n^*\otimes e_{l-a'+a''}e_{n-a'+a''}^*\\
& = &
\frac{1}{d} \sum_{l,n=0}^{d-1}\zeta^{(b''-b')(l-n)}e_{l+a'}e_{n+a'}^*\otimes e_{l+a''}e_{n+a''}^*.
\end{eqnarray*}
Note that $\Theta = (\sigma_{x,y})_{x,y}$ is a CQNS correlation; indeed,
$$\Tr{}_A\sigma_{x,y} = \frac{1}{d} \sum_{l=0}^{d-1} e_{l+a''}e_{l+a''}^*
= \frac{1}{d} I_A = \frac{1}{d} \sum_{l=0}^{d-1} e_{l+a'}e_{l+a'}^* = \Tr{}_B\sigma_{x,y}$$
for all $x,y\in X$. 
Since 
$$\sum_{a,b\in A}\langle \sigma_{x,y},e_ae_b^*\otimes e_ae_b^*\rangle
= \frac{1}{d} \sum_{a,b\in A}\delta_{a',a''}\zeta^{(b''-b')(a-b)} = d \delta_{a',a''}\delta_{b',b''},$$
we have that $\Theta$  is $K_{d^2}$-proper.

We claim that $\Theta$ is tracial. 
To see this, let $E_{x,z,z'} = \zeta^{(z'-z)b'}e_{z-a'}e_{z'-a'}^*\in \cl L(\bb{C}^A)$, $x = (a',b')\in X$, $z,z'\in A$, 
and set $E_x = (E_{x,z,z'})_{z,z'\in A}$, $x\in X$. 
Fix $x\in X$; then $\sum_{z\in A} E_{x,z,z} = I_A$. Furthermore, 
if $\xi = (\xi_z)_{z\in A}$, $\xi_z\in \bb{C}^A$, then
$$\left\langle E_x\xi,\xi\right\rangle
= \sum_{z,z'\in A}\zeta^{z'b'}\langle\xi_{z'},e_{z'-a'}\rangle\langle e_{z-a'},\xi_z\rangle\zeta^{-zb'}
= \left|\sum_{z\in A}\zeta^{zb'}\langle\xi_{z},e_{z-a'}\rangle\right|^2\geq 0.$$
Thus, $E = (E_x)_{x\in X} \in \cl D_{X}\otimes M_A\otimes \cl L(\bb{C}^A)$ is a semi-classical stochastic matrix. 
Moreover,
for $x = (a',b'), y = (a'',b'')\in X$ and $z,z',w,w'\in A$ we have
\begin{eqnarray*}
& & 
\Tr\left(E_{x,z,z'}E_{y,w',w}\right)\\
& = & 
\Tr\left(\zeta^{(z'-z)b'}\zeta^{(w-w')b''} (e_{z-a'}e_{z'-a'}^*)(e_{w'-a''}e_{w-a''}^*)\right)\\
& = &
\delta_{z'-a', w'-a''}\delta_{z-a',w-a''}\zeta^{(z'-z)(b'-b'')}\\
& = &
\sum_{l,n=0}^{d-1}\zeta^{(b''-b')(l-n)}\left\langle e_{z'},e_{n+a'}\right\rangle
\left\langle e_{w'},e_{n+a''}\right\rangle  \left\langle e_{l+a'}, e_{z}\right\rangle
\left\langle e_{l+a''},e_w\right\rangle\\
& = &
\sum_{l,n=0}^{d-1}\zeta^{(b''-b')(l-n)} \left\langle (e_{l+a'}e_{n+a'}^*)e_{z'}\otimes (e_{l+a''}e_{n+a''}^*)e_{w'}, e_z\otimes e_w\right\rangle\\
& = & 
d \left\langle\sigma_{x,y}(e_{z'}\otimes e_{w'}),e_z\otimes e_w\right\rangle.
\end{eqnarray*}
Therefore $\Theta$ is quantum tracial.  It follows that $\xi_{\rm q}(K_{d^2}) \leq d$; 
On the other hand, $\theta(K_{d^2}) = 1$ and hence $\xi_{\rm qc}(K_{d^2}) \geq d$. 
Proposition \ref{r_chxxix} now implies that $\xi_{\rm qc}(K_{d^2}) = \xi_{\rm q}(K_{d^2}) = d$. 
\end{proof}


\subsection{Graph homomorphisms}\label{ss_qhncg}

In this subsection, we consider a quantum version 
of the graph homomorphism game first studied in \cite{mrob}.
Let $G$ and $H$ be graphs with vertex sets $X$ and $A$, respectively. 
Recall that the homomorphism game $G\to H$ has $Y = X$, $B = A$, and 
$\lambda(x,y,a,b) = 0$ if and only if, either $x = y$ and $a\neq b$, or $x\sim y$ and $a\not\sim b$. 
A synchronous NS correlation
$p = \left\{\left(p(a,b|x,y)\right)_{a,b\in A} : x,y\in X\right\}$ is thus called a perfect ${\rm x}$-strategy 
for the game $G\to H$ if $p\in \cl C_{\rm x}$ and 
$$x \sim y, \ a\not\sim b \ \Longrightarrow \ p(a,b|x,y) = 0.$$

For a subset $\kappa\subseteq X\times X$, let 
$P_{\kappa} : M_X\to M_X$ be the map given by
$$P_{\kappa}(T) = \sum_{(x,y)\in \kappa} (e_x e_x^*) T (e_y e_y^*), \ \ \ T\in M_X.$$
Thus, $P_{\kappa}$ is the Schur projection onto $\cl S_{\kappa}$; it 
can be canonically identified with the (positive) element $\sum_{(x,y)\in \kappa} (e_x e_x^*) \otimes (e_y e_y^*)$
of $\cl D_{XX}$. We set $(P_{\kappa})_{\perp} = P_{\kappa^c}$. 
For a graph $G$, we write for brevity $P_G = P_{E_0(G)}$.

\begin{proposition}\label{p_clgh}
Let $G$ (resp. $H$) be a graph with vertex set $X$ (resp. $A$), and 
$p = \left\{\left(p(a,b|x,y)\right)_{a,b\in A} : x,y\in X\right\}$ be a synchronous NS correlation.
The following are equivalent:
\begin{itemize}
\item[(i)] $p$ is a perfect strategy for the homomorphism game $G\to H$;

\item[(ii)] $\left\langle\cl N_p(P_G), (P_{H})_{\perp}\right\rangle = 0$.
\end{itemize}
\end{proposition}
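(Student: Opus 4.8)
The plan is to unwind both conditions into statements about the scalar quantities $p(a,b|x,y)$ and show they coincide term by term. First I would compute the pairing in (ii) explicitly. Recall that $\cl N_p : \cl D_{XX} \to \cl D_{AA}$ is the classical channel associated with $p$, so that $\cl N_p(e_x e_x^* \otimes e_y e_y^*) = \sum_{a,b \in A} p(a,b|x,y)\, e_a e_a^* \otimes e_b e_b^*$. Since $P_G = \sum_{(x,y) \in E_0(G)} (e_x e_x^*) \otimes (e_y e_y^*)$ as an element of $\cl D_{XX}$, linearity gives
\[
\cl N_p(P_G) = \sum_{(x,y) \in E_0(G)} \sum_{a,b \in A} p(a,b|x,y)\, e_a e_a^* \otimes e_b e_b^*.
\]
Likewise $(P_H)_\perp = P_{E_0(H)^c} = \sum_{(a,b) \notin E_0(H)} (e_a e_a^*) \otimes (e_b e_b^*)$. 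Using the self-dual pairing on $M_{AA} = M_A \otimes M_A$, in which $\langle e_a e_a^* \otimes e_b e_b^*,\, e_{a'} e_{a'}^* \otimes e_{b'} e_{b'}^*\rangle = \delta_{a,a'}\delta_{b,b'}$, I would obtain
\[
\left\langle \cl N_p(P_G), (P_H)_\perp \right\rangle = \sum_{(x,y) \in E_0(G)} \ \sum_{(a,b) \notin E_0(H)} p(a,b|x,y).
\]

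Next I would observe that this is a sum of non-negative terms, so it vanishes if and only if every summand vanishes, i.e.\ if and only if
\[
x \sim y \ \text{ and } \ (a,b) \notin E_0(H) \ \Longrightarrow \ p(a,b|x,y) = 0.
\]
It remains to match this against the definition of a perfect strategy for the game $G \to H$. By definition $\lambda(x,y,a,b) = 0$ exactly when either $x = y$ and $a \neq b$, or $x \sim y$ and $a \not\sim b$; and $p$ is a perfect strategy iff $\lambda(x,y,a,b) = 0 \Rightarrow p(a,b|x,y) = 0$. The ``$x=y$, $a\neq b$'' clause is precisely the synchronicity of $p$, which is assumed in the hypothesis of the proposition. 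So the only remaining content of being a perfect strategy is: $x \sim y$ and $a \not\sim b \Rightarrow p(a,b|x,y) = 0$. The final point is that $(a,b) \notin E_0(H)$ is the same condition as $a \not\sim b$ in $H$, since $E_0(H) = \{(a,b) : a \sim b\}$. Hence the two displayed implications are literally the same, completing the equivalence.

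The argument is essentially a bookkeeping exercise; there is no serious obstacle. The one point that warrants a line of care is the role of the synchronicity hypothesis: it is what lets us drop the ``$x=y$, $a \neq b$'' part of the rule predicate and reduce ``perfect strategy'' to the single off-diagonal-edge condition, so that condition (ii) -- which only sees the edge set $E_0(G)$ and never the diagonal -- can capture it. I would state this explicitly so the reader sees why the equivalence would fail without assuming $p$ synchronous. A second minor point to get right is the identification of $P_\kappa$ with the element $\sum_{(x,y)\in\kappa}(e_xe_x^*)\otimes(e_ye_y^*)$ of $\cl D_{XX}$ and the corresponding identification of $(P_H)_\perp$ with $P_{E_0(H)^c}$, both of which are recorded in the paragraph preceding the proposition, so they can simply be invoked.
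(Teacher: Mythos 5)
Your computation is correct and this is essentially the paper's argument: the paper likewise expands the pairing $\langle \cl N_p(P_G),(P_H)_\perp\rangle$ into the sum $\sum_{x\sim y}\sum_{a\not\sim b}p(a,b|x,y)$ (handling the two implications separately, with the reverse direction phrased via monotonicity of the pairing, which amounts to your non-negativity-of-each-term observation). Your remarks on the role of synchronicity and on the identification $(P_H)_\perp = P_{E_0(H)^c}$ are accurate and consistent with how the paper sets things up.
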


\begin{proof}
(i)$\Rightarrow$(ii)
We have
\begin{eqnarray*}
& & \left\langle\cl N_p\left(P_G\right), (P_{H})_{\perp}\right\rangle\\
& = & 
\left\langle \cl N_p\left(\sum_{x\sim y} e_x e_x^* \otimes e_y e_y^*\right), 
\sum_{a'\not\sim b'} e_{a'} e_{a'}^* \otimes e_{b'} e_{b'}^*\right\rangle\\
& = & 
\sum_{x\sim y} \sum_{a,b\in A} \sum_{a'\not\sim b'} p(a,b|x,y) \left\langle e_a e_a^* \otimes e_b e_b^*, 
e_{a'} e_{a'}^* \otimes e_{b'} e_{b'}^* \right\rangle\\
& = & 
\sum_{x\sim y} \sum_{a\sim b} \sum_{a'\not\sim b'} p(a,b|x,y) 
\left\langle e_a e_a^* \otimes e_b e_b^*, e_{a'} e_{a'}^* \otimes e_{b'} e_{b'}^*\right\rangle
= 0.
\end{eqnarray*}

(ii)$\Rightarrow$(i)
If $x\sim y$ and $a\not\sim b$ then
$e_x e_x^*\otimes e_y e_y^*\leq P_G$ and $e_a e_a^*\otimes e_b e_b^*\leq (P_H)_{\perp}$.
By the monotonicity of the pairing, 
$$p\left(a,b|x,y\right) 
=  
\left\langle\cl N_p\left(e_x e_x^*\otimes e_y e_y^*\right), e_a e_a^*\otimes e_b e_b^*\right\rangle 
\leq \left\langle\cl N_p(P_G), (P_{H})_{\perp}\right\rangle = 0.$$
\end{proof}

General operator systems in $M_X$ were considered in \cite{dsw} as a quantum versions of graphs 
(noting that $\cl S_G$ is an operator system), while operator anti-systems
(that is, selfadjoint subspaces of $M_X$ each of whose elements has trace zero \cite{btw}) 
were proposed as such a quantum version in \cite{stahlke} (noting that $\cl S_G^0$ is an operator anti-system). 
Note that one can pass from any of the two notions to the other by taking orthogonal complements. 
Due to the specific definition of QNS correlations in \cite{dw}, employed also here,
it will be convenient to use a slightly different (but equivalent) perspective on non-commutative graphs, 
which we now describe. 
Let $Z$ be a finite set, $H = \bb{C}^Z$,
$H^{\rm d}$ be its dual space and 
${\rm d} : H\to H^{\rm d}$ be the map given by ${\rm d}(\xi) (\eta) = \langle \eta,\xi\rangle$; 
we write $\xi^{\dd} = \dd(\xi)$. 
Note that, if $T\in \cl L(H)$ then 
\begin{equation}\label{eq_Mxi}
T^{\rm d}\xi^{\rm d} = (T^*\xi)^{\rm d}, \ \ \ T\in \cl L(H).
\end{equation}
Let $\theta : H\otimes H\to \cl L(H^{\dd},H)$ be the linear map given by 
$$\theta(\xi\otimes\eta)(\zeta^{\dd}) = \langle \xi,\zeta\rangle\eta, \ \ \ \zeta\in H.$$
By (\ref{eq_Mxi}),
\begin{equation}\label{eq_MNtheta}
\theta((S\otimes T)\zeta) = T\theta(\zeta)S^{\rm d}, \ \ \ \zeta\in H\otimes H, \ S,T\in \cl L(H).
\end{equation}
We denote by $\mm : H\otimes H\to \bb{C}$ the map given by 
$$\mm(\zeta) = \left\langle \zeta, \sum_{z\in Z} e_z\otimes e_z\right\rangle, \ \ \ \zeta\in H\otimes H.$$
Let also 
$\frak{f} : H\otimes H \to H\otimes H$ be the flip
operator given by $\frak{f}(\xi\otimes\eta) = \eta\otimes\xi$. 
Note that, if $\xi,\eta, \zeta_1,\zeta_2 \in  H$ then 
\begin{eqnarray*}
\langle \theta(\xi\otimes\eta)^*(\zeta_1),\zeta_2^{\rm d}\rangle
& = & 
\langle \zeta_1,\theta(\xi\otimes\eta)\zeta_2^{\rm d}\rangle
= 
\langle \zeta_1,\langle \xi,\zeta_2\rangle \eta\rangle
= 
\langle \zeta_2,\xi \rangle \langle \zeta_1,\eta\rangle\\
& = & 
\langle \xi^{\rm d},\zeta_2^{\rm d} \rangle \langle \zeta_1,\eta\rangle
= 
\langle \langle \zeta_1,\eta\rangle \xi^{\rm d},\zeta_2^{\rm d} \rangle 
\end{eqnarray*}
and hence
$${\rm d}^{-1}(\theta(\xi\otimes\eta)^* ({\rm d}^{-1}(\zeta_1^{\rm d})) 
= 
{\rm d}^{-1}( \langle \zeta_1,\eta\rangle \xi^{\rm d})
= 
\langle \eta,\zeta_1\rangle \xi 
= 
\theta(\eta\otimes\xi)(\zeta_1^{\rm d});$$
thus, 
\begin{equation}\label{eq_cotd}
{\rm d}^{-1} \circ \theta(\zeta)^* \circ  {\rm d}^{-1} = (\theta\circ \frak{f})(\zeta), \ \ \ \zeta\in H\otimes H.
\end{equation}
In addition, 
$$\sum_{z\in Z} \langle \theta(\xi\otimes\eta) ({\rm d}(e_z)), e_z\rangle 
= \sum_{z\in Z} \langle \xi,e_z\rangle \langle\eta,e_z\rangle
= {\rm m}(\xi\otimes\eta),$$
and hence 
\begin{equation}\label{eq_cotd2}
{\rm m}(\zeta) = \sum_{z\in Z} \langle (\theta(\zeta) \circ {\rm d}) (e_z), e_z\rangle, \ \ \ \zeta\in H\otimes H.
\end{equation}

\begin{definition}\label{d_ss}
A linear subspace $\cl U\subseteq H\otimes H$ is called \emph{skew} if $\mm(\cl U) = \{0\}$ and 
\emph{symmetric} if $\frak{f}(\cl U) = \cl U$. 
\end{definition}

Suppose that $\cl U$ is a symmetric skew subspace of $H\otimes H$. 
Let 
$\cl S_{\cl U} = \theta(\cl U)$;
by (\ref{eq_cotd}) and (\ref{eq_cotd2}),
the subspace $\cl S_{\cl U}$ of $\cl L(H^{\dd},H)$ satisfies
\smallskip
\begin{itemize}
\item $T\in \cl S_{\cl U}$ $\Longrightarrow$ $\dd^{-1}\circ T^* \circ \dd^{-1} \in \cl S_{\cl U}$, and 
\item $T\in \cl S_{\cl U}$ $\Longrightarrow$  $\sum_{z\in Z} \langle (T\circ \dd) (e_z),e_z\rangle = 0$.
\end{itemize}
\smallskip
We call a subspace of $\cl L(H^{\dd},H)$ satisfying these properties a \emph{twisted operator anti-system}.
Conversely, given a twisted operator anti-system $\cl S\subseteq \cl L(H^{\dd},H)$, (\ref{eq_cotd}) and (\ref{eq_cotd2}) imply that 
the subspace $\cl U_{\cl S} = \theta^{-1}(\cl S)$ of $H\otimes H$ is symmetric and skew. 
Given a graph $G$, let 
$$\cl U_G = {\rm span}\{e_x\otimes e_y : x\sim y\};$$
it is clear that $\cl U_G$ is a symmetric skew subspace of $\bb{C}^X\otimes \bb{C}^X$. 
We thus consider symmetric skew subspaces of $\bb{C}^X\otimes \bb{C}^X$ as a 
non-commutative version of graphs. 

We write $P_{\cl U}$ for the orthogonal projection from $\bb{C}^X\otimes \bb{C}^X$ onto $\cl U$. 
Let $\cl U_{\perp}\subset \left(\bb{C}^X\otimes \bb{C}^X\right)^{\dd}$ be the annihilator of $\cl U$ and write
$P_{\cl U_\perp}\in \cl L\left((\bb{C}^X\otimes \bb{C}^X)^{\dd}\right) $ for the orthogonal projection onto $\cl U_{\perp}$. 
Observe that $\zeta^{\dd}\in \cl U_{\perp}$ if and only if $\zeta$ 
belongs to the orthogonal complement $\cl U^{\perp}$ of $\cl U$ in $\bb{C}^X\otimes \bb{C}^X$.
Thus, for $\zeta \in H\otimes H$ we have 
\begin{eqnarray*}
P_{\cl U_\perp}\zeta^{\rm d} = \zeta^{\rm d} 
& \Leftrightarrow & 
P_{\cl U}^{\perp}(\zeta) = \zeta
\Leftrightarrow 
\langle P_{\cl U}^{\perp}(\zeta),\zeta^{\rm d} \rangle = 1\\
& \Leftrightarrow & 
\langle \zeta, (P_{\cl U}^{\perp})^{\rm d}(\zeta^{\rm d}) \rangle = 1
\Leftrightarrow
(P_{\cl U}^{\perp})^{\rm d}(\zeta^{\rm d}) = \zeta^{\rm d},
\end{eqnarray*}
and hence
\begin{equation}\label{eq_pperpan}
P_{\cl U_\perp} = (P_{\cl U}^\perp)^{\dd}. 
\end{equation}

Let $A$ be a finite set and $\omega\in M_A$. 
Writing $f_\omega$ for the functional on $M_A$ given by
$f_\omega(\rho) = \Tr(\rho\omega^{\rm t})$,
we have that the map $\omega\to f_{\omega}$ is a complete order isomorphism from $M_A$ onto 
$M_A^{\rm d}$
(see e.g. \cite[Theorem 6.2]{ptt}). 
On the other hand, the map $\omega^{\dd}\mapsto \omega^{\rm t}$ is 
 a *-isomorphism  from 
$\cl L\left((\bb{C}^A)^{\dd}\right)$ onto $M_A$. 
The composition of these maps, $\omega^{\dd}\mapsto f_{w^t}$, is thus 
a complete order isomorphism from 
$\cl L\left((\bb{C}^A)^{\dd}\right)$ onto $M_A^{\rm d}$. In the sequel, we identify these two spaces; 
note that, via this identification,
\begin{equation}\label{eq_dtom}
\langle \rho,\omega^{\dd}\rangle = \langle\rho,\omega^{\rm t}\rangle = \Tr(\rho\omega), \ \ \ \rho,\omega\in M_A. 
\end{equation}

\begin{definition}\label{d_hss}
Let $X$ and $A$ be finite sets and $\cl U\subseteq \bb{C}^X\otimes \bb{C}^X$,  
$\cl V\subseteq \bb{C}^A\otimes \bb{C}^A$
be symmetric skew subspaces. A QNS correlation $\Gamma : M_{XX}\to M_{AA}$ is called
\begin{itemize}
\item[(i)] a \emph{quantum commuting homomorphism} from $\cl U$ to $\cl V$ 
(denoted $\cl U\stackrel{\rm qc}{\to}\cl V$)
if $\Gamma$ is tracial and 
\begin{equation}\label{eq_projde}
\left\langle \Gamma\left(P_{\cl U}\right),P_{\cl V_{\perp}}\right\rangle = 0.
\end{equation}

\item[(ii)]
a \emph{quantum homomorphism} from $\cl U$ to $\cl V$ 
(denoted $\cl U\stackrel{\rm q}{\to}\cl V$)
if $\Gamma$ is quantum tracial and (\ref{eq_projde}) holds;

\item[(iii)]a \emph{local homomorphism} from $\cl U$ to $\cl V$ 
(denoted $\cl U\stackrel{\rm loc}{\to}\cl V$)
if $\Gamma$ is locally tracial and (\ref{eq_projde}) holds.
\end{itemize}
\end{definition}

Given operator anti-systems $\cl S\subseteq M_X$ and $\cl T\subseteq M_A$,
Stahlke \cite{stahlke} defines a 
\emph{non-commutative graph homomorphism} from $\cl S$ to $\cl T$ to be a 
quantum channel $\Phi : M_X\to M_A$ with family $\{M_i\}_{i=1}^m$ of Kraus operators, such that 
$M_i \cl S M_j^*\subseteq \cl T$, $i,j = 1,\dots,m$; if such $\Phi$ exists, he writes 
$\cl S\to\cl T$. 
The appropriate version of this notion for twisted operator anti-systems 
-- directly modelled on Stahlke's definition -- is as follows.
For $T\in M_Z$, we write $\overline{T} = T^{* {\rm t}}$ for the conjugated matrix of $T$. 

\begin{definition}\label{d_ssss}
Let $X$ and $A$ be finite sets,
and $\cl S\subseteq \cl L\left((\bb{C}^{X})^{\dd},\bb{C}^X\right)$ and $\cl T\subseteq \cl L\left((\bb{C}^{A})^{\dd},\bb{C}^A\right)$ be 
twisted operator anti-systems. 
A \emph{homomorphism} from $\cl S$ into $\cl T$ is a quantum channel 
$$\Phi : M_X\to M_A, \ \ \Phi(T) = \sum_{i=1}^m M_i T M_i^*,$$
such that 
$$\overline{M}_j \cl S M_i^{\rm d}\subseteq \cl T, \ \ \ i,j = 1,\dots,m.$$
\end{definition}

If $\cl S$ and $\cl T$ are twisted operator anti-systems, we write $\cl S\to\cl T$ as in \cite{stahlke} 
to denote the existence of a homomorphism from $\cl S$ to $\cl T$.

\begin{proposition}\label{p_losame}
Let $X$ and $A$ be finite sets and $\cl U\subseteq \bb{C}^X\otimes \bb{C}^X$, 
$\cl V\subseteq \bb{C}^A\otimes \bb{C}^A$ be symmetric skew spaces. 
Then $\cl U\stackrel{\rm loc}{\to}\cl V$ if and only if $\cl S_{\cl U}\to \cl S_{\cl V}$.
\end{proposition}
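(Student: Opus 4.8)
\textbf{Proof plan for Proposition \ref{p_losame}.}
The plan is to unwind the two sides of the equivalence into a common combinatorial statement about families of channels and their Kraus operators, using the characterisation of locally tracial QNS correlations from Theorem \ref{th_belo}(ii). First I would recall that $\cl U\stackrel{\rm loc}{\to}\cl V$ means there is a locally tracial QNS correlation $\Gamma$ with $\langle\Gamma(P_{\cl U}),P_{\cl V_\perp}\rangle=0$, and by Theorem \ref{th_belo}(ii) such a $\Gamma$ has the form $\Gamma=\sum_{j=1}^k\lambda_j\Phi_j\otimes\Phi_j^\sharp$ for quantum channels $\Phi_j:M_X\to M_A$ and a convex combination $\sum_j\lambda_j=1$. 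Since $P_{\cl V_\perp}=(P_{\cl V}^\perp)^{\dd}$ is positive and each term $\lambda_j\Phi_j\otimes\Phi_j^\sharp$ is completely positive, the vanishing of the pairing is equivalent to $\langle(\Phi_j\otimes\Phi_j^\sharp)(P_{\cl U}),P_{\cl V_\perp}\rangle=0$ for every $j$; so the whole statement reduces to the single-channel assertion: there exists a quantum channel $\Phi:M_X\to M_A$ with $\langle(\Phi\otimes\Phi^\sharp)(P_{\cl U}),P_{\cl V_\perp}\rangle=0$ if and only if $\cl S_{\cl U}\to\cl S_{\cl V}$ in the sense of Definition \ref{d_ssss}.

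Next I would make the bridge between the ``correlation'' picture and the ``Kraus operator'' picture explicit. Write $\Phi(T)=\sum_{i=1}^m M_iTM_i^*$ in Kraus form; then $\Phi^\sharp(T)=\overline{\Phi(\overline T)}$ has Kraus operators $\overline{M_i}$ (one checks $\Phi^\sharp(T)=\sum_i\overline{M_i}\,T\,\overline{M_i}^{\,*}$ using $\Phi^\sharp(\omega)=\Phi(\omega^{\rm t})^{\rm t}$ together with Lemma \ref{l_opopsys}(iii)). The key computation is then to evaluate, for a rank-one $P_{\cl U}$-type input $\xi\xi^*$ with $\xi=\sum_{x}e_x\otimes\zeta_x\in\bb{C}^X\otimes\bb{C}^X$, the quantity $\langle(\Phi\otimes\Phi^\sharp)(\xi\xi^*),\eta\eta^*\rangle$ and relate it, via the maps $\theta$, $\dd$ and $\mm$ introduced before Definition \ref{d_ss} and the identity (\ref{eq_MNtheta}), to $\langle \overline{M_j}\,\theta(\xi)\,M_i^{\dd}\ \text{against}\ \theta(\eta)\rangle$. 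Concretely, I expect that for $\xi\in\bb{C}^X\otimes\bb{C}^X$ one gets $(\Phi\otimes\Phi^\sharp)(\theta^{-1}$-data$)$ matching $\sum_{i,j}\theta\bigl((M_i\otimes\overline{M_j})\xi\bigr)=\sum_{i,j}M_i\,\theta(\xi)\,\overline{M_j}^{\dd}$, up to the bookkeeping of which operator hits which tensor leg; comparing with Definition \ref{d_ssss} (where the condition is $\overline{M_j}\,\cl S_{\cl U}\,M_i^{\dd}\subseteq\cl S_{\cl V}$) is then a matter of matching indices and using (\ref{eq_pperpan}), (\ref{eq_dtom}) to translate ``$\perp\cl V$'' into ``$P_{\cl V_\perp}$''.

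With those identifications in hand, the argument becomes: $\langle(\Phi\otimes\Phi^\sharp)(P_{\cl U}),P_{\cl V_\perp}\rangle=0$ says that for every $u\in\cl U$ and every $v^{\dd}\in\cl V_\perp$, $\sum_{i,j}\langle M_i\,\theta(u)\,\overline{M_j}^{\dd},\theta(v)\rangle=0$; since $P_{\cl U}$ and $P_{\cl V_\perp}$ are (positive) projections, and the summands are governed by a positive form, this forces each $\langle \overline{M_j}\,\theta(u)\,M_i^{\dd},\theta(v)\rangle=0$ individually (this is the standard ``sum of nonnegative terms vanishes'' step, exactly as used in the proof of Proposition \ref{p_locor}), which is precisely $\overline{M_j}\,\cl S_{\cl U}\,M_i^{\dd}\perp\cl S_{\cl V}$, i.e. $\overline{M_j}\,\cl S_{\cl U}\,M_i^{\dd}\subseteq\cl S_{\cl V}$ since $\cl S_{\cl V}=\cl S_{\cl V}^{\perp\perp}$. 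Conversely, given a homomorphism $\Phi$ as in Definition \ref{d_ssss}, run the computation backwards: the inclusions $\overline{M_j}\,\cl S_{\cl U}\,M_i^{\dd}\subseteq\cl S_{\cl V}$ give termwise orthogonality to $\cl S_{\cl V_\perp}$, hence $\langle(\Phi\otimes\Phi^\sharp)(P_{\cl U}),P_{\cl V_\perp}\rangle=0$, and $\Gamma=\Phi\otimes\Phi^\sharp$ is locally tracial by Theorem \ref{th_belo}(ii), so $\cl U\stackrel{\rm loc}{\to}\cl V$.

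The main obstacle I anticipate is purely notational rather than conceptual: getting the conjugations, duals, and transposes to line up correctly between $\Phi^\sharp$, $\overline{M_i}$, $M_i^{\dd}$, $\theta$, and the identification (\ref{eq_dtom}) of $M_A^{\rm d}$ with $\cl L((\bb{C}^A)^{\dd})$. In particular one must be careful that the leg of $\bb{C}^X\otimes\bb{C}^X$ on which $\Phi$ acts corresponds, under $\theta$, to left multiplication, while the leg on which $\Phi^\sharp$ acts corresponds to right multiplication by the $\dd$-adjoint, so that the pattern $\overline{M_j}(\,\cdot\,)M_i^{\dd}$ in Definition \ref{d_ssss} emerges with exactly the indices transposed that Stahlke's definition requires. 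Once that dictionary is pinned down (best done by checking it on elementary tensors $e_x\otimes e_{x'}$ and the matrix units $e_ae_b^*\otimes e_{a'}e_{b'}^*$, mirroring the displayed computation in the proof of Proposition \ref{p_lovasz}), the two implications are immediate.
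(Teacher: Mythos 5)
Your plan follows the paper's proof essentially verbatim: reduce via Theorem \ref{th_belo}(ii) and positivity of each summand to a single channel $\Phi$, compute that $\Phi^{\sharp}$ has Kraus operators $\overline{M}_i$, force termwise vanishing on rank-one elements $\xi\xi^*\leq P_{\cl U}$ and $(\eta\eta^*)^{\dd}\leq P_{\cl V_{\perp}}$, translate through $\theta$ and (\ref{eq_MNtheta}) into the condition of Definition \ref{d_ssss}, and reverse the steps for the converse. The only blemish is the leg-ordering you yourself flagged: (\ref{eq_MNtheta}) gives $\theta\bigl((M_i\otimes\overline{M}_j)\xi\bigr)=\overline{M}_j\,\theta(\xi)\,M_i^{\rm d}$ rather than $M_i\,\theta(\xi)\,\overline{M}_j^{\rm d}$, which is exactly the pattern required by Definition \ref{d_ssss} and the one you use correctly in your concluding paragraph.
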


\begin{proof}
Suppose that $\cl U\stackrel{\rm loc}{\to}\cl V$ and let $\Gamma$ be a locally tracial QNS correlation for which 
(\ref{eq_projde}) holds. 
By Theorem \ref{th_belo}, there exist quantum channels $\Phi_j : M_X\to M_A$, $j = 1,\dots,k$, 
such that $\Gamma = \sum_{j=1}^k \lambda_j \Phi_j\otimes \Phi_j^{\sharp}$ as a convex combination.
We have 
$$\sum_{j=1}^k \lambda_j \left\langle \left(\Phi_j\otimes \Phi_j^{\sharp}\right) \left(P_{\cl U}\right),P_{\cl V_{\perp}}\right\rangle 
= \left\langle \Gamma\left(P_{\cl U}\right),P_{\cl V_{\perp}}\right\rangle = 0;$$
since each of the terms in the sum on the left hand side is non-negative, selecting $j$ with $\lambda_j > 0$ and 
setting $\Phi = \Phi_j$, we have 
\begin{equation}\label{eq_sin}
\left\langle \left(\Phi\otimes \Phi^{\sharp}\right) \left(P_{\cl U}\right),P_{\cl V_{\perp}}\right\rangle = 0.
\end{equation}

Let $\Phi(\omega) = \sum_{i=1}^m M_i \omega M_i^*$, $\omega\in M_X$, be a Kraus representation of $\Phi$. 
For $\omega\in M_X$, we have
$$\Phi^{\sharp}(\omega) = \sum_{i=1}^m \left(M_i \omega^{\rm t} M_i^*\right)^{\rm t} 
= 
\sum_{i=1}^m (M_i^*)^{\rm t} \omega M_i^{\rm t}
= 
\sum_{i=1}^m \overline{M}_i \omega \overline{M}_i^*.$$
It follows that
\begin{equation}\label{eq_Mi}
\left(\Phi\otimes \Phi^{\sharp}\right)(\rho) 
= 
\sum_{i,j = 1}^m (M_i \otimes \overline{M}_j) \rho (M_i \otimes \overline{M}_j)^*,
\ \ \ \rho \in M_{XX}.
\end{equation}

Let $\xi\in \cl U$ and $\eta\in \cl V^{\perp}$ be unit vectors; then $\xi\xi^*\leq P_{\cl U}$. 
In addition, $\eta^{\rm d} = P_{\cl V_{\perp}}\eta^{\rm d}$ and hence 
$(\eta\eta^*)^{\rm d}=\eta^{\rm d}(\eta^{\rm d})^* \leq P_{\cl V_{\perp}}$; 
thus, (\ref{eq_sin}) implies
$$\left\langle \left(\Phi\otimes \Phi^{\sharp}\right) \left(\xi\xi^*\right),(\eta\eta^*)^{\dd}\right\rangle = 0.$$
By (\ref{eq_Mi}) and positivity, 
$$\left\langle  (M_i \otimes \overline{M}_j)(\xi\xi^*)(M_i \otimes \overline{M}_j)^*,(\eta\eta^*)^{\dd}\right\rangle = 0, \ \ \ i,j = 1,\dots,m,$$
which, by (\ref{eq_dtom}), means that 
$$\left\langle (M_i \otimes \overline{M}_j)\xi,\eta\right\rangle = 0, \ \ \ i,j = 1,\dots,m.$$
Thus, $(M_i \otimes \overline{M}_j)\xi \in \cl V$ for every $\xi\in \cl U$ and, by (\ref{eq_MNtheta}),
$$\overline{M}_j \theta(\xi) M_i^{\rm d} = \theta((M_i \otimes \overline{M}_j)\xi) \in \cl S_{\cl V}, \ \ \ \xi\in \cl U,$$
that is, $\cl S_{\cl U}\to \cl S_{\cl V}$. 

Conversely, suppose that $\Phi : M_X\to M_A$ is a quantum channel with a family of Kraus operators
$(M_i)_{i=1}^m\subseteq \cl L(\bb{C}^X,\bb{C}^A)$ such that 
$\overline{M}_j \cl S_{\cl U} M_i^{\rm d}\subseteq \cl S_{\cl V}$, $i,j = 1,\dots,m$. 
The previous paragraphs show that 
$$\left\langle(\Phi\otimes\Phi^{\sharp})(\xi\xi^*),(\eta\eta^*)^{\dd}\right\rangle =\Tr((\Phi\otimes\Phi^{\sharp})(\xi\xi^*)(\eta\eta^*))= 0$$
for all unit vectors $\xi\in \cl U$,  $\eta\in \cl V^{\perp}.$
It follows that
$$(\Phi\otimes\Phi^{\sharp})(\xi\xi^*) = (\eta\eta^*)^{\perp} (\Phi\otimes\Phi^{\sharp})(\xi\xi^*) (\eta\eta^*)^{\perp},$$
for all unit vectors $\eta\in \cl V^{\perp}$. Taking infimum over all such $\eta$, we obtain 
$$(\Phi\otimes\Phi^{\sharp})(\xi\xi^*) = P_{\cl V} (\Phi\otimes\Phi^{\sharp})(\xi\xi^*)P_{\cl V},$$ 
for all unit vectors $\xi\in \cl U$.
Thus, by (\ref{eq_dtom}) and (\ref{eq_pperpan}), 
$$\Tr\left(\left(\Phi\otimes\Phi^{\sharp}\right)(\xi\xi^*) P_{\cl V}^{\perp}\right)
= \left\langle (\Phi\otimes\Phi^{\sharp})(\xi\xi^*),P_{\cl V_\perp}\right\rangle = 0,$$
for all unit vectors $\xi\in \cl U$.
Writing $P_{\cl U} = \sum_{i=1}^l \xi_i\xi_i^*$, where $(\xi_i)_{i=1}^l$ is an orthonormal basis of 
$\cl U$, we obtain
$\left\langle (\Phi\otimes\Phi^{\sharp})(P_{\cl U}),P_{\cl V_{\perp}}\right\rangle = 0$.
\end{proof}

For graphs $G$ and $H$, write $G\to H$ if there exists a homomorphism from $G$ to $H$. 
The next corollary justifies viewing the symmetric skew spaces as non-commutative graphs.

\begin{corollary}\label{c_clg}
Let $G$ and $H$ be graphs. 
We have that $G\to H$ if and only if $\cl U_G \stackrel{{\rm loc}}{\to} \cl U_H$. 
\end{corollary}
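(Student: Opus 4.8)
The statement reduces, via Proposition \ref{p_losame}, to relating the classical graph homomorphism $G\to H$ with the existence of a homomorphism of twisted operator anti-systems $\cl S_{\cl U_G}\to \cl S_{\cl U_H}$. So the plan is: first translate a classical homomorphism $f: X\to A$ (with $x\sim y\Rightarrow f(x)\sim f(y)$) into a quantum channel $\Phi_f: M_X\to M_A$ that serves as a homomorphism of the associated twisted operator anti-systems, and conversely extract a classical homomorphism from an arbitrary homomorphism of twisted operator anti-systems. Once $\cl S_{\cl U_G}\to \cl S_{\cl U_H}$ is shown equivalent to $G\to H$, Proposition \ref{p_losame} gives $\cl U_G \stackrel{\rm loc}{\to}\cl U_H \Leftrightarrow \cl S_{\cl U_G}\to \cl S_{\cl U_H}\Leftrightarrow G\to H$, which is exactly the claim.

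For the forward direction, given a homomorphism $f:X\to A$ of graphs, I would take $\Phi_f(T) = \sum_{a\in A} (e_a e_a^*) \left(\sum_{x\in f^{-1}(a)} e_a e_x^*\right) T \left(\sum_{x\in f^{-1}(a)} e_x e_a^*\right)$, or more simply the channel with single Kraus-type blocks $M = \sum_{x\in X} e_{f(x)} e_x^*$ followed by the pinching onto the diagonal of $M_A$; concretely one can use the Kraus operators $M_a = \sum_{x\in f^{-1}(a)} e_a e_x^*$, $a\in A$. One then checks, using $\cl S_{\cl U_G} = \theta(\cl U_G) = {\rm span}\{\theta(e_x\otimes e_y): x\sim y\} = {\rm span}\{e_y e_x^{\rm d}: x\sim y\}$ (recalling $\theta(\xi\otimes\eta)(\zeta^{\dd}) = \langle\xi,\zeta\rangle\eta$), that $\overline{M_b}\, (e_y e_x^{\dd})\, M_a^{\rm d}$ is a scalar multiple of $e_{f(y)} e_{f(x)}^{\dd}$ when $x\in f^{-1}(a)$, $y\in f^{-1}(b)$ and zero otherwise; since $x\sim y$ forces $f(x)\sim f(y)$, this lies in $\cl S_{\cl U_H}$. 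Hence $\cl S_{\cl U_G}\to \cl S_{\cl U_H}$.

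For the converse, suppose $\Phi: M_X\to M_A$ with Kraus operators $(M_i)_{i=1}^m$ satisfies $\overline{M}_j \cl S_{\cl U_G} M_i^{\rm d}\subseteq \cl S_{\cl U_H}$. Pick any $i$ with $M_i\neq 0$ and any column $M_i e_x\neq 0$; writing $M_i e_x = \sum_a (M_i)_{a,x} e_a$, the condition $\overline{M}_i \theta(e_x\otimes e_y) M_i^{\rm d}\in \cl S_{\cl U_H} = {\rm span}\{e_b e_a^{\dd}: a\sim b\}$ for $x\sim y$, combined with $\theta(e_x\otimes e_y) = e_y e_x^{\dd}$ and $\overline{M}_i e_y e_x^{\dd} M_i^{\rm d} = (\overline{M}_i e_y)(M_i^* e_x)^{\dd} = (\overline{M_i e_y})\,(\overline{M_i e_x})^{\dd}$ up to conjugation bookkeeping, shows that the support of $M_i e_x$ in $A$ and the support of $M_i e_y$ must be "$\sim$-related" in $H$: every $a$ in the support of $M_i e_x$ and every $b$ in the support of $M_i e_y$ satisfy $a\sim b$. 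Since a vertex of $H$ is never adjacent to itself, no two vertices in the support of $M_i e_x$ alone can be adjacent; and choosing $x=y$ — wait, here $x\sim y$ needs $x\neq y$, so instead I would use a path or simply the trace-preservation $\sum_i M_i^* M_i = I_X$ to locate for each $x$ a nonzero $M_i e_x$, then define $g(x)$ to be any element of its support. The adjacency constraint then forces $x\sim y\Rightarrow g(x)\sim g(y)$ (in particular $g(x)\neq g(y)$), i.e. $g: X\to A$ is a graph homomorphism, so $G\to H$.

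The main obstacle I anticipate is the converse direction: an arbitrary homomorphism of twisted operator anti-systems need not come from a "diagonal" channel, so one must carefully argue that the Kraus operators can nonetheless be used to read off a single-valued vertex map, handling the bookkeeping with the conjugation $\overline{M}$, the dual maps $M_i^{\rm d}$, and the identifications (\ref{eq_MNtheta}), (\ref{eq_dtom}), (\ref{eq_pperpan}) correctly. A clean way to finesse this is to observe that $\cl S_{\cl U_G}$ and $\cl S_{\cl U_H}$ are the twisted operator anti-systems of $G$ and $H$ in Stahlke's sense (after the $\theta$-identification), and that Stahlke already proved $G\to H$ iff $\cl S_G \to \cl S_H$ for the honest (untwisted) anti-systems; translating his argument through $\theta$ and ${\rm d}$ then yields the result with minimal extra work. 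I would present the proof by (i) identifying $\cl S_{\cl U_G}$ with Stahlke's graph anti-system up to the canonical isomorphisms, (ii) quoting $G\to H \Leftrightarrow \cl S_G\to \cl S_H$ from \cite{stahlke}, and (iii) invoking Proposition \ref{p_losame}.
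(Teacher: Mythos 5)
Your final plan --- identify $\cl S_{\cl U_G}$ and $\cl S_{\cl U_H}$ with Stahlke's graph operator anti-systems via $\theta$ and the conjugation/duality maps, quote his equivalence between $G\to H$ and $\cl S_G^0\to\cl S_H^0$, and then invoke Proposition \ref{p_losame} --- is exactly the paper's proof, which likewise conjugates the Kraus operators by entrywise conjugation and ${\rm d}$ to pass between the untwisted and twisted settings and reverses the argument for the converse. One small caution about your auxiliary direct sketch: the Kraus operators $M_a=\sum_{x\in f^{-1}(a)}e_ae_x^*$ (and the pinched single-operator variant) are not trace-preserving unless $f$ is injective, so there you should use $M_x=e_{f(x)}e_x^*$, $x\in X$; this does not affect the Stahlke-based route you ultimately commit to.
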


\begin{proof}
Write $X$ and $A$ for the vertex sets of $G$ and $H$, respectively. 
Assume that $G\to H$. 
By \cite{stahlke},  $\cl S_G^0\to \cl S_H^0$. 
Write $\{M_i\}_{i=1}^m$ for the set of Kraus operators such that $M_i \cl S_G^0 M_j^*\subseteq \cl S_H^0$,
$i,j = 1,\dots,m$.  
Let $J_X : \bb{C}^X\to \bb{C}^X$ be the map given by $J_X(\eta) = \bar\eta$. 
Then $\theta(e_x\otimes e_y) = J_X\circ e_ye_x^*\circ\dd^{-1}$, $x,y\in X$. 
Therefore,
$$(J_A\circ M_i\circ J_X)(J_X\circ \cl S_G^0\circ{\dd}^{-1})(\dd\circ M_j^*\circ{\dd}^{-1})\subseteq J_A \circ {\cl S}_H^0\circ {\dd}^{-1},$$
implying
$\overline{M}_i\cl S_{\cl U_G}M_j^{\dd}\subseteq \cl S_{\cl U_H}$;
by Proposition \ref{p_losame}, $\cl U_G \stackrel{{\rm loc}}{\to} \cl U_H$. 
The converse follows after reversing the arguments. 
\end{proof}


\subsection{General quantum non-local games}\label{ss_gnlg}

We write $\cl P_{\cl M}$ for the projection lattice of a von Neumann algebra $\cl M$, 
and denote as usual by $\vee$ (resp. $\wedge$) the join 
(resp. the wedge) operation in $\cl P_{\cl M}$; thus, for $P_1,P_2\in \cl P_{\cl M}$, the projection $P_1\vee P_1$ 
(resp. $P_1\wedge P_2$) has range the closed span (resp. the intersection) of the ranges of $P_1$ and $P_2$. 
If $\cl M$ and $\cl N$ are von Neumann algebras, a map $\nph : \cl P_{\cl M}\to \cl P_{\cl N}$ is called 
\emph{join continuous} if 
$\nph\left(\vee_{i\in\bb{I}} P_i\right) = \vee_{i\in\bb{I}} \nph(P_i)$ 
for any family $\{P_i\}_{i\in \bb{I}}\subseteq \cl P_{\cl M}$.
Note that if $\cl M$ is finite dimensional, then join continuity is equivalent to the preservation of finite joins. 

Let $H$ be a Hilbert space and $P$ be an orthogonal projection on $H$ with range $\cl U$. 
As in Subsection \ref{ss_qhncg}, we denote by $\cl U_{\perp}$ the annihilator of $\cl U$ in the space $H^{\rm d}$, and 
by $P_{\perp}$ -- the orthogonal projection on $H^{\rm d}$ with range $\cl U_{\perp}$.

\begin{definition}\label{d_qnlg}
Let $X$, $Y$, $A$ and $B$ be finite sets. 
\begin{itemize}
\item[(i)] 
A map $\nph : \cl P_{M_{XY}}\to \cl P_{M_{AB}}$
(resp. $\nph : \cl P_{\cl D_{XY}}\to \cl P_{M_{AB}}$, 
$\nph : \cl P_{\cl D_{XY}}\to \cl P_{\cl D_{AB}}$) is called 
a \emph{quantum non-local game} (resp. a \emph{classical-to-quantum non-local game}, 
a \emph{classical non-local game}) if $\nph$ is join continuous and $\nph(0) = 0$.
We say that such $\nph$ is a game \emph{from $XY$ to $AB$}.

\item[(ii)] 
A QNS (resp. CQNS, NS) correlation $\Lambda$
is called a perfect strategy for the quantum (resp. classical-to-quantum, classical) 
non-local game $\nph$ if 
\begin{equation}\label{eq_erdos}
\left\langle \Lambda(P),\nph(P)_{\perp}\right\rangle = 0, 
\ \ \ P\in \cl P_{M_{XY}} \ (\mbox{resp. } P\in \cl P_{\cl D_{XY}}).
\end{equation}
\end{itemize}
\end{definition}

\begin{remark}\label{r_erdos}
{\rm 
{\bf (i)}
Join continuous zero-preserving maps $\nph : \cl P_{\cl B(H)} \to \cl P_{\cl B(K)}$, 
where $H$ and $K$ are Hilbert spaces, 
were first considered by J. A. Erdos in \cite{erdos}. 
They are equivalent to \emph{bilattices} introduced in \cite{st1} -- that is,
subsets $\frak{B}\subseteq \cl P_{\cl B(H)} \times \cl P_{\cl B(K)}$ such that 
$(P,0), (0,Q)\in \frak{B}$ for all $P\in \cl P_{\cl B(H)}$, $Q\in \cl P_{\cl B(K)}$, 
and $(P_1,Q_1), (P_2,Q_2)\in \frak{B}$ $\Rightarrow$
$(P_1\vee P_2,Q_1\wedge Q_2)\in \frak{B}$ and $(P_1\wedge P_2,Q_1\vee Q_2)\in \frak{B}$. 
Thus, quantum non-local games (resp. classical-to-quantum non-local games, classical non-local games) 
can be alternatively defined as bilattices; we have chosen to use maps instead because they 
are more convenient to work with when compositions are considered (see Definition \ref{d_coqnlg}).

Conditions (\ref{eq_erdos}) are reminiscent of J. A. Erdos' characterisation \cite{erdos} of 
reflexive spaces of operators, introduced by L. N. Loginov and V. S. Shulman in \cite{ls}. 
As shown in \cite{erdos}, a subspace $\cl S\subseteq \cl B(H,K)$ ($H$ and $K$ being Hilbert spaces)
is reflexive in the sense of \cite{ls} if and only if there exists a join continuous zero-preserving map 
$\nph : \cl P_{\cl B(H)} \to \cl P_{\cl B(K)}$ such that $\cl S$ coincides with the the space 
$${\rm Op}(\nph) = \left\{T\in \cl B(H,K) : \nph(P)^{\perp} T P = 0, \ \mbox{ for all } P\in \cl P_{\cl B(H)}\right\}.$$

\smallskip

{\bf (ii) } The quantum (resp. classical-to-quantum, classical) non-local game 
$\nph$ with $\nph(P) = I_{AB}$ for every non-zero $P \in \cl P_{M_{XY}}$ (resp. $P \in \cl P_{\cl D_{XY}}$) 
will be referred to as the \emph{empty game}. 
It is clear that the set of perfect strategies for the empty game coincides with the class of all 
no-signalling correlations. 

\smallskip

{\bf (iii) } Let $G$ be a graph with vertex set $X$ and $A$ be a finite set. 
The quantum graph colouring game considered in Subsection \ref{ss_gcg} is the 
classical-to-quantum non-local game $\nph : \cl P_{\cl D_{XX}}\to \cl P_{M_{AA}}$, given by 
$$
\nph(e_xe_x^* \otimes e_y e_y^*) = 
\begin{cases}
\frac{1}{|A|} \Omega_A^{\perp} & \text{if } x\sim y\\
I & \text{otherwise.}
\end{cases}
$$
Similarly, letting $\cl U\subseteq \bb{C}^X\otimes \bb{C}^X$ and 
$\cl V\subseteq \bb{C}^A\otimes \bb{C}^A$ be symmetric skew spaces, 
we define the homomorphism game $\cl U\to \cl V$ to be the 
quantum non-local game $\psi$, given by 
$$
\psi(P) = 
\begin{cases}
P_{\cl V} & \text{if } 0 \neq P \leq P_{\cl U}\\
0 & \text{if } P = 0\\
I & \text{otherwise.}
\end{cases}
$$
For ${\rm x}\in \{{\rm loc}, {\rm q}, {\rm qc}\}$, we have that $\cl U\stackrel{{\rm x}}{\to}\cl V$ if and only if 
the game $\cl U\to \cl V$ has a perfect strategy of class $\cl Q_{\rm x}$. 
}
\end{remark}

Let $(X,Y,A,B,\lambda)$ be a non-local game. 
For a subset $\alpha\subseteq X\times Y$, let $P_{\alpha} \in \cl P_{\cl D_{XY}}$ be the projection 
with range ${\rm span}\{e_x\otimes e_y : (x,y)\in \alpha\}$.
For $(x,y)\in X\times Y$, let 
$$\beta_{x,y}(\lambda) = \{(a,b) \in A\times B : \lambda(x,y,a,b) = 1\}.$$
We associate with $\lambda$ the (unique) classical non-local game 
$\nph_{\lambda} : \cl P_{\cl D_{XY}} \to \cl P_{\cl D_{AB}}$ determined by the requirement 
$$\nph\mbox{}_{\lambda}\left(P_{\{(x,y)\}}\right) = P_{\beta_{x,y}(\lambda)}, \ \ \ (x,y)\in X\times Y.$$ 

\begin{proposition}\label{p_pseqcn}
An NS correlation $p$ is a perfect strategy for the non-local game (with rule function) $\lambda$
if and only if $\cl N_p$ is a perfect strategy for $\nph_{\lambda}$. 
\end{proposition}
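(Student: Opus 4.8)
The plan is to unwind both conditions to elementary statements about the scalars $p(a,b|x,y)$ and to observe that they coincide. First I would recall that $\cl N_p : \cl D_{XY}\to \cl D_{AB}$ acts by
\[
\cl N_p\left(e_x e_x^*\otimes e_y e_y^*\right) = \sum_{a\in A,b\in B} p(a,b|x,y)\, e_a e_a^*\otimes e_b e_b^*,
\]
so that for any subsets $\alpha\subseteq X\times Y$ and $\gamma\subseteq A\times B$ we have
\[
\left\langle \cl N_p(P_{\alpha}), P_{\gamma}\right\rangle
= \sum_{(x,y)\in\alpha}\ \sum_{(a,b)\in\gamma} p(a,b|x,y),
\]
using that $\langle e_a e_a^*\otimes e_b e_b^*, e_{a'} e_{a'}^*\otimes e_{b'} e_{b'}^*\rangle = \delta_{a,a'}\delta_{b,b'}$. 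Since all summands are non-negative, this pairing vanishes if and only if $p(a,b|x,y)=0$ for every $(x,y)\in\alpha$ and every $(a,b)\in\gamma$.

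Next I would identify $\nph_\lambda(P)_\perp$. Because $\nph_\lambda$ is join continuous, for $P = P_\alpha$ with $\alpha\subseteq X\times Y$ we have $\nph_\lambda(P_\alpha) = \bigvee_{(x,y)\in\alpha} P_{\beta_{x,y}(\lambda)} = P_{\gamma(\alpha)}$, where $\gamma(\alpha) = \bigcup_{(x,y)\in\alpha}\beta_{x,y}(\lambda)$; note every projection in $\cl P_{\cl D_{XY}}$ is of the form $P_\alpha$. The annihilator of the range of $P_{\gamma(\alpha)}$ inside $\cl D_{AB}^{\rm d}\cong \cl D_{AB}$ is spanned by $\{e_a e_a^*\otimes e_b e_b^* : (a,b)\notin\gamma(\alpha)\}$, so $\nph_\lambda(P_\alpha)_\perp = P_{\gamma(\alpha)^c}$. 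Combining with the previous paragraph, $\cl N_p$ is a perfect strategy for $\nph_\lambda$ iff for all $\alpha$, all $(x,y)\in\alpha$ and all $(a,b)\notin\gamma(\alpha)$ one has $p(a,b|x,y)=0$. By taking $\alpha = \{(x,y)\}$ this is equivalent to: $p(a,b|x,y) = 0$ whenever $(a,b)\notin\beta_{x,y}(\lambda)$, i.e.\ whenever $\lambda(x,y,a,b) = 0$; conversely this singleton condition clearly implies the condition for all $\alpha$ since $\beta_{x,y}(\lambda)\subseteq\gamma(\alpha)$ when $(x,y)\in\alpha$. That last equivalence is exactly the definition of $p$ being a perfect strategy for $\lambda$, which completes the argument.

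There is no real obstacle here; the only point requiring a little care is the bookkeeping around duals and annihilators — specifically checking that under the identification $\cl D_{AB}^{\rm d}\cong\cl D_{AB}$ the projection $P_{\gamma}$ has annihilator projection $P_{\gamma^c}$, and that the pairing $\langle\cdot,\cdot\rangle$ used in \eqref{eq_erdos} restricts on diagonal algebras to the one making these matrix units an orthonormal-type system. I would state this once cleanly at the start and then the rest is immediate. One should also remark explicitly that every element of $\cl P_{\cl D_{XY}}$ has the form $P_\alpha$ for a unique $\alpha\subseteq X\times Y$, so that verifying \eqref{eq_erdos} on all such $P_\alpha$ (and then reducing to singletons by non-negativity) suffices.
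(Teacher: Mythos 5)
Your argument is correct and follows essentially the same route as the paper's proof: reduce the perfect-strategy condition to singleton projections $P_{\{(x,y)\}}$ using non-negativity of the pairing, and handle a general diagonal projection $P_\alpha$ via join continuity of $\nph_\lambda$ (your $\gamma(\alpha)=\bigcup_{(x,y)\in\alpha}\beta_{x,y}(\lambda)$ is exactly the paper's join $\vee P_{\beta_{x,y}(\lambda)}$). The only difference is presentational: you write out the scalar computation explicitly where the paper cites the analogous computation in Proposition \ref{p_clgh}.
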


\begin{proof}
Note that, if $(x,y)\in X\times Y$ then 
$\left(P_{\beta_{x,y}(\lambda)}\right)_{\perp}$ has range 
${\rm span}\{e_a e_a^*\otimes e_b e_b^* : \lambda(x,y,a,b) = 0\}$. 
As in Proposition \ref{p_clgh}, it is thus easily seen that $p$ is a perfect strategy for $\lambda$ 
if and only if 
$$\left\langle \cl N_p\left(P_{\{(x,y)\}}\right),\left(P_{\beta_{x,y}(\lambda)}\right)_{\perp}\right\rangle = 0, \ \ \ (x,y)\in X\times Y.$$

Assume that $p$ is a perfect strategy for $\lambda$. 
For a projection $P\in \cl D_{XY}$, write $P = \vee\{P_{\{(x,y)\}} : P(e_x\otimes e_y) = e_x\otimes e_y\}$; 
then
$$\nph\mbox{}_{\lambda}(P) = \vee\{P_{\beta_{x,y}(\lambda)} : P(e_x\otimes e_y) = e_x\otimes e_y\}.$$
Thus,
$\left\langle \cl N_p(P_{\{(x,y)\}}),\nph_\lambda(P)_{\perp}\right\rangle = 0$ for all pairs 
$(x,y)$ with $P(e_x\otimes e_y) = e_x\otimes e_y$.
Taking the join over all those $(x,y)$, we conclude that 
$\left\langle \cl N_p(P),\nph_\lambda(P)_{\perp}\right\rangle = 0$.
The converse is direct from the first paragraph. 
\end{proof}

\begin{definition}\label{d_coqnlg}
Let $X$, $Y$, $A$, $B$, $Z$ and $W$ be finite sets and 
$\nph_1$ (resp. $\nph_2$) be a game from $XY$ to $AB$ (resp. from $AB$ to $ZW$). 
The \emph{composition} of $\nph_1$ and $\nph_2$ is the game $\nph_2\circ \nph_1$ from $XY$ to $ZW$.
\end{definition}

It is clear that $\nph_2\circ \nph_1$ is well-defined in all cases except when $\nph_1$ is a quantum game, while 
$\nph_2$ is a classical-to-quantum game.

\begin{lemma}\label{l_cGPOVM}
Let $X$, $A$ and $Z$ be finite sets, $H$ and $K$ be Hilbert spaces and $E\in M_X\otimes M_A\otimes \cl B(H)$ and 
$F\in M_A\otimes M_Z\otimes \cl B(K)$ be stochastic operator matrices.
Set 
$$G_{x,x',z,z'} = \sum_{a,a'\in A} F_{a,a',z,z'}  \otimes E_{x,x',a,a'} , \ \ \ x,x'\in X, z,z'\in Z.$$
Then $G = (G_{x,x',z,z'})_{x,x',z,z'}$ is a stochastic operator matrix in $M_X\otimes M_Z\otimes \cl B(K\otimes H)$.
\end{lemma}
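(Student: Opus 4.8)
The statement asserts that a certain ``composition'' of two stochastic operator matrices is again stochastic. The cleanest route is to use the characterisation of stochastic operator matrices in terms of block operator isometries, namely condition (v) of Theorem \ref{p_coor}, and build the isometry for $G$ by composing the isometries for $E$ and $F$. First I would invoke Theorem \ref{p_coor} to obtain a Hilbert space $\tilde H$ and operators $V_{a,x} : H\to \tilde H$, $x\in X$, $a\in A$, such that $V = (V_{a,x})_{a,x}\in \cl B(H^X,\tilde H^A)$ is an isometry and $E_{x,x',a,a'} = V_{a,x}^* V_{a',x'}$; similarly a Hilbert space $L'$ and operators $W_{z,a} : K\to L'$, $a\in A$, $z\in Z$, with $(W_{z,a})_{z,a}\in \cl B(K^A,L'^Z)$ an isometry and $F_{a,a',z,z'} = W_{z,a}^* W_{z',a'}$.

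The key step is to exhibit, for each $x\in X$ and $z\in Z$, an operator $U_{z,x} : K\otimes H\to L'\otimes\tilde H$ such that $(U_{z,x})_{z,x}$ is an isometry in $\cl B((K\otimes H)^X,(L'\otimes\tilde H)^Z)$ and $G_{x,x',z,z'} = U_{z,x}^* U_{z',x'}$. The natural candidate is $U_{z,x} = \sum_{a\in A}(W_{z,a}\otimes I_{\tilde H})(I_K\otimes V_{a,x})$, or equivalently, thinking of $V$ and $W$ as block matrices, the composition $W\otimes I$ followed by $I\otimes V$ on the appropriate amplified spaces. With this choice, a direct computation using $E_{x,x',a,a'} = V_{a,x}^* V_{a',x'}$ and $F_{a,a',z,z'} = W_{z,a}^* W_{z',a'}$ gives
\begin{eqnarray*}
U_{z,x}^* U_{z',x'}
&=& \sum_{a,a'\in A} (I_K\otimes V_{a,x})^*(W_{z,a}^*W_{z',a'}\otimes I_{\tilde H})(I_K\otimes V_{a',x'})\\
&=& \sum_{a,a'\in A} F_{a,a',z,z'}\otimes V_{a,x}^*V_{a',x'}
= \sum_{a,a'\in A} F_{a,a',z,z'}\otimes E_{x,x',a,a'} = G_{x,x',z,z'},
\end{eqnarray*}
up to the reshuffling of tensor legs implicit in the statement (matching $\cl B(K\otimes H)$ with the ordering in which $F$ and $E$ appear). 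One then checks that $(U_{z,x})_{z,x}$ is an isometry: $\sum_{z\in Z} U_{z,x}^* U_{z,x'} = \sum_{z\in Z} G_{x,x',z,z} = \sum_{a,a'\in A}(\sum_{z\in Z} F_{a,a',z,z})\otimes E_{x,x',a,a'} = \sum_{a,a'}\delta_{a,a'}I_K\otimes E_{x,x',a,a'} = I_K\otimes(\sum_{a\in A} E_{x,x,a,a}) = \delta_{x,x'} I_{K\otimes H}$, where the two inner sums are evaluated using that $F$ and $E$ are stochastic (via (\ref{eq_gpovm0}) or equivalently the isometry property). By Theorem \ref{p_coor} (v)$\Rightarrow$(i), $G$ is a stochastic operator matrix in $M_X\otimes M_Z\otimes\cl B(K\otimes H)$.

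The only genuine obstacle is bookkeeping: the statement places $G$ in $M_X\otimes M_Z\otimes\cl B(K\otimes H)$ with $F$'s Hilbert space $K$ appearing \emph{before} $E$'s Hilbert space $H$, whereas the formula for $G_{x,x',z,z'}$ writes $F_{a,a',z,z'}\otimes E_{x,x',a,a'}$ in that order, so one must be careful to apply the canonical shuffle consistently and make sure the isometry $U_{z,x}$ is built so that its target is $L'\otimes\tilde H$ (matching $K\otimes H$). An alternative, slightly more conceptual, packaging avoids even naming the $U_{z,x}$: one observes that $G = (\text{shuffle})(F\otimes I_H)\cdot(I\text{-amplification of }E)$ is manifestly a product of the form $T^*T$ with $T$ an isometry, using that the amplification of an isometry by $\otimes I$ is an isometry and the composition of isometries on compatible block decompositions is an isometry; but I would still present the explicit $U_{z,x}$ computation above since it makes the verification transparent. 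Positivity of $G$ is then automatic from the $U^*U$ form, and the stochasticity relation $\Tr_Z G = I_X\otimes I_{K\otimes H}$ is exactly the isometry identity just checked.
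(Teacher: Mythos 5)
Your proof is correct and follows essentially the same route as the paper: the paper also takes the isometries $V=(V_{a,x})_{a,x}$ and $W=(W_{z,a})_{z,a}$ from Theorem \ref{p_coor}, forms $U_{z,x}=\sum_{a\in A}W_{z,a}\otimes V_{a,x}$, verifies that $(U_{z,x})_{z,x}$ is an isometry and that $U_{z,x}^*U_{z',x'}=G_{x,x',z,z'}$, and concludes by Theorem \ref{p_coor}. The only blemish is a harmless index slip in your isometry check, where $\sum_{a\in A}E_{x,x,a,a}$ should read $\sum_{a\in A}E_{x,x',a,a}=\delta_{x,x'}I_H$, which is what your final equality $\delta_{x,x'}I_{K\otimes H}$ in fact uses.
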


\begin{proof}
Let $V = (V_{a,x})_{a,x}$ (resp. $W = (W_{z,a})_{z,a}$)
be an isometry from $H^X$ (resp. $K^A$) to $\tilde{H}^A$ (resp. $\tilde{K}^Z$) 
for some Hilbert space $\tilde{H}$ (resp. $\tilde{K}$),
such that 
$$E_{x,x',a,a'} = V_{a,x}^* V_{a',x'} \ \mbox{ and } \ F_{a,a',z,z'} = W_{z,a}^* W_{z',a'}$$
for all $x,x'\in X$, $a,a'\in A$ and $z,z'\in Z$. 
Set 
$$U_{z,x} = \sum_{a\in A} W_{z,a}\otimes V_{a,x}, \ \ \ x\in X, z\in Z.$$
For $x,x'\in X$, we have 
\begin{eqnarray*}
\sum_{z\in Z} U_{z,x}^* U_{z,x'} 
& = & 
\sum_{z\in Z} \left(\sum_{a\in A} W_{z,a}^*\otimes V_{a,x}^*\right) \left(\sum_{a'\in A} W_{z,a'}\otimes V_{a',x'}\right)\\
& = & 
\sum_{z\in Z} \sum_{a,a'\in A} W_{z,a}^* W_{z,a'} \otimes V_{a,x}^*V_{a',x'}\\
& = & 
\sum_{a,a'\in A} \left(\sum_{z\in Z} W_{z,a}^* W_{z,a'}\right) \otimes V_{a,x}^*V_{a',x'}\\
& = & 
\sum_{a,a'\in A} \delta_{a,a'} I_K \otimes V_{a,x}^*V_{a',x'} 
= 
\sum_{a\in A} I_K \otimes V_{a,x}^*V_{a,x'}\\ 
& = & 
\delta_{x,x'} I_K \otimes I_H;
\end{eqnarray*}
thus, $(U_{z,x})_{z,x}$ is an isometry from $(K\otimes H)^X$ into $(\tilde{K}\otimes \tilde{H})^Z$. 
In addition, for $x,x'\in X$ and $z,z'\in Z$, we have 
\begin{eqnarray*}
U_{z,x}^* U_{z',x'}
& = & 
\left(\sum_{a\in A} W_{z,a}^*\otimes V_{a,x}^*\right) \left(\sum_{a'\in A} W_{z',a'}\otimes V_{a',x'}\right)\\
& = & 
\sum_{a,a'\in A} F_{a,a',z,z'} \otimes E_{x,x',a,a'} 
= 
G_{x,x',z,z'}.
\end{eqnarray*}
By Theorem \ref{p_coor}, $G$ is a stochastic operator matrcx acting on $K\otimes H$.
\end{proof}

We call the stochastic operator matrix $G$ from Lemma \ref{l_cGPOVM} the \emph{composition} of $F$ and $E$ 
and denote it by $F\circ E$.

\begin{theorem}\label{th_comg}
Let $\nph_1$ (resp. $\nph_2$) be a quantum game from $XY$ to $AB$ (resp. from $AB$ to $ZW$) and 
${\rm x}\in \{{\rm loc},{\rm q},{\rm qa},{\rm qc},{\rm ns}\}$. 
\begin{itemize}
\item[(i)]
If $\Gamma_i$ is a perfect strategy for $\nph_i$ from the class $\cl Q_{\rm x}$, $i = 1,2$,
then $\Gamma_2\circ \Gamma_1$ is a perfect strategy for $\nph_2\circ \nph_1$
from the class $\cl Q_{\rm x}$. 
\item[(ii)]
Asume that $X = Y$, $A = B$ and $Z = W$.
If $\Gamma_i$ is a perfect tracial (resp. quantum tracial, locally tracial) strategy for $\nph_i$, $i = 1,2$, 
then $\Gamma_2\circ \Gamma_1$ is a perfect tracial (resp. quantum tracial, locally tracial) strategy for $\nph_2\circ \nph_1$.
\end{itemize}
\end{theorem}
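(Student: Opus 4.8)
The plan is to prove both parts by unwinding the definitions of strategy composition, correlation composition, and game composition simultaneously, reducing everything to the behaviour of stochastic operator matrices under the composition operation $F\circ E$ from Lemma \ref{l_cGPOVM}. First I would make precise what $\Gamma_2\circ\Gamma_1$ means: it is the quantum channel $M_{XY}\to M_{ZW}$ obtained by ordinary composition of the channels $\Gamma_1 : M_{XY}\to M_{AB}$ and $\Gamma_2 : M_{AB}\to M_{ZW}$. The first routine observation is that the composition of two QNS correlations is again a QNS correlation: if $\Tr_X\rho'=0$ then $\Tr_A\Gamma_1(\rho')=0$ (by Remark \ref{c_strpt}), and then $\Tr_Z\Gamma_2(\Gamma_1(\rho'))=0$ because $\Gamma_1(\rho')$, being traceless in its first leg after the partial trace over $A$, is killed by $\Tr_Z\Gamma_2$; symmetrically for the $W$-marginal. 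So composition keeps us inside $\cl Q_{\rm ns}$, and it remains to track the class ${\rm x}$ and the perfection condition.

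For the perfection condition, I would verify the key functorial inequality: if $\Gamma_1$ is perfect for $\nph_1$ and $\Gamma_2$ is perfect for $\nph_2$, then $\Gamma_2\circ\Gamma_1$ is perfect for $\nph_2\circ\nph_1$. Fix a projection $P\in\cl P_{M_{XY}}$ (or $\cl P_{\cl D_{XY}}$). Perfection of $\Gamma_1$ means $\langle\Gamma_1(P),\nph_1(P)_\perp\rangle=0$, i.e. the (completely positive) operator $\Gamma_1(P)$ is supported, in the appropriate dual sense, on $\nph_1(P)$; concretely $\Gamma_1(P)=Q\Gamma_1(P)Q$ where $Q=\nph_1(P)$. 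Then $\Gamma_2(\Gamma_1(P))=\Gamma_2(Q\Gamma_1(P)Q)$; writing $\Gamma_1(P)$ as a positive combination $\sum_i \lambda_i \eta_i\eta_i^*$ with $\eta_i\in\ran Q$ (using the dualities of Section \ref{s_prel} to move between $M_{AB}$ and $M_{AB}^{\rm d}$ where necessary), and using that $\eta_i\eta_i^*\le \|\Gamma_1(P)\|\,Q$, complete positivity of $\Gamma_2$ gives $\Gamma_2(\Gamma_1(P))\le \|\Gamma_1(P)\|\,\Gamma_2(Q)$, while perfection of $\Gamma_2$ gives $\langle\Gamma_2(Q),\nph_2(Q)_\perp\rangle=0$. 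Since $\nph_2(Q)=\nph_2(\nph_1(P))=(\nph_2\circ\nph_1)(P)$, monotonicity of the pairing (exactly as in the last step of Proposition \ref{p_clgh}) yields $\langle\Gamma_2(\Gamma_1(P)),(\nph_2\circ\nph_1)(P)_\perp\rangle=0$, which is perfection of $\Gamma_2\circ\Gamma_1$. This argument is insensitive to whether the domains are full matrix algebras or diagonal ones, so it covers the quantum and classical-to-quantum cases uniformly.

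For part (i), the remaining task is to show $\Gamma_2\circ\Gamma_1\in\cl Q_{\rm x}$ whenever $\Gamma_i\in\cl Q_{\rm x}$. This is where Lemma \ref{l_cGPOVM} is essential. In the case ${\rm x}={\rm qc}$ I would write $\Gamma_1=\Gamma_{E_1\cdot F_1,\sigma_1}$ and $\Gamma_2=\Gamma_{E_2\cdot F_2,\sigma_2}$ with $(E_1,F_1)$ a commuting pair on $H$ and $(E_2,F_2)$ a commuting pair on $K$; the composition $E_2\circ E_1$ and $F_2\circ F_1$ (Lemma \ref{l_cGPOVM}, applied with the relevant index sets) again form a commuting pair of stochastic operator matrices on $K\otimes H$ — commutativity is inherited because the building blocks $E_2$-entries commute with $F_2$-entries and $E_1$-entries commute with $F_1$-entries, and the tensor legs are disjoint — and an explicit computation with (\ref{eq_long}) and the definition of channel composition shows $\Gamma_{(E_2\cdot F_2)\circ(E_1\cdot F_1),\sigma_2\otimes\sigma_1}=\Gamma_2\circ\Gamma_1$. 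For ${\rm x}={\rm q}$ the same works using $\odot$ in place of $\cdot$ and keeping track of the bipartition $H_A,H_B$ of both stages (the composed Hilbert spaces are $K_A\otimes H_A$ and $K_B\otimes H_B$, and $\sigma_2\otimes\sigma_1$ is a pure state on their tensor product, finite dimensional when both stages are). For ${\rm x}={\rm loc}$ one composes $\sum_i\lambda_i\Phi_i\otimes\Psi_i$ with $\sum_j\mu_j\Phi_j'\otimes\Psi_j'$ and distributes to get $\sum_{i,j}\lambda_i\mu_j(\Phi_j'\Phi_i)\otimes(\Psi_j'\Psi_i)$, a local correlation; ${\rm x}={\rm qa}$ then follows by continuity of composition and the fact that $\cl Q_{\rm qa}=\overline{\cl Q_{\rm q}}$; ${\rm x}={\rm ns}$ was already handled above. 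For part (ii), traciality: with $X=Y$, $A=B$, $Z=W$, write $\Gamma_1=\Gamma_{E_1,\tau_1}$ and $\Gamma_2=\Gamma_{E_2,\tau_2}$ for stochastic $\cl A_1$- and $\cl A_2$-matrices with traces $\tau_1,\tau_2$; the composition $E_2\circ E_1$ is a stochastic $(\cl A_2\otimes_{\max}\cl A_1)$-matrix (Lemma \ref{l_cGPOVM} again, now with $\cl A_2\otimes_{\max}\cl A_1$ playing the role of $\cl B(K\otimes H)$), and with the product trace $\tau_2\otimes\tau_1$ (cf. \cite[Proposition 3.4.7]{bo}) one checks $\Gamma_{E_2\circ E_1,\tau_2\otimes\tau_1}=\Gamma_2\circ\Gamma_1$ by matching entries via the defining formula for $f_{E,\tau}$; finite-dimensionality (resp. abelianness) is preserved by the maximal tensor product in these two cases, giving quantum tracial (resp. locally tracial).

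\textbf{Main obstacle.} The step I expect to be most delicate is verifying that the composed stochastic operator matrix $F\circ E$ from Lemma \ref{l_cGPOVM}, together with the tensor-product state (or tensor-product trace), actually reproduces the channel composition $\Gamma_2\circ\Gamma_1$ rather than some permuted or partially-transposed variant — the bookkeeping of which tensor leg is being sliced, and of the transpose conventions hidden in the dualities (\ref{eq_rdu}) and in the definition of $\Gamma_{E,\sigma}$ via $L_{\rho_X\otimes\sigma}(E)$, must be done carefully. Getting the order of the factors in $\cl A_2\otimes_{\max}\cl A_1$ (as opposed to $\cl A_1\otimes_{\max}\cl A_2$) right in the tracial case, so that the product trace pairs the correct entries, is the same issue in algebraic guise. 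Once these identifications are pinned down, each individual class assertion is a short distributivity or continuity argument.
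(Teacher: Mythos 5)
Your proposal is correct and follows essentially the same route as the paper: composition stays no-signalling via Remark \ref{c_strpt}, class membership is tracked through Lemma \ref{l_cGPOVM} (with tensor products, convex combinations, continuity and the product trace handling the ${\rm q}$, ${\rm loc}$, ${\rm qa}$ and tracial cases), and perfection passes to the composition by a support/monotonicity argument applied to $\nph_1(P)$. The only deviations are cosmetic: you run the perfection step through the operator inequality $\Gamma_1(P)\leq \|\Gamma_1(P)\|\,\nph_1(P)$ instead of the paper's pure-state decomposition, and in the tracial case you use $\cl A_2\otimes_{\max}\cl A_1$ where the paper takes the minimal tensor product (either carries the product trace, and the two coincide in the finite-dimensional and abelian situations relevant to the quantum tracial and locally tracial claims).
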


\begin{proof}
First note that if $\Gamma_i$ is a QNS correlation then so is $\Gamma_2\circ \Gamma_1$. 
Indeed, suppose that $\rho \in M_{XY}$ is such that $\Tr_X\rho = 0$. 
By Remark \ref{c_strpt}, $\Tr_A\Gamma_1(\rho) = 0$, and hence, again by Remark \ref{c_strpt}, 
$\Tr_Z(\Gamma_2(\Gamma_1(\rho)) = 0$.

Suppose that $\Gamma_i\in \cl Q_{\rm qc}$, $i = 1,2$. Let 
$(E^{(i)},F^{(i)})$ be a commuting pair of 
stochastic operator matrices acting on a Hilbert space $H_i$, and $\sigma_i$ be a normal state on $\cl B(H_i)$, 
such that $\Gamma_i = \Gamma_{E^{(i)}\cdot F^{(i)},\sigma_i}$, $i = 1,2$. 
Write $E^{(1)} = \left(E^{(1)}_{x,x',a,a'}\right)$, 
$F^{(1)} = \left(F^{(1)}_{y,y',b,b'}\right)$, 
$E^{(2)} = \left(E^{(2)}_{a,a',z,z'}\right)$ and 
$F^{(2)} = \left(F^{(2)}_{b,b',w,w'}\right)$. 
Set $H = H_2 \otimes H_1$, $\sigma = \sigma_2\otimes \sigma_1$, 
$E = E^{(2)}\circ E^{(1)}$ and $F = F^{(2)}\circ F^{(1)}$; 
note that, by Lemma \ref{l_cGPOVM}, $E$ and $F$ are 
stochastic operator matrices. 
It is straightforward that $(E,F)$ is a commuting pair. 
Write $E = (E_{x,x',z,z'})$ and $F = (F_{y,y',w,w'})$. 
Note that 
\begin{eqnarray*}
& & \sum_{a,a',b,b'} 
\left\langle E^{(1)}_{x,x',a,a'} F^{(1)}_{y,y',b,b'}, \sigma_1\right\rangle \left\langle E^{(2)}_{a,a',z,z'} F^{(2)}_{b,b',w,w'}, 
\sigma_2\right\rangle\\
& = & 
\sum_{a,a',b,b'} 
\left\langle \left(E^{(2)}_{a,a',z,z'} \otimes E^{(1)}_{x,x',a,a'}\right) 
\left(F^{(2)}_{b,b',w,w'} \otimes F^{(1)}_{y,y',b,b'}\right), \sigma_2\otimes \sigma_1\right\rangle\\
& = & 
\left\langle E_{x,x',z,z'} F_{y,y',w,w'}, \sigma\right\rangle,
\end{eqnarray*} 
and hence
\begin{eqnarray*}
& & \left(\Gamma_2\circ \Gamma_1\right)\left(e_{x}e_{x'}^*\otimes e_{y}e_{y'}^*\right)\\
& = & 
\sum_{a,a',b,b'} \left\langle E^{(1)}_{x,x',a,a'} F^{(1)}_{y,y',b,b'}, \sigma_1\right\rangle 
\Gamma_2\left(e_a e_{a'}^*\otimes e_b e_{b'}^*\right)\\
& = & 
\sum_{z,z',w,w'} 
\sum_{a,a',b,b'} 
\langle E^{(1)}_{x,x',a,a'} F^{(1)}_{y,y',b,b'}, \sigma_1\rangle \langle E^{(2)}_{a,a',z,z'} F^{(2)}_{b,b',w,w'}, \sigma_2\rangle
e_z e_{z'}^*\otimes e_w e_{w'}^*\\
& = & 
\sum_{z,z',w,w'}
\left\langle E_{x,x',z,z'} F_{y,y',w,w'}, \sigma\right\rangle e_z e_{z'}^*\otimes e_w e_{w'}^*;
\end{eqnarray*}
thus, $\Gamma_2\circ \Gamma_1 = \Gamma_{E\cdot F,\sigma}$. 

If $\Gamma_i\in \cl Q_{\rm q}$, $i = 1,2$, then the arguments in the previous paragraph 
-- replacing operator products by tensor products as necessary -- show that $\Gamma_2\circ \Gamma_1\in \cl Q_{\rm q}$. 
By the continuity of the composition, the assumptions $\Gamma_i\in \cl Q_{\rm qa}$, $i = 1,2$, imply that 
$\Gamma_2\circ \Gamma_1\in \cl Q_{\rm qa}$.
Finally, assume that $\Gamma_i\in \cl Q_{\rm loc}$, $i = 1,2$, and write 
$\Gamma_i = \sum_{k=1}^{m_i} \lambda_k^{(i)} \Phi_k^{(i)} \otimes \Psi_k^{(i)}$ as a convex combination, 
where $\Phi_k^{(i)} : M_X\to M_A$ and $\Psi_k^{(i)} : M_Y\to M_B$ are quantum channels, $i = 1,2$.
Then
$$\Gamma_2\circ \Gamma_1 = \sum_{k=1}^{m_1} \sum_{l=1}^{m_2}
\lambda_k^{(1)} \lambda_l^{(2)} \left(\Phi_l^{(2)}\circ \Phi_k^{(1)}\right) \otimes  \left(\Psi_l^{(2)}\circ \Psi_k^{(1)}\right)$$
as a convex combination, and hence $\Gamma_2\circ \Gamma_1\in \cl Q_{\rm loc}$.

Suppose that $\Gamma_i$ is a tracial QNS correlation; thus, there exist unital C*-algebras
$\cl A_1$ and $\cl A_2$, traces $\tau_1$ and $\tau_2$ on $\cl A_1$ and $\cl A_2$, respectively, 
and stochastic matrices 
$E^{(1)}\in M_X\otimes M_A\otimes \cl A_1$ and $E^{(2)}\in M_A\otimes M_Z\otimes \cl A_2$, 
such that
$\Gamma_i = \Gamma_{E^{(i)}, \tau_i}$, $i = 1,2$.
 The arguments given for (i) show that 
$$\Gamma_2\circ \Gamma_1 = \Gamma_{E^{(2)}\circ E^{(1)}, \tau_2\otimes \tau_1},$$ 
where $\tau_2\otimes \tau_1$ is the product trace on $\cl A_2\otimes_{\min}\cl A_1$;
$E^{(2)}\circ E^{(1)}$ is considered as a stochastic $\cl A_2\otimes_{\min}\cl A_1$-matrix
(note that we identify
$\left(E^{(2)}\circ E^{(1)}\right)^{\rm op}$ with $E^{(2){\rm op}}\circ E^{(1){\rm op}}$ in the natural way). 

It remains to show that if $\Gamma_i$ is a perfect strategy for $\nph_i$, $i = 1,2$, then 
$\Gamma_2\circ \Gamma_1$ is a perfect strategy for $\nph_2\circ \nph_1$. 
Let $P\in \cl P_{M_{XY}}$ and $\omega$ be a pure state with $\omega\leq P$. 
Then $\Gamma_1(\omega) = \nph_1(P)\Gamma_1(\omega)\nph_1(P)$ and hence
$\Gamma_1(\omega)\leq \nph_1(P)$. 
Similarly, for any pure state $\sigma$ with $\sigma\leq\nph_1(P)$ 
we have $\Gamma_2(\sigma)=\nph_2(\nph_1(P))\Gamma_2(\sigma)\nph_2(\nph_1(P))$, 
giving $\left\langle(\Gamma_2(\sigma),(\nph\mbox{}_2\circ \nph\mbox{}_1)(P)_{\perp} \right\rangle = 0$.
In particular, 
$$\left\langle(\Gamma_2\circ \Gamma_1)(\omega), (\nph\mbox{}_2\circ \nph\mbox{}_1)(P)_{\perp} \right\rangle = 0.$$
As in 
the proof of Proposition \ref{p_losame}, this yields 
$$\left\langle(\Gamma_2\circ \Gamma_1)(P), (\nph\mbox{}_2\circ \nph\mbox{}_1)(P)_{\perp} \right\rangle = 0,$$
establishing the claim. 
\end{proof}

Suppose that $p_1$ (resp. $p_2$) is an NS correlation from $XY$ to $AB$ (resp. from $AB$ to $ZW$). 
It is straightforward to verify that the correlation $p$ with $\cl N_p = \cl N_{p_2}\circ \cl N_{p_1}$ is given by 
$$p(z,w|x,y) = \sum_{a\in A}\sum_{b\in B} p_2(z,w|a,b) p_1(a,b|x,y);$$
we write $p = p_2\circ p_1$. Such compositions were first studied in \cite{po}. 
For a non-local game from $XY$ to $AB$ (resp. from $AB$ to $ZW$) with rule function $\lambda_1$ 
(resp. $\lambda_2$), let 
$\lambda_2 \circ \lambda_1 : X\times Y \times Z\times W\to \{0,1\}$
be given by 
$$(\lambda_2 \circ \lambda_1)(x,y,z,w) = 1 \Leftrightarrow \exists \ (a,b) \mbox{ s.t. }
\lambda_1(x,y,a,b) = \lambda_2(a,b,z,w) = 1.$$
Combining Theorem \ref{th_comg} with classical reduction and Proposition \ref{p_pseqcn}, we obtain the following
perfect strategy version of \cite[Proposition 3.5]{po}, which simultaneously extends 
the graph homomorphism transitivity results contained in \cite[Theorem 3.7]{po}.

\begin{corollary}\label{c_pog}
Let $\lambda_1$ (resp. $\lambda_2$) be the rule functions of non-local games 
from $XY$ to $AB$ (resp. from $AB$ to $ZW$) and 
${\rm x}\in \{{\rm loc},{\rm q},{\rm qa},{\rm qc},{\rm ns}\}$. 
If $p_i$ is a perfect strategy for $\lambda_i$ from the class $\cl C_{\rm x}$, $i = 1,2$,
then $p_2\circ p_1$ is a perfect strategy for $\lambda_2\circ \lambda_1$
from the class $\cl C_{\rm x}$. 
\end{corollary}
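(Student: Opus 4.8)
The plan is to reduce the statement about classical correlations to the already-established Theorem \ref{th_comg}(i) via the classical reduction maps of Remark \ref{th_onedq}, combined with Proposition \ref{p_pseqcn}, which translates the notion of a perfect strategy for a rule function into the language of join-continuous games. First I would record the elementary identities tying together the three compositions in play: for NS correlations $p_1, p_2$ one checks directly from the definitions that $\cl N_{p_2 \circ p_1} = \cl N_{p_2} \circ \cl N_{p_1}$, and that under the inclusion $\cl C_{\rm x}\hookrightarrow \cl{CQ}_{\rm x}\hookrightarrow \cl Q_{\rm x}$ (Remark \ref{th_onedq}) one has $\Gamma_{p_2\circ p_1} = \Gamma_{p_2}\circ \Gamma_{p_1}$; the latter follows because $\Delta_{AB}$ is a conditional expectation and $\Gamma_{p_i} = \Phi_{\cl N_{p_i}} = \cl N_{p_i}\circ\Delta$, so the composite telescopes. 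On the games side, one checks that the classical game $\nph_{\lambda_2\circ\lambda_1}$ associated to the composed rule function $\lambda_2\circ\lambda_1$ coincides with the composition $\nph_{\lambda_2}\circ\nph_{\lambda_1}$ of the associated classical games: this is immediate on the rank-one diagonal projections $P_{\{(x,y)\}}$ from the definition of $\beta_{x,y}$ and the formula for $\lambda_2\circ\lambda_1$, and extends to all projections by join continuity.

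With these identities in hand the argument is short. Suppose $p_i\in\cl C_{\rm x}$ is a perfect strategy for $\lambda_i$, $i=1,2$. By Remark \ref{th_onedq}, $\Gamma_{p_i}\in\cl Q_{\rm x}$. By Proposition \ref{p_pseqcn}, $\cl N_{p_i}$ is a perfect strategy for the classical game $\nph_{\lambda_i}$, and hence $\Gamma_{p_i} = \Gamma_{\cl N_{p_i}\circ\Delta}$ is a perfect strategy for $\nph_{\lambda_i}$ viewed as a quantum non-local game (the defining condition \eqref{eq_erdos} for $\Gamma_{p_i}$ against diagonal projections $P\in\cl P_{\cl D_{XY}}$ is exactly the one for $\cl N_{p_i}$, since $\Gamma_{p_i}|_{\cl D} = \cl N_{p_i}$ composed with the inclusion $\cl D_{AB}\hookrightarrow M_{AB}$ and the projections $\nph_{\lambda_i}(P)$ remain diagonal). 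Now apply Theorem \ref{th_comg}(i) to the quantum games $\nph_{\lambda_1}$ and $\nph_{\lambda_2}$: the composite $\Gamma_{p_2}\circ\Gamma_{p_1}\in\cl Q_{\rm x}$ is a perfect strategy for $\nph_{\lambda_2}\circ\nph_{\lambda_1} = \nph_{\lambda_2\circ\lambda_1}$. Since $\Gamma_{p_2}\circ\Gamma_{p_1} = \Gamma_{p_2\circ p_1}$ and $p_2\circ p_1$ is again classical, another application of Remark \ref{th_onedq} gives $p_2\circ p_1\in\cl C_{\rm x}$, and Proposition \ref{p_pseqcn} converts "$\Gamma_{p_2\circ p_1}$ perfect for $\nph_{\lambda_2\circ\lambda_1}$" back into "$p_2\circ p_1$ perfect for $\lambda_2\circ\lambda_1$." This completes the proof.

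The only genuinely non-routine point is the compatibility $\nph_{\lambda_2\circ\lambda_1} = \nph_{\lambda_2}\circ\nph_{\lambda_1}$, together with checking that a classical perfect strategy, once inflated via $\frak{C}$ to a QNS correlation, really is a perfect strategy for the corresponding \emph{quantum} game in the sense of Definition \ref{d_qnlg}(ii) — i.e. that the condition over \emph{all} projections in $\cl P_{\cl D_{XY}}$ (not just rank-one diagonal ones) is satisfied. Both reduce to join continuity: a general diagonal projection $P$ is the join of the rank-one diagonal projections below it, $\nph_{\lambda_i}$ sends this join to the corresponding join of the $P_{\beta_{x,y}}$'s, and the pairing $\langle\cdot,\cdot\rangle$ against $\Gamma(P)$ is controlled termwise exactly as in the proof of Proposition \ref{p_pseqcn}. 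I would state this as a short lemma (or fold it into the verification above), after which everything else is bookkeeping with the two reduction maps. Finally, I would remark that, specialising $\lambda_1,\lambda_2$ to graph-homomorphism rule functions recovers the transitivity of ${\rm x}$-colourings/homomorphisms, as asserted in the statement, since $\lambda_2\circ\lambda_1$ is then the rule function of $G\to K$ when $\lambda_1,\lambda_2$ are those of $G\to H$ and $H\to K$.
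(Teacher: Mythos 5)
Your proposal is correct and follows essentially the same route as the paper, which offers no separate argument beyond the stated combination of Theorem \ref{th_comg}, the classical reduction of Remark \ref{th_onedq}, and Proposition \ref{p_pseqcn}. The identities you verify explicitly ($\cl N_{p_2\circ p_1}=\cl N_{p_2}\circ\cl N_{p_1}$, $\Gamma_{p_2\circ p_1}=\Gamma_{p_2}\circ\Gamma_{p_1}$ and $\nph_{\lambda_2\circ\lambda_1}=\nph_{\lambda_2}\circ\nph_{\lambda_1}$ via join continuity) are exactly the bookkeeping the paper leaves implicit, and your treatment of the classical games as (restrictions of) quantum games is the same harmless gloss the paper makes.
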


Combining Theorem \ref{th_comg} with Remark \ref{r_erdos} (iii) yields the following transitivity result;
in view of Proposition \ref{p_losame}, it extends \cite[Proposition 9]{stahlke}.

\begin{corollary}\label{c_sss}
Let $X$, $A$ and $Z$ be finite sets, 
$\cl U\subseteq \bb{C}^X\otimes \bb{C}^X$, $\cl V\subseteq \bb{C}^A\otimes \bb{C}^A$
and $\cl W\subseteq \bb{C}^Z\otimes \bb{C}^Z$ be symmetric skew spaces, and 
${\rm x}\in \{{\rm loc},{\rm q},{\rm qc}\}$. 
If $\cl U\stackrel{{\rm x}}{\to}\cl V$ and $\cl V \stackrel{{\rm x}}{\to}\cl W$
then $\cl U\stackrel{{\rm x}}{\to}\cl W$.
\end{corollary}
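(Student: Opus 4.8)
\textbf{Proof proposal for Corollary~\ref{c_sss}.}
The plan is to reduce the statement to Theorem~\ref{th_comg}(ii) by realising the relations $\cl U\stackrel{\rm x}{\to}\cl V$ as the existence of a perfect strategy of the appropriate tracial type for the homomorphism game $\cl U\to\cl V$, and then to verify that the composition of games behaves correctly. First I would recall from Remark~\ref{r_erdos}(iii) that, for each symmetric skew pair, the homomorphism game $\cl U\to\cl V$ is the quantum non-local game $\psi_{\cl U,\cl V}$ with $\psi_{\cl U,\cl V}(P)=P_{\cl V}$ when $0\neq P\leq P_{\cl U}$, $\psi_{\cl U,\cl V}(0)=0$, and $\psi_{\cl U,\cl V}(P)=I$ otherwise; moreover, by Definition~\ref{d_hss} together with (\ref{eq_pperpan}), a QNS correlation $\Gamma$ satisfies $\langle\Gamma(P_{\cl U}),P_{\cl V_\perp}\rangle=0$ precisely when it is a perfect strategy for $\psi_{\cl U,\cl V}$ in the sense of Definition~\ref{d_qnlg}(ii) (the join-continuity reduction from $P_{\cl U}$ to a general $P\leq P_{\cl U}$ is exactly the argument at the end of the proof of Proposition~\ref{p_losame}). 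Thus $\cl U\stackrel{\rm x}{\to}\cl V$ holds if and only if $\psi_{\cl U,\cl V}$ admits a perfect strategy that is tracial (x$={\rm qc}$), quantum tracial (x$={\rm q}$), or locally tracial (x$={\rm loc}$).

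Next I would check the compatibility of game composition with the homomorphism games, namely that
$$\psi_{\cl V,\cl W}\circ\psi_{\cl U,\cl V}=\psi_{\cl U,\cl W}.$$
This is a direct lattice computation using Definition~\ref{d_coqnlg}: for $0\neq P\leq P_{\cl U}$ we have $\psi_{\cl U,\cl V}(P)=P_{\cl V}$, and then $\psi_{\cl V,\cl W}(P_{\cl V})=P_{\cl W}$ since $0\neq P_{\cl V}\leq P_{\cl V}$; for $P=0$ both sides are $0$; and for $P\not\leq P_{\cl U}$, $P\neq0$, we get $\psi_{\cl U,\cl V}(P)=I$ and $\psi_{\cl V,\cl W}(I)=P_{\cl W}$ if $I\leq P_{\cl V}$, i.e. $\cl V=\bb C^A\otimes\bb C^A$, and $I$ otherwise — but in the degenerate case $\cl V=\bb C^A\otimes\bb C^A$ one has $P_{\cl V}=I$, so the two formulas still agree. (One should also note $P_{\cl W}\leq I$ always, so no inconsistency arises.) Hence the composed game is again the homomorphism game for the pair $(\cl U,\cl W)$.

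Finally I would invoke Theorem~\ref{th_comg}(ii) with $X=Y$, $A=B$, $Z=W$: given a tracial (resp. quantum tracial, locally tracial) perfect strategy $\Gamma_1$ for $\psi_{\cl U,\cl V}$ and a tracial (resp. quantum tracial, locally tracial) perfect strategy $\Gamma_2$ for $\psi_{\cl V,\cl W}$, the correlation $\Gamma_2\circ\Gamma_1$ is a tracial (resp. quantum tracial, locally tracial) perfect strategy for $\psi_{\cl V,\cl W}\circ\psi_{\cl U,\cl V}=\psi_{\cl U,\cl W}$, which by the first paragraph is exactly the assertion $\cl U\stackrel{\rm x}{\to}\cl W$ for ${\rm x}\in\{{\rm loc},{\rm q},{\rm qc}\}$. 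The only genuinely delicate point is making sure the identification between "$\langle\Gamma(P_{\cl U}),P_{\cl V_\perp}\rangle=0$'' and "$\Gamma$ is a perfect strategy for $\psi_{\cl U,\cl V}$'' is airtight, including the passage through (\ref{eq_pperpan}) relating $P_{\cl V_\perp}$ on the dual space to $(P_{\cl V}^\perp)^{\rm d}$, and the join-continuity step; everything else is bookkeeping already carried out in the cited results. I expect the main obstacle to be cleanly handling the edge cases in the identity $\psi_{\cl V,\cl W}\circ\psi_{\cl U,\cl V}=\psi_{\cl U,\cl W}$ (in particular $P\not\leq P_{\cl U}$ feeding $I$ into $\psi_{\cl V,\cl W}$), but these resolve once one observes $P_{\cl W}\leq I$ and treats the trivial subspaces separately.
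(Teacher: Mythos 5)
Your proposal is correct and follows essentially the paper's own route: the corollary is obtained exactly by combining Remark \ref{r_erdos} (iii) with Theorem \ref{th_comg} (ii), which is what you do. One small caveat: the exact identity $\psi_{\cl V,\cl W}\circ\psi_{\cl U,\cl V}=\psi_{\cl U,\cl W}$ can fail in the degenerate case $\cl V=\{0\}$ (where $\psi_{\cl U,\cl V}(P)=0$ for $0\neq P\leq P_{\cl U}$), but this is harmless, since all you need is the composed game's value at $P_{\cl U}$, and in that degenerate case the hypothesis $\cl U\stackrel{\rm x}{\to}\cl V$ already forces $\cl U=\{0\}$.
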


\smallskip

\noindent 
{\bf Acknowledgement. } 
It is our pleasure to thank Michael Brannan, Li Gao, Marius Junge, 
Dan Stahlke and Andreas Winter 
for fruitful discussions on the topic of this paper.

\smallskip

\noindent 
{\bf Note. } 
After the paper was completed, we became aware of the work 
\cite{bgh}, in which the authors
define quantum-to-classical no-signalling correlations and study a version of the homomorphism game 
from a non-commutative to a classical graph. Although there are similarities between our approaches, 
there is no duplication of results in the current paper with those in \cite{bgh}.



\begin{thebibliography}{99}


\bibitem{delaroche}
{\sc C.  Anantharaman-Delaroche},
{\it On ergodic theorems for free group actions on noncommutative spaces}, 
{\rm Probab. Theory Rel. Fields 135, 520-546 (2006)}.


\bibitem{akt_new}
{\sc M. Anoussis, A. Katavolos and I. G. Todorov},
{\it Entanglement assisted versions of exchangeability},
{\rm in preparation}.

\bibitem{alp}
\textsc{G. Aubrun, L. Lami and C. Palazuelos},
{\it Universal entangleability of non-classical theories},
{\rm preprint (2019), arXiv: 1910.04745}.




\bibitem{barrett}
{\sc J. Barrett},
{\it Information processing in generalized probabilistic theories},
{\rm Phys. Rev. A 75, 032304}.


\bibitem{blecher}
{\sc D. P. Blecher},
{\it A new approach to Hilbert C*-modules},
{\rm Math. Ann. 307 (1997), no. 2, 253-290}.

\bibitem{boca}
{\sc F. Boca},
{\it Free products of completely positive maps and spectral sets}, 
{\rm J. Funct. Anal. 97 (1991), no. 2, 251-263}.

\bibitem{btw}
{\sc G. Boreland, I. G. Todorov and A. Winter},
{\it Sandwich theorems and capacity bounds for non-commutative graphs},
{\rm J. Comb. Theory Series A, in press}.


\bibitem{bcehpsw}
{\sc M. Brannan, A. Chirvasitu, K. Eifler, S. Harris, V. I. Paulsen, X. Su and M. Wasilewski}, 
{\it Bigalois extensions and the graph isomorphism game},
{\rm Comm. Math. Phys. 375 (2020), no. 3, 1777-1809}.


\bibitem{bgh}
{\sc M. Brannan, P. Ganesan and S. Harris},
{\it The quantum-to-classical graph homomorphism game},
{\rm preprint}.


\bibitem{bbt}
{\sc  G. Brassard, A. Broadbent and A. Tapp}, 
{\it Quantum pseudo-telepathy}, 
{\rm Found. Phys. 35 (2005), no. 11, 1877-1907}.
 
\bibitem{b}
{\sc L. G. Brown},
{\it Ext of certain free product C*-algebras},
{\rm J. Operator Theory 6 (1981), no. 1, 135-141}.

\bibitem{bo}
{\sc N. P. Brown and N. Ozawa},
{\it C*-algebras and finite-dimensional approximations},
{\rm American Mathematical Society, 2008}.


\bibitem{blp}
{\sc S. Burgdorf, M. Laurent and T. Piovesan}, 
{\it On the closure of the completely positive semidefinite cone and linear approximations to quantum colorings}, 
{\rm Electron. J. Linear Algebra 32 (2017), 15-40}.

\bibitem{buscemi}
{\sc F. Buscemi}, 
{\it All entangled quantum states are nonlocal}, 
{\rm Phys. Rev. Letters 108 (2012), no. 20, 200401}.

\bibitem{cmnsw}
{\sc P. J. Cameron, A. Montanaro, M. W. Newman, S. Severini and A. Winter},
{\it On the quantum chromatic number of a graph}, 
{\rm Electron. J. Combin. 14 (2007), no. 1}.

\bibitem{CE2} 
{\sc M. D. Choi and E. G. Effros}, 
{\it Injectivity and operator spaces}, 
{\rm J. Funct. Anal. 24 (1977), 156-209}.

\bibitem{ckr} 
{\sc M. Christandl, R. K\"{o}nig and R. Renner}, 
{\it Post-selection technique for quantum channels with applications to quantum cryptography},
{\rm Phys. Rev. Lett. 102 (2009), 020504}.

\bibitem{chtw} 
{\sc R. Cleve, O. H\o yer, B. Toner and J. Watrous},
{\it Consequences and limits of nonlocal strategies}, 
{\rm Proceedings of the 19th Annual IEEE Conference on Computational Complexity (2004), 236-249}.


\bibitem{cjppg} 
{\sc  T. Cooney, M. Junge, C. Palazuelos and D. P\'{e}rez-Garc\'{i}a}, 
{\it Rank-one quantum games}, 
{\rm Comput. Complexity 24 (2015), no. 1, 133-196}.
 
 

\bibitem{dsw}
{\sc R. Duan, S. Severini and A. Winter}, 
{\it Zero-error communication via quantum channels, non-commutative graphs and a quantum Lov\'{a}sz $\theta$ function},
{\rm IEEE Trans. Inf. Theory 59 (2013), no. 2, 1164-1174}.


\bibitem{dw}
{\sc R. Duan and A. Winter},
{\it No-signalling assisted zero-error capacity of quantum channels and an
information theoretic interpretation of the Lov\'{a}sz number},
{\rm IEEE Trans. Inf. Theory 62 (2016), no. 2, 891-914}.

\bibitem{dpp}
{\sc K. Dykema, V. I. Paulsen and J. Prakash},
{\it Non-closure of the set of quantum correlations via graphs},
{\rm Comm. Math. Phys. 365 (2019), no. 3, 1125-1142}.

\bibitem{er}
{\sc E. G. Effros and Zh.-J. Ruan},
{\it Operator spaces},
{\rm Oxford University Press, 2000}.



\bibitem{elefkak}
{\sc G. K. Eleftherakis and E. T. A. Kakariadis}, 
{\it Strong Morita equivalence of operator spaces},
{\rm J. Math. Anal. Appl. 446 (2017), no. 2, 1632-1653}.
 
\bibitem{erdos}
{\sc J. A. Erdos},  
{\it Reflexivity for subspace maps and linear spaces of operators},
{\rm Proc. London Math. Soc. (3) 52 (1986), no. 3, 582-600}.

\bibitem{fkpt} 
{\sc D. Farenick, A. Kavruk, V. I. Paulsen and I. G. Todorov},
{\it Operator systems from discrete groups}, 
{\rm Comm. Math. Phys. 329 (2014), 207-238}.


\bibitem{fkpt_NYJ} 
{\sc D. Farenick, A. Kavruk, V. I. Paulsen and I. G. Todorov},
{\it Characterisations of the weak expectation property}, 
{\rm New York J. Math. 24A (2018), 107-135}.

\bibitem{fp} 
{\sc D. Farenick and V. I. Paulsen},
{\it Operator system quotients of matrix algebras and their tensor products},
{\rm Math. Scand. 111 (2012), 210-243}.


\bibitem{ghj}
{\sc L. Gao, S. J. Harris and M. Junge},
{\it Quantum teleportation and super-dense coding in operator algebras},
{\rm Int. Math. Res. Notices (2017), in print; arXiv:1709.02785}.


\bibitem{gw}
{\sc G. Gutoski and J. Watrous},
{\it Toward a general theory of quantum games}, 
{\rm STOC'07 -- Proceedings of the 39th Annual ACM Symposium on Theory of Computing, ACM, New York (2007), 565-574}. 

\bibitem{haag-musat0}
{\sc U. Haagerup and M. Musat}, 
{\it Factorization and dilation problems for completely positive maps on von Neumann algebras}, 
{\rm Comm. Math. Phys. 303 (2011), no. 2, 555-594}.

\bibitem{haag-musat}
{\sc U. Haagerup and M. Musat}, 
{\it An asymptotic property of factorizable completely positive maps and the Connes embedding problem}, 
{\rm Comm. Math. Phys. 338 (2015), no. 2, 721-752}.


\bibitem{had-li}
{\sc D. Hadwin and W. Li}, 
{\it A note on approximate liftings},
{\rm Oper. Matrices 3 (2009), no. 1, 125-143}.


\bibitem{hamana}
{\sc M. Hamana}, 
{\it Triple envelopes and Shilov boundaries of operator spaces}, 
{\rm Math. J. Toyama Univ. 22 (1999), 77-93}.

\bibitem{harris}
{\sc S. Harris}, 
{\it A non-commutative unitary analogue of Kirchberg's conjecture}, 
{\rm Indiana Univ. Math. J. 68 (2019), no. 2, 503-536}.

\bibitem{hestenes}
{\sc  M. R. Hestenes},  
{\it A ternary algebra with applications to matrices and linear transformations},
{\rm Arch. Rational Mech. Anal. 11 (1962), 138-194}.


\bibitem{jnvwy}
{\sc Z. Ji, A. Natarajan, T. Vidick, J. Wright and H. Yuen},
{\it MIP*=RE},
{\rm preprint (2020), arXiv:2001.04383}.


\bibitem{jnpp}
{\sc M. Junge, M. Navascues, C. Palazuelos, D. Perez-Garcia, V. Scholz and R. F. Werner}, 
{\it Connes' emnedding problem and Tsirelson's problem}, 
{\rm J. Math. Phys. 52, 012102 (2011), 12 pages}.


\bibitem{kadison-ringrose}
{\sc R. V. Kadison and J. R. Ringrose},
{\it Fundamentals of the theory of operator algebras II}, 
{\rm American Mathematical Society, 1997}.

\bibitem{kavruk_thesis}
{\sc A. S. Kavruk}, 
{\it Tensor products of operator systems and applications},
{\rm PhD thesis (2011), University of Houston}.


\bibitem{kavruk_JOT}
{\sc A. S. Kavruk}, 
{\it Nuclearity related properties in operator systems},
{\rm J. Operator Theory 71 (2014), no. 1, 95-156}.



\bibitem{kptt}
{\sc A. S. Kavruk, V. I. Paulsen, I. G. Todorov and M. Tomforde}, 
{\it Tensor products of operator systems},
{\rm J. Funct. Anal. 261 (2011), 267-299}.


\bibitem{kptt_adv}
{\sc A. S. Kavruk, V. I. Paulsen, I. G. Todorov and M. Tomforde},
{\it Quotients, exactness, and nuclearity in the operator system category},
{\rm Adv. Math. 235 (2013), 321-360}.


\bibitem{kps}
{\sc S.-J. Kim, V. I. Paulsen and C. Schafhauser}, 
{\it A synchronous game for binary constraint systems}, 
{\rm J. Math. Phys. 59 (2018), no. 3, 032201, 17 pp.}


\bibitem{kw}
{\sc E. Kirchberg and S. Wassermann}, 
{\it C*-algebras generated by operator systems}, 
{\rm J. Funct. Anal. 155 (1998), 324-351}.




\bibitem{lp}
{\sc M. Laurent and T. Piovesan}, 
{\it Conic approach to quantum graph parameters using linear optimization over the completely positive semidefinite cone}, 
{\rm SIAM J. Optim. 25 (2015), no. 4, 2461-2493}.


\bibitem{ls}
{\sc A. N. Loginov and V. S. Shulman}, 
{\it Hereditary and intermediate reflexivity of W*-algebras},
{\rm Izv. Akad. Nauk SSSR Ser. Mat. 39 (1975), no. 6, 1260-1273, 1437}.


\bibitem{lo}  
{\sc L. Lov\'asz}, {\it On the Shannon capacity of a graph}, 
{\rm IEEE Trans. Inf. Theory 25 (1979), no. 1, 1-7}.


\bibitem{lmprsstw}
{\sc M. Lupini, L. Man\v{c}inska, V. Paulsen, D. E. Roberson, G. Scarpa, S. Severini, I. G. Todorov and A. Winter},
{\it Perfect strategies for non-signalling games}, 
{\rm Math Phys Anal Geom 23 (2020), 7}.


\bibitem{lmr}
{\sc M. Lupini, L. Man\v{c}inska and D. E. Roberson},
{\it Nonlocal games and quantum permutation groups},
{\rm J. Funct. Anal. 279 (2020), no. 5, 108592, 44 pp}.


\bibitem{mrob}
{\sc L. Man\v{c}inska and D. E. Roberson},
{\it Quantum homomorphisms}, 
{\rm J. Combin. Theory Ser. B 118 (2016), 228-267}.


\bibitem{mrob2}
{\sc L. Man\v{c}inska and D. E. Roberson},
{\it Oddities of quantum colorings}, 
{\rm Baltic J. Modern Computing 4 (2016), no. 4, 846-859}.
 
 
\bibitem{mr}
{\sc M. Musat and M. R\o rdam},
{\it Factorizable maps and traces on the universal free product of matrix algebras},
{\rm preprint (2019), arXiv:1903.10182}.

\bibitem{mr2}
{\sc M. Musat and M. R\o rdam},
{\it Non-closure of quantum correlation matrices and factorizable channels that require infinite dimensional ancilla. 
With an appendix by Narutaka Ozawa}, 
{\rm Comm. Math. Phys. 375 (2020), no. 3, 1761-1776}.

\bibitem{mrv}
{\sc B. Musto, D. Reutter and D. Verdon},
{\it The Morita theory of quantum graph isomorphisms}, 
{\rm Comm. Math. Phys. 365 (2019), no. 2, 797-845}.


\bibitem{oz} 
{\sc N. Ozawa}, 
{\it About the Connes' embedding problem -- algebraic approaches},
{\rm Japan. J.  Math. 8 (2013), no. 1, 147-183}.


\bibitem{po}
{\sc C. M. Ortiz and V. I. Paulsen},
{\it Quantum graph homomorphisms via operator systems},
{\rm Linear Algebra Appl. 497 (2016), 23-43}.



\bibitem{Pa} 
{\sc V. I. Paulsen}, 
{\it Completely bounded maps and operator algebras}, 
{\rm Cambridge University Press, 2002}.


\bibitem{paulsen_notes}
{\sc V. I. Paulsen}, 
{\it Entanglement and non-locality}, 
{\rm Lecture Notes, University of Waterloo, 2016}.

\bibitem{pr}
{\sc V. I. Paulsen and M. Rahaman},
{\it Bisynchronous games and factorizable maps},
{\rm preprint (2019), arXiv:1908.03842}.

\bibitem{psstw}
{\sc V. I. Paulsen, S. Severini, D. Stahlke, I. G. Todorov and A. Winter},
{\it Estimating quantum chromatic numbers}, 
{\rm J. Funct. Anal. 270 (2016), 2188-2222}.

\bibitem{pt_QJM}
{\sc V. I. Paulsen and I. G. Todorov}, 
{\it Quantum chromatic numbers via operator systems},  
{\rm Q. J. Math. 66 (2015), 677-692}.

\bibitem{ptt}
{\sc V. I. Paulsen, I. G. Todorov and M. Tomforde}, 
{\it Operator system structures on ordered spaces},
{\rm Proc. London Math. Soc.  102 (2011), 25-49}.


\bibitem{Pi} 
{\sc G. Pisier}, 
{\it Introduction to operator space theory},
{\rm Cambridge University Press, 2003}.


\bibitem{Pi2} 
{\sc G. Pisier}, 
{\it Tensor products of C*-algebras and operator spaces: The Connes-Kirchberg problem},
{\rm Cambridge University Press, 2020}.



\bibitem{rv}
{\sc O. Regev and T. Vidick},
{\it Quantum XOR games}, 
{\rm ACM Trans. Comput. Theory 7 (2015), no. 4, Art. 15, 43 pp.}


\bibitem{rw}
{\sc V. Russo and J. Watrous},
{\it Extended nonlocal games from quantum-classical games}, 
{\rm Chic. J. Theoret. Comput. Sci. 2018, Art. 4, 12 pp.}

\bibitem{ss}
{\sc G. Scarpa and S. Severini},
{\it Kochen-Specker sets and the rank-1 quantum chromatic number}, 
{\rm IEEE Trans. Inform. Theory 58 (2012), no. 4, 2524-2529}.

\bibitem{st1}
{\sc V. S. Shulman and L. Turowska}, 
{\it Operator synthesis. I. Synthetic sets, bilattices and tensor algebras}, 
{\rm J. Funct. Anal. 209 (2004), no. 2, 293-331}.

\bibitem{slofstra}
{\sc W. Slofstra},
{\it The set of quantum correlations is not closed},
{\rm Forum Math. Pi 7 (2019), E1}.

\bibitem{stahlke}
{\sc D. Stahlke},
{\it Quantum zero-error source-channel coding and non-commutative graph theory},
{\rm IEEE Trans. Inform. Theory 62 (2016), no. 1, 554-577}.


\bibitem{villani}
{\sc C. Villani},
{\it Optimal transport. Old and new},
{\rm Springer-Verlag, 2009}.


\bibitem{jw}
{\sc J. Watrous},
{\it The theory of quantum information},
{\rm Cambridge University Press, 2018}.


\bibitem{zettl}
{\sc H. Zettl},
{\it A characterization of ternary rings of operators},
{\rm Adv. Math. 48 (1983), no. 2, 117-143}. 

\end{thebibliography}
\end{document}